\newtheorem{prop}{Proposition}[section]
\newtheorem{theo}[prop]{Theorem}
\newtheorem{cor}[prop]{Corollary}
\newtheorem{defn}[prop]{Definition} 
\newtheorem{rem}[prop]{Remark}
\newtheorem{rems}[prop]{Remarks}
\newtheorem{lem}[prop]{Lemma}
\newenvironment{proof}
 {\begin{trivlist} \item[\hskip \labelsep {\bf Proof}\hspace*{3 mm}]}
 {\hfill$\Box$\end{trivlist}}
\newenvironment{acknow}
 {\begin{trivlist} \item[\hskip \labelsep {\bf Acknowledgments.}]}
 {\end{trivlist}}
\date{}
\begin{document}

\title{On $k$-folding map-germs and hidden symmetries of surfaces in the Euclidean 3-space}
\author{Guillermo Pe\~nafort Sanchis and  Farid Tari}

\maketitle
\begin{abstract}
Let $M$ be a smooth surface in $\mathbb R^3$  (or a complex surface in $\mathbb C^3$) and $k\geq 2$ be an integer. 
At any point on $M$ and for any  plane in $\mathbb R^3$, we construct a holomorphic map-germ $(\C^2,0)\to(\C^3,0)$ of the form $F_k(x,y)= (x,y^k,f(x,y))$, called a $k$-folding map-germ.
We study in this paper the local singularities of $k$-folding map-germs and relate them to the extrinsic differential geometry of $M$. More precisely, we

\begin{itemize}[itemsep=-5mm]
\item stratify the jet space of $k$-folding map-germs  
so that the strata of codimension $\le 4$ correspond to topologically equivalent $\mathcal A$-finitely determined germs;\\
\item obtain the  topological classification of $k$-folding map-germs on generic surfaces in $\mathbb R^3$ (or $\mathbb C^3$);\\
\item generalise the work of Bruce-Wilkinson on folding maps ($k=2$);\\ 
\item recover, in a unified way, results obtained by considering the contact of surfaces with lines, planes and spheres;\\
\item discover new robust features on smooth surfaces in $\mathbb R^3$.
\end{itemize}

\end{abstract}

\renewcommand{\thefootnote}{\fnsymbol{footnote}}
\footnote[0]{2010 Mathematics Subject classification:
	58K05,  
	58K65,  
	53A05. 	
}
\footnote[0]{Key Words and Phrases. Singularities of mappings, surfaces, invariants, folding-maps, stratification, robust features.}

\section{Introduction }\label{sec:intro}
The aim of this work is to study $k$-folding map-germs on complex surfaces in $\mathbb C^3$ 
and relate them to the extrinsic differential geometry of surfaces in $\mathbb R^3$. 

	The standard Whitney fold of order $k$ with respect to the plane $\pi_0:y=0$ in $\mathbb C^3$  is the map
	$\omega_k\colon \C^3\to \C^3,$ given by
$$
	\omega_k(x,y,z)=(x,y^k,z).
$$	

The map $\omega_k$ `folds' the space $\C^3$ along the plane $\pi_0$, gluing the points
	$(x,y,z),$ $(x,\xi y,z),\dots,(x,\xi^{k-1}y,z),$ where $\xi=e^{2\pi i/k}$ is a primitive $k^{th}$-root of unity. 
	The Whitney fold of order $k$ with respect to any plane $\pi$, denoted  by $\omega_k^{\pi}$, is defined similarly in \S \ref{sec:Prelim}.

	Let $M$ be a complex surface in $\C^3$. We call the restriction  of  $\omega_k^{\pi}$ to $M$ the $k$-folding map on $M$ with respect to $\pi$.
	As our study is local, given point $p$ on $M$ and a plane $\pi$ in $\mathbb C^3$ through $p$, we choose a coordinate system so that 
	$M$ is locally the graph of a function $z=f(x,y)$ and $\pi=\pi_0:y=0$ (see Remarks \ref{rems:Fkpi}(4)). Then the germ at $p_0$ of the $k$-folding map is represented in standard form by the map-germ 
	$F_k: (\mathbb C^2,0)\to (\mathbb C^3,0)$, given by
\begin{equation}\label{eq:kfolding_map_standard}
	F_k(x,y)=(x,y^k,f(x,y)).	
\end{equation}
	
For an analytic (resp. smooth) surface $M\subset \R^3$, the $k$-folding map at a point $p$ on $M$ is constructed by complexifying $M$ (resp. 
a certain jet of a parametrisation of $M$) at $p$.
The singularities of a $k$-folding map-germ encode the local symmetries of $M$ with respect to the (complex) reflection group of order $k$ whose hyperplane arrangement consists of the single plane $\pi$.

The study of $2$-folding map-germs on surfaces in $\R^3$ was carried out by  Bruce and Wilkinson 
\cite{Bruce84, BruceWilk,wilkinson} (see \cite{BGT, GiblinTariRef, GiblinTariPerp,IFRT, IzumiyaTakaTari, wilkinson} for more work  on the subject),  without resorting to complexification. The real map-germs in \cite{Bruce84, BruceWilk,wilkinson} 
are called folding map-germs and our $2$-folding map-germs are their complexifications.  
Complexifying does not give  extra information when $k=2$.
For $k\geq 3$, per contra,  $k$-folding maps reveal a great deal of new geometric information. 
The local symmetries captured by these map-germs cannot be seen in the real case, which is why we call them \emph{hidden symmetries} of $M\subset \mathbb R^3$. The loci of their singularities 
are visible on $M$ and capture extrinsic geometric information of the surface.

Bruce and Wilkinson showed that
folding maps capture the sub-parabolic and ridge curves, as well as 
umbilic points and other special points on these curves: these are robust features of the surface  
(i.e., they are special geometric features that can be traced on an evolving surface; see \S\ref{sec:surfaces} for details). 
Passing to the complex setting, we  
show that the singularities of $k$-folding maps, $k\ge 2$, capture in a unified way, known robust features obtained by 
considering the contact of the surface 
with lines, planes and spheres  (parabolic, sub-parabolic, ridge and flecnodal curves, umbilic points, $B_3$, $C_3$ and $S_3$-points, $A^*_2$-points, cusps of Gauss (gulls-points) and butterfly-points).   
Our approach also reveals a new robust feature on surfaces: when $k$ is divisible by $3$, 
we obtain a new curve, called the {\it $H_3$-curve}. 
We also obtain new special points  on previously known curves as well as on the $H_3$-curve.
This motivates the following question: can the $H_3$-curve
be obtained via the contact of the surface with some special geometric object? Further work is also required for understanding the link 
between local (hidden) symmetries of a surface and its contact with lines and planes.

The paper is organised as follows. In \S \ref{sec:Prelim}, we set notation and give some preliminaries. 
In \S \ref{sec:TopInvariants}, we obtain formulae for the invariants $C,$ $T,$ $\mu({\mathcal D})$ and $r({\mathcal D})$ of $k$-folding map-germs. These 
are respectively, 
the number of cross-caps, the  number of triple points, the Milnor number and the number of branches 
of the double point curve. These invariants 
determine the finite $\mathcal A$-determinacy and the topological class of a $k$-folding map-germ. 
In  \S \ref{sec:Classification}, we produce a stratification of the $l$-jet space of $k$-folding map-germs in standard form 
which is identified with the $l$-jet space of  germs of functions $J^{l}(2,1)$.
The stratification results are summarised as follows.

\begin{theo}\label{MainThm}
	For any integer $k\ge 2$, there is a stratification $\mathcal S_k$ of $J^{11}(2,1)$ such that, 
	for any stratum $S$ in $\mathcal S_k$ of codimension $\le 4$, 
	all $k$-folding map-germs in standard form with $11$-jets in $S$ are finitely $\mathcal A$-determined and are pairwise topologically equivalent.
\end{theo}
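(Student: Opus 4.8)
The plan is to reduce the statement to a finite stratification problem by exploiting the result of \S\ref{sec:TopInvariants}, namely that the invariants $C$, $T$, $\mu(\mathcal D)$ and $r(\mathcal D)$ detect finite $\mathcal A$-determinacy and completely determine the topological class of a $k$-folding germ. Under the identification of $J^{11}(2,1)$ with the space of $11$-jets of $f$, it then suffices to partition this jet space into locally closed strata on which $F_k$ has constant topological type, in such a way that every germ with $11$-jet in a stratum of codimension $\le 4$ is $\mathcal A$-finite and $11$-$\mathcal A$-determined. Topological equivalence within a stratum will then be automatic, since $11$-determinacy makes the four invariants functions of the $11$-jet alone.

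The first step is to render $\mathcal D$, $C$ and $T$ explicit as algebraic conditions on the jet of $f$. Because $F_k(x,y)=F_k(x,\xi^i y)$ exactly when $f(x,y)=f(x,\xi^i y)$, with $\xi=e^{2\pi i/k}$, all the multiple-point and non-immersive data of $F_k$ are carried by the divided differences $f(x,y)-f(x,\xi^i y)$, $1\le i\le k-1$. From these one obtains the cross-cap locus, the triple-point locus and the defining ideal of the double point curve $\mathcal D$, and hence, using the formulae of \S\ref{sec:TopInvariants}, expresses each of $C$, $T$, $\mu(\mathcal D)$ and $r(\mathcal D)$ in terms of finitely many Taylor coefficients of $f$.

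I would then construct the stratification inductively in the codimension, working with the action on $J^{11}(2,1)$ of the subgroup of jets of $\mathcal A$ that preserves the $k$-folding shape $(x,y^k,f)$. The open stratum consists of jets yielding a cross-cap or a transverse double point; the higher-codimension strata are cut out by successively imposing the degeneracy conditions on $f$ (vanishing of the coefficients governing the order of contact with the fold plane, the parabolic, ridge and sub-parabolic conditions, and so on), each of which is invariant under the group action. Reducing each jet to a prenormal form, I would check by direct computation that up to codimension $4$ only finitely many prenormal forms occur, that on each the double point curve has an isolated singularity (so $\mu(\mathcal D)<\infty$ and $F_k$ is $\mathcal A$-finite by the criterion of \S\ref{sec:TopInvariants}), and that the four invariants are constant there. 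Bounding the determinacy degree on these strata and verifying that it never exceeds $11$ is what justifies working in $J^{11}(2,1)$.

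The main obstacle will be proving the finiteness and completeness of the codimension-$\le4$ list: one must show that these strata exhaust the $\mathcal A$-finite behaviour up to that codimension and that $C$, $T$, $\mu(\mathcal D)$ and $r(\mathcal D)$ genuinely separate the topological types that occur, with particular care for the dependence on $k$ --- for instance the extra degeneracy arising when $3\mid k$, which is responsible for the $H_3$-curve. This is a finite but delicate case analysis; once it is complete, the reduction in the first paragraph yields the pairwise topological equivalence of all $k$-folding germs with $11$-jets in a common stratum of codimension $\le4$.
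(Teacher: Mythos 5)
Your overall skeleton --- extract the multiple-point data from the divided differences $f(x,y)-f(x,\xi^jy)$, stratify the jet space by degeneracy conditions on the Taylor coefficients of $f$, and run a finite case analysis up to codimension $4$ --- is indeed what the paper does (Theorem \ref{theo:decom_D(F_k)} and the case analysis of \S\ref{sec:Classification}). But the logical mechanism you propose for the conclusion ``pairwise topologically equivalent'' has a genuine gap. You assert that $C$, $T$, $\mu(\mathcal D)$ and $r(\mathcal D)$ \emph{completely determine} the topological class of a $k$-folding germ, so that equivalence within a stratum is ``automatic'' once these numbers are constant. The paper proves only the converse direction (Proposition \ref{propTopInvs}: these numbers are topological invariants), and it explicitly exhibits counterexamples to completeness: when $12\mid k$, the germs ${\bf Q}^k_4$ and $\widetilde{\bf Q}^k_4$ have identical $C$, $T$, $\mu(\mathcal D)$, $r(\mathcal D)$ yet are \emph{not} topologically equivalent (Remark \ref{rem:Unequivalent}); the same happens for ${\bf Q}^4_3$ versus ${\bf R}^4_4$ and for ${\bf U}^k_4$ versus ${\bf V}^{k,j,j'}_4$. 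So constancy of the four invariants on a stratum cannot, by itself, yield topological equivalence. What the paper uses instead is a family argument: each stratum is path connected, the branch data $\mu(\mathcal D_j)$ and $\mathcal D_j\cdot \mathcal D_{j'}$ are shown by computation to be constant along it, hence $\mu(\mathcal D)$ is constant by Proposition \ref{prop:Mukfoldingmaps}, and then the Bobadilla--Pe theorem (Proposition \ref{theo:BobPe}, refined as Corollary \ref{propTopTrivAndInvs}) gives topological triviality of any family joining two germs of the stratum. This external input --- constancy of $\mu(\mathcal D)$ in a family of finitely $\mathcal A$-determined germs implies topological triviality of the family --- is the missing idea in your proposal; without it (or something equivalent), the step from constant invariants to equivalence fails.

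A second, concrete error is your justification for working in $J^{11}(2,1)$: you propose to bound the $\mathcal A$-determinacy degree on the codimension $\le 4$ strata and ``verify that it never exceeds $11$''. This is false in general: as the paper notes, the degree of $\mathcal A$-determinacy of any \emph{singular} $k$-folding germ is at least $k$ (because $(x,y)\mapsto(x,0,f(x,y))$ is never finitely $\mathcal A$-determined), so for $k\ge 12$ no singular germ in any stratum is $11$-$\mathcal A$-determined. The jet level $11$ is justified differently: the equations and inequalities cutting out the strata involve only coefficients $a_{qs}$ with $q\le 11$ (the worst case being $CndH_4$, which involves $a_{11,11}$), and the family-triviality argument shows that the topological type of a germ whose $11$-jet lies in a codimension $\le 4$ stratum depends only on those finitely many coefficients --- a statement of topological determinacy, not of $\mathcal A$-determinacy.
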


We relate in  \S \ref{sec:hiddensymmetries} 
the stratification of the jet space to the extrinsic differential geometry of surfaces in $\mathbb R^3$. 
After clarifying what it means for a surface to be {generic}, 
 we deduce the following result about the topological classes of $k$-folding map-germs.

\begin{theo}\label{ClassThm}
	Let $k\geq 3$ be an integer and let $M$ be a generic smooth  surface in $\R^3$ {\rm (}or a complex surface in $\C^3${\rm )}. 
	Then, at any point $p$ on $M$ and for any plane $\pi$ through $p$, 
the $k$-folding map-germ at $p$ with respect to $\pi$
	is finitely $\mathcal A$-determined and is topologically equivalent to one of the following map-germs:
$$
\begin{array}{cl}
	{\bf M}^k_0	&				(x,y)\mapsto(x,y^k,y),\\
	{\bf M}^k_1	&				(x,y)\mapsto(x,y^k,xy+y^2),\\
{\bf M}^{k}_{l}&	(x,y)\mapsto(x,y^k,y^2+y^3+x^ly),\  l=2,3,4,\\
{\bf N}^k_l&	(x,y)\mapsto(x,y^k,y^2+x^2y+y^{2l-1}),\  l=3,4,\\
{\bf O}^{k}_4 &	(x,y)\mapsto (x,y^k,y^2+x^3y+xy^3),\\
		{\bf P}^k_l&					(x,y)\mapsto(x,y^k,xy+y^3+y^{3l-1}),\ l=2,3,4,\\
	{\bf Q}^k_3&					(x,y)\mapsto(x,y^k,xy+y^4+y^{5}+y^{6}),\\
	{\bf Q}^k_4&	(x,y)\mapsto(x,y^k,xy+y^4+y^{6}+y^{8}),\\
	\widetilde{{\bf Q}}^k_4&(x,y)\mapsto(x,y^k,xy+y^4+y^{5}+y^{7}),\\
		{\bf R}^k_4	&				(x,y)\mapsto(x,y^k,xy+y^5+y^6+y^7),\\
		{\bf U}^k_{3}	&			(x,y)\mapsto(x,y^k,x^2y+2xy^2+y^3+y^{4}),\\
		{\bf U}^k_{4}&	(x,y)\mapsto(x,y^k,x^2y+2xy^2+y^3+y^{8}),\\
		{\bf V}^{k,j,j'}_{4}&				(x,y)\mapsto(x,y^k,x^2y+xy^2+{a_{j,j'}}y^3+{b_{j,j'}}x^3y+y^4),\\
		{\bf W}^{k,j}_{4}&					(x,y)\mapsto(x,y^k,x^2y+xy^2+{c_j}y^3+4x^2y^2+y^4),\\
		{\bf W}^{3p,p}_4& (x,y)\mapsto(x,y^k,x^2y+y^3+y^4+y^5),\\
		{\bf X}_4^k &					(x,y)\mapsto (x,y^k,xy^2+y^3+x^3y+y^4),\\
		{\bf Y}^{k}_{4}&					(x,y)\mapsto(x,y^k,-xy^2+x^2y+y^4+y^5),
\end{array}
$$	
where the constants $a_{j,j'},b_{j,j'}$ {\rm(}resp. $c_{j}${\rm)} are as in {\rm Proposition \ref{prop:a11=a21=a22=0(1)} (}resp. {\rm \ref{prop:a11=a21=a22=0(2)})} and 
$j,j'=1,\ldots,k-1$, with $j\ne j'$.
\end{theo}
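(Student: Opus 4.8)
The plan is to obtain Theorem~\ref{ClassThm} as a direct specialization of the general stratification in Theorem~\ref{MainThm} to the geometry of a generic surface. First I would make precise what \emph{generic} means: a surface $M\subset\R^3$ (or $\C^3$) is generic when the map that assigns to each pair $(p,\pi)$ the $11$-jet of the corresponding $k$-folding map-germ $F_k$ in standard form is transverse to every stratum of the stratification $\mathcal S_k\subset J^{11}(2,1)$. This is a Thom--Mather-type transversality condition, and the point of isolating codimension $\le 4$ is that the base of the relevant parameter family has dimension $4$ (two for the point $p$ on $M$ and two for the choice of plane $\pi$ through $p$, after accounting for the standard-form normalizations), so by transversality the $11$-jet of $F_k$ at any $(p,\pi)$ lands in a stratum of $\mathcal S_k$ of codimension at most $4$.

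Granting genericity, the main work is to enumerate explicitly the strata of $\mathcal S_k$ of codimension $\le 4$ and to identify, for each such stratum $S$, an $\mathcal A$-finitely determined normal form whose $11$-jet lies in $S$. By Theorem~\ref{MainThm}, all $k$-folding map-germs with $11$-jet in a fixed stratum $S$ are pairwise topologically equivalent, so it suffices to exhibit a \emph{single} representative per stratum; the listed germs $\mathbf M^k_l,\mathbf N^k_l,\dots,\mathbf Y^k_4$ are precisely these representatives. Concretely, I would run through the low-codimension strata organized by the jet $j^1F_k$ and successively higher jets: the immersive and cross-cap type cases ($\mathbf M^k_0$, $\mathbf M^k_1$), then the cases where the relevant lowest-order part of $f$ degenerates, producing the families $\mathbf M^k_l$, $\mathbf N^k_l$, $\mathbf O^k_4$, $\mathbf P^k_l$, $\mathbf Q^k_3$, $\mathbf Q^k_4$, $\widetilde{\mathbf Q}{}^k_4$, $\mathbf R^k_4$, and the genuinely $k$-dependent families $\mathbf U^k_l$, $\mathbf V^{k,j,j'}_4$, $\mathbf W^{k,j}_4$, $\mathbf X^k_4$, $\mathbf Y^k_4$, together with the special sporadic stratum $\mathbf W^{3p,p}_4$ that appears only when $3\mid k$. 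For each representative I would verify finite $\mathcal A$-determinacy by computing the invariants $C$, $T$, $\mu(\mathcal D)$, $r(\mathcal D)$ from \S\ref{sec:TopInvariants}, which by the cited results both certify finite determinacy and pin down the topological class.

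For the $k$-dependent families the constants require care: in $\mathbf V^{k,j,j'}_4$ and $\mathbf W^{k,j}_4$ the coefficients $a_{j,j'},b_{j,j'},c_j$ depend on the indices $j,j'\in\{1,\dots,k-1\}$ with $j\ne j'$, reflecting which $k$-th roots of unity govern the identifications in the folding, and these are exactly the values produced in Propositions~\ref{prop:a11=a21=a22=0(1)} and~\ref{prop:a11=a21=a22=0(2)}. I would therefore import those two propositions to fix the normal forms and to confirm that the resulting germs are $\mathcal A$-finitely determined of the asserted topological type. The final step is completeness: I must argue that no stratum of codimension $\le 4$ has been omitted, which amounts to checking that the enumeration above exhausts the codimension-$\le 4$ part of $\mathcal S_k$ constructed in the proof of Theorem~\ref{MainThm}; this follows by matching each stratum of $\mathcal S_k$ to exactly one entry of the list.

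The hard part will be the bookkeeping of the $k$-dependence. Unlike the Bruce--Wilkinson case $k=2$, the extra roots of unity force the classification to branch according to the indices $j,j'$, and one must verify that the transversality/genericity condition genuinely realizes each index combination and that distinct combinations are not accidentally topologically identified (so the list is both complete and non-redundant). Controlling the constants $a_{j,j'},b_{j,j'},c_j$ uniformly in $k$, and handling the divisibility-by-$3$ phenomenon that produces $\mathbf W^{3p,p}_4$ and the $H_3$-curve, is where the argument is most delicate; everything else reduces to invariant computations already made available in \S\ref{sec:TopInvariants}.
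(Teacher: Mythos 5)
Your proposal is correct and follows essentially the same route as the paper: there, Theorem \ref{ClassThm} is deduced by combining the stratification and per-stratum topological normal forms of Theorem \ref{MainThm} (the content of \S\ref{sec:Classification}) with the standard transversality argument of \S 5, in which a Monge--Taylor-type map $\Phi$ on the five-dimensional space of pairs (point, plane) is made transverse to $\{0\}\times\mathcal S_k$, so that for generic $M$ the germs fall only in strata of codimension at most $4$; your four-parameter count of pairs $(p,\pi)$ with $p\in\pi$ is equivalent to that dimension count. The bookkeeping you defer --- the branch-by-branch enumeration, the constants from Propositions \ref{prop:a11=a21=a22=0(1)} and \ref{prop:a11=a21=a22=0(2)}, and the divisibility phenomena --- is exactly what Tables \ref{tab:k=3}, \ref{tab:ClassifPr}, \ref{tab:ClassifAs} and \ref{tab:ClassifPar} record.
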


It is worth noting that $3$-folding map-germs can have  $\mathcal A$-simple singularities 
and their corresponding strata in $\mathcal S_k$ are $\mathcal A$-constant. 
For $k\ge 4$, none of the $k$-folding map-germs are 
$\mathcal A$-simple, except for immersions and for the $C_3$-singularity of $F_4$, and the strata of $\mathcal S_k$ give rise to moduli of finitely determined map-germs with constant invariants $C,T,\mu(\mathcal D)$ and $r(\mathcal D)$. 

The  robust features captured by $k$-folding map-germs on a generic surface are sketched  in Figure \ref{fig:NewRobust}. An interesting finding is that, having studied symmetries of infinitely many orders (for any $k\ge 2$), 
we obtain  a finite collection of robust features that occur along curves and a finite collection 
of special points on these curves if we discard the ${\bf V}^{k,j,j'}$ and ${\bf W}^{k,j}$-points. 

\begin{figure}
	\begin{center}
	\includegraphics[scale=0.8]{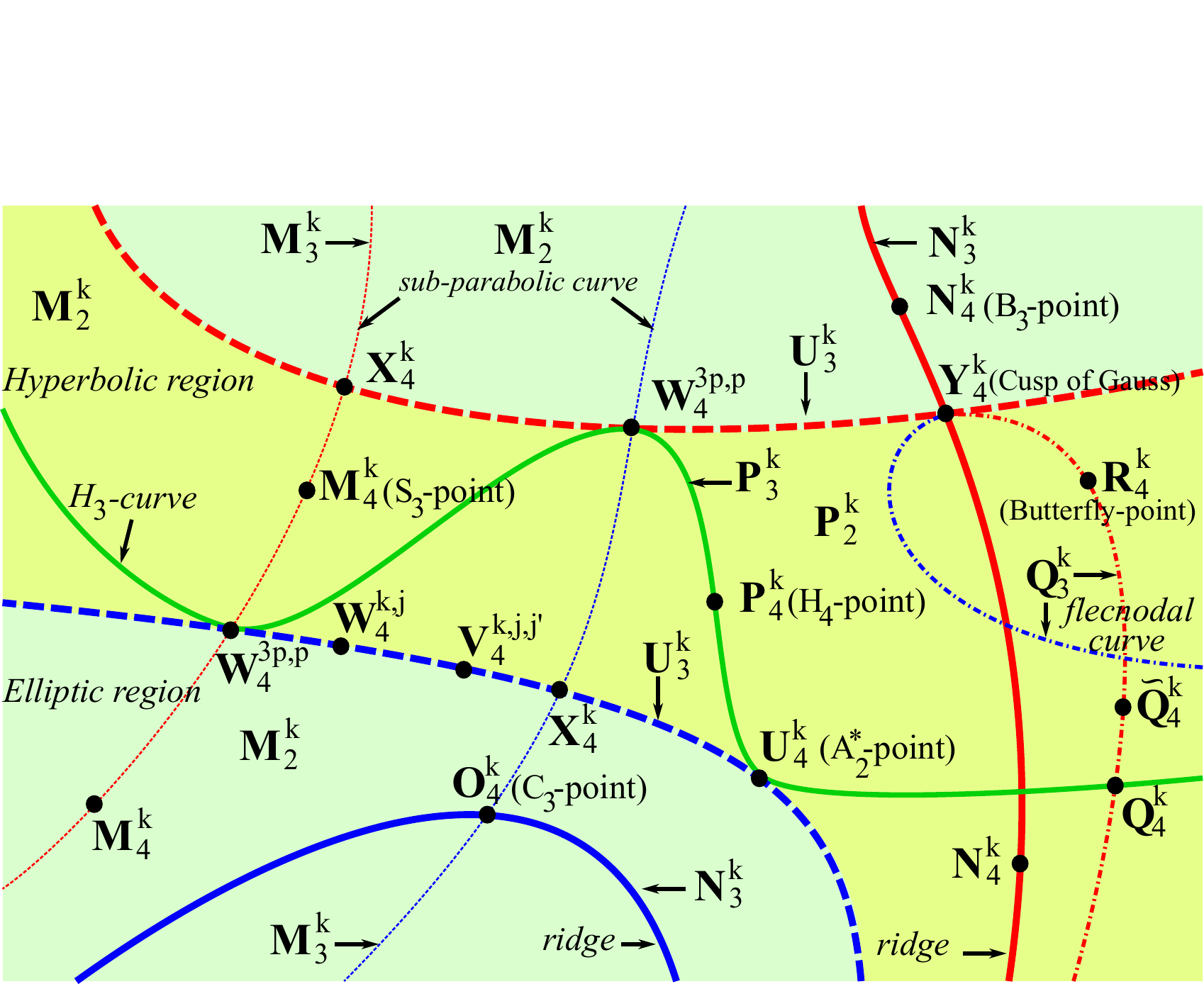}
	\end{center}
	\caption{Robust features  captured by $k$-folding map-germs away from umbilic points for $k\ge 4$, even and divisible by $3$ (see Theorem \ref{theo:umb} and Remarks \ref{rems:umb} for the robust curves at umbilic points). }
	\label{fig:NewRobust}
\end{figure}

\section{Preliminaries}\label{sec:Prelim}

We introduce here $k$-folding map-germs and notation from  singularity theory  that 
are needed in the paper. We start with the singularity theory notation, see for example \cite{Mond_Nuno, wallsurvey} for more details.

\subsection{Singularities of map-germs}

We deal with germs $F\colon (M,p)\to (N,F(p))$ of holomorphic maps 
between complex manifolds. Taking coordinate chartes,  this is the same 
as dealing with map-germs $(\mathbb C^n,0)\to (\mathbb C^p,0)$.

Let ${\mathcal O}_n$ be the local ring of germs of holomorphic 
functions $(\mathbb{C}^n,0 ) \to \mathbb{C}$ and ${\mathfrak m}_n$  its maximal ideal
(which is the subset of germs that vanish at the origin). Denote by
$\cO(n,p)$  the $\cO_n$-module of  holomorphic map-germs $(\C^n,0)\to \C^p$, so $\cO(n,p)=\bigoplus_p\cO_n$.  

Let ${\mathcal R}$ (resp. ${\mathcal L}$) be the group of bi-holomorphic germs $(\C^n,0)\to (\C^n,0)$ (resp. $(\C^p,0)\to (\C^p,0)$).
The group ${\mathcal A}={\mathcal R}\times{\mathcal L}$ of right-left equivalence
acts on ${\mathfrak m}_n.{\mathcal O}(n,p)$ by $(h_1,h_2).G=h_2\circ G\circ
h_1^{-1}$. Two germs $H,G$ are said to be $\mathcal A$-equivalent, and write $H\sim_{\mathcal A} G$, if 
$H=(h_1,h_2).G$ for some $(h_1,h_2)\in \mathcal A$.

The $l$-jet space of map-germs in $\mathfrak m_n\cdot\cO(n,p)$ is
by definition
$$J^l(n,p)={\mathfrak m_n\cdot\mathcal O}(n,p)/{\mathfrak m}_n^{l+1}\cdot{}{\mathcal O}(n,p).$$
Given a germ $G\in\mathfrak m_n\cdot{\mathcal O}(n,p)$, we identified its $l$-jet $j^lG$ with 
its Taylor polynomial of degree $l$ at the origin. 
Let $\mathcal A_{l}$ be the subgroup of $\mathcal A$ whose elements
have $l$-jets the
germ of the identity. The group $\mathcal A_l$ is a normal subgroup of $\mathcal
A$. Define
$\mathcal A^{(l)}=\mathcal A/\mathcal A_l$. The elements of $\mathcal A^{(l)}$ are the
$l$-jets of the elements of $\mathcal A$. The action of $\mathcal A$ on ${\mathfrak m}_n.{\mathcal O}(n,p)$
induces an action
of the jet group $\mathcal A^{(l)}$ on $J^l(n,p)$ as follows. For $j^lG\in J^l(n,p)$
and $j^l(h_1,h_2)\in \mathcal A^{(l)}$, $j^l(h_1,h_2).j^rG=j^l((h_1,h_2).G).$

A germ $G$ is said to be finitely $\mathcal A$-determined if there exist an integer $l$ 
such that $G\sim_{\mathcal A}H$ for any $H$ with $j^lH=j^lG$; $j^lG$ is then said to be a sufficient jet of $G$. The germ $G$ is then said to be $l$-$\mathcal A$-determined. The least $l$ satisfying this property is called the degree of determinacy of $G$. 

There are classifications of finitely determined map-germs for various pairs $(n,p)$. 
When $p=1$, there is Arnold's extensive list of the $\mathcal R$-classification of germs of functions (\cite{ArnoldEtal}). For $(n,p)=(2,2)$, 
classifications were carried out by several authors, the most extensive ones are given in \cite{Goryunov, rieger}. 
Here we need only the singularities of  $\mathcal A_e$-codimension $\le 2$, 
which we reproduce in Table \ref{tab:othgprojeSurfaR3Algb}. 
For $(n,p)=(2,3)$, Mond \cite{mond} produced an extensive list of finitely $\mathcal A$-determined map-germs. 
We use in this paper the following singularities from \cite{mond}:
\begin{center}
\begin{tabular}{cl}
	Immersion & $(x,y,0)$ \\
	Cross-cap & $(x,y^2,xy)$ \\
	$S_{k}$ & $(x, y^2, y^3 + x^{k+1}y)$, $k\ge 1$ \\
	$B_{k}$ & $(x, y^2, x^2y + y^{2k+1})$, $k\ge 2$\\
	$C_{3}$ & $(x, y^2, xy^3 + x^{3}y)$\\
	$H_k$ & $(x,xy+y^{3k-1},y^3)$, $k\ge 2$\\
	$X_4$ & $(x,y^3,x^2y+xy^2+y^{4})$
\end{tabular}
\end{center}

The notion of a {\it simple germ} is defined in  \cite{ArnoldEtal} as follows.
Let $X$ be a manifold and $\mathcal G$ a Lie group acting on $X$. The
modality of a point $g\in X$ under the action of $\mathcal G$ on $X$ is the
least number $m$ such that a sufficiently small neighbourhood of $g$
may be covered by a finite number of $m$-parameter families of
orbits. The point $g$ is said to be {\it simple} if its modality is
$0$, that is, a sufficiently small neighbourhood intersects only a
finite number of orbits. The modality of a finitely $\mathcal A$-determined
map-germ is the modality of a sufficient jet in the jet-space under
the action of the jet-group.

We also need the notion of topological equivalence. We say that two germs $H,G\in {\mathfrak m}_n\cdot{\cO(n,p)}$ are topologically equivalent if 
$H=h_2\circ G\circ h_1^{-1}$ for some germs of homeomorphisms $h_1$ and $h_2$ of, respectively, the source and target.

\subsection{Reflections and $k$-folding maps}

In all this paper, we fix  the inner product  $\langle a,b\rangle=\sum_ia_i\overline {b_i}$ in $\C^3$. 

Let $\pi$ be an element of  the affine Grassmannian ${\rm Graff}(2,3)$
of planes in  $\mathbb C^3$. 
A plane $\pi$ has equation $\left <q,v\right>=d$, where $v$ is a fixed non-zero vector orthogonal to  $\pi$ and $d$ is a fixed scalar.
However, any non-zero scalar multiple of $(d,v)$ gives an equation of $\pi$, so $\pi$ is identified with the 
 class $\overline{(d,v)}\in \mathbb C P^3$ of $(d,v)\in \mathbb C^4$.

Let $\pi:\left <q,v\right>=d$ be a plane in $\mathbb C^3$. The orthogonal projection of a point $p\in \mathbb C^3$ to $\pi$ along the vector $v$
is the point $q= p+\lambda v\in \pi$ with $\lambda=(d-\left <p,v\right>)/\left <v,v\right>$.

Consider the map $\omega_k^{\pi}\colon\C^3\to\C^3$ given by
\[
\omega_k^{\pi}(p)=q+\lambda^k v=p+\lambda v+\lambda^k v.
\]

If we take $(d',v')=(\alpha d,\alpha v)$, $\alpha\in \mathbb C\setminus0$, as another representative of $\pi=\overline{(d,v)}$, then 
$$
\begin{array}{rcl}
p+\lambda' v'+\lambda'^k v'&=&p+\frac{\alpha(d-\left <p,v\right>))}{\alpha^2\left <v,v\right>}\alpha v+
	\frac{\alpha^k(d-\left <p,v\right>)^k)}{\alpha^{2k}\left <v,v\right>}\alpha v\\
	&=&p+\lambda v+\lambda^k \alpha^{1-k} v\\
	&=&q+\lambda^k \alpha^{1-k} v.
\end{array}
$$

Clearly, the map $\omega_k^{\pi}$ depends on the points on the line $(\alpha d, \alpha v)\in \mathbb C^4$ and not merely on 
the class of the line $\overline{(d,v)}\in \mathbb CP^3$. 
However, all these maps  are $\mathcal L$-equivalent: 
the bi-holomorphic map $q-\lambda v\mapsto q-\lambda \alpha^{\frac{1-k}{k}}v$
composed (on the left) with the map $\omega_k^{\pi}$ with $\pi$ represented by $(d,v)$ gives  the map $\omega_k^{\pi}$ with $\pi$ represented by  $(\alpha d, \alpha v)$. Therefore, the   $\mathcal L$-class of $\omega_k^{\pi}$ depends only on $\pi$.

\begin{defn}The \emph{Whitney fold of order $k$} {\rm(}$k$-fold for short{\rm)} with respect to a plane $\pi\in  {\rm Graff}(2,3)$ 
is the  $\mathcal L$-class of the map $\omega^k_{\pi}$. 
We still denote by $\omega_k^{\pi}$ any representative of $\omega_k^{\pi}$ obtained by choosing a representative $(p,v)$ of 
$\pi=\overline{(d,v)}\in \mathbb CP^3$.
\end{defn}

A $k$-fold may be viewed as generalisation of the Whitney fold $(x,y^2,z)\mapsto (x,y^2,z)$. While the Whitney fold folds the space along the plane $\{y=0\}$ and identifies the points $(x,y,z)$ and $(x,-y,z)$, 
the Whitney fold of order $k$ with respect to a plane $\pi$ represented by $(d,v)$ is a generically a $k$-to-one branched cover, 
ramified along $\pi$, and identifies $k$-tuples of points
$q-\lambda v, \  q-\xi\lambda v,\ \dots\ ,\   q-\xi^{k-1}\lambda v,$
where $\xi=e^{2\pi i/k}$ is a primitive $k^{th}$-root of unity and $q\in \pi$.

The map $\omega_k^{\pi}$ can also be viewed as the quotient map associated to the action of the cyclic group $\mathbb Z/k\mathbb Z$, regarded as a complex reflection group whose hyperplane arrangement consist of the single plane $\pi$. 
We regard  $\mathbb Z/k\mathbb Z$ as the group generated by the order $k$ complex reflection 
$q-\lambda v\mapsto q-\xi\lambda v$. Observe that, even though the plane $\pi$ does not determine $\omega_k^{\pi}$ uniquely (it depends on the choice of a representative of $\pi=\overline{(d,v)}\in \mathbb CP^3$), the action of $\Z/k\Z$ on $\C^3$ is determined uniquely by $\pi$.

Given any subset $X\subseteq \C^3$, $\omega_k^{\pi}(X)$ encodes the order $k$ reflectional symmetries of $X$ with respect to $\pi$. See  \cite{GuillReflec} for a recent work on singular maps related to reflection groups.

\begin{defn} \label{def:k-foldingComplex}
Let $M\subset\C^3$ be a complex surface, $p$ a point on $M$ and $k\geq 2$ an integer. 
 Given $\pi\in  {\rm Graff}(2,3)$, the $k$-folding map-germ on $M$ at $p$ with respect to $\pi$ 
 is the $\mathcal A$-class of the restriction of $\omega_k^{\pi}$ to $M$ at $p$.
We denote any representative of the class by
$F_k^{\pi}\colon (M,p)\to( \C^3,\omega_k^{\pi}(p)).$
\end{defn}

\begin{rems}\label{rems:Fkpi}
{\rm 
1. All the map-germs $F_k^{\pi}$ with $\pi$ represented by $(\alpha d, \alpha v)$, $\alpha\in \mathbb C$, 
are $\mathcal  A$-equivalent as the maps $\omega_k^{\pi}$ are $\mathcal L$-equivalent. 
Thus, the $\mathcal A$-class of $F_k^{\pi}$ depends only on $\pi$ 
and not on the choice of a representative $(p,v)$ of $\pi=\overline{(d,v)} \in \mathbb CP^3$.
In all the paper, we work with a representative of the $\mathcal A$-class of $F_k^{\pi}$. 

2. If $p\notin \pi$, then $F_k^{\pi}$ is the germ of an immersion. Thus, to obtain any meaningful local geometric information about the surface $M$ we should  take 
the plane $\pi$ passing through the point $p\in M$.

3. The image of $F_k^{\pi}$ is the image by $\omega_k^{\pi}$ of the germ $(M,p)$, so for $p\in \pi\cap M$, 
 $F_k^{\pi}$ captures order $k$ local symmetries of $M$ with respect to $\pi$. The aim of this paper is to understand how these 
 local symmetries are captured by the $\mathcal A$-singularities of  $F_k^{\pi}$.

4. Let $p_0\in \pi\cap M$ and $(d,v)$ a representative of $\pi$.
If $v\notin T_{p_0}M$, then $F^{\pi}_k$ is a germ of an immersion and is $\mathcal A$-equivalent to $(x,y)\mapsto(x,y^k,y)$.
Suppose that $v\in T_{p_0}M$. We choose a coordinates system in $\C^3$ so that $p_0$ is the origin,
the $z$-axis is along a normal vector to $M$ at $p_0$, the $y$-axis along $v$ and the $x$-axis orthogonal to the previous two axes.
Then we can take $M$ locally at $p_0$ as the graph $z=f(x,y)$ of some holomorphic map $f$ in a neighbourhood $U$ of the origin. In 
this coordinate system, we have $\pi=\pi_0:y=0$. 
Consequently, the $k$-folding map-germ on $M$ at $p_0$ is the germ $F_k=F_k^{\pi_0}:(\C^2,0)\to (\C^3,0)$, given in standard form 
$F_k(x,y)=(x,y^k,f(x,y))$. In view of this, we shall always take a given $k$-folding map-germ in standard form \eqref{eq:kfolding_map_standard}.

5. Definition \ref{def:k-foldingComplex} is adapted as follows for the real case. 
When $M$ is an analytic surface in $\R^3$, denote by  $M_{\C,p}$ 
its local complexification   at $p$  and by $\pi_{\mathbb C}$ the  complexification of $\pi$. 
The $k$-folding map-germ on $M$ at $p$ with respect to $\pi$ is then defined as
the $\mathcal A$-class of the restriction of $\omega_k^{\pi_{\mathbb C}}$ to $M_{\C,p}$ at $p$.
When $M$ is a smooth surface,  we consider
the $k$-folding map-germ of a given jet of {\rm(}a parametrisation of{\rm)} $M$ at $p$.
}
\end{rems}


\section{Topological invariants}\label{sec:TopInvariants}

We recall the definitions of some key $\mathcal A$-invariants of map-germs $(\mathbb C^2,0)\to (\mathbb C^3,0)$. These are the Milnor number of the double point curve $\mu(\mathcal D)$, the number of cross-caps $C$ and the number of triple points $T$. 
We give formulae for computing these invariants for $k$-folding map-germs, 
and use the invariants to study the finite $\mathcal A$-determinacy and  topological equivalence 
of these germs.

\subsection{The double point curve}\label{subs:D(F)}

We start by recalling the definition of the double and triple point spaces of a corank one map-germ $F\colon (\C^n,0)\to(\C^{n+1},0)$ from  \cite{Marar_Mond}. 
Any such germ can be written in a suitable coordinate system in the form 
$
F(x,y)=(x,f_{n}(x,y),f_{n+1}(x,y)),
$
 with $x=(x_1,\dots,x_{n-1})\in(\C^{n-1},0)$ and $y\in (\C,0)$. 

Given  $h\in \cO_n$, the iterated divided differences of $h$ are defined as 
$$
\begin{array}{rcl}
h[x,y,y']&=&\dfrac{h(x,y')-h(x,y)}{y'-y}\in \cO_{n+1},\\
h[x,y,y',y'']&=&\dfrac{h[x,y,y'']-h[x,y,y']}{y''-y'}\in\cO_{n+2}.
\end{array}
$$

The multiple point ideals of a map-germ $F$ as above are defined as 
$$
\begin{array}{rcl}
I^2(F)&=&\langle f_{n}[x,y,y'],f_{n+1}[x,y,y']\rangle\subseteq \cO_{n+1},\\
I^3(F)&=&\langle f_{n}[x,y,y'],f_{n}[x,y,y',y''],f_{n+1}[x,y,y'],f_{n+1}[x,y,y',y'']\rangle\subseteq \cO_{n+2}.
\end{array}
$$

The double and triple point spaces of $F$ are, respectively,
$$
\begin{array}{rcl}
{\mathcal D}^2(F)&=&V(I^2(F)),\\
{\mathcal D}^3(F)&=&V(I^3(F)).
\end{array}
$$

By counting variables and generators, it follows that ${\mathcal D}^2(F)$ (resp. ${\mathcal D}^3(F)$) 
is a complete intersection whenever it has dimension $n-1$ (resp. $n-2$).

The double point space ${\mathcal D}^2(F)$, as a subset of $(\C^{n-1}\times \C\times \C,0)$,  consists of points $(x,y,y')$ 
such that either $y\neq y'$ and $F(x,y)=F(x,y')$ or $y'=y$ and $F$ is singular at $(x,y)$.

To define the source double point space, we assume that $F$ is finite. Then, the projection $\pi \colon {\mathcal D}^2(F)\to \C^{n-1}\times \C$ given by $(x,y,y')\mapsto (x,y)$ is also finite. As a consequence, the image of  $\pi$ can be given a complex structure as the $0$-th Fitting ideal $\mathcal F_0(\pi_*\cO_{{\mathcal D}^2(F)})$ 
of the push forward module $\pi_*\cO_{{\mathcal D}^2(F)}$ (see \cite{Mond_Pellikaan} for details). The source double point space ${\mathcal D}(F)$ is defined as the projection $\pi({\mathcal D}^2(F))$ endowed with this complex space structure, that is,
\[
{\mathcal D}(F)=V(\mathcal F_0(\pi_*\cO_{{\mathcal D}^2(F)}))\subseteq \C^n.
\]

To compute the source double point space of a $k$-folding map-germ, we need the following result.

\begin{lem} Let $Z$ be a germ of an $n$-dimensional Cohen-Macaulay space and let $h_1,\dots,h_r$ in $\cO_Z$ be regular elements. Write $X_j=V(h_j)$ and $X=V(h_1\dots h_r)$. Let $\phi\colon Z\to (\C^n,0)$ be a germ of a morphism of complex spaces such that the restrictions $\phi\vert_{X_j}\colon X_j\to(\C^n,0)$ are finite. Then 
$\mathcal F_0((\phi\vert_{X})_*\cO_{X})=\prod_{j=1}^r\mathcal F_0((\phi\vert_{X_j})_*\cO_{X_j}).$
\label{lem:0-fitting}
\end{lem}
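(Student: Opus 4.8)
The plan is to reduce to the case $r=2$ by induction on $r$, and to treat that base case by producing a short exact sequence whose outer terms recover $\cO_{X_1}$ and $\cO_{X_2}$ and then feeding it into the multiplicativity of the zeroth Fitting ideal. The key algebraic input is that for any short exact sequence $0\to M'\to M\to M''\to 0$ of finitely generated $\cO_{\C^n}$-modules one has $\mathcal F_0(M)=\mathcal F_0(M')\cdot\mathcal F_0(M'')$ (see \cite{Mond_Pellikaan}). I stress that this identity is a genuine theorem and not merely a consequence of putting a presentation of $M$ in block triangular form: the off-diagonal block encoding the extension is essential, so I would quote it rather than reprove it.

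For $r=2$, write $h=h_1h_2$, so that $\cO_{X}=\cO_Z/(h_1h_2)$, $\cO_{X_1}=\cO_Z/(h_1)$ and $\cO_{X_2}=\cO_Z/(h_2)$. Since $h_2$ is a regular element of $\cO_Z$, multiplication by $h_2$ gives a well-defined $\cO_Z$-linear map $\cO_{X_1}\to\cO_{X}$, $[a]\mapsto[ah_2]$: it is well defined because $h_2(h_1)\subseteq(h_1h_2)$, and injective because $ah_2\in(h_1h_2)$ forces $a\in(h_1)$, as $h_2$ is a non-zero-divisor. Its cokernel is $\cO_Z/\bigl((h_1h_2)+(h_2)\bigr)=\cO_Z/(h_2)=\cO_{X_2}$, whence the short exact sequence
\[
0\longrightarrow \cO_{X_1}\xrightarrow{\ \cdot h_2\ }\cO_{X}\longrightarrow \cO_{X_2}\longrightarrow 0
\]
of $\cO_Z$-modules. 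Regarding these as $\cO_{\C^n}$-modules through $\phi^{\#}$ is restriction of scalars, which is exact, so the pushed-forward sequence
\[
0\longrightarrow (\phi|_{X_1})_*\cO_{X_1}\longrightarrow (\phi|_{X})_*\cO_{X}\longrightarrow (\phi|_{X_2})_*\cO_{X_2}\longrightarrow 0
\]
is again exact. The hypothesis that $\phi|_{X_1}$ and $\phi|_{X_2}$ are finite makes the two outer terms finitely generated over $\cO_{\C^n}$; hence so is the middle term, $\phi|_X$ is finite, and all three Fitting ideals are defined. The multiplicativity recalled above then yields $\mathcal F_0((\phi|_X)_*\cO_X)=\mathcal F_0((\phi|_{X_1})_*\cO_{X_1})\cdot\mathcal F_0((\phi|_{X_2})_*\cO_{X_2})$, which is the case $r=2$.

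For the inductive step I would set $g=h_2\cdots h_r$ and $X'=V(g)$. A product of regular elements in a Cohen-Macaulay local ring is again regular, so $g$ is a non-zero-divisor and the base case applies to the factorisation $h=h_1\cdot g$, giving $\mathcal F_0((\phi|_X)_*\cO_X)=\mathcal F_0((\phi|_{X_1})_*\cO_{X_1})\cdot\mathcal F_0((\phi|_{X'})_*\cO_{X'})$. Iterating the same short exact sequence on $X'$ shows, on the one hand, that $\phi|_{X'}$ is finite (so that the base case is legitimately applicable) and, on the other hand by the induction hypothesis, that $\mathcal F_0((\phi|_{X'})_*\cO_{X'})=\prod_{j=2}^r\mathcal F_0((\phi|_{X_j})_*\cO_{X_j})$. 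Substituting gives the asserted identity. The only non-formal ingredient throughout is the multiplicativity of $\mathcal F_0$ on short exact sequences; once that is granted, the remainder is bookkeeping with regular elements and restriction of scalars, so this is the step I would isolate as the crux.
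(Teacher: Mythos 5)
The crux of your argument is the claim that $\mathcal F_0$ is multiplicative on \emph{arbitrary} short exact sequences of finitely generated $\cO_n$-modules, and that claim is false. In general one only has the inclusion $\mathcal F_0(M')\cdot\mathcal F_0(M'')\subseteq \mathcal F_0(M)$, and it can be strict: over $\cO_2$ consider
\[
0\longrightarrow \cO_2/(x,y)\xrightarrow{\ \cdot x\ } \cO_2/(x^2,y)\longrightarrow \cO_2/(x,y)\longrightarrow 0,
\]
whose outer terms have $\mathcal F_0=(x,y)$ while the middle term has $\mathcal F_0=(x^2,y)$; since $(x,y)^2=(x^2,xy,y^2)\neq(x^2,y)$, multiplicativity fails. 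So there is no general theorem to quote here -- in particular it cannot be in \cite{Mond_Pellikaan}, whose Lemma 1.2 concerns base change -- and as written your proof has a genuine gap at its central step. Moreover, your parenthetical remark gets the logic exactly backwards: the identity you need is \emph{not} a theorem in the generality you state, and in the setting of the lemma it \emph{is} precisely a consequence of the block-triangular presentation, because the off-diagonal block is irrelevant to a determinant.

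The good news is that your architecture (the short exact sequence, whose construction is correct, plus induction on $r$) can be repaired, and the repair is the argument you disavowed. In the situation of the lemma, $X_1$, $X_2$ and $X$ are Cohen-Macaulay of dimension $n-1$ (hypersurfaces cut out by regular elements in a Cohen-Macaulay space) and the restrictions of $\phi$ are finite, so each of $(\phi|_{X_1})_*\cO_{X_1}$, $(\phi|_{X_2})_*\cO_{X_2}$, $(\phi|_{X})_*\cO_{X}$ is a torsion $\cO_n$-module of depth $n-1$, hence by Auslander--Buchsbaum of projective dimension one, with a presentation $0\to\cO_n^{p}\xrightarrow{A}\cO_n^{p}\to M\to 0$ by a square injective matrix and $\mathcal F_0(M)=(\det A)$ (this is the key point of \cite{Mond_Pellikaan}). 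The horseshoe lemma assembles square presentations $A'$, $A''$ of the outer terms of your sequence into a square presentation $\begin{pmatrix} A' & C\\ 0 & A''\end{pmatrix}$ of the middle term, whence $\mathcal F_0\bigl((\phi|_X)_*\cO_X\bigr)=(\det A'\det A'')=\mathcal F_0\bigl((\phi|_{X_1})_*\cO_{X_1}\bigr)\cdot\mathcal F_0\bigl((\phi|_{X_2})_*\cO_{X_2}\bigr)$. With this restricted, correct version of multiplicativity -- which you must prove, not cite -- your induction goes through, and it gives a route genuinely different from the paper's: the paper instead deforms so that $X_1$ and $X_2$ share no component (using base-change invariance of Fitting ideals) and compares $X$ with $X_1\sqcup X_2$ via a generically bijective map, using principality of both ideals and multiplicativity of $\mathcal F_0$ only on direct sums.
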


\begin{proof}It is enough to prove the statement for the case $r=2$. 
	We can assume that $X_1$ and $X_2$ have no common irreducible component as topological spaces. 
	Indeed, consider the two subspaces $\mathcal X_1=V(h_1-t)$ and $\mathcal X_2=V(h_2)$ of $Z\times (\C,0)$ and the map $\phi\times \operatorname{Id}\colon Z\times (\C,0)\to (\C^{n+1},0)$. The  spaces 
	$\mathcal X_1$ and $\mathcal X_2$  have no common irreducible component. Moreover, 
	if the statement holds for $\phi\times \operatorname{Id}$, then it holds for $\phi$. 
	This is a consequence of the fact that Fitting ideals commute with base change (see Lemma 1.2 in \cite{Mond_Pellikaan}).
Now consider the disjoint union $X_1\sqcup X_2\subseteq Z\sqcup Z$ and the commutative diagram
\[
\begin{tikzcd}[row sep=1em,column sep=0.3em]
X_1\sqcup X_2 \arrow[rr,"\alpha"]	\arrow[dr,swap,"\psi"]&	&  X\arrow[dl,"\phi\vert_X"] \\
	& (\C^n,0)&
\end{tikzcd}
\]

The map $\alpha$ is generically a local isomorphism (that is, a local isomorphism on a Zariski open and dense subset)
because $X_1$ and $X_2$ are assumed to have no common component. Moreover, both $X$ and $X_1\sqcup X_2$ are Cohen Macaulay spaces, which implies that the ideals $\mathcal F_0(\psi_*\cO_{X_1\sqcup X_2})$ and $\mathcal F_0((\phi\vert_{X})_*\cO_{X})$ are principal. Since  $\alpha$ is generically a local isomorphism, they are necessarily equal. The statement then follows from the equalities
$$
\mathcal F_0(\psi_*\cO_{X_1\sqcup X_2})=\mathcal F_0((\phi\vert_{X_1})_*\cO_{X_1}\oplus (\phi\vert_{X_2})_*\cO_{X_2}))
=\mathcal F_0((\phi\vert_{X_1})_*\cO_{X_1})\cdot \mathcal F_0((\phi\vert_{X_2})_*\cO_{X_2}).
$$
\end{proof}

\begin{theo} \label{theo:decom_D(F_k)}
For  a $k$-folding map-germ $F_k(x,y)=(x,y^k,f(x,y))$, the double point space 
${\mathcal D}$ is the zero locus $V(\lambda)$, where $\lambda=\prod_{j=1}^{k-1}\lambda_j$ and 
\[
\lambda_j=\frac{f(x,y)-f(x,\xi^j y)}{(1-\xi^j)y},
\] 
for $1\le j\le k-1.$ We have thus a decomposition
${\mathcal D}=\bigcup_{j=1}^{k-1}{\mathcal D}_j$, with ${\mathcal D}_j=V(\lambda_j)$.
\end{theo}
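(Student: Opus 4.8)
The plan is to express $\mathcal D=\mathcal D(F_k)$ through its defining Fitting ideal and then to split that ideal multiplicatively by invoking Lemma \ref{lem:0-fitting} on the factorisation of the first generator of the double point ideal. Writing $F_k=(x,f_2,f_3)$ with $f_2=y^k$ and $f_3=f$, the ideal $I^2(F_k)\subseteq\mathcal O_3$ (coordinates $(x,y,y')$) is generated by $f_2[x,y,y']=(y'^k-y^k)/(y'-y)$ and $f_3[x,y,y']=(f(x,y')-f(x,y))/(y'-y)$. The elementary observation that drives everything is the factorisation $y'^k-y^k=\prod_{j=0}^{k-1}(y'-\xi^j y)$, which gives
\[
f_2[x,y,y']=\prod_{j=1}^{k-1}(y'-\xi^j y).
\]
Thus $f_2[x,y,y']$ is a product of $k-1$ linear forms, one per nontrivial $k$-th root of unity, each encoding the $k$-folding identification of $(x,y)$ with $(x,\xi^j y)$.

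Next I would set up the ambient space required by the Lemma. I take $Z=V(f_3[x,y,y'])\subseteq(\mathbb C^3,0)$, a hypersurface and hence a two-dimensional Cohen--Macaulay germ, and set $h_j=y'-\xi^j y$ for $1\le j\le k-1$. Then $h_1\cdots h_{k-1}=f_2[x,y,y']$, so that $X:=V(h_1\cdots h_{k-1})\subseteq Z$ equals $V(f_2[x,y,y'],f_3[x,y,y'])=\mathcal D^2(F_k)$, while $X_j:=V(h_j)\subseteq Z$. Letting $\phi=\pi|_Z$ be the projection $\pi(x,y,y')=(x,y)$, the definition of the source double point space gives $\mathcal D=V(\mathcal F_0((\phi|_X)_*\mathcal O_X))$, and $\mathcal D_j$ is by definition $V(\mathcal F_0((\phi|_{X_j})_*\mathcal O_{X_j}))$.

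The crux is the computation of the branch Fitting ideals. On $X_j$ one has $y'=\xi^j y$, so $X_j$ lies in the plane $\{y'=\xi^j y\}$, which $\phi$ maps isomorphically to $(\mathbb C^2,0)$ via $(x,y,\xi^j y)\mapsto(x,y)$. Substituting $y'=\xi^j y$ into $f_3[x,y,y']=0$ yields exactly $\lambda_j=0$, because
\[
f_3[x,y,\xi^j y]=\frac{f(x,\xi^j y)-f(x,y)}{(\xi^j-1)y}=\lambda_j.
\]
Hence $\phi|_{X_j}$ is an isomorphism onto $V(\lambda_j)\subseteq(\mathbb C^2,0)$; in particular it is finite, and $(\phi|_{X_j})_*\mathcal O_{X_j}\cong\mathcal O_2/\langle\lambda_j\rangle$, whose zeroth Fitting ideal is $\langle\lambda_j\rangle$. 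This identifies $\mathcal D_j=V(\lambda_j)$. Applying Lemma \ref{lem:0-fitting} with the data $Z,h_j,X_j,X,\phi$ then gives $\mathcal F_0((\phi|_X)_*\mathcal O_X)=\prod_{j=1}^{k-1}\langle\lambda_j\rangle=\langle\lambda\rangle$, whence $\mathcal D=V(\lambda)=\bigcup_{j=1}^{k-1}\mathcal D_j$.

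I expect the main obstacle to lie in verifying the regularity hypotheses needed to invoke Lemma \ref{lem:0-fitting}: that $f_3[x,y,y']$ is a nonzerodivisor in $\mathcal O_3$, so that $Z$ is Cohen--Macaulay of dimension two, and that each $h_j$ is a nonzerodivisor on $Z$, equivalently $\lambda_j\not\equiv 0$. Both follow from the standing assumption that $F_k$ is finite: if $f_3[x,y,y']\equiv 0$ then $f$ would be independent of $y$, and if some $\lambda_j\equiv 0$ then $f(x,y)=f(x,\xi^j y)$, forcing the whole plane $\{y'=\xi^j y\}$ into $\mathcal D^2(F_k)$ and destroying its expected dimension. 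Beyond this, the only delicate point is to confirm that the scheme structure coming from the Fitting ideal agrees with $\langle\lambda_j\rangle$ on each branch, which is what upgrades the evident set-theoretic decomposition into the stated equality of complex spaces.
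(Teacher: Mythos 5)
Your main argument --- factoring $(y'^k-y^k)/(y'-y)=\prod_{j=1}^{k-1}(y'-\xi^jy)$, taking $Z=V(f[x,y,y'])$, identifying $\cO_{X_j}\cong\cO_2/\langle\lambda_j\rangle$ via the projection, and invoking Lemma \ref{lem:0-fitting} to multiply the branch Fitting ideals --- is exactly the paper's proof in the case where ${\mathcal D}^2(F_k)$ has dimension one, and that part is correct. The genuine gap is in your final paragraph, where you claim that the hypotheses of the Lemma (that $f[x,y,y']\not\equiv 0$ and that each $\lambda_j\not\equiv 0$) follow from finiteness of $F_k$. They do not: \emph{every} $k$-folding map-germ in standard form is finite, whatever $f$ is, because $x$ and $y^k$ are components of $F_k$, so $\cO_2$ is generated by $1,y,\dots,y^{k-1}$ over $F_k^*\cO_3$. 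Finiteness therefore carries no information about $f$, and the degenerate situation you want to exclude genuinely occurs: for instance $f(x,y)=y^k$ gives a finite $F_k$ with $\lambda_j\equiv 0$ for every $j$; then each $h_j=y'-\xi^jy$ divides the equation of $Z$, hence is a zerodivisor in $\cO_Z$, and Lemma \ref{lem:0-fitting} cannot be applied. The same happens whenever $f$ is invariant under $y\mapsto \xi^jy$ for some $j$, and if $f$ is independent of $y$ the space $Z$ is all of $(\C^3,0)$, so the dimension hypothesis fails as well.

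Since the theorem is asserted for arbitrary $k$-folding map-germs, this case must be treated rather than dismissed; your argument, as written, would "prove" a contradiction from a situation that is perfectly consistent. The paper handles it directly: if some $\lambda_j\equiv 0$, then $\lambda\equiv 0$, and ${\mathcal D}^2(F_k)$ contains the two-dimensional plane $\{y'=\xi^jy\}$; because $F_k$ (hence the projection $\pi$ restricted to ${\mathcal D}^2(F_k)$) is finite, the image of that plane is a two-dimensional closed analytic subgerm of $(\C^2,0)$, hence equal to $(\C^2,0)$, which forces $\mathcal F_0(\pi_*\cO_{{\mathcal D}^2(F_k)})=0$ and ${\mathcal D}=V(0)=V(\lambda)$. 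Note that finiteness is used there to \emph{prove} the equality in the degenerate case --- the opposite of the role you assigned to it. With this case added, your proof is complete and coincides with the paper's.
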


\begin{proof}
The double point space ${\mathcal D}^2(F_k)$ is the intersection of the zero loci of the divided differences 
\[
\frac{((y')^k-y^k)}{y'-y}\quad {\rm and}\quad \frac{f(x,y)-f(x,y')}{y-y'}.
\]

Since $((y')^k-y^k)/{(y'-y)}=\prod_{j=1}^{k-1}(y'-\xi^j y)$, we conclude that, as a set, the space ${\mathcal D}^2(F_k)$ is the union of the spaces 
\[
{\mathcal D}^2_j=\left\{(x,y,\xi^jy)\in (\mathbb C^{n-1}\times \mathbb C\times\mathbb C,0) \mid \frac{f(x,y)-f(x,\xi^j y)}{(1-\xi^j)y}=0\right\},
\]
for $j=1,\ldots, k-1$. 
Each of the sets ${\mathcal D}^2_j$ projects to $V(\lambda_j)$, which shows that ${\mathcal D}(F_k)=V(\lambda)$ as sets.

To show the equality as complex spaces, observe that the possible dimension of ${\mathcal D}^2(F_k)$ is one or two. 
If ${\mathcal D}^2(F_k)$ has dimension two, then some branch ${\mathcal D}^2_j$ has dimension two. Therefore, the corresponding function 
$\lambda_j$ 
is identically zero, which in turn implies $\lambda =0$. Since $F_k$ is finite by construction, the projection ${\mathcal D}^2(F_k)$ is finite, hence the image of ${\mathcal D}^2_j$ is a germ of a two dimensional analytic closed subset of $(\C^2,0)$, so is equal to  $(\C^2,0)$. This implies that ${\mathcal D}(F_k)= (\C^2,0)=V(0)$. 

Suppose now that ${\mathcal D}^2(F_k)$ has dimension one. This implies that the functions 
$\prod_{j=1}^{k-1}(y'-\xi^j y)$ and $f[x,y,y']$
form a regular sequence. 
Applying Lemma \ref{lem:0-fitting} with $Z=V(f[x,y,y'])$ gives
${\mathcal D}(F_k)=V\left(\prod_{j=1}^{k-1}\mathcal F_0((\pi\vert_{{\mathcal D}^2_j})_*\cO_{{\mathcal D}^2_j}))\right)$,
where the ${\mathcal D}^2_j$, $j=1,\ldots, k-1$, are given the natural complex space structure. 
Each of the morphisms $\pi\vert_{{\mathcal D}^2_j}\colon {\mathcal D}^2_j\to \C^2$ consists of forgetting the third coordinate of the tuple $(x,y,\xi^j y)$, and this implies $\cF_0((\pi\vert_{{\mathcal D}^2_j})_*\cO_{{\mathcal D}^2_j}))=\langle \lambda_j\rangle$.
\end{proof}

Now we introduce some results to we use to check finite $\mathcal A$-determinacy of a 
$k$-folding map-germ $F_k$ and topological triviality in families of such germs. The first of these results was proven in \cite{Marar_Mond} for corank one map-germs, then extended to arbitrary corank in \cite{Marar_Nuno_Penafort}.
\begin{theo}\label{theo:mu(D)FinitDet}
A finite map-germ $F \colon(\mathbb C^2,0)\to (\mathbb C^3,0)$ 
is finitely $\mathcal A$-determined if, and only if, its double point curve ${\mathcal D}$ is reduced.
\end{theo}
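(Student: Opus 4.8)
The plan is to derive the statement from the Mather--Gaffney geometric criterion, which is available because $(n,p)=(2,3)$ lies in Mather's nice dimensions. In this form, a finite germ $F\colon(\mathbb C^2,0)\to(\mathbb C^3,0)$ is finitely $\mathcal A$-determined if and only if it has \emph{isolated instability}: some representative $F\colon U\to\mathbb C^3$ with $F^{-1}(0)\cap U=\{0\}$ is locally stable (as a mono- or multigerm) at every point of $U\setminus\{0\}$. So the first step is to recall the stable local models in these dimensions---immersions, cross-caps, and the transverse multigerms built from them (an ordinary double point of two transverse immersed sheets, an ordinary triple point, and a cross-cap meeting an immersion transversally). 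The whole theorem then reduces to showing that ``locally stable on $U\setminus\{0\}$'' is equivalent to ``$\mathcal D$ reduced''.

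The tool for the translation is the double point space $\mathcal D^2(F)\subseteq(\mathbb C^3,0)$ in coordinates $(x,y,y')$, carrying the involution $\iota(x,y,y')=(x,y',y)$ and the finite projection $\pi(x,y,y')=(x,y)$, with $\mathcal D=V(\mathcal F_0(\pi_*\mathcal O_{\mathcal D^2(F)}))$ its Fitting-ideal image. Since $F$ is finite, $\mathcal D^2(F)$ has dimension at most one; when it is a curve it is a complete intersection, hence Cohen--Macaulay, and off the diagonal $\{y=y'\}$ the projection $\pi\colon\mathcal D^2(F)\to\mathcal D$ is generically one-to-one, so that $\mathcal D$ and $\mathcal D^2(F)$ share their generic scheme structure, the diagonal being exactly the cross-cap (ramification) locus. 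I would then set up the dictionary point by point on $U\setminus\{0\}$: an ordinary double point corresponds to two smooth, transverse branches of $\mathcal D^2(F)$ projecting isomorphically to two smooth reduced branches of $\mathcal D$; a cross-cap corresponds to a diagonal point at which $\mathcal D^2(F)$ is smooth and $\mathcal D$ is reduced (smooth); and a triple point corresponds to three smooth branches meeting, projecting to an ordinary triple point of $\mathcal D$. In every stable model $\mathcal D$ is reduced at the relevant point, and conversely each model is the \emph{only} reduced one-dimensional possibility compatible with the involution.

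For the forward implication I would assume $F$ finitely $\mathcal A$-determined; isolated instability gives a stable model at every $p\in U\setminus\{0\}$, and running the dictionary shows every branch of $\mathcal D$ is a reduced double-point locus with isolated cross-cap and triple points, so the defining function of $\mathcal D$ is square-free and $\mathcal D$ is reduced. For the converse I would assume $\mathcal D$ reduced; using that $\mathcal D^2(F)$ is Cohen--Macaulay and $\pi$ is generically one-to-one, a non-reduced component of $\mathcal D^2(F)$ would persist in the Fitting image, so $\mathcal D^2(F)$ is itself a reduced curve; at each punctured point the reduced one-dimensional geometry together with the involution forces one of the stable models above, giving isolated instability and hence finite determinacy.

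The main obstacle I expect is the precise comparison of scheme structures in the middle paragraph: one must show that the zeroth-Fitting-ideal structure on $\mathcal D$ (the Mond--Pellikaan image, of the type handled in Lemma \ref{lem:0-fitting}) faithfully records reducedness of $\mathcal D^2(F)$ across the ramified diagonal, where $\pi$ fails to be an isomorphism yet both spaces must remain reduced, and that no embedded points are introduced. Controlling this transfer at the cross-cap locus, and ruling out a positive-dimensional singular or instability locus hiding in a non-reduced defining function, are the delicate points; everything else is the identification of stable germs carried out in \cite{Marar_Mond} and extended in \cite{Marar_Nuno_Penafort}.
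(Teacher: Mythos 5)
You should know at the outset that the paper does not prove this statement at all: Theorem \ref{theo:mu(D)FinitDet} is quoted from the literature, proved in \cite{Marar_Mond} for corank one germs and extended to arbitrary corank in \cite{Marar_Nuno_Penafort}, and your Mather--Gaffney strategy is in essence the strategy of those papers (incidentally, the Mather--Gaffney criterion holds in all dimensions; being in the nice dimensions is not what makes it available). Measured against that strategy, however, your outline has two genuine flaws. The first is that your list of stable multigerms is wrong: a cross-cap whose image is met by another immersed sheet is \emph{not} stable --- it is one of the codimension-one multigerm singularities, and in a stable map the (isolated) cross-cap points lie off all other sheets. This model must be excluded from, not built into, the dictionary.

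The second flaw is the serious one: your dictionary is pointwise, ``$\mathcal D$ reduced at the relevant point if and only if the multigerm there is stable'', and the ``only if'' half of that is false, which breaks your converse direction. The $S_1$ monogerm $(x,y)\mapsto(x,y^2,y^3+x^2y)$ has double point curve $V(x^2+y^2)$, a reduced node, yet is unstable; a bigerm of two immersed sheets meeting \emph{tangentially} also has $\mathcal D=V(x^2+y^2)$ (reduced, singular) and is unstable; and your cross-cap-plus-sheet bigerm has reduced $\mathcal D$ as well (two smooth branches tangent along the cross-cap's double line). So ``reduced one-dimensional geometry plus the involution forces one of the stable models'' is simply not true at a fixed punctured point, and unstable configurations slip through your argument. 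The correct route runs through smoothness, not reducedness, away from the origin: $(\mathcal D,0)$ reduced means its defining function $\lambda$ is square-free, so $V(\lambda,\lambda_x,\lambda_y)$ is $0$-dimensional and a small representative has $\mathcal D\setminus\{0\}$ \emph{smooth}; one then proves that a smooth Fitting-structure $\mathcal D$ at a punctured point forces either a cross-cap (singular monogerm) or a transverse bigerm of immersions, ruling out tangencies, cross-caps on other sheets, curves of non-immersive points, and triple points, whence stability off $0$. (Stable triple points, whose local $\mathcal D$ is nodal, cause no conflict: for a germ they are confined to the origin, since a curve of triple points would force three sheets to share a common intersection curve, which is unstable.) Finally, note that your divided-difference model of $\mathcal D^2(F)$ presupposes corank one at the origin, whereas the statement concerns arbitrary finite germs; removing that hypothesis is exactly the content of \cite{Marar_Nuno_Penafort}.
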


The decomposition ${\mathcal D}=\bigcup_{j=1}^{k-1}{\mathcal D}_j$ in Theorem \ref{theo:decom_D(F_k)} 
can be used to compute $\mu({\mathcal D})$, making it easier to apply Theorem \ref{theo:mu(D)FinitDet} 
 (and Theorem \ref{theo:BobPe} below). 
We denote by  ${\mathcal D}_j\cdot {\mathcal D}_{j'}$ the intersection multiplicity of two distinct branches of the double point curve.
 Clearly, ${\mathcal D}_j\cdot {\mathcal D}_{j'}={\mathcal D}_{j'}\cdot {\mathcal D}_{j}$, and  
$$
{\mathcal D}_j\cdot {\mathcal D}_{j'}=\dim_\C\frac{\cO_2}{\langle \lambda_j,\lambda_{j'}\rangle}.
$$

 \begin{prop}\label{prop:Mukfoldingmaps} 
 A $k$-folding map-germ $F_k$ is finitely $\mathcal A$-determined 
 if, and only if, the Milnor numbers $\mu({\mathcal D}_j)$, $j=1,\ldots,k-1$, and the intersection multiplicities ${\mathcal D}_j\cdot {\mathcal D}_{j'}$ of all pairs ${\mathcal D}_j$ and ${\mathcal D}_j'$, with 
$j'\neq j$, are finite. In that case, 
\[
\mu({\mathcal D}(F_k))=\sum_{j=1}^{k-1}\mu({\mathcal D}_j)+2
\sum_{\substack{j,j'=1\\ j<j'}}^{k-1}{\mathcal D}_j\cdot {\mathcal D}_{j'}
-k+2.
\]
\end{prop}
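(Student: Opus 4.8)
The plan is to reduce everything to the plane-curve geometry of the defining equation $\lambda=\prod_{j=1}^{k-1}\lambda_j$ of $\mathcal D$ supplied by Theorem \ref{theo:decom_D(F_k)}. Since $F_k$ is finite, Theorem \ref{theo:mu(D)FinitDet} says that $F_k$ is finitely $\mathcal A$-determined precisely when $\mathcal D=V(\lambda)$ is reduced, and for a germ of a plane curve this holds if and only if $\lambda\in\cO_2$ has no repeated irreducible factor. Thus the whole statement becomes a question about the factorisation of $\lambda$ and the Milnor number of a reduced plane curve germ.

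First I would establish the characterisation of finite determinacy. Writing each $\lambda_j$ as a product of irreducibles, the product $\lambda$ is reduced if and only if (i) each $\lambda_j$ is itself reduced, and (ii) the $\lambda_j$ are pairwise coprime, i.e. $V(\lambda_j)$ and $V(\lambda_{j'})$ share no common component for $j\neq j'$. It then remains to translate (i) and (ii) into the finiteness conditions of the statement. For a plane curve germ, the Milnor number $\mu(\mathcal D_j)=\dim_\C\cO_2/\langle (\lambda_j)_x,(\lambda_j)_y\rangle$ is finite if and only if $\lambda_j$ has an isolated critical point, which for a plane curve is equivalent to $\lambda_j$ being reduced; this is (i). Likewise $\mathcal D_j\cdot\mathcal D_{j'}=\dim_\C\cO_2/\langle\lambda_j,\lambda_{j'}\rangle$ is finite if and only if $V(\lambda_j)$ and $V(\lambda_{j'})$ meet only at the origin, i.e. are coprime; this is (ii). Combining, $F_k$ is finitely $\mathcal A$-determined if and only if all the $\mu(\mathcal D_j)$ and all the $\mathcal D_j\cdot\mathcal D_{j'}$ are finite.

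For the formula I would use the classical additivity of the Milnor number under unions of coprime plane-curve germs: if $f,g\in\cO_2$ have no common factor then
\[
\mu(fg)=\mu(f)+\mu(g)+2\,(f\cdot g)-1,
\]
where $f\cdot g=\dim_\C\cO_2/\langle f,g\rangle$. (This follows from Milnor's relation $\mu=2\delta-r+1$ together with the additivity $\delta(f\cup g)=\delta(f)+\delta(g)+f\cdot g$ and $r(fg)=r(f)+r(g)$ of the delta invariant and the number of branches.) Applying this inductively to $\lambda=\lambda_1\cdots\lambda_{k-1}$, and expanding $(\lambda_1\cdots\lambda_m)\cdot\lambda_{m+1}=\sum_{j\le m}\lambda_j\cdot\lambda_{m+1}$ by additivity of the intersection multiplicity at each step, I obtain
\[
\mu(\lambda)=\sum_{j=1}^{k-1}\mu(\mathcal D_j)+2\sum_{j<j'}\mathcal D_j\cdot\mathcal D_{j'}-(k-2),
\]
which is the claimed value of $\mu(\mathcal D(F_k))$, the constant $-k+2$ arising because the $-1$ contributed at each of the $k-2$ successive multiplications accumulates to $-(k-2)$.

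I expect the only real subtlety, rather than a genuine obstacle, to lie in the two equivalences of the first part: that finiteness of $\mu(\mathcal D_j)$ detects exactly reducedness of the branch $\lambda_j$, and that finiteness of $\mathcal D_j\cdot\mathcal D_{j'}$ detects exactly pairwise coprimality. These are precisely what make the stated finiteness conditions equivalent to the reducedness of $\lambda$, and hence, via Theorem \ref{theo:mu(D)FinitDet}, to finite $\mathcal A$-determinacy. The inductive computation of the formula is then routine bookkeeping, with the combinatorics of the double sum and the constant $-k+2$ being the only points needing care.
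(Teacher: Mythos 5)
Your proof is correct and takes essentially the same route as the paper's: both reduce finite $\mathcal A$-determinacy to reducedness of $\mathcal D$ via Theorem \ref{theo:mu(D)FinitDet}, and both derive the formula from Milnor's relation $\mu=2\delta-r+1$ together with the additivity $\delta(X\cup Y)=\delta(X)+\delta(Y)+X\cdot Y$ and $r(X\cup Y)=r(X)+r(Y)$. The only difference is organizational: the paper computes $\mu(\mathcal D(F_k))$ in a single pass from the $\delta$ and $r$ of the branches, whereas you package the same ingredients into the two-factor formula $\mu(fg)=\mu(f)+\mu(g)+2(f\cdot g)-1$ and induct over the $k-1$ factors.
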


\begin{proof}
By Theorem \ref{theo:mu(D)FinitDet}, $F_k$ is finitely $\mathcal A$-determined if, and only if,  
$\mu({\mathcal D}(F_k))$ is finite, equivalently, ${\mathcal D}(F_k)$ has an isolated singularity. This occurs if, and only if, every branch ${\mathcal D}_j$ has an isolated singularity and no pair of branches 
${\mathcal D}_j$ and ${\mathcal D}_{j'}$, with $j\ne j'$, have a common component. 
Using the formula 
$\mu=2\delta-r+1$ for plane curves (see \cite{Milnor})
and  the property $\delta(X\cup Y)=\delta (X)+\delta(Y)+X\cdot Y$, we get 
\begin{align*}
\mu({\mathcal D}(F_k))	&=2\delta({\mathcal D}(F_k))-r({\mathcal D}(F_k))+1\\
					&=\sum_{j=1}^{k-1}(2\delta({\mathcal D_j})-r(D_j)+1)-k+2+2\sum_{j,j'=1, j<j'}^{k-1}{\mathcal D}_j\cdot {\mathcal D}_{j'}\\
					&=\sum_{j=1}^{k-1}\mu({\mathcal D}_j)+2\sum_{j,j'=1, j<j'}^{k-1}{\mathcal D}_j\cdot {\mathcal D}_{j'}-k+2.
\end{align*}
 \end{proof}

\begin{rems}
{\rm 
1. Suppose that ${\mathcal D}_j$ is a germ of a regular curve  
parametrised by a regular map-germ  $\alpha\colon (\C,0)\to(\C^2,0)$. 
Then, ${\mathcal D}_j\cdot {\mathcal D}_{j'}=\mathrm{ord}(h_{j'}\circ\alpha),$
which is the degree of the first non zero term in the Taylor expansion
 of $\lambda_{j'}(\alpha(t))$.

2. If both ${\mathcal D}_j$ and ${\mathcal D}_{j'}$ are regular curves, we refer to ${\mathcal D}_j\cdot {\mathcal D}_{j'}$ as the \emph{order of contact between ${\mathcal D}_j$ and ${\mathcal D}_{j'}$}. We have ${\mathcal D}_j\cdot {\mathcal D}_{j'}=1$ if, and only if, the two curves intersect transversally.
Suppose they are tangential and parametrised, respectively, by $t\mapsto(t,\gamma_j(t))$ 
and $t\mapsto(t,\gamma_{j'}(t))$. Then
${\mathcal D}_j\cdot {\mathcal D}_{j'}=\mathrm{ord}(\gamma_j-\gamma_{j'}).$

3. Let $F_k$ be a finitely $\mathcal A$-determined $k$-folding map-germ.  
Then any pair of branches ${\mathcal D}_j$ and ${\mathcal D}_j'$, with $j\ne j'$, 
cannot have any common irreducible component, otherwise ${\mathcal D}(F_k)$ would fail to be reduced. 
Hence,  we have
$r({\mathcal D})=\sum_{j=1}^{k-1}r({\mathcal D}_j).$
}
\end{rems}

The defining functions $\lambda_j$ of the branches ${\mathcal D}_j$ of the 
double point curve play a major role in our study of finite $\mathcal A$-determinacy and topological equivalence of $k$-folding map-germs. 
We take $F_k(x,y)=(x,y^k,f(x,y))$ and write,  for any given integer $p\ge 1$, 
\begin{equation}\label{Taylorf}
j^pf(x,y)=\sum_{q=1}^p\sum_{s=0}^qa_{qs}x^{q-s}y^s.
\end{equation}
Then,
\begin{equation}\label{eq:Taylorlambdaj}
j^{p-1}\lambda_j=\sum_{q=1}^{p}\sum_{s=1}^{q}\vartheta_{sj} a_{qs}x^{q-s}y^{s-1},
\end{equation}
with
$$
\vartheta_{sj}=\frac{1-\xi^{sj}}{1-\xi^j}=1+\xi^j+\dots+\xi^{(s-1)j}.
$$

The constants $\vartheta_{sj}$ play a significant role in determining the singularity type of the germs $\lambda_j$ and 
in computing ${\mathcal D}_j\cdot {\mathcal D}_{j'}$. 
The following properties are needed in \S \ref{sec:Classification}.

\begin{lem}\label{lemVarthetaZeroOrOne}
The numbers $\vartheta_{sj}$ satisfy the following properties:
\begin{itemize}
\item[{\rm (1)}] 
$\vartheta_{sj}=0$ if, and only if, $k\mid sj$. 
\item[{\rm (2)}]
$\vartheta_{sj}=1$ if, and only if, $k\mid (s-1)j$.
\item[{\rm (3)}]
If $\vartheta_{sj}=\vartheta_{sj'}$, then $\vartheta_{sj}$ is either $0$ or $1$.
\end{itemize}

\label{lem:vartheta_js}
\end{lem}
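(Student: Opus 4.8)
The plan is to treat the three parts separately: (1) and (2) are immediate from the closed formula $\vartheta_{sj}=(1-\xi^{sj})/(1-\xi^j)$, while (3) needs a short geometric argument and is where the real content lies.

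For (1) and (2) I would argue directly. Since $1\le j\le k-1$ we have $\xi^j\neq 1$, so the denominator $1-\xi^j$ is nonzero. Hence $\vartheta_{sj}=0$ if and only if the numerator $1-\xi^{sj}$ vanishes, i.e. $\xi^{sj}=1$, which happens exactly when $k\mid sj$; this is (1). Likewise $\vartheta_{sj}=1$ if and only if $1-\xi^{sj}=1-\xi^j$, i.e. $\xi^{(s-1)j}=1$, which holds exactly when $k\mid(s-1)j$; this is (2).

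The substance is (3), where I take $j\neq j'$ (the two indices label distinct branches, so this is the intended reading). Write $a=\xi^j$ and $b=\xi^{j'}$, two distinct $k$-th roots of unity, both different from $1$, and set $c=\vartheta_{sj}=\vartheta_{sj'}$. From $c(1-a)=1-a^s$ I read off $a^s=ca+(1-c)$, and similarly $b^s=cb+(1-c)$. Introducing the complex affine map $L(w)=cw+(1-c)$, these identities say $L(a)=a^s$ and $L(b)=b^s$, while also $L(1)=1$. The key observation is that the three source points $1,a,b$ and the three image points $1,a^s,b^s$ all lie on the unit circle, because they are powers of roots of unity.

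I would then suppose $c\neq 0$ and deduce $c=1$. When $c\neq 0$ the map $L$ is a genuine similarity of the plane, so it carries the unit circle onto the circle $C'$ with center $L(0)=1-c$ and radius $|c|$; thus $1=L(1)$, $a^s=L(a)$ and $b^s=L(b)$ all lie on $C'$. These three images are pairwise distinct: $a^s=1$ would force $c(a-1)=0$, hence $a=1$, a contradiction, so $a^s\neq 1$ and symmetrically $b^s\neq 1$; and $a^s=b^s$ would give $c(a-b)=0$, hence $a=b$, again impossible. Three distinct points on a circle are never collinear and determine that circle uniquely, so the circle through $1,a^s,b^s$ is simultaneously $C'$ and the unit circle. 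Equating them forces the center $1-c$ to be $0$, i.e. $c=1$. Therefore $c\in\{0,1\}$, which proves (3). The main obstacle is precisely this reformulation: turning the algebraic identity $\vartheta_{sj}=\vartheta_{sj'}$ into the statement that one affine map sends three points of the unit circle back into the unit circle, after which the rigidity of circle-preserving similarities finishes the argument; the computations themselves are routine.
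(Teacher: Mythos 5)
Your proof is correct. Parts (1) and (2) coincide with the paper's argument: the paper phrases them via the identity $\vartheta_{mj}=\vartheta_{(m+n)j}$ if, and only if, $k\mid nj$, which is exactly your manipulation of the closed formula. For part (3), however, you take a genuinely different route. The paper regards $\vartheta_{sj}=\gamma_{s-1}(\xi^{j})$ as a point on the curve $\gamma_n(z)=1+z+\dots+z^n$, $z\in S^1$, and determines the self-intersections of that curve analytically: writing $z=e^{i\theta}$ and $\gamma_n=x+iy$, it derives the relation $(1-\cos\theta)(x^2+y^2-x)-y\sin\theta=0$, then splits into the case $y=0$, which forces $x\in\{0,1\}$ (or $\theta=0$), and the case $y\neq 0$, where $\cot(\theta/2)=(x^2+y^2-x)/y$ shows each such point has a unique preimage; hence the only self-intersection values are $0$ and $1$. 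You instead encode the hypothesis $\vartheta_{sj}=\vartheta_{sj'}=c$ with $j\neq j'$ in the affine map $L(w)=cw+(1-c)$, which fixes $1$ and sends the distinct unit-circle points $\xi^{j},\xi^{j'}$ to the unit-circle points $\xi^{sj},\xi^{sj'}$; when $c\neq 0$ this is a similarity, so $L(S^1)$ is the circle of center $1-c$ and radius $|c|$, and your check that the three images $1,\xi^{sj},\xi^{sj'}$ are pairwise distinct (the one point requiring care, which you handle correctly) shows $L(S^1)$ and $S^1$ share three distinct points, hence coincide, forcing $1-c=0$. Your version is shorter, avoids all trigonometry and the case analysis, and treats every $s$ uniformly (the paper states its self-intersection claim for $n\ge 3$); what the paper's computation buys in exchange is finer information, namely how many times $\gamma_n$ passes through $0$ and through $1$ (cf.\ Figure \ref{fig:PlotGamma_n}), which is not needed for the lemma itself. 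Both proofs read the statement with the implicit assumption $j\neq j'$, and your reading is the intended one.
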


\begin{proof}
We observe that $\vartheta_{0j}=0$, $\vartheta_{1j}=1$ and, for any integers $m,n$, we have
$\vartheta_{mj}=\vartheta_{(m+n)j}$ if, and only if, $k\mid nj$. 
For (1) we take $m=0$ and $n=s$, and for   (2) we take $m=1$ and $n=s-1$.

For (3), 
we observe that the constants $\vartheta_{sj}$ lie in the images of the curves 
$\gamma_n: S^1\subset \mathbb C\to \mathbb C$ given by 
$\gamma_n(z)={(1-z^{n+1})}/{(1-z)}=1+z+\ldots+z^n.$
We show that the self intersection points of the curves $\gamma_n$ are $0$ and $1$ (for $n\ge 3$).

Write $z=e^{i\theta}$, with $\theta\in [0,2\pi)$. Then $\gamma_n(\theta)=x+iy$, with $(x,y)\in\mathbb R^2$, gives
$
1-e^{i(n+1)\theta}=(x+iy)(1-e^{i\theta}).
$
Therefore,
$$
\begin{array}{rcl}
\cos((n+1)\theta)&=&1-x+x\cos(\theta)-y\sin(\theta),\\
\sin((n+1)\theta)&=&-y+y\cos(\theta)+x\sin(\theta).
\end{array}
$$
Now the identity $\cos((n+1)\theta)^2+\sin((n+1)\theta)^2=1$ gives
\begin{equation}
(1-\cos(\theta))(x^2+y^2-x)-y\sin(\theta)=0.
\label{eq:mobius}
\end{equation}

Suppose that $y=0$. Then equation \eqref{eq:mobius} becomes $(1-\cos(\theta))x(x-1)=0$, so $x=0$ or $1$ or $\theta=0$.

When $x=0$, we have $1-e^{i\theta}\ne 0$, so $ 1-e^{i(n+1)\theta}=0$. That gives $\theta=
\frac{2\pi j}{n+1}, j=1,\ldots, n$. 
Therefore, $\gamma_n$ passes $n$-times through the origin.

When $x=1$, we get $ e^{i n\theta}=1$, so $\theta=\frac{2\pi j}{n}, j=1,\ldots, n-1$. 
Therefore, $\gamma_n$ passes $(n-1)$-times through the point $1$.

When $\theta=0$, we have $ \gamma_n(0)=n+1$ and the curve has no self-intersections at that point.
See Figure \ref{fig:PlotGamma_n} for the cases $n=5,6,7.$

Suppose now that $y\ne 0$. Then equation \eqref{eq:mobius} can be written as
$
\cot({\theta}/{2})={(x^2+y^2-x)}/{y}.
$ 
This shows that, for any $(x,y)\in \mathbb R^2$ with $y\ne 0$, there is at most one $\theta\in [0,2\pi)$ satisfying
 $\gamma_n(\theta)=x+iy$. Therefore, the only self-intersection points of $\gamma_n$ are $0$ and $1$.
\end{proof}

\begin{figure}[htp]
\begin{center}
\includegraphics[scale=0.27]{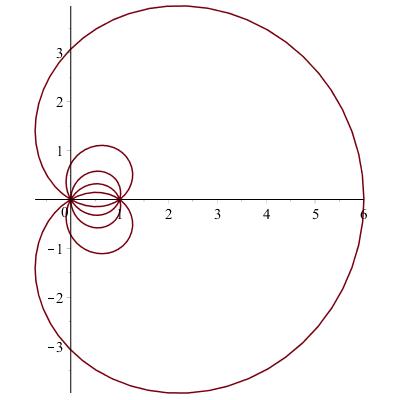}
\includegraphics[scale=0.27]{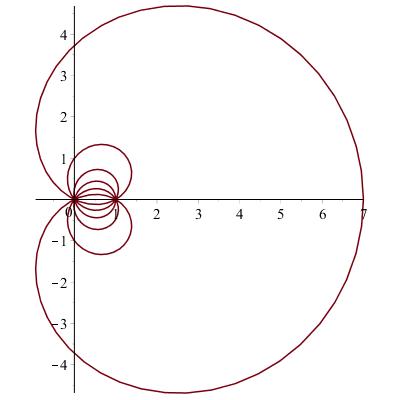}
\includegraphics[scale=0.27]{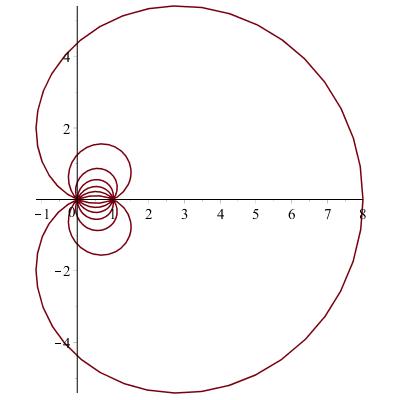}
\end{center}
\caption{Maple plots of the curves $\gamma_n$ for $n=5,6,7$ (from left to right).}
\label{fig:PlotGamma_n}
\end{figure}

\begin{rem}
{\rm 
As $k$ cannot divide $j$, the condition $k\mid sj$ in Lemma \ref{lem:vartheta_js} (1)
can also be written as
$d=\gcd(k,s)\neq 1\text{ and }j\in\{\frac{k}{d},\dots,\frac{(d-1)k}{d}\}.$
Of course, the same applies to the condition $k\mid (s-1)j$ in Lemma \ref{lem:vartheta_js} (2). }
\end{rem}


\subsection{Cross-caps and triple points}\label{subs:C and T}

The number of cross-caps and triple points are invariants of finitely $\mathcal A$-determined map-germs $F\colon (\C^2,0)\to (\C^3,0)$ that can be described using stable deformations. 
A stable mapping $U\to (\C^3,0)$ with $U$ an open neighbourhood of the origin in  $\mathbb C^2$ 
exhibits only regular points, transverse double points along curves, cross-caps and isolated transverse triple points. Every  stable deformation $F_t$ of $F$ 
exhibits the same number $C$ of cross-caps and $T$ of triple points \cite{mondRemarks}. For a corank one map-germ,
 these are given by the formulae  
\[
C=\dim_\C\frac{\cO_2}{JF},\quad T=\frac{1}{6}\dim_\C\frac{\cO_4}{I^3(F)},
\]
where $JF$ is the ideal generated by the $2\times 2$ minors of the differential matrix of $F$  \cite{mondRemarks} (the formula for $C$ holds without the corank one assumption).

For $ j,j'\in \{1,\dots,k-1\}$, with $ j\neq j'$, we set 
$$\lambda_{j,j' }=\frac{\lambda_j-\lambda_{j'}}{y}$$ 
and define 
$$
T_{j,j'}=\dim_\C\frac{\cO_2}{\langle \lambda_j,\lambda_{j,j'}\rangle}.
$$
Observe that $T_{j,j'}=T_{j',j}$ for all $j\neq j'$.

\begin{prop} The number of cross-caps and of triple points of a finitely $\mathcal A$-determined $k$-folding map-germ $F_k$ are  given by
\[
C=\dim_{\C}\dfrac{\cO_{2}}{\langle y^{k-1},\frac{\partial{f}}{\partial y}(x,y)\rangle}
\quad \mbox{and} \quad 
T=\frac{1}{3}\sum_{1\leq j<j'\leq k-1}T_{j,j'}.
\]
\end{prop}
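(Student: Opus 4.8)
The plan is to compute the two invariants separately: $C$ follows by an immediate Jacobian computation, while $T$ requires decomposing the triple point space $\mathcal D^3(F_k)$ and reducing its colength to the branchwise quantities $T_{j,j'}$. For $C$, I would use the formula $C=\dim_\C \cO_2/JF_k$ and compute $JF_k$ directly. The $2\times 2$ minors of the differential of $F_k(x,y)=(x,y^k,f(x,y))$ are $ky^{k-1}$, $\frac{\partial f}{\partial y}$ and $-ky^{k-1}\frac{\partial f}{\partial x}$. The third is a multiple of the first and $k$ is a unit, so $JF_k=\langle y^{k-1},\frac{\partial f}{\partial y}\rangle$, which gives the stated formula for $C$.

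For $T$, recall $T=\frac16\dim_\C \cO_4/I^3(F_k)$, with $I^3(F_k)$ generated in $\cO_4=\C\{x,y,y',y''\}$ by the divided differences of $y^k$ and of $f$. I would first record that $g:=f_2[x,y,y']=\prod_{m=1}^{k-1}(y'-\xi^m y)$ is monic of degree $k-1$ in $y'$ and that $g':=f_2[x,y,y',y'']$ is monic of degree $k-2$ in $y''$. Hence $g,g'$ is a regular sequence, $X:=V(g,g')$ is a two-dimensional complete intersection, and $\cO_X=\cO_4/\langle g,g'\rangle$ is free of rank $(k-1)(k-2)$ over $\cO_2=\C\{x,y\}$, in particular Cohen--Macaulay. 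Set-theoretically $X=\bigcup_{j\neq j'}P_{j,j'}$, the union over ordered pairs $j\neq j'$ in $\{1,\dots,k-1\}$ of the planes $P_{j,j'}=\{y'=\xi^j y,\ y''=\xi^{j'}y\}$, each projecting isomorphically onto $\cO_2$ and so of degree one over the base. Since there are exactly $(k-1)(k-2)$ such planes, this degree count forces every component to occur with multiplicity one; combined with the absence of embedded components (Cohen--Macaulayness), Serre's criterion shows $X$ is reduced with the $P_{j,j'}$ as its components. On $P_{j,j'}$ the remaining two generators of $I^3(F_k)$ restrict to $\lambda_j$ and $(\xi^j-\xi^{j'})^{-1}\lambda_{j,j'}$, because $f_3[x,y,y']=\lambda_j$ when $y'=\xi^j y$ and $f_3[x,y,y',y'']=(\lambda_{j'}-\lambda_j)/((\xi^{j'}-\xi^j)y)$ when in addition $y''=\xi^{j'}y$. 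Thus the local contribution of $P_{j,j'}$ is $\dim_\C\cO_2/\langle\lambda_j,\lambda_{j,j'}\rangle=T_{j,j'}$.

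The key step, and the main obstacle, is the additivity $\dim_\C \cO_4/I^3(F_k)=\sum_{j\neq j'}T_{j,j'}$. Writing $\cO_4/I^3(F_k)=\cO_X/\langle f_3[x,y,y'],f_3[x,y,y',y'']\rangle$ and noting that each $T_{j,j'}$ is finite for a finitely $\mathcal A$-determined $F_k$ (otherwise $\mathcal D_j$ and $\mathcal D_{j'}$ would share a component, contradicting the reducedness of $\mathcal D$ in Theorem~\ref{theo:decom_D(F_k)}), the two functions $f_3[x,y,y']$ and $f_3[x,y,y',y'']$ cut every component $P_{j,j'}$ down to dimension zero and hence form a regular sequence on the Cohen--Macaulay space $X$. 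For a regular sequence on a reduced Cohen--Macaulay space, the colength of the quotient is additive over the multiplicity-one irreducible components; this is the length-theoretic counterpart of the multiplicativity of $0$-th Fitting ideals exploited in Lemma~\ref{lem:0-fitting} and Theorem~\ref{theo:decom_D(F_k)}. The delicate point is precisely that the planes $P_{j,j'}$ all meet along the common $x$-axis, so naive inclusion--exclusion would produce correction terms there; the Cohen--Macaulay hypothesis is what guarantees these cancel, leaving $\dim_\C \cO_4/I^3(F_k)=\sum_{j\neq j'}T_{j,j'}$. Finally, since $T_{j,j'}=T_{j',j}$, the sum over ordered pairs equals $2\sum_{1\le j<j'\le k-1}T_{j,j'}$, and dividing by six yields $T=\frac13\sum_{1\le j<j'\le k-1}T_{j,j'}$.
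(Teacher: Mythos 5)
Your proof is correct, and up to the decisive step it follows the same skeleton as the paper's: the Jacobian computation for $C$, the set-theoretic decomposition of the triple-point space into the $(k-1)(k-2)$ planes $P_{j,j'}=\{y'=\xi^j y,\ y''=\xi^{j'}y\}$, reducedness of the ambient complete intersection via generic reducedness (your explicit degree count of $\cO_X$ over $\cO_2$ is a clean way to get this), the identification of the restricted generators with $\lambda_j$ and a unit times $\lambda_{j,j'}$, and the final symmetry reduction from $\tfrac{1}{6}\sum_{j\neq j'}$ to $\tfrac{1}{3}\sum_{j<j'}$. The genuine difference is how you prove the additivity $\dim_\C\cO_4/I^3(F_k)=\sum_{j\neq j'}T_{j,j'}$, which is indeed the crux because all the planes meet along the $x$-axis. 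The paper argues dynamically: it chooses an unfolding $\tilde F_k(x,y,t)=(x,y^k,f_t(x,y),t)$ whose triple-point branches are pairwise disjoint for a nonzero parameter (checking that intersections can only occur on $\{y=0\}$ and are avoided by a generic deformation of $f$), and then invokes conservation of the total colength under deformation, where the count becomes trivially additive. You argue statically on the germ itself, via multiplicity theory on the Cohen--Macaulay space $X=V(g,g')$: colength of a parameter ideal equals Samuel multiplicity on a CM ring, multiplicity is additive over the reduced equidimensional components by the associativity formula, and on each smooth plane colength equals multiplicity again, giving $T_{j,j'}$. That chain is valid and is exactly what your phrase ``additive over the multiplicity-one components'' amounts to, but as written it is a named principle rather than a proof; to make it airtight you should cite those two standard facts explicitly, since naive length additivity (inclusion--exclusion) fails precisely because of the common axis, as you yourself note. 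What each route buys: yours needs no construction of a deformation and no appeal to conservation of multiplicity, so it is self-contained over standard commutative algebra; the paper's avoids quoting multiplicity theory at the cost of exhibiting and controlling a good unfolding. Two small slips worth fixing: reducedness of $\mathcal D(F_k)$ for a finitely determined germ is Theorem \ref{theo:mu(D)FinitDet}, not Theorem \ref{theo:decom_D(F_k)}; and your finiteness argument for $T_{j,j'}$ implicitly uses the identity $\lambda_{j'}=\lambda_j-y\,\lambda_{j,j'}$ to pass from a common component of $V(\lambda_j)$ and $V(\lambda_{j,j'})$ to a common component of $\mathcal D_j$ and $\mathcal D_{j'}$, and that one line should be stated.
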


\begin{proof}
The formula for $C$ is trivial.  
For $T$, we claim that 
\[
J=\langle \frac{y'^k-y^k}{y'-y},\frac{\frac{y''^k-y^k}{y''-y}-\frac{y'^k-y^k}{y'-y}}{y''-y}\rangle\subseteq I^3(F_k)
\]
 is a radical ideal of $\cO_{4}$ whose zero locus is
\[
Z=\{(x,y,y',y'')\mid y'=\xi^j y, y''=\xi^{j'}  \text{for some }j,j'\in \{1,\dots,k-1\} \text{ with } j\neq j'\}.
\]

Clearly, $Z$ is contained in the zero locus $V(J)$. 
The converse inclusion follows by considering the degree of the generators of $J$ and by the fact that $V(J)$ is generically reduced. 
Since $V(J)$ is also a complete intersection, it is reduced. Thus $J$ is a radical ideal.

The irreducible decomposition of $Z$ consists of the branches
\[
Z_{j,j'}=\{(x,y,\xi^j y,\xi^{j'}y)\mid x,y\in \C\}.
\]
Thus, we may decompose ${\mathcal D}^3(F_k)$, as a set, as the union of the (not necessarily irreducible) branches 
\[
{\mathcal D}^3_{j,j'}(F_k)=V(y'-\xi^jy,y''-\xi^{j'}y,f[x,y,y'],f[x,y,y',y'']).
\]

Eliminating $y'$ and $y''$, gives ${\mathcal D}^3_{j,j'}(F_k)=\langle \lambda_j,\lambda_{j,j'}\rangle$. Therefore, we only need to 
show that
$\dim_\C\frac{\cO_4}{I^3(F_k)}=\sum_{j\neq j'}\dim_\C\frac{\cO_2}{\langle\lambda_{j},\lambda_{j,j'}\rangle}$.
It is clear that the same decomposition of ${\mathcal D}^3(F_k)$ applies to an unfolding 
$\tilde F_k(x,y,t)=(x,y^k,f_t(x,y),t)$ of $F_k$, and fixing a representative and a nonzero parameter $\delta$, 
the same holds for $(\tilde F_k)_{\delta}$. This gives a decomposition of ${\mathcal D}^3(({\tilde F_k})_{\delta})$ into the branches 
${\mathcal D}^3_{j,j'}(({\tilde F_k})_{\delta})$.

It is possible to choose an unfolding where the branches  ${\mathcal D}^3_{j,j'}(({\tilde F_k})_{\delta})$ are pairwise disjoint. 
Indeed, it follows from their defining equations that two spaces ${\mathcal D}^3_{j,j'}(({\tilde F_k})_{\delta})$ and ${\mathcal D}^3_{s,s'}(({\tilde F_k})_{\delta})$ 
can only intersect on $\{y=0\}$, so it is enough to find an unfolding where ${\mathcal D}^3_{j,j'}(({\tilde F_k})_{\delta})\cap \{y=0\}$ is empty. 
Again, it follows from the defining equations that  a point $(x,0,0,0)\in {\mathcal D}^3_{j,j'}(({\tilde F_k})_{\delta})$ if  
$\frac{\partial f_{\delta}}{\partial x}(x,0)=\frac{\partial f^2_{\delta}}{\partial x^2}(x,0)=0$, 
a condition that can be avoided by choosing a suitable deformation $f_t$ of $f$. 

We take now an unfolding as above. Since $Z$ is reduced, ${\mathcal D}^3(({\tilde F_k})_{\delta})$  is isomorphic to the union of the ${\mathcal D}^3_{j,j'}(({\tilde F_k})_{\delta})$ as complex spaces. The equality we need to show follows now from the constancy of the numbers involved under continuous deformations. 
Defining $\lambda_{j}^{\delta}$ and $\lambda_{j,j'}^{\delta}$ in the obvious way, we obtain
\begin{align*}
\dim_\C\frac{\cO_4}{I^3(F_k)}&=\sum_{(x,y,y',y'')}\dim_\C\frac{\cO_4}{I^3(({\tilde F_k})_{\delta})_{(x,y,y',y'')}}\\
					&=\sum_{(x,y)}\sum_{j\neq j'}\dim_\C\frac{\cO_2}{\langle\lambda_{j}^{\delta},\lambda_{j,j'}^{\delta}\rangle}
					=\sum_{j\neq j'}\dim_\C\frac{\cO_2}{\langle\lambda_{j},\lambda_{j,j'}\rangle}.
\end{align*}

Since $T_{j,j'}=T_{j',j}$, we add only the numbers $T_{j,j'}$, with $j<j'$, and replace $1/6$ by  $1/3$.
  \end{proof}

\subsection{Topological triviality}
 We say that two subspaces $S$ and $S'$ of a topological space $X$ have the \emph{same topological type} 
 if there is a homeomorphism $X\to X$ restricting to an homeomorphism $S\to S'$. 
 Milnor showed that two isolated hypersurface singularities with same topological type have the same Milnor number.
In the case of our invariants, a similar result can be obtained using the results in \cite{NemethiPinter} and \cite{Siersma} 
(see \cite{Bobadilla_Sampaio_Penafort} for details).

\begin{prop}\label{propTopInvs}Let $F,G:(\mathbb C^2,0)\to (\mathbb C^3,0)$ be finitely $\mathcal A$-determined map-germs. If $F$ and $G$ are topologically equivalent, then
$\mu({\mathcal D}(F))=\mu({\mathcal D}(G))$,
$r({\mathcal D}(F))=r({\mathcal D}(G))$,
$C(F)=C(G)$ and 
$T(F)=T(G)$.
\end{prop}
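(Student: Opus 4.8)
The plan is to show that each of the four invariants $\mu(\mathcal D)$, $r(\mathcal D)$, $C$ and $T$ is a \emph{topological invariant} of the map-germ, that is, it depends only on the topological type of $F$ as a map-germ $(\mathbb C^2,0)\to(\mathbb C^3,0)$. The key observation is that all four quantities admit a description purely in terms of the topology of the image $F(U)$, its double point locus, and their associated Milnor fibrations, so that a pair of homeomorphisms $(h_1,h_2)$ of source and target conjugating $F$ to $G$ will identify the relevant topological data.

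First I would treat the double-point invariants $\mu(\mathcal D)$ and $r(\mathcal D)$. The source double point curve $\mathcal D(F)\subset(\mathbb C^2,0)$ is recovered topologically from $F$: a point lies on $\mathcal D(F)$ exactly when $F$ is non-injective or non-immersive there, and this condition is preserved under the homeomorphism $h_1$ of the source, since $h_2\circ F\circ h_1^{-1}=G$ forces $h_1$ to carry the non-injectivity/singular locus of $F$ onto that of $G$. Thus $h_1$ restricts to a homeomorphism of germs $(\mathcal D(F),0)\to(\mathcal D(G),0)$, so $\mathcal D(F)$ and $\mathcal D(G)$ have the same topological type as plane curve germs. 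For reduced plane curves the number of branches $r$ is visibly a topological invariant, and by Milnor's theorem an isolated plane curve singularity has its Milnor number determined by its topological type; hence $r(\mathcal D(F))=r(\mathcal D(G))$ and $\mu(\mathcal D(F))=\mu(\mathcal D(G))$. This is exactly the point where the results cited from \cite{NemethiPinter} and \cite{Siersma} (via \cite{Bobadilla_Sampaio_Penafort}) would be invoked to make the ambient statement rigorous.

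Next I would handle the number of cross-caps $C$ and triple points $T$. Here the strategy is to read these off from a stable perturbation and argue that the corresponding local topological features are preserved. Cross-caps and triple points are the isolated non-immersive and triple-self-intersection phenomena of a stable map, and each contributes a characteristic local topological type to the image; since a topological equivalence $(h_1,h_2)$ carries stable perturbations of $F$ to topologically equivalent stable perturbations of $G$ (after suitable small deformation), the homeomorphism $h_2$ of the target must match the triple points of the perturbed image and the cross-cap points, so the counts $C$ and $T$ agree. Alternatively, and more cleanly, one invokes that $C$ and $T$ are expressible through linking numbers and Euler-characteristic data of the image link and its singular strata, quantities manifestly invariant under the ambient homeomorphism $h_2$; this is again the content of the results in \cite{Bobadilla_Sampaio_Penafort}.

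The main obstacle I anticipate is not the branch count $r$, which is elementary, but the passage from the \emph{analytic} invariants $\mu(\mathcal D)$, $C$, $T$ to genuinely \emph{topological} statements: one must know that these numbers are determined by the embedded topological type of the image (or of $\mathcal D$), rather than merely by the analytic $\mathcal A$-class. This is precisely where Milnor's theorem for $\mu$ of plane curves and the more delicate results of \cite{NemethiPinter,Siersma} enter, and in a full write-up the bulk of the work is in carefully citing and applying these, checking that a topological equivalence of map-germs induces the homeomorphisms of the image, double point space, and their Milnor fibres needed to conclude. Given the excerpt's explicit pointer to \cite{Bobadilla_Sampaio_Penafort} for details, I would state the proposition as a direct consequence of those references, reducing the argument to the observations above and a single sentence attributing the topological invariance of $\mu(\mathcal D)$, $C$ and $T$ to that source.
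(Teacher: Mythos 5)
The paper gives no proof of this proposition: it is stated as a consequence of the results of \cite{NemethiPinter} and \cite{Siersma}, with \cite{Bobadilla_Sampaio_Penafort} cited for the details. Your bottom line --- stating the proposition as a direct consequence of \cite{Bobadilla_Sampaio_Penafort} --- is therefore exactly the paper's approach, and what you add on top is a partial direct argument. The part concerning $\mu(\mathcal D)$ and $r(\mathcal D)$ is sound and is the standard argument: since $G=h_2\circ F\circ h_1^{-1}$, a point $x$ is non-injective for $F$ if and only if $h_1(x)$ is non-injective for $G$; moreover, for a finitely $\mathcal A$-determined germ the non-immersive locus of a good representative is at most the origin (off the origin the germ is stable, and stable singular points of maps from surfaces to $3$-space are isolated cross-caps, so they cannot fill a curve), whence $\mathcal D(F)$ is set-theoretically the closure of the non-injectivity locus. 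Hence $h_1$ restricts to a homeomorphism of germs $(\mathbb C^2,\mathcal D(F),0)\to(\mathbb C^2,\mathcal D(G),0)$, and reducedness of $\mathcal D$ (Theorem \ref{theo:mu(D)FinitDet}) together with the topological invariance of $\mu$ and $r$ for plane curve germs gives the first two equalities. One caution: being non-immersive is \emph{not} a condition preserved by homeomorphisms, so your assertion that $h_1$ carries the singular locus of $F$ onto that of $G$ needs the reduction above (both loci equal $\{0\}$); you should phrase the recovery of $\mathcal D$ purely through non-injectivity.

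Your first proposed mechanism for $C$ and $T$, however, would fail: a topological equivalence does not carry stable perturbations of $F$ to stable perturbations of $G$. The conjugate $h_2\circ F_t\circ h_1^{-1}$ of a holomorphic stable perturbation is merely continuous, not holomorphic, and no ``suitable small deformation'' repairs this; comparing the stable perturbations of two germs that are only topologically equivalent is precisely the difficult content of \cite{Bobadilla_Sampaio_Penafort}, where the invariance of $C$ and $T$ is extracted from the topology of the image and of its Milnor fibre via \cite{Siersma} and \cite{NemethiPinter}. Since you flag this weakness yourself and in the end rest the case for $C$ and $T$ on that reference, the proposal as a whole is acceptable; just drop the stable-perturbation argument rather than presenting it as an alternative proof.
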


The Milnor number of the double point curve is enough to determine the topological triviality of families of finitely $\mathcal A$-determined map-germs.

\begin{prop}[Corollary 40 \cite{Bobadilla_Pe}]\label{theo:BobPe}
A family of finitely $\mathcal A$-determined map-germs 
$G_t\colon(\mathbb C^2,0)\to(\mathbb C^3,0)$ 
is topologically trivial if, and only if, $\mu({\mathcal D}(G_t))$ is constant along the parameter $t$.
\end{prop}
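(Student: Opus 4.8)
The plan is to treat the two implications separately, the forward one being routine and the reverse one being the substance of the statement.

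For necessity, suppose the family $G_t$ is topologically trivial. Then in particular every member $G_t$ is topologically equivalent to $G_0$, and Proposition \ref{propTopInvs} immediately yields $\mu({\mathcal D}(G_t))=\mu({\mathcal D}(G_0))$ for all $t$, so $\mu({\mathcal D}(G_t))$ is constant. This direction requires nothing beyond what is already established.

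For sufficiency, I would run a L\^e--Ramanujam-type argument adapted to the double point curve. First I would organise the family into an unfolding $G(x,y,t)=(G_t(x,y),t)\colon(\mathbb C^2\times\mathbb C,0)\to(\mathbb C^3\times\mathbb C,0)$ and examine the associated family of double point spaces. Since each $G_t$ is finitely $\mathcal A$-determined, Theorem \ref{theo:mu(D)FinitDet} guarantees that each ${\mathcal D}(G_t)$ is a reduced plane curve with an isolated singularity, so the ${\mathcal D}(G_t)$ constitute a one-parameter family of reduced plane curve germs with finite, constant Milnor number. The first key step is to pass from $\mu$-constancy to equisingularity of this family: for reduced plane curves, a constant Milnor number forces the characteristic Puiseux data, and hence the embedded topological type, to remain constant, so the family ${\mathcal D}(G_t)\subset(\mathbb C^2,0)$ is topologically trivial as an embedded curve.

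The crux is to upgrade topological triviality of the double point curve to topological triviality of the maps themselves. Here I would exploit the fact that a finitely $\mathcal A$-determined corank-one germ $(\mathbb C^2,0)\to(\mathbb C^3,0)$ is the normalisation of its image, so that the topological type of $G_t$ is encoded by the embedded topology of the image surface $X_t=G_t(\mathbb C^2)$ together with its singular (double point) curve $G_t({\mathcal D}(G_t))$. The aim is then to produce one-parameter families of homeomorphisms of source and target that are compatible with the maps, by integrating controlled stratified vector fields in the Thom--Mather sense. Constancy of $\mu({\mathcal D})$ is what keeps the relevant image data locally constant: combined with Proposition \ref{propTopInvs} and the results of \cite{NemethiPinter,Siersma}, it pins down the invariants $C$, $T$, $r({\mathcal D})$ and the homotopy type of the Milnor fibre of $X_t$, so that no extra vanishing cycles appear and the link of $X_t$ retains a constant topological type. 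An $h$-cobordism argument in the style of L\^e--Ramanujam then yields an ambient trivialisation, which lifts through the normalisation to a trivialisation of the family $G_t$.

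The main obstacle I anticipate is precisely this lifting step: establishing that equisingularity of the double point curve, together with constancy of the image invariants, is strong enough to force a genuine topological trivialisation of the maps rather than merely constancy of discrete invariants. Making this rigorous requires controlling the vanishing topology of the image surface---showing that $\mu({\mathcal D})$ constant prevents any change in the homotopy type of the Milnor fibre and in the link---and then verifying that the resulting stratified vector fields integrate to homeomorphisms compatible with the normalisation. This is where the machinery of \cite{Bobadilla_Pe} is essential, and it is the heart of the argument.
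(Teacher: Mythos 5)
The first thing to say is that the paper contains no proof of this statement: it is imported wholesale as Corollary 40 of \cite{Bobadilla_Pe}, so the only part of your argument that can be checked against the paper's own machinery is the necessity direction, and that part is correct and is exactly what the paper's logic supports (topological triviality gives topological equivalence of each $G_t$ to $G_0$, and Proposition \ref{propTopInvs} then forces $\mu(\mathcal D(G_t))$ to be constant).

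Your sufficiency sketch, however, has two genuine gaps. First, a circularity: you claim that constancy of $\mu(\mathcal D)$, "combined with Proposition \ref{propTopInvs} and the results of \cite{NemethiPinter,Siersma}", pins down $C$, $T$, $r(\mathcal D)$ and the vanishing topology of the image. Proposition \ref{propTopInvs} takes topological equivalence as its \emph{hypothesis}, which is precisely what you are trying to prove; deducing constancy of the other invariants from $\mu(\mathcal D)$-constancy alone requires separate upper-semicontinuity arguments, and in the paper that deduction (Corollary \ref{propTopTrivAndInvs}) uses the present proposition as an input, not the other way around. Second, and more seriously, your core mechanism --- "an $h$-cobordism argument in the style of L\^e--Ramanujam" applied to the image surface $X_t\subset\C^3$ --- is exactly the setting in which that argument is unavailable: $X_t$ is a hypersurface germ in three variables, the single dimension excluded from the L\^e--Ramanujam theorem (the links are $3$-manifolds and the relevant cobordisms are $4$-dimensional, where the $h$-cobordism theorem gives nothing), and moreover $X_t$ has non-isolated singularities along its double point curve, so the theorem would not apply even in good dimensions. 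This is why Corollary 40 of \cite{Bobadilla_Pe} is a substantial theorem: as its title indicates, its proof works at the normalisation --- exploiting that the source is smooth, that $\mu$-constancy makes the family of plane curves $\mathcal D(G_t)$ equisingular, and building homeomorphisms of the source compatible with the gluing data along the double point curve that then descend to the images --- rather than running vanishing-cycle and $h$-cobordism arguments on the image. Since you explicitly defer "the heart of the argument" to the machinery of \cite{Bobadilla_Pe}, what you have is a citation wrapped in a strategy sketch, and the strategy sketched is not the one that makes the cited result true.
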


Using the upper semi-continuity of the numbers involved, Proposition \ref {theo:BobPe} can be combined with Proposition \ref{prop:Mukfoldingmaps}
to yield the following result.

\begin{cor}\label{propTopTrivAndInvs}
Let $F_k^t=(x,y^k,f_t(x,y))$ be a family of finitely $\mathcal A$-determined $k$-folding map-germs. 
The following statements are equivalent:
\begin{itemize}
\item[{\rm (1)}] The family $F_k^t$ is topologically trivial.
\item[{\rm (2)}]  The numbers $\mu({\mathcal D}_j)$ and ${\mathcal D}_j\cdot {\mathcal D}_{j'}$ are constant along the family $F_k^t$.
\item[{\rm (3)}]  The numbers $C, \mu({\mathcal D}_j), {\mathcal D}_j\cdot {\mathcal D}_{j'}$ and $T_{j,j'}$ are constant along the family $F_k^t$.
\end{itemize}
\end{cor}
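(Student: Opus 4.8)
The plan is to prove the cycle of implications $(1)\Rightarrow(3)\Rightarrow(2)\Rightarrow(1)$, using as the three main ingredients Proposition \ref{theo:BobPe}, the formula for $\mu({\mathcal D}(F_k))$ of Proposition \ref{prop:Mukfoldingmaps}, and the topological invariance of $C$ and $T$ recorded in Proposition \ref{propTopInvs}. The key device throughout is the upper semi-continuity, in the parameter $t$, of each of the numbers $\mu({\mathcal D}_j^t)$, ${\mathcal D}_j^t\cdot {\mathcal D}_{j'}^t$ and $T_{j,j'}^t$, together with the observation that they enter the relevant sums with strictly positive coefficients. Since each of these numbers is the colength of an ideal in $\cO_2$ (a Milnor number or an intersection multiplicity), it is upper semi-continuous: it attains its minimum at a generic parameter and can only jump up at special values. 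Consequently, a linear combination of such numbers with positive coefficients (plus a $t$-independent constant) is constant in $t$ if, and only if, each summand is separately constant. Note also that the decomposition ${\mathcal D}=\bigcup_{j=1}^{k-1}{\mathcal D}_j$ of Theorem \ref{theo:decom_D(F_k)} is consistent along the whole family, the index set $\{1,\dots,k-1\}$ being independent of $t$, so the number of summands does not vary.

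For $(2)\Rightarrow(1)$, I would substitute the hypotheses of (2) into the formula
\[
\mu({\mathcal D}(F_k^t))=\sum_{j=1}^{k-1}\mu({\mathcal D}_j^t)+2\sum_{1\le j<j'\le k-1}{\mathcal D}_j^t\cdot {\mathcal D}_{j'}^t-k+2
\]
of Proposition \ref{prop:Mukfoldingmaps}. As every term on the right is constant and $-k+2$ is $t$-independent, $\mu({\mathcal D}(F_k^t))$ is constant, and Proposition \ref{theo:BobPe} then yields topological triviality of the family. For $(1)\Rightarrow(3)$, topological triviality makes all members $F_k^t$ pairwise topologically equivalent, so by Proposition \ref{propTopInvs} the invariants $\mu({\mathcal D})$, $C$ and $T$ are constant in $t$. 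Feeding the constancy of $\mu({\mathcal D}(F_k^t))$ back into the displayed formula, the positive-coefficient semi-continuity argument forces each $\mu({\mathcal D}_j^t)$ and each ${\mathcal D}_j^t\cdot {\mathcal D}_{j'}^t$ to be constant; applying the same argument to the constancy of $T=\tfrac13\sum_{1\le j<j'\le k-1}T_{j,j'}$ forces each $T_{j,j'}^t$ to be constant, while $C$ is constant directly. This establishes (3). Finally $(3)\Rightarrow(2)$ is immediate, since the conditions of (2) are a subset of those of (3).

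The step I expect to require the most care is the semi-continuity argument that descends from the constancy of a sum to the constancy of each summand: it relies on the individual invariants being genuinely upper semi-continuous in $t$ (so that no summand can decrease) and on the coefficients being positive (so that an increase in one summand cannot be offset by a decrease in another). Making this precise amounts to viewing the family as a deformation, so that the relevant colengths are upper semi-continuous functions of $t$; granting this, the remainder is the bookkeeping of the two displayed formulae.
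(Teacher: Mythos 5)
Your proof is correct and follows essentially the same route the paper intends: the paper's own "proof" is the one-line remark preceding the corollary (combine Proposition \ref{theo:BobPe} with the formula of Proposition \ref{prop:Mukfoldingmaps}, using upper semi-continuity of the invariants so that constancy of a positive-coefficient sum forces constancy of each summand), which is precisely what you carry out, with Proposition \ref{propTopInvs} supplying the implication from topological triviality to constancy of $C$, $T$ and $\mu(\mathcal D)$.
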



\section{The jet space stratification}\label{sec:Classification}
In this section, we study the singularities of $k$-folding map-germs which we take 
in standard form $F_k(x,y)=(x,y^k,f(x,y))$ (see Remarks \ref{rems:Fkpi}(4)). We identify the set of such germs  
with the set $\mathcal O_2$ of germs, at the origin, of holomorphic functions $f$.
For each $k$, we obtain a stratification $\mathcal S_k$ of $J^{11}(2,1)$ (and hence of $J^{l}(2,1)$ for $l\ge 11$). 
The stratification consists of the  strata of codimension $\leq 4$ stated in Theorem \ref{MainThm} 
together with the complement of their union (i.e., the union of strata of codimension $\ge 5$).
Every stratum of codimension $\leq 4$ of $\mathcal S_k$ consists of finitely $\mathcal A$-determined and 
pairwise topologically equivalent 
$k$-folding map-germs. The jet level $l=11$ is determined by the conditions defining the strata of $\mathcal S_k$ 
which involve the coefficients of $f$ in \eqref{Taylorf} up to degree $11$ (see Tables \ref{tab:k=3}, \ref{tab:ClassifPr}, \ref{tab:ClassifAs}, \ref{tab:ClassifPar}).

As pointed out in the introduction, the case $k=2$ was studied in \cite{Bruce84, BruceWilk,wilkinson}.
The stratification $\mathcal S_2$ can be recovered from the results in this paper. The different $\cA$-classes 
obtain in \cite{Bruce84} correspond to different topological classes. 
This follows by analysing the invariants $C,T,\mu(\mathcal D)$ and $r(\mathcal D)$. 
We shall suppose here that $k\ge 3$ and 
write the jets of $f$ as in (\ref{Taylorf}).

It is clear that $F_k$ is an immersion if and only if $a_{11}\neq 0$. All immersions are $\mathcal A$-finitely determined and 
pairwise topologically equivalent. 
Moreover, only immersions have $\mathcal D=\emptyset$, hence $r(\mathcal D)=0$.  
We define $a_{11}\neq 0$ as the open stratum of $\mathcal S_k$ (corresponding to the jets of all immersions) and choose
\[{\bf M}^k_0\colon(x,y)\mapsto(x,y^k,y)\]
as a normal form for topological equivalence of  germs in this stratum.
 
 The strata corresponding to singular germs are organized  into four branches according to the following result.
  
\begin{lem}\label{lem:Class2-jets}
For $k\ge 3$, every singular $k$-folding map-germ is $\mathcal A$-equivalent to a $k$-folding map-germ whose $2$-jet is equal to
$(x,0,xy+y^2), (x,0,y^2), (x,0,xy)$, or $(x,0,0).$
\end{lem}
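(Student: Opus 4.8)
The goal is to show that any singular $k$-folding germ $F_k(x,y)=(x,y^k,f(x,y))$ can be brought, by $\mathcal A$-equivalence, to one of the four listed $2$-jets. Since $F_k$ is singular, $a_{11}=0$, so the $2$-jet of $f$ has the form $j^2f=a_{10}x+a_{21}xy+a_{22}y^2$ (the pure $x^2$ term $a_{20}x^2$ and the constant/$a_{10}x$ terms can be absorbed, as I explain below). The plan is to exploit the restricted shape of the allowed coordinate changes: because the first two components of $F_k$ are $x$ and $y^k$, the $\mathcal A$-equivalences that preserve the standard form of a $k$-folding germ act on $f$ essentially by (i) changes of source coordinate respecting the $y\mapsto \xi y$ symmetry needed to keep $y^k$ fixed up to the group, and (ii) changes of target that let us add to $f$ any combination of $x$ and $y^k$ (the first two target coordinates) and postcompose by a unit. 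I would first record precisely which operations on $f$ are available, then use them to normalise $j^2f$.

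First I would eliminate the terms that are irrelevant to the $2$-jet classification. The monomials $x$ and $y^k$ in $f$ can be killed outright, since they coincide with target coordinates and can be subtracted off by a target shear $(X,Y,Z)\mapsto(X,Y,Z-\alpha X-\beta Y)$; in particular the linear term $a_{10}x$ disappears, and any $x^2$ contribution can be handled by a source change $x\mapsto x+(\text{quadratic})$ together with such a shear. Thus, up to these moves, the relevant part of $j^2f$ is the quadratic form $Q(x,y)=a_{20}x^2+a_{21}xy+a_{22}y^2$ restricted to the directions that genuinely survive, and because $x^2$ is reducible to a target coordinate the effective $2$-jet is governed by $a_{21}xy+a_{22}y^2$ together with the freedom to scale $x$ and $y$ and to add multiples of $x$ to $y$-type directions. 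The singular condition $a_{11}=0$ is exactly what forces us into this quadratic regime.

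Next I would split into cases according to the quadratic form. After scaling $y$ (which only rescales $y^k$, harmless up to $\mathcal L$) and a source change $x\mapsto x+cy$ (which shifts $a_{22}$ while keeping $xy$), the four possible normal forms for the effective quadratic are: both $xy$ and $y^2$ present, giving $xy+y^2$; only $y^2$; only $xy$; or neither, giving $0$. Concretely, if $a_{21}\neq 0$ I can use the $xy$ term and complete to reach either $xy+y^2$ or $xy$ depending on whether the $y^2$ coefficient can be removed; if $a_{21}=0$ but $a_{22}\neq 0$ I land on $y^2$; and if both vanish the $2$-jet is $0$. The one subtlety is confirming that the admissible source changes really do allow me to toggle the $y^2$ coefficient when $xy$ is present but cannot always remove it when $xy$ is absent, which is what separates the $xy+y^2$ and $xy$ cases from the $y^2$ and $0$ cases.

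The main obstacle will be pinning down exactly the group of coordinate changes that preserve the standard $k$-folding form, i.e.\ that keep the shape $(x,y^k,\tilde f)$. Unlike the general $\mathcal A$-action, here the source diffeomorphism must be compatible with the middle component $y^k$, so not every source change is allowed (for instance $y\mapsto y+\text{(something)}$ would disturb $y^k$); only those $\mathcal A$-moves that can be compensated in the target preserve the form. I would argue that, to the order needed for the $2$-jet, the effective moves are precisely source scalings $x\mapsto \alpha x$, $y\mapsto \beta y$, the shear $x\mapsto x+(\text{higher order in } x)$, the addition of $x$ to the $y$-direction within $f$, and target operations subtracting $x$, $y^k$ and postcomposing by units. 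Establishing that this repertoire suffices to realise the four listed $2$-jets and no more is the crux; once it is in place, the case analysis on $a_{21}$ and $a_{22}$ is routine.
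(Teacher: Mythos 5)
Your proposal follows the same skeleton as the paper's proof (singularity forces $a_{11}=0$; monomials in $x$ and $y^k$ are absorbed by a target shear $(X,Y,Z)\mapsto(X,Y,Z-g(X,Y))$; the four jets are then read off from $a_{21}$, $a_{22}$ after rescaling $x$, $y$ and the last target coordinate), but it rests on a move that is not available, and this is precisely where your argument breaks. The source change $x\mapsto x+cy$ turns the first component of the germ into $x+cy$, and no target diffeomorphism can restore the standard form $(x,y^k,\ast)$, because $y$ is not a function of the target coordinates $x$ and $y^k$; the only admissible changes of the $x$-variable are of the form $x\mapsto \phi(x,y^k)$, which do not mix $y$ linearly into $x$. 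If your move were admissible it would be fatal to the statement itself: choosing $c=-a_{22}/a_{21}$ would erase the $y^2$ term whenever $a_{21}\neq0$, so the normal form $(x,0,xy+y^2)$ would collapse into $(x,0,xy)$ and the lemma's list would be redundant. In fact these two $2$-jets are not $\mathcal A^{(2)}$-equivalent at all: for a singular germ the second differential induces an intrinsic map from $\ker dF_0\otimes\ker dF_0$ to $\mathrm{coker}\, dF_0$, and its value is the class of $(0,0,2a_{22})$, nonzero for $(x,0,xy+y^2)$ and zero for $(x,0,xy)$. This disjointness is also visible in the paper's invariants: all germs over Branch 1 are topologically equivalent to ${\bf M}^k_1$ with $T=0$, while the generic germs over Branch 3 (type ${\bf P}^k_2$) have $T\geq 1$.

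So the ``one subtlety'' you flag --- whether admissible changes let you toggle the $y^2$ coefficient when $xy$ is present --- is indeed the crux, but you leave it unresolved and your expected answer is the wrong one: the toggle is impossible. Once you restrict to the genuinely admissible moves (target shears $Z\mapsto Z-g(X,Y)$ and target units, source scalings $x\mapsto\alpha x$, $y\mapsto\beta y$, and more generally $x\mapsto\phi(x,y^k)$, $y\mapsto y\,u(x,y^k)$ with $u$ a unit), the argument is exactly the paper's short one: the $2$-jet of a singular germ reduces to $(x,0,a_{21}xy+a_{22}y^2)$, and scaling normalises whichever of $a_{21}$, $a_{22}$ is nonzero to $1$, giving the four cases according to $a_{21}a_{22}\neq0$; $a_{21}=0$, $a_{22}\neq0$; $a_{21}\neq0$, $a_{22}=0$; and $a_{21}=a_{22}=0$, with no further collapsing possible.
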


\begin{proof}
As we are assuming $F_k$ to be singular 
at the origin, we have $a_{11}=0$. 
Then 
 $F_k$ is $\mathcal A$-equivalent to a germ whose $2$-jet is $(x,0,a_{21}xy+a_{22}y^2)$. Depending on the coefficients $a_{21}$ and $a_{22}$, the $2$-jet can be taken to one of the following forms:

$$
\begin{array}{lcll}
(x,0,xy+y^2) &\iff &a_{11}=0, a_{21}a_{22}\ne 0&\mbox{\rm (Branch 1)}\\
(x,0,y^2) &\iff &a_{11}=a_{21}=0,\, a_{22}\ne 0&\mbox{\rm (Branch 2)}\\
(x,0,xy) &\iff &a_{11}=a_{22}=0,\, a_{21}\ne 0&\mbox{\rm (Branch 3)}\\
(x,0,0) &\iff& a_{11}=a_{21}=a_{22}=0&\mbox{\rm (Branch 4)}
\end{array}
$$
\end{proof}

We have the following about $\mathcal A$-simplicity of germs of $k$-folding maps.

\begin{prop}\label{prop:modality}
There are no $\mathcal A$-simple $k$-folding map-germs for $k\ge 5$. 
\end{prop}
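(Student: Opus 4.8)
The plan is to show that the action of the jet group $\mathcal A^{(l)}$ on the jet space $J^l(2,1)$ (identified with the jets of $k$-folding map-germs in standard form) admits continuous moduli near the origin for $k\ge 5$, so that no singular germ can be simple. The guiding principle is that $\mathcal A$-simplicity requires a neighbourhood of a sufficient jet to meet only finitely many $\mathcal A$-orbits; exhibiting even a single continuous one-parameter family of germs with pairwise distinct topological types (or distinct $\mathcal A$-invariants) in every neighbourhood of a singular jet rules out simplicity.

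First I would reduce to the four branches of Lemma \ref{lem:Class2-jets}. Since simplicity is an $\mathcal A$-invariant notion and every singular $k$-folding germ is $\mathcal A$-equivalent to one whose $2$-jet lies in one of Branches 1--4, it suffices to produce a modulus inside each branch. The cleanest strategy is to use the topological invariants already developed: by Proposition \ref{propTopInvs}, topologically equivalent germs share $C$, $T$, $\mu(\mathcal D)$ and $r(\mathcal D)$, and by Corollary \ref{propTopTrivAndInvs} the constancy of the $\mu(\mathcal D_j)$ and the intersection multiplicities $\mathcal D_j\cdot\mathcal D_{j'}$ controls topological triviality in families. Thus if I can write down, for each branch, a $k$-folding family $F_k^t=(x,y^k,f_t(x,y))$ in which some intersection multiplicity $\mathcal D_j\cdot\mathcal D_{j'}$ or some $\mu(\mathcal D_j)$ varies continuously with $t$, the family cannot be topologically trivial, hence the orbit cannot absorb a neighbourhood, and the germ is not simple.

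The key computation will exploit the extra room available when $k\ge 5$, namely that there are at least four branches $\mathcal D_1,\dots,\mathcal D_{k-1}$ of the double point curve, and their defining functions $\lambda_j$ depend on the coefficients $a_{qs}$ through the constants $\vartheta_{sj}$ in \eqref{eq:Taylorlambdaj}. For $k\ge 5$ I would select two branches $\mathcal D_j,\mathcal D_{j'}$ (tangential, with the same leading direction) and parametrise them as $t\mapsto(t,\gamma_j(t))$, $t\mapsto(t,\gamma_{j'}(t))$; by Remark 2 following Proposition \ref{prop:Mukfoldingmaps}, $\mathcal D_j\cdot\mathcal D_{j'}=\mathrm{ord}(\gamma_j-\gamma_{j'})$. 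Perturbing one higher-order coefficient of $f$ then shifts this order of contact, and because $k\ge 5$ guarantees enough independent $\vartheta_{sj}$-weighted coefficients to do this while keeping the lower-order $2$-jet fixed inside its branch, I obtain a genuine one-parameter family of distinct finite intersection multiplicities. One must check finite $\mathcal A$-determinacy along the family via Proposition \ref{prop:Mukfoldingmaps}, which holds as long as the branches have isolated singularities and no common component; this is arranged by choosing the perturbation generically.

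The main obstacle is verifying that the modulus is genuine rather than removable by the $\mathcal A$-action restricted to standard-form germs: one must confirm that the varying invariant truly separates $\mathcal A$-orbits (not merely the chosen representatives) and that the perturbation stays within the branch, i.e.\ does not accidentally change the $2$-jet class or raise the codimension into a regime where the family degenerates. Handling the small cases $k=5,6,7$ explicitly, and treating separately those $k$ divisible by $3$ or even, where some $\vartheta_{sj}$ vanish by Lemma \ref{lem:vartheta_js} and collapse certain branches, is where the bookkeeping becomes delicate; here the divisibility analysis of the $\vartheta_{sj}$ is exactly the tool that certifies which coefficients remain free to carry the modulus.
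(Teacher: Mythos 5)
Your overall frame (non-simplicity follows from exhibiting infinitely many orbits in every neighbourhood of a sufficient jet) is sound, but the mechanism you choose to produce those orbits cannot work, and this is a genuine gap rather than a bookkeeping issue. You propose to detect moduli through the invariants $C$, $T$, $\mu(\mathcal D_j)$ and $\mathcal D_j\cdot\mathcal D_{j'}$, i.e.\ through variation of topological type. However, the paper's stratification results show that these invariants are \emph{constant} on each stratum of codimension $\le 4$: for instance, by Theorem \ref{theo:a11=0} every germ in Branch 1 ($a_{11}=0$, $a_{21}a_{22}\neq0$) is finitely determined and topologically equivalent to ${\bf M}^k_1$, with $\mu(\mathcal D)=(k-2)^2$, $C=k-1$, $T=0$, $r(\mathcal D)=k-1$. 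Consequently, in a small neighbourhood of a Branch 1 jet the only topological types occurring are ${\bf M}^k_0$ (immersions) and ${\bf M}^k_1$ --- two types in total --- so no family near such a jet has non-constant topological invariants, and by Corollary \ref{propTopTrivAndInvs} any family staying in the stratum is topologically trivial. Yet these germs \emph{are} non-simple for $k\ge 5$. The moduli that obstruct simplicity are $\mathcal A$-moduli inside a single topological class (this is exactly the point made after Theorem \ref{ClassThm}: the strata give rise to moduli of finitely determined map-germs with constant $C,T,\mu(\mathcal D)$ and $r(\mathcal D)$), and they are invisible to every invariant in your toolkit. Note also that integer-valued invariants cannot ``vary continuously'': by semicontinuity they are constant off a proper closed subset, so they could never produce the one-parameter family of distinct types you want.

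What is needed instead is an analytic argument, and this is what the paper does. It first reduces, by adjacency, to Branch 1 alone (your plan of producing a modulus separately in each branch is unnecessary: Branch 1 jets occur in every neighbourhood of a jet in Branches 2--4, so non-simplicity of Branch 1 germs propagates to all the others). For $F_k$ in Branch 1 one computes $j^kF_k\sim_{\mathcal A^{(k)}}(x,(y-\frac{1}{2}x)^k,y^2)$; after exchanging the last two coordinates this is of Mond's form $(x,y^2,f(x,y))$ with $j^4f\equiv0$ when $k\ge 5$, and Mond's classification (Theorem 1:1 of \cite{mond}) states that there are no $\mathcal A$-simple germs of that form. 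This single step supplies exactly the $\mathcal A$-moduli that your topological invariants cannot see.
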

\begin{proof}
It is enough to show that the orbit of a map-germ $F_k$ with a 2-jet $(x,0,xy+y^2)$ is not simple 
as the orbits of germs in the remaining branches in Lemma \ref{lem:Class2-jets} are adjacent to it. 
For such a germ, we have $j^kF_k\sim_{\mathcal A^{(k)}}(x,(y-\frac{1}{2}x)^k,y^2)$. The result follows by Theorem 1:1 in \cite{mond} as there 
are no $\mathcal A$-simple germs of the form $(x,y^2,f(x,y))$ with $j^4f\equiv 0$.

When $k=4$ and for $F_4$ in Branch 1, we have $j^4F_4\sim_{\mathcal A^{(4)}}(x,xy^3+x^3y,y^2)$. This is a $C_3$-singularity and is 
$\mathcal A$-simple \cite{mond}. 
For $F_4$ in Branch 2, we have $j^4F_4\sim_{\mathcal A^{(4)}}(x,0,y^2)$ so it leads to non $\mathcal A$-simple germs. By adjacency, the germs in Branches 3 and 4 also lead to non $\mathcal A$-simple germs.
Therefore, the $C_3$-singularity is the only $\mathcal A$-simple singularity of $4$-folding map-germs.

The case $k=3$ is treated in \S\ref{subs:casek=3} where there are several $\mathcal A$-simple singularities of $3$-folding map-germs.
\end{proof}

\begin{rems}
{
\rm 
1. The degree of $\mathcal A$-determinacy of a singular germ $F_k$ is greater or equal to $k$ (the germ
$(x,y)\mapsto (x,0,f(x,y))$ is not finitely $\mathcal A$-determined for any $f$).

2. For germs in Branch 1 or Branch 2, we have 
$F_k\sim_{\mathcal A} (x,yp(x,y^2),y^2)$ for some germ $p$. 
One can study the germ $p$, as was done in \cite{mond}, instead of $F_k$, but this blurs the order of the original $k$-folding map-germ. 
Also, the approach in \cite{mond} of reducing the action of $\mathcal A$ on the set of $2$-folding map-germs 
to the action of a subgroup of  $\mathcal K$ on $\mathfrak  m_2$ does not extend to germs of $k$-folding maps for $k\ge 3$.
}
\end{rems}


\subsection{The case $k= 3$}\label{subs:casek=3}

When $k=3$ we get several $\mathcal A$-simple map-germs. For this reason, we  treat this case separately. 

\begin{theo}\label{theo:k=3SingReflmaps}
The only $\mathcal A$-simple singularities a $3$-folding map-germ $F_3$ can have 
are those of type $S_{2l-1}$, $l\ge 2$, or $H_s$, $s\ge 3$. In the real case, the $S_{2l-1}$-singularities are of type
$S_{2l-1}^-$. The strata of $\mathcal S_3$ of codimension $\le 4$ are given in  {\rm Table \ref{tab:k=3}}.
\end{theo}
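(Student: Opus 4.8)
The plan is to reduce Theorem \ref{theo:k=3SingReflmaps} to an analysis of the branch structure established in Lemma \ref{lem:Class2-jets}, using the double point curve decomposition of Theorem \ref{theo:decom_D(F_k)} as the main computational tool. For $k=3$ there are exactly two branches $\mathcal D_1=V(\lambda_1)$ and $\mathcal D_2=V(\lambda_2)$, where $\xi=e^{2\pi i/3}$, and by Theorem \ref{theo:mu(D)FinitDet} finite $\mathcal A$-determinacy is equivalent to $\mathcal D$ being reduced. I would first record the numbers $\vartheta_{sj}$ from Lemma \ref{lemVarthetaZeroOrOne}: since $3\nmid j$ for $j=1,2$, we have $\vartheta_{sj}=0$ iff $3\mid s$, $\vartheta_{sj}=1$ iff $s\equiv 1\pmod 3$, and the only remaining value lies on the third root-of-unity pattern. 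These explicit values let me write down $j^{p-1}\lambda_j$ from \eqref{eq:Taylorlambdaj} term by term and determine the singularity type of each branch directly from the coefficients $a_{qs}$ of $f$.

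The core of the argument is to go branch by branch through Lemma \ref{lem:Class2-jets} and, within each branch, perform the $\mathcal A^{(l)}$-reduction of $j^l F_3$ to a normal form, comparing against Mond's list reproduced in \S\ref{sec:Prelim}. In Branch 2, where the $2$-jet is $(x,0,y^2)$, I expect $F_3$ to reduce to germs of the form $(x,y^3,y^2+\cdots)$ and, after eliminating the cube (using $y^3$ in the target to absorb terms), to land on the $S_{2l-1}$ family $(x,y^2,y^3+x^{l}y)$ up to reordering coordinates; the parity constraint $S_{2l-1}^-$ in the real case comes from tracking the sign of the relevant coefficient under real $\mathcal A$-equivalence. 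In Branch 3, where the $2$-jet is $(x,0,xy)$, I expect the $H_s$ family $(x,xy+y^{3s-1},y^3)$ to emerge, which is natural here since the $y^3$ component survives and the $xy$-term is present; this is precisely where the $k=3$ case is special, because $y^3=y^k$ is exactly the middle coordinate. Throughout, I would use Theorem \ref{theo:mu(D)FinitDet}, Proposition \ref{prop:Mukfoldingmaps} and the cross-cap/triple-point formulae to confirm finite determinacy and to compute the invariants $C,T,\mu(\mathcal D),r(\mathcal D)$ that label the strata.

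To establish that these are the \emph{only} $\mathcal A$-simple singularities, I would invoke Proposition \ref{prop:modality} and its proof strategy: Branch 1 (with $2$-jet $(x,0,xy+y^2)$) and Branch 4 (with $2$-jet $(x,0,0)$) must be shown to yield non-simple germs, so that no simple singularities hide there. For Branch 1 I would follow the reduction $j^3F_3\sim_{\mathcal A^{(3)}}(x,(y-\tfrac12 x)^3,y^2)$ analogous to the $k\ge 5$ argument, reducing to a germ of the form $(x,y^2,g)$ with $j^4g$ controlled, and appeal to Theorem 1:1 in \cite{mond} to detect modality; Branch 4 is non-simple by adjacency. The special feature for $k=3$ is that Branches 2 and 3 do produce the infinite simple families, which is why this case is singled out, and the main obstacle is the bookkeeping in Branch 3: verifying that the $xy$-term together with the surviving $y^3$ truly forces the $H_s$ normal form rather than a more degenerate type requires carefully tracking how the allowed $\mathcal A$-changes of coordinates (in particular those preserving the standard form $F_3=(x,y^3,f)$) act on the higher jets of $f$.

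Finally, the tabulation of the codimension $\le 4$ strata in Table \ref{tab:k=3} follows by pairing each reduced normal form with its invariants, using Corollary \ref{propTopTrivAndInvs} to confirm that constancy of $\mu(\mathcal D_j)$ and $\mathcal D_j\cdot\mathcal D_{j'}$ along each stratum guarantees topological triviality, and using Proposition \ref{propTopInvs} to distinguish the distinct strata. The hardest step overall is the explicit $\mathcal A^{(l)}$-reduction in Branches 2 and 3 to Mond's $S_{2l-1}$ and $H_s$ forms while respecting the rigidity of the $k$-folding standard form; the rest is a matter of organising the coefficient conditions and reading off the invariants.
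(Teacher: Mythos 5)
Your overall architecture (branch-by-branch reduction to Mond's normal forms, with the $S$-family arising from the branch with $2$-jet $(x,0,y^2)$ and the $H$-family from the branch with $2$-jet $(x,0,xy)$, plus the invariants to organise the strata) matches the paper's proof in outline, but your argument for why these are the \emph{only} simple singularities contains a genuine error. You claim that Branch 1 (2-jet $(x,0,xy+y^2)$) yields non-simple germs by following the reduction $j^kF_k\sim_{\mathcal A^{(k)}}(x,(y-\tfrac12x)^k,y^2)$ and appealing to Mond's Theorem 1:1. That appeal only works when, after swapping target coordinates, the last component has vanishing $4$-jet, i.e.\ when $k\ge 5$; for $k=3$ the germ is $(x,y^2,(y-\tfrac12x)^3)$, whose last component has a nonzero cubic part, and after absorbing the monomials $xy^2$ and $x^3$ into the target one is left with $y^3+\tfrac34x^2y$ --- precisely the $3$-jet of the \emph{simple} singularity $S_1$. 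Indeed, Table \ref{tab:k=3}, the very table your proof must reproduce, lists $S_1$, with defining conditions $a_{11}=0$, $a_{21}a_{22}\neq 0$, as the codimension-one stratum: this \emph{is} Branch 1. The fact that Branches 1, 2 and 3 all produce simple germs when $k=3$ is exactly why the paper treats this case separately from Proposition \ref{prop:modality}, whose proof explicitly defers $k=3$ to this section. The paper in fact handles Branches 1 and 2 simultaneously: writing $f=f_0(x,y^3)+yf_1(x,y^3)+y^2f_2(x,y^3)$ and absorbing into the target everything a $3$-fold permits, any germ with $a_{11}=0$, $a_{22}\ne0$ reduces to $j^pF_3\sim_{\mathcal A^{(p)}}(x,y^3,yL(x)+a_{22}y^2)$; when $l={\rm ord}(L)={\rm ord}(f_y(x,0))$ is finite, the shear $y\mapsto y-(a_{l1}/2a_{22})x^l$ gives $(x,y^3-3(a_{l1}/2a_{22})^2yx^{2l},a_{22}y^2)$, an $S_{2l-1}$-singularity ($S_1$ when $l=1$). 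Note the exponent doubling from $x^l$ to $x^{2l}$, which your sketch misses (you write $x^ly$), and note that the coefficient $-3(a_{l1}/2a_{22})^2$ is automatically negative over $\mathbb R$, which is where the $S^-_{2l-1}$ claim comes from --- it is not a sign you are free to track case by case.

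The error propagates to your treatment of Branch 4: declaring it non-simple ``by adjacency'' presumably relies on adjacency to a non-simple Branch 1, which is unavailable for $k=3$. The paper instead analyses Branch 4 directly, exactly as in the $k\ge4$ case of \S\ref{secCase3}, using the double point curve computations to obtain the non-simple strata ${\bf U}^3_3$ (topologically Mond's $X_4$), ${\bf U}^3_4$, ${\bf X}^3_4$ and ${\bf W}^{3,1}_4$; this analysis is also what supplies the last four rows of Table \ref{tab:k=3}, which your proposal does not otherwise produce. So, as written, your proof both fails to establish the ``only'' claim correctly and would contradict the table it is meant to verify; repairing it requires recognising that Branch 1 gives $S_1$ and replacing the adjacency argument for Branch 4 with the explicit stratification of \S\ref{secCase3}.
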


\begin{table}[tp]
\footnotesize{
\begin{center}
\caption{The strata of $\mathcal S_3$ of codimension $\le 4$.}
\begin{tabular}{clc}
\hline 
Normal form & Defining equations and open conditions & Codim\\
\hline 
\text{Immersion} &$a_{11}\neq0$&0\\
$S_1$ &$a_{11}=0,a_{22}a_{21}\ne 0$&1\\
$S_3$ &$a_{11}=a_{21}=0,a_{22}a_{31}\ne 0$&2\\
$S_5$ &$a_{11}=a_{21}=a_{31}=0,a_{22}a_{41}\ne 0$&3\\
$S_7$ &$a_{11}=a_{21}=a_{31}=a_{41}=0,a_{22}a_{51}\ne 0$&4\\
$H_2$ &$a_{11}=a_{22}=0,a_{21}\ne 0, CndH_2\ne 0$&2\\
$H_3$ &$a_{11}=a_{22}=CndH_2=0,a_{21}\ne 0, CndH_3\ne 0$&3\\
$H_4$ &$a_{11}=a_{22}=CndH_2=CndH_3=0,a_{21}\ne 0,CndH_4\ne 0$&4\\
$X_4$&$a_{11}=a_{21}=a_{22}=0, a_{31}a_{32}a_{44}\ne 0$&4\\
${\bf U}_4^{3}$&$a_{11}=a_{21}=a_{22}=a_{44}=0, a_{31}a_{32}\ne 0, CndUm_8\ne 0$&4\\
${\bf X}_4^{3}$&$a_{11}=a_{21}=a_{22}=a_{31}=0, a_{32}a_{41}a_{44}\ne 0$&4\\
${\bf W}_4^{3,1}$&$a_{11}=a_{21}=a_{22}=a_{32}=0, a_{31}a_{44}\ne 0, a_{31}a_{55}-a_{42}a_{44}\ne 0$&4\\
\hline
\end{tabular}\label{tab:k=3}
$
\begin{array}{rcl}
CndH_2&=&a_{32}a_{44}-a_{55}a_{21}
\\
CndH_3&=&a_{88}a_{21}^3-(a_{77}a_{32}+a_{44}a_{65})a_{21}^2+a_{44}(a_{42}a_{44}+
a_{32}a_{54})a_{21}-a_{31}a_{44}^2a_{32}.
\\
CndH_4&=&a_{11,11}a_{21}^5
-(a_{44}a_{98}+a_{10,10}a_{32}+a_{77}a_{65})a_{21}^4\\
&&+(a_{44}a_{87}a_{32}+a_{32}a_{54}a_{77}+a_{44}a_{54}a_{65}+2a_{44}a_{77}a_{42}+a_{44}^2a_{75})a_{21}^3\\
&&-(a_{52}a_{44}^2+a_{44}a_{31}a_{65}+2a_{42}a_{44}a_{54}+a_{44}a_{64}a_{32}+2a_{77}a_{32}a_{31}+a_{54}^2a_{32})a_{44}a_{21}^2\\
&&+(2a_{44}a_{31}a_{42}+a_{44}a_{41}a_{32}+3a_{32}a_{54}a_{31})a_{44}^2a_{21}-2a_{31}^2a_{32}a_{44}^3
\\
CndUm_8&=& a_{55}(a_{31}a_{55}-a_{54}a_{32})+a_{77}a_{32}^2 \, \, \mbox{\rm  (see Table \ref{tab:ClassifPar})}
\end{array}
$
\end{center}
}
\end{table}

\begin{proof}
We can write $f(x,y)=f_0(x,y^3)+yf_1(x,y^3)+y^2f_2(x,y^3)$, for some germs of holomorphic functions $f_i$, $i=0,1,2$. 
Then $F_3\sim_{\mathcal A} (x,y^3,yf_1(x,y^3)+y^2f_2(x,y^3)).$ 

Suppose that $a_{10}=0$ ($F_3$ is singular)  and $a_{22}\ne 0$. Then 
$$
F_3\sim_{\mathcal A}(x,y^3,y(g(x)+y^3h(x,y^3))+y^2(a_{22}+k(x,y^3)))
$$
for some germs of  holomorphic functions $g\in \mathfrak m_1$, $h\in \mathcal O_2$ and $k\in \mathfrak  m_2$.
We can make successive changes of coordinates in the target so that
$
j^pF_3\sim_{\mathcal A^{(p)}}(x,y^3,yL(x)+a_{22}y^2)
$
for any $p\ge 3$. It is not difficult to show that $F_3$ is finitely $\mathcal A$-determined if, and only if, 
$ord(L)=ord(g)=ord( f_y (x,0))$ is finite. 
Suppose that this is the case and denote by $l$ that order. Then $j^lL(x)=a_{l1}x^l$, $a_{l1}\ne 0$, and 
the change of coordinates $y\mapsto y-(a_{l1}/2a_{22})x^l$ in the source yields 
$j^{2l+1}F_3\sim_{\mathcal A^{(2l+1)}}(x,y^3-3(a_{l1}/2a_{22})^2yx^{2l},a_{22}y^2)$.  
This is an $S_{2l-1}$-singularity, and since it is $(2l+1)$-$\mathcal A$-determined, we have 
$F_3\sim_{\mathcal A} (x,y^3-yx^{2l},y^2)$. In the real case, this is an $S_{2l-1}^{-}$-singularity.

The above calculations show, in particular, that we do not get the simple singularities $B_l^{\pm}$, $C_l^{\pm}$ and $F_4$ 
whose 2-jets are $\mathcal A^{(2)}$-equivalent to $(x,0,y^2)$.

Similar calculations show that when $a_{11}=a_{22}=0$ and $a_{21}\ne 0$, we get an $H_l$-singularity when the singularity of $F_3$ is finitely $\mathcal A$-determined.

The remaining cases are studied in the same way as in the case $k\ge 4$ (Table \ref{tab:ClassifPar}). We get three strata  with topological normal forms  ${\bf U}_4^{3}, {\bf X}_4^{3}$ and ${\bf W}_4^{3,1}$ (the germs in these strata are not $\mathcal A$-simple) together with the stratum represented by the topological normal form ${\bf U}^3_3$, which is topologically equivalent to Mond's singularity $X_4$ (see \S\ref{sec:Prelim} and \cite{mond}).
\end{proof}

\begin{rem}
{\rm 
The invariants associated to the simple singularities can be found in \cite{mondRemarks}; those 
associated to ${\bf U}_4^{3}, {\bf X}_4^{3}, {\bf W}_4^{3,1}$ are given in Table \ref{tab:InvariantsParabolicBranch}.
}
\end{rem}


\subsection{The case $k\ge 4$}\label{case:kge4}

We consider here the case when $k\ge 4$, which we divide into the four branches according to the $\mathcal A^{(2)}$-orbits 
in Lemma \ref{lem:Class2-jets}. In all that follows, $j\in\{1,\ldots,k-1\}$, and subindices of singularities indicate the codimension of the stratum.

\subsubsection{Branch 1: $a_{11}=0$, $a_{21}a_{22}\neq 0$}\label{subsec:Branch1}

\begin{theo}\label{theo:a11=0}
Any germ $F_k$ of a $k$-folding map satisfying $a_{11}=0$ and $a_{21}a_{22}\neq 0$ is 
finitely $\mathcal A$-determined and is topologically equivalent to 
\[
{\bf M}^k_1\colon(x,y)\mapsto(x,y^k,xy+y^2).
\]
The invariants take the values  $\mu({\mathcal D})=(k-2)^2,\, C=k-1,\, T=0$ and $r({\mathcal D})=k-1$ and the double point curve of $F_k$ is the union of $k-1$ regular curves intersecting transversally.
\end{theo}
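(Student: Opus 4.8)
The plan is to exploit the decomposition $\mathcal D=\bigcup_{j=1}^{k-1}\mathcal D_j$ of Theorem \ref{theo:decom_D(F_k)} and to show that, in Branch~1, each branch $\mathcal D_j$ is smooth and the branches meet pairwise transversally. Finite determinacy and the values of the invariants then fall out of Proposition \ref{prop:Mukfoldingmaps}, and a topological-triviality argument transports everything to the model ${\bf M}^k_1$.

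First I would normalise the $2$-jet: by Lemma \ref{lem:Class2-jets} (or a direct diagonal scaling in source and target absorbing the resulting factor of $y^k$ into the second target coordinate) I may assume $j^2f=xy+y^2$, i.e.\ $a_{21}=a_{22}=1$, keeping $F_k$ in standard form. The key computation is the linear part of $\lambda_j$: formula \eqref{eq:Taylorlambdaj} gives $j^1\lambda_j=a_{21}x+(1+\xi^j)a_{22}y$, since $a_{11}=0$ kills the constant term and only $q=2$ contributes in degree one (with $\vartheta_{1j}=1$ and $\vartheta_{2j}=1+\xi^j$). Because $a_{21}\neq0$, this linear part is nonzero, so every $\mathcal D_j=V(\lambda_j)$ is a regular curve with $\mu(\mathcal D_j)=0$ and $r(\mathcal D_j)=1$. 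Because $a_{22}\neq0$ and $\xi^j\neq\xi^{j'}$ for $1\le j\neq j'\le k-1$, the tangent lines $a_{21}x+(1+\xi^j)a_{22}y=0$ are pairwise distinct, so the branches meet pairwise transversally and $\mathcal D_j\cdot\mathcal D_{j'}=1$. This already proves the last assertion (the double point curve is a union of $k-1$ regular curves meeting transversally) and, via Proposition \ref{prop:Mukfoldingmaps}, that $F_k$ is finitely $\mathcal A$-determined.

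With these local data I would read off the invariants. Proposition \ref{prop:Mukfoldingmaps} yields $\mu(\mathcal D)=\sum_j 0+2\binom{k-1}{2}\cdot 1-k+2=(k-1)(k-2)-k+2=(k-2)^2$, while $r(\mathcal D)=\sum_j r(\mathcal D_j)=k-1$. For the model itself $\lambda_j=x+(1+\xi^j)y$ is linear, so $\lambda_{j,j'}=(\lambda_j-\lambda_{j'})/y=\xi^j-\xi^{j'}$ is a nonzero constant; hence $\langle\lambda_j,\lambda_{j,j'}\rangle=\cO_2$, giving $T_{j,j'}=0$ and $T=0$. Likewise $C=\dim_\C \cO_2/\langle y^{k-1},x+2y\rangle=\dim_\C \C\{y\}/\langle y^{k-1}\rangle=k-1$.

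Finally, for topological equivalence I would connect $F_k$ to ${\bf M}^k_1$ by the family $F_k^t=(x,y^k,\,xy+y^2+t\,(f-j^2f))$. Every member has $2$-jet $xy+y^2$, so the computation above applies verbatim for each $t$: each $F_k^t$ is finitely $\mathcal A$-determined with $\mu(\mathcal D_j^t)=0$ and $\mathcal D_j^t\cdot\mathcal D_{j'}^t=1$, both constant in $t$. Corollary \ref{propTopTrivAndInvs} then makes the family topologically trivial, whence $F_k\sim_{\mathrm{top}}{\bf M}^k_1$; the asserted values of $C$ and $T$ for $F_k$ also follow from this equivalence through Proposition \ref{propTopInvs}. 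The main obstacle is precisely this last step, as it is what genuinely invokes the topological-triviality criterion; what makes it go through is that the entire smooth-and-transversal configuration of the $\mathcal D_j$ is governed by the $2$-jet alone, hence is automatically preserved along the homotopy.
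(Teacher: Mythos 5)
Your proposal is correct and follows essentially the same route as the paper's proof: the linear parts $a_{21}x+(1+\xi^j)a_{22}y$ of the $\lambda_j$, transversality of the branches, Proposition \ref{prop:Mukfoldingmaps} for $\mu(\mathcal D)=(k-2)^2$ and finite determinacy, and topological triviality via Corollary \ref{propTopTrivAndInvs}, with $C$ and $T_{j,j'}$ computed on the model. The only (immaterial) difference is that you normalise the $2$-jet and write down an explicit affine path to ${\bf M}^k_1$, where the paper simply invokes path-connectedness of the stratum $\{a_{11}=0,\,a_{21}a_{22}\neq 0\}$.
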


\begin{proof}
The functions defining the branches ${\mathcal D}_j$ of the double point curve (see Theorem \ref{theo:decom_D(F_k)}) 
are given by 
$
\lambda_j=a_{21}x+(1+\xi^j)a_{22}y+O(2),
$
 where $O(l)$ denotes a remainder of order $l$. 
Clearly, all of the branches ${\mathcal D}_j$ are regular curves. As the scalars $1+\xi^j$ are pairwise distinct, the space
${\mathcal D}={\mathcal D}(F_k)=\bigcup_j{\mathcal D}_j$ consists of $k-1$ regular curves intersecting transversally at the origin.

We have $\mu({\mathcal D}_j)=0$ and ${\mathcal D}_j\cdot {\mathcal D}_{j'}=1$, and from Proposition \ref{prop:Mukfoldingmaps} we obtain  $\mu({\mathcal D})=(k-2)^2$. By Theorem \ref{theo:mu(D)FinitDet}, 
any germ $F_k$ satisfying the conditions in the statement of the theorem is finitely $\mathcal A$-determined. 
These germs form a stratum of codimension 1 defined by $\{a_{11}=0,a_{21}a_{22}\neq 0\}$. Since this stratum is path connected, 
we conclude by Theorem \ref{theo:BobPe} that it consists of topologically equivalent germs. 
We choose for a topological model the germ ${\bf M}^k_1$ given in the statement of the theorem.

By Proposition \ref{propTopTrivAndInvs}, it is enough 
to compute $C$ and $T_{j,j'}$ for ${\bf M}^k_1$ as these invariants are constant along the stratum. 
We have 
$C=\dim_\C {\cO_2}/{\langle y^{k-1},x+2y\rangle}=k-1$ and $T_{j,j'}=\dim_\C {\cO_2}/{\langle x+(1+\xi^j)y,1\rangle}=0.$
\end{proof}

\begin{rem}
{\rm 
The singularity ${\bf M}^4_1$ is the $\mathcal A$-simple singularity $C_3$ {\rm (}see the proof of {\rm Proposition \ref{prop:modality})}.
}
\end{rem}

The proofs for the cases in the remaining branches follow by similar arguments used in the proof of Theorem \ref{theo:a11=0} (except for the calculations of $T$). To avoid repetition, we highlight only key differences in each case.
The notation for the conditions that define the strata are those indicated in the tables.


\subsubsection{Branch 2: $a_{11}=a_{21}=0$, $a_{22}\ne 0$}\label{secCase1}

\begin{theo}\label{theo:Branch1}
The strata of codimension $\le 4$ of finitely $\mathcal A$-determined $k$-folding map-germs in the branch $a_{11}=a_{21}=0$, $a_{22}\ne 0$ are those given in {\rm Table \ref{tab:ClassifPr}}.
The invariants associated to the germs in each stratum are given in {\rm Table \ref{TableInvariantsCase1}}.
\end{theo}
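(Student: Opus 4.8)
The plan is to follow exactly the template established in the proof of Theorem \ref{theo:a11=0}, treating each stratum of Table \ref{tab:ClassifPr} in turn. For a germ $F_k$ in Branch 2, we have $a_{11}=a_{21}=0$ and $a_{22}\neq 0$, so by \eqref{eq:Taylorlambdaj} the defining functions of the branches of the double point curve start as $\lambda_j=(1+\xi^j+\cdots+\xi^{(s-1)j})a_{qs}x^{q-s}y^{s-1}+\cdots$, i.e. $\lambda_j=\vartheta_{2j}a_{22}y+O(2)$. The first decisive observation, via Lemma \ref{lem:vartheta_js}, is that $\vartheta_{2j}=0$ precisely when $k\mid 2j$, which for $k$ even happens at the single index $j=k/2$ and for $k$ odd never happens. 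Thus the branch $\mathcal D_{k/2}$ (when $k$ is even) is qualitatively different from the others, and this is what forces the distinction between the even and odd cases, and between the various strata, in Table \ref{tab:ClassifPr}.

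The core of the argument is a sequence of lemma-type computations, one per stratum, determining $\mu(\mathcal D_j)$ and the pairwise intersection multiplicities $\mathcal D_j\cdot\mathcal D_{j'}$ under the successive vanishing conditions listed in the table. First I would substitute the Taylor expansion \eqref{Taylorf} into \eqref{eq:Taylorlambdaj}, keeping enough terms, and read off the singularity type of each $\lambda_j$: for the generic indices $j\neq k/2$ the branches are regular ($\mu(\mathcal D_j)=0$), while for $j=k/2$ the leading $y$-coefficient vanishes and one must descend to higher-order terms, where the open conditions in the table (expressed through the $\vartheta_{sj}$ and the $a_{qs}$) guarantee finiteness of the relevant Milnor number. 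Once $\mu(\mathcal D_j)$ and $\mathcal D_j\cdot\mathcal D_{j'}$ are known, finite $\mathcal A$-determinacy follows from Theorem \ref{theo:mu(D)FinitDet} via Proposition \ref{prop:Mukfoldingmaps}, and $\mu(\mathcal D)$ is assembled by that same proposition. Path-connectedness of each stratum together with Proposition \ref{theo:BobPe} (through Corollary \ref{propTopTrivAndInvs}) then yields that the stratum consists of a single topological class, for which I choose the stated normal form ${\bf M}^k_l$, ${\bf N}^k_l$, ${\bf O}^k_4$, etc.

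For the invariant table (Table \ref{TableInvariantsCase1}) it suffices, by Corollary \ref{propTopTrivAndInvs}, to evaluate $C$, $\mu(\mathcal D)$, $r(\mathcal D)$ and the $T_{j,j'}$ on the chosen normal form, since these are constant along the stratum. The number of cross-caps $C=\dim_\C\cO_2/\langle y^{k-1},f_y\rangle$ and $r(\mathcal D)=\sum_j r(\mathcal D_j)$ are immediate, and $T=\tfrac13\sum_{j<j'}T_{j,j'}$ reduces to computing $\dim_\C\cO_2/\langle\lambda_j,\lambda_{j,j'}\rangle$ for the normal form.

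The main obstacle I expect is the case analysis driven by the arithmetic of $k$. Because the vanishing pattern of the coefficients $\vartheta_{sj}$ depends on divisibility relations between $k$ and the exponents $s$ (Lemma \ref{lem:vartheta_js}), the singularity type of the distinguished branch $\mathcal D_{k/2}$ and the contact orders $\mathcal D_j\cdot\mathcal D_{j'}$ vary with the parity of $k$ and with congruence conditions, so the strata must be split accordingly and the open conditions ($CndH_2$, $CndUm_8$, and their analogues) must be verified to be exactly the nondegeneracy hypotheses ensuring reducedness of $\mathcal D$. Keeping this bookkeeping uniform across all strata of codimension $\le 4$, while correctly identifying which normal form each reduced double-point configuration corresponds to, is the delicate part; the individual intersection-multiplicity computations, guided by Remarks following Proposition \ref{prop:Mukfoldingmaps}, are then routine.
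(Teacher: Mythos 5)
Your proposal is correct and is essentially the paper's own proof: the theorem is established there exactly as you describe, via the per-stratum Propositions \ref{prop:a11=a21=0}, \ref{prop:a11=a21=0a31ne0,keven} and \ref{prop:a11=a21=a31=a33=0,keven}, which use Lemma \ref{lem:vartheta_js} to single out the exceptional branch $\mathcal D_{k/2}$ when $k$ is even, compute the branch singularity types and pairwise contacts, and then combine Proposition \ref{prop:Mukfoldingmaps}, Theorem \ref{theo:mu(D)FinitDet} and Corollary \ref{propTopTrivAndInvs} with connectedness of each stratum, finishing with the invariants evaluated on the normal forms. The only minor divergence is the triple-point count: the paper gets $T=0$ for the entire branch at once by observing that $(x,y)\mapsto(x,f(x,y))$ is finite and generically two-to-one when $a_{22}\ne 0$, whereas you evaluate the $T_{j,j'}$ on the normal forms; both routes give $T=0$.
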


\begin{proof}
The result follows from Propositions \ref{prop:a11=a21=0}, 
 \ref{prop:a11=a21=0a31ne0,keven} and \ref{prop:a11=a21=a31=a33=0,keven}.
\end{proof}

\begin{table}[htp]
\footnotesize{
\begin{center}
\caption{Strata of codimension $\le 4$ in Branch 2.}
\begin{tabular}{llc}
\hline 
Name & Defining equations and open condition& Codim\\
&together with $a_{11}=a_{21}=0$, $a_{22}\ne0$&\\
\hline 
${\bf M}^{k}_2$, $2\nmid k$& $a_{31}\ne 0$ & 2\\ 
${\bf M}^{k}_2$, $2\mid k$& $a_{33}a_{31}\ne 0$ & 2\\
${\bf M}^{k}_3$, $2\nmid k$& $a_{31}=0$, $a_{41}\ne 0$ & 3\\
${\bf M}^{k}_3$, $2\mid k$& $a_{31}=0$, $a_{33}a_{41}\ne 0$ & 3\\
${\bf M}^{k}_4$, $2\nmid k$& $a_{31}=a_{41}=0$, $a_{51}\ne 0$ & 4\\
${\bf M}^{k}_4$, $2\mid k$& $a_{31}=a_{41}=0$, $a_{33}a_{51}\ne 0$ & 4\\
${\bf N}^{k}_3$, $2\mid k$& $a_{33}=0$, 
$a_{31}\ne 0$, ${CndNA}_3\ne 0$ & 3\\
${\bf N}^{k}_4$, $2\mid k$& $a_{33}={CndNA}_3=0$, $a_{31}\ne 0$, 
${CndNA}_5\ne 0$ & 4\\
${\bf O}^{k}_4$, $2\mid k$& $a_{31}=a_{33}=0$, $a_{41}a_{43}\ne 0$&4\\
\hline
\end{tabular}
$
\begin{array}{l}
{CndNA}_3=a_{43}^2-4a_{31}a_{55}\\
{CndNA}_5=8a_{31}^3a_{77}-4a_{65}a_{43}a_{31}^2+ 2a_{53}a_{43}^2a_{31}-
a_{41}^2a_{43}^3
\end{array}
$
\label{tab:ClassifPr}
\end{center}
}
\end{table}

For germs in this branch, the map-germ $(x,y)\mapsto(x,f(x,y))$ is finite and generically two-to-one. 
Therefore, $T=0$ for any finitely $\mathcal A$-determined map-germ in this branch. 

The germs of the functions defining the double point branch ${\mathcal D}_j$ is given by
\[
\lambda_j={\vartheta_{2j}a_{22}y}+a_{31}x^2+\vartheta_{2j}a_{32}xy+\vartheta_{3j}a_{33}y^2+O(3).
\]

The branch ${\mathcal D}_j$ is thus regular if, and only if, 
$\vartheta_{2j}\ne 0$. 
By  Lemma \ref{lem:vartheta_js},  $\vartheta_{2j}=0$ when $k$ is even and $j=k/2$.

\begin{table}[tp]
	\begin{center}
		\caption{Topological invariants of germs in the strata in Table \ref{tab:ClassifPr}.}
		\footnotesize{
			\begin{tabular}{lcccc}
				\hline
				Name	&$C$ &	$T$ &	$\mu({\mathcal D})$ &	$r({\mathcal D})$
				\\
				\hline
				${\bf M}^{k}_{2}$, $2\nmid k$ & $2k-2$& 0&$2 (k-1)(k-2)-k+2$&$k-1$
				\\
				${\bf M}^{k}_{2}$, $2\mid k$& $2k-2$& 0&$(k-1)(k-2)+3-k$&$k$
				\\
				${\bf M}^{k}_{3}$, $2\nmid k$ & $3k-3$& 0&$3 (k-1)(k-2)-k+2$&$k-1$
				\\
				${\bf M}^{k}_{3}$, $2\mid k$& $3k-3$& 0&$(k-1)(k-2)+4-k$&$k-1$
				\\
				${\bf M}^{k}_{4}$, $2\nmid k$& $4k-4$& 0&$4 (k-1)(k-2)-k+2$&$k-1$
				\\
				${\bf M}^{k}_{4}$, $2\mid k$& $4k-4$& 0&$(k-1)(k-2)+5-k$&$k$
				\\
				${\bf N}^{k}_3$, $2\mid k$&$2k-2$& 	$0$		
				& 	$(2k-3)(k-2)+3$		
				& 	$k$
				\\
				${\bf N}^{k}_4$, $2\mid k$&$2k-2$& 	$0$		
				& 	$(2k-3)(k-2)+5$		
				& 	$k$
				\\
				${\bf O}^{k}_4$, $2\mid k$
				&	$3k-3$
				& 	$0$		
				& 	$(3k-4)(k-2)+4$
				& 	$k+1$\\
				\hline
			\end{tabular}
		}
		\label{TableInvariantsCase1}
	\end{center}
\end{table}

\begin{prop}\label{prop:a11=a21=0}
Any $k$-folding map-germ $F_k$ satisfying $a_{11}=a_{21}=\ldots=a_{l1}=0$, $a_{22}a_{(l+1)1}\neq 0$, 
for some $l\ge 2$, and $a_{33}\ne 0$ when $k=2p$, is finitely $\mathcal A$-determined and is topologically equivalent to 
$$
{\bf M}^{k}_{l}\colon(x,y)\mapsto(x,y^k,y^2+y^3+x^ly).
$$

The invariants $C$, $T$, $\mu(\mathcal D)$ and $r(\mathcal D)$ are as in {\rm Table \ref{tab:ClassifPr}}.
All the double point branches are regular curves except for the branch 
$\mathcal D_p$, when $k=2p$, which has an $A_{l-1}$-singularity.
We have ${\mathcal D}_j\cdot {\mathcal D}_{j'}=l$,  for all $j\ne j'$. 
 \end{prop}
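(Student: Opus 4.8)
The plan is to follow, almost verbatim, the template used in the proof of Theorem~\ref{theo:a11=0}: read off the branches $\mathcal{D}_j$ of the double point curve from the Taylor expansion of the $\lambda_j$, determine the singularity type of each branch and the mutual intersection multiplicities $\mathcal{D}_j\cdot\mathcal{D}_{j'}$, assemble $\mu(\mathcal{D})$ and $r(\mathcal{D})$ from Proposition~\ref{prop:Mukfoldingmaps}, deduce finite $\mathcal{A}$-determinacy from Theorem~\ref{theo:mu(D)FinitDet}, and finally use the connectedness of the defining stratum together with Corollary~\ref{propTopTrivAndInvs} to collapse the whole stratum to the single topological model $\mathbf{M}^k_l$.

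First I would substitute the hypotheses $a_{q1}=0$ for $q\le l$ and $a_{(l+1)1}\neq0$ into the expansion $\lambda_j=\vartheta_{2j}a_{22}y+a_{31}x^2+\vartheta_{2j}a_{32}xy+\vartheta_{3j}a_{33}y^2+O(3)$ recorded before the statement. Since $\vartheta_{1j}=1$, the lowest pure power of $x$ in every $\lambda_j$ is $a_{(l+1)1}x^l$. For every index $j$ with $\vartheta_{2j}\neq0$ (all $j$ when $k$ is odd, all $j\neq p$ when $k=2p$, by Lemma~\ref{lem:vartheta_js}(1)), the linear term $\vartheta_{2j}a_{22}y$ is nonzero, so $\mathcal{D}_j$ is a regular curve $y=\gamma_j(x)$ with $\gamma_j(x)=-\tfrac{a_{(l+1)1}}{\vartheta_{2j}a_{22}}x^l+O(x^{l+1})$, tangent to $\{y=0\}$, giving $\mu(\mathcal{D}_j)=0$. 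When $k=2p$, the remaining branch is governed by $\xi^p=-1$, whence $\vartheta_{sp}=0$ for even $s$ and $\vartheta_{sp}=1$ for odd $s$; this forces $\lambda_p$ to be a power series in $x$ and $y^2$ with lowest terms $a_{(l+1)1}x^l+a_{33}y^2$. Here the hypothesis $a_{33}\neq0$ is essential: by Weierstrass preparation in the variable $y^2$ one gets $\mathcal{D}_p:\ y^2=cx^l+\cdots$ with $c\neq0$, i.e.\ an $A_{l-1}$ singularity with $\mu(\mathcal{D}_p)=l-1$.

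Next I would compute the intersection numbers branch by branch. Two regular branches are mutually tangent to $\{y=0\}$, so their intersection multiplicity equals the order of $\gamma_j-\gamma_{j'}$; its leading coefficient is $\tfrac{a_{(l+1)1}}{a_{22}}\left(\vartheta_{2j'}^{-1}-\vartheta_{2j}^{-1}\right)$, which is nonzero because $\vartheta_{2j}=1+\xi^{j}$ is injective in $j$, so $\mathcal{D}_j\cdot\mathcal{D}_{j'}=l$. For a pair involving the singular branch I would substitute the parametrisation $(t,\gamma_j(t))$ of a regular $\mathcal{D}_j$ into $\lambda_p$: the term $a_{33}y^2$ then contributes order $2l$ while $a_{(l+1)1}x^l$ contributes order $l$, so again $\mathcal{D}_j\cdot\mathcal{D}_p=l$. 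Since every branch has an isolated singularity and distinct branches have distinct tangent data beyond the prescribed contact, no two share a component, so $\mathcal{D}$ is reduced; Theorem~\ref{theo:mu(D)FinitDet} gives finite $\mathcal{A}$-determinacy and Proposition~\ref{prop:Mukfoldingmaps} assembles $\mu(\mathcal{D})$ (with $r(\mathcal{D})=\sum_j r(\mathcal{D}_j)$ and $r(A_{l-1})\in\{1,2\}$ according to the parity of $l$). The invariant $T$ vanishes because $(x,y)\mapsto(x,f)$ is generically two-to-one, as noted before the statement, while $C=l(k-1)$ follows from the cross-cap formula applied to the model; these reproduce the entries of Tables~\ref{tab:ClassifPr} and~\ref{TableInvariantsCase1}.

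Finally, the conditions $a_{11}=a_{21}=\cdots=a_{l1}=0$, $a_{22}a_{(l+1)1}\neq0$ (together with $a_{33}\neq0$ when $k$ is even) cut out a connected stratum, namely a linear subspace of the jet space with a proper analytic subset removed, on which all the local data $\mu(\mathcal{D}_j)$ and $\mathcal{D}_j\cdot\mathcal{D}_{j'}$ are constant. Corollary~\ref{propTopTrivAndInvs} (through Proposition~\ref{theo:BobPe}) then yields topological triviality along any path in this stratum, so every such $F_k$ is topologically equivalent to the representative $\mathbf{M}^k_l$, which plainly satisfies the hypotheses. I expect the only real obstacle to be the even case: correctly identifying $\mathcal{D}_p$ as an $A_{l-1}$ singularity (this is exactly where $a_{33}\neq0$ enters) and verifying that this singular branch shares no component with the regular ones, so that the contact computation $\mathcal{D}_j\cdot\mathcal{D}_p=l$ and the resulting values of $\mu(\mathcal{D})$ and $r(\mathcal{D})$ remain valid.
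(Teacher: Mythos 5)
Your proposal is correct and follows essentially the same route as the paper's proof: parametrising the regular branches to get pairwise contact $l$, isolating the exceptional branch $\mathcal{D}_p$ for $k=2p$ (where only odd-$s$ terms survive, so $a_{33}\neq 0$ gives the $A_{l-1}$ singularity), computing $\mathcal{D}_j\cdot\mathcal{D}_p=l$ by substitution, and then invoking Theorem~\ref{theo:mu(D)FinitDet}, Proposition~\ref{prop:Mukfoldingmaps} and the connectedness of the stratum via Corollary~\ref{propTopTrivAndInvs} exactly as the paper does. The only difference is that you spell out the computations of $C$, $T$ and $r(\mathcal{D})$ that the paper handles in the text preceding the proposition, which is a harmless (and slightly more self-contained) elaboration of the same argument.
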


\begin{proof}
Fix an index $j$ and assume that $2\nmid k$ or that $k=2p$ but $j\neq p$. Then ${\mathcal D}_j$ is a regular curve and can be parametrised by 
$t\mapsto(t,\gamma_j(t))$, with  
$\gamma_j(t)=-\frac{a_{l+1,1}}{(1+\xi^j)a_{22}}t^{l}+O(l+1).$ 
Clearly, any two distinct branches have order of contact equal to  $l$. 

Suppose now that $k=2p$. As $\vartheta^2_{p}=0$, the coefficients of $x^sy$ in $\lambda_p$ vanish for all $s\ge 1$. Moreover, since $\vartheta^3_{p}=1$, the function $\lambda_p$ is of the form 
$\lambda_p=a_{l+1,1}x^l+a_{33}y^2+y^3h(y).$
This implies that $\lambda_p$ is $\mathcal R$-equivalent to $y^2+x^{l}$ if and only if $a_{33}\neq 0$, in which case it has an $A_{l-1}$-singularity. As for the contact between branches, 
$ord(\lambda_p(t,\frac{-a_{l+1,1}}{(1+\xi^j)a_{22}}t^{l}+O(l+1))=l,$ 
hence 
${\mathcal D}_j\cdot {\mathcal D}_{p}=l$. This determines $\mu(\mathcal D)$, and hence the topological triviality and the 
constancy of the invariants along the stratum. 
\end{proof}

\begin{rems}
	{\rm 
		1. Branch 1 can be considered as a particular case of the strata ${\bf M}^k_l$ in 
		Proposition \ref{prop:a11=a21=0} (the condition $a_{33}\ne 0$ is not needed when $l=1$).
		
		2. When $2\nmid k$, the term $y^3$ in ${\bf M}^k_l$ is irrelevant for topological equivalence. 
		We include it to represent both $k$ even and $k$ odd  by the same map-germ. 
		We do this for all subsequent topological normal forms. 
	}
\end{rems}

\begin{prop}\label{prop:a11=a21=0a31ne0,keven}
Suppose that $k=2p$. 
Any  $k$-folding map-germ $F_k$  satisfying $a_{11}=a_{21}=a_{33}=0$, $a_{22}a_{31}\ne 0$ and the additional
conditions in {\rm (a)} or {\rm (b)} below is finitely $\mathcal A$-determined and is topologically equivalent to one of the germs
$$
{\bf N}^k_l\colon(x,y)\mapsto(x,y^k,y^2+x^2y+y^{2l-1}), \, l=3,4.
$$

The invariants $\mu({\mathcal D(F_k)}), C,T, r({\mathcal D}(F_k))$  are as in {\rm Table \ref{TableInvariantsCase1}.}
The branches ${\mathcal D}_j$ are regular curves for all $j\ne p$ and
 ${\mathcal D}_j\cdot {\mathcal D}_{j'}=2$ for all $j\neq j'$. 

{\rm (a)} If ${CndNA}_3\ne 0$, then the branch ${\mathcal D}_p$ has an $A_{3}$-singularity and the map-germ is topologically equivalent to ${\bf N}^k_3$. 

{\rm (b)}  If ${CndNA}_3=0$ and ${CndNA}_5\ne 0$, then ${\mathcal D}_p$ has an $A_{5}$-singularity and the map-germ is topologically equivalent to ${\bf N}^k_4$. 
\end{prop}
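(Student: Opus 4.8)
The plan is to follow the template of the proof of Theorem \ref{theo:a11=0}: describe the branches $\mathcal{D}_j$ of the double point curve, pin down the singularity of the exceptional branch $\mathcal{D}_p$, compute $\mu(\mathcal{D})$ together with the intersection data, and then invoke the topological-triviality machinery. First I would write out the low-order part of $\lambda_j$ from \eqref{eq:Taylorlambdaj}. Under $a_{11}=a_{21}=a_{33}=0$ one has $\lambda_j=\vartheta_{2j}a_{22}y+a_{31}x^2+\vartheta_{2j}a_{32}xy+O(3)$. For $j\ne p$ we have $\vartheta_{2j}=1+\xi^j\ne 0$, so $\mathcal{D}_j$ is a regular curve tangent to the $x$-axis, parametrised by $t\mapsto(t,\gamma_j(t))$ with $\gamma_j(t)=-\frac{a_{31}}{(1+\xi^j)a_{22}}t^2+O(t^3)$. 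For $j=p$, since $k=2p$ gives $\xi^p=-1$ and hence $\vartheta_{sp}=0$ for $s$ even and $\vartheta_{sp}=1$ for $s$ odd, only the odd-$s$ terms of \eqref{eq:Taylorlambdaj} survive; thus $\lambda_p$ is a function of $(x,y^2)$ of order two with nonzero $x^2$-coefficient $a_{31}$, so it has an $A_{2m-1}$-singularity for some $m$.

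The technical heart, and the step I expect to be the main obstacle, is to identify $m$. Writing $u=y^2$ and $P(x,u)=\lambda_p$, the hypothesis $a_{31}\ne 0$ lets me apply the splitting lemma: there is a unique $x=\chi(u)$ solving $\partial P/\partial x=0$, and $P\sim_{\mathcal{R}}x^2+R(u)$ with $R(u)=P(\chi(u),u)$. A short computation gives $\chi(u)=-\frac{a_{43}}{2a_{31}}u+O(u^2)$ and shows $R$ has no constant or linear term, while its $u^2$-coefficient equals $-\frac{1}{4a_{31}}(a_{43}^2-4a_{31}a_{55})=-\frac{1}{4a_{31}}CndNA_3$. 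Hence if $CndNA_3\ne 0$ then $R=c_2u^2+\cdots$ with $c_2\ne 0$, so $\lambda_p\sim x^2+y^4$ is an $A_3$-singularity (case (a)). If $CndNA_3=0$, the $u^2$-term drops out and the $u^3$-coefficient of $R$ is, up to the nonzero factor $(8a_{31}^3)^{-1}$, equal to $CndNA_5$ (the contribution of the $u^2$-coefficient of $\chi$ does not enter, as it is multiplied by $2a_{31}\chi'(0)+a_{43}=0$); when $CndNA_5\ne 0$ this yields $\lambda_p\sim x^2+y^6$, an $A_5$-singularity (case (b)). The bookkeeping of these two coefficients is the only genuinely delicate part.

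It then remains to assemble the invariants. Since $\vartheta_{2j}\notin\{0,1\}$ for $j\ne p$, Lemma \ref{lemVarthetaZeroOrOne}(3) shows the leading coefficients $-a_{31}/((1+\xi^j)a_{22})$ are pairwise distinct, so any two regular branches meet with order of contact $\operatorname{ord}(\gamma_j-\gamma_{j'})=2$; substituting a parametrisation of $\mathcal{D}_j$ into $\lambda_p$ likewise gives $\mathcal{D}_j\cdot\mathcal{D}_p=2$. Thus $\mathcal{D}_j\cdot\mathcal{D}_{j'}=2$ for all $j\ne j'$, every $\mathcal{D}_j$ with $j\ne p$ is smooth, and $\mathcal{D}_p$ contributes $\mu=2l-3$ and two branches. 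Feeding these into Proposition \ref{prop:Mukfoldingmaps} reproduces the values of $\mu(\mathcal{D})$ in Table \ref{TableInvariantsCase1} and, via Theorem \ref{theo:mu(D)FinitDet}, gives finite $\mathcal{A}$-determinacy (each branch has an isolated singularity and no two share a component, so $\mathcal{D}$ is reduced), while $r(\mathcal{D})=(k-2)+2=k$.

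Finally, the strata (a) and (b) are complements of proper hypersurfaces inside a linear subspace (respectively inside the irreducible quadric $CndNA_3=0$), hence path-connected, and $\mu(\mathcal{D})$ is constant along each. Corollary \ref{propTopTrivAndInvs} then makes every germ in stratum (a) (resp. (b)) topologically equivalent to ${\bf N}^k_3$ (resp. ${\bf N}^k_4$), once one checks that these normal forms satisfy the defining conditions, which is immediate: for ${\bf N}^k_3$ one has $a_{31}=a_{55}=1$, $a_{43}=0$, so $CndNA_3=-4\ne 0$, and for ${\bf N}^k_4$ one has $a_{31}=a_{77}=1$ and $a_{43}=a_{55}=a_{41}=0$, so $CndNA_3=0$, $CndNA_5=8\ne 0$. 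The value $T=0$ holds for the whole branch, and $C=\dim_\C\cO_2/\langle y^{k-1},f_y\rangle=2k-2$ follows for the normal form by eliminating $y\equiv-\frac12 x^2$ from $f_y=2y+x^2+(2l-1)y^{2l-2}$, which reduces the quotient to $\C[x]/(x^{2k-2})$.
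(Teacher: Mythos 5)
Your proof is correct and follows essentially the same route as the paper's: the paper's (much terser) argument likewise writes out $\lambda_p=a_{31}x^2+a_{41}x^3+a_{43}xy^2+a_{51}x^4+a_{53}x^2y^2+a_{55}y^4+O(5)$, identifies the $A_3$/$A_5$ conditions with $CndNA_3\ne 0$ and $CndNA_5\ne 0$, and obtains $\mathcal{D}_j\cdot\mathcal{D}_p=2$ by substituting the parametrisation of $\mathcal{D}_j$ into $\lambda_p$, exactly as you do; your splitting-lemma computation simply makes explicit what the paper compresses into ``a calculation shows.'' One point worth flagging: your cubic coefficient $8a_{31}^3a_{77}-4a_{65}a_{43}a_{31}^2+2a_{53}a_{43}^2a_{31}-a_{41}a_{43}^3$ disagrees with the paper's printed $CndNA_5$, whose last term is $-a_{41}^2a_{43}^3$; since every other term of that expression has four factors and the same weighted degree as your $-a_{41}a_{43}^3$ (while $-a_{41}^2a_{43}^3$ breaks both patterns), your linear-in-$a_{41}$ term appears to be the correct one and the printed formula a typo, so this is not a gap in your argument.
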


\begin{proof}
We have 
$
\lambda_{p}=
a_{31}x^2+a_{41}x^3+a_{43}xy^2+a_{51}x^4+ a_{53}x^2y^2+a_{55}y^4 +O(5).
$
It has an $A_3$-singularity if, and only if, ${CndNA}_3\ne 0$. 
When ${CndNA}_3=0$, we need to consider the $7$-jet of $\lambda_p$. A calculation 
shows that $\lambda_p$ has an $A_5$-singularity if, and only if, ${CndNA}_5\ne 0$.
In both cases, we have  $ord(\lambda_p(t,-{a_{31}}/{((1+\xi^j)a_{22})}t^{2}+O(3)))=2$ for all $j\ne p$.
\end{proof}

\begin{prop}\label{prop:a11=a21=a31=a33=0,keven}
Suppose that $k=2p$. Any $k$-folding map-germ $F_k$ satisfying  $a_{11}=a_{21}=a_{31}=a_{33}=0$ and $a_{22}a_{41}a_{43}\ne 0$ is finitely $\mathcal A$-determined and is topologically equivalent to 
$$
{\bf O}^{k}_4\colon (x,y)\mapsto (x,y^k,y^2+x^3y+xy^3).
$$

The codimension of the stratum is $4$ and the invariants
$\mu({\mathcal D}(F_k)), C,T,$ $r({\mathcal D}(F_k))$ are as in {\rm Table \ref{TableInvariantsCase1}.}
The branches ${\mathcal D}_j,$ $j\ne p$, are regular curves and 
${\mathcal D}_p$ has a $D_{4}$-singularity. 
We have \mbox{${\mathcal D}_j\cdot {\mathcal D}_{j'}=2$} for the distinct regular branches 
and 
\mbox{${\mathcal D}_j\cdot {\mathcal D}_{p}=3$}, for $j\ne p$. 

\end{prop}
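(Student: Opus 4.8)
The plan is to follow the template already used for Theorem \ref{theo:a11=0} and Propositions \ref{prop:a11=a21=0}--\ref{prop:a11=a21=0a31ne0,keven}: read off the branches $\mathcal{D}_j$ of the double point curve from \eqref{eq:Taylorlambdaj}, identify their singularity types and mutual contact, feed these into Proposition \ref{prop:Mukfoldingmaps} to obtain $\mu(\mathcal{D})$, deduce finite $\mathcal{A}$-determinacy and topological triviality along the stratum, and finally evaluate the remaining invariants on the normal form ${\bf O}^k_4$. First I would substitute the stratum conditions $a_{11}=a_{21}=a_{31}=a_{33}=0$ into \eqref{eq:Taylorlambdaj}, obtaining
\[
\lambda_j=\vartheta_{2j}a_{22}y+\vartheta_{2j}a_{32}xy+\bigl(a_{41}x^3+\vartheta_{2j}a_{42}x^2y+\vartheta_{3j}a_{43}xy^2+\vartheta_{4j}a_{44}y^3\bigr)+O(4).
\]
The decisive input is Lemma \ref{lemVarthetaZeroOrOne}: since $k=2p$ one has $\vartheta_{2p}=\vartheta_{4p}=0$ and $\vartheta_{3p}=1$, whereas $\vartheta_{2j}\neq 0$ for every $j\neq p$.

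For $j\neq p$ the linear term $\vartheta_{2j}a_{22}y$ is nonzero, so $\mathcal{D}_j$ is a regular curve; moreover, because $a_{21}=a_{31}=0$ there is no pure $x$-monomial below $a_{41}x^3$, so solving $\lambda_j=0$ for $y$ yields a parametrisation $t\mapsto(t,\gamma_j(t))$ with $\gamma_j(t)=-\tfrac{a_{41}}{\vartheta_{2j}a_{22}}t^3+O(t^4)$, and all regular branches are mutually tangent to $\{y=0\}$ to order three. For $j=p$ the vanishing of $\vartheta_{2p}$ and of $a_{31},a_{33}$ collapses $\lambda_p$ to leading cubic part $a_{41}x^3+a_{43}xy^2=x(a_{41}x^2+a_{43}y^2)$, a product of three distinct lines since $a_{41}a_{43}\neq0$; such a cubic is $3$-$\mathcal{R}$-determined and defines a $D_4$-singularity with three smooth branches. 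I would then read the contact orders off the parametrisations: the leading coefficients $-a_{41}/(\vartheta_{2j}a_{22})$ are pairwise distinct, so $\operatorname{ord}(\gamma_j-\gamma_{j'})=3$, and substituting $\gamma_j$ into $\lambda_p$ gives $\mathcal{D}_j\cdot\mathcal{D}_p=3$ as well. Inserting $\mu(\mathcal{D}_j)=0$ for $j\neq p$, $\mu(\mathcal{D}_p)=4$, and these contact orders into Proposition \ref{prop:Mukfoldingmaps} reproduces the value of $\mu(\mathcal{D})$ recorded in Table \ref{TableInvariantsCase1}; since every $\mathcal{D}_j$ is reduced and no two branches share a component, $\mathcal{D}$ is reduced and $F_k$ is finitely $\mathcal{A}$-determined by Theorem \ref{theo:mu(D)FinitDet}.

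Finally, the stratum is cut out by four linear equations together with the open condition $a_{22}a_{41}a_{43}\neq0$, hence it is smooth of codimension $4$ and path-connected; along it $\mu(\mathcal{D})$ is constant, so Proposition \ref{theo:BobPe} (via Corollary \ref{propTopTrivAndInvs}) renders every germ topologically trivial to the model ${\bf O}^k_4$, establishing the topological equivalence and the constancy of $C,T,r(\mathcal{D})$. On $f=y^2+x^3y+xy^3$ one gets $T=0$ from the general Branch~2 observation that $(x,y)\mapsto(x,f(x,y))$ is generically two-to-one, while $C=\dim_{\C}\mathcal{O}_2/\langle y^{k-1},\,2y+x^3+3xy^2\rangle=3(k-1)$ after eliminating $y\sim-\tfrac12 x^3$, and $r(\mathcal{D})=(k-2)+3=k+1$ by counting branches. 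The main obstacle I anticipate is the careful bookkeeping of the constants $\vartheta_{sp}$, which is what forces the low-order terms of $\lambda_p$ to vanish and exposes the cubic responsible for the $D_4$-point; once that is secured, the contact-order computations and the determinacy and triviality arguments are routine adaptations of the earlier cases.
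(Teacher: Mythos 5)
Your proposal is correct and follows essentially the same route as the paper, whose entire proof is the one-line observation that $\lambda_p=a_{41}x^3+a_{43}xy^2+O(4)$ together with the template of the preceding Branch 2 propositions; you simply execute that template in full (regular branches for $j\neq p$, the $D_4$-point on $\mathcal{D}_p$, contact orders, Proposition \ref{prop:Mukfoldingmaps}, connectedness of the stratum plus Corollary \ref{propTopTrivAndInvs}).

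One point must be made explicit, however. Your computation gives $\mathcal{D}_j\cdot\mathcal{D}_{j'}=3$ for distinct regular branches, while the proposition as stated claims $\mathcal{D}_j\cdot\mathcal{D}_{j'}=2$; you leave this discrepancy unremarked. Your value is the correct one, and the clause in the statement is an error in the paper. Indeed, for any $j\neq j'$ the difference $\lambda_j-\lambda_{j'}$ has no pure-$x$ terms (the $s=1$ terms carry $\vartheta_{1j}=\vartheta_{1j'}=1$ and cancel), so $\lambda_j-\lambda_{j'}=y\,h$ with $h$ a unit whose constant term is $(\vartheta_{2j}-\vartheta_{2j'})a_{22}\neq 0$; hence $\langle\lambda_j,\lambda_{j'}\rangle=\langle y,\,a_{41}x^3+O(x^4)\rangle$ has colength $3$, for regular--regular pairs just as for regular--$\mathcal{D}_p$ pairs. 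This is also the only value consistent with the paper's own Table \ref{TableInvariantsCase1}: with all $\binom{k-1}{2}$ contacts equal to $3$ and $\mu(\mathcal{D}_p)=4$, Proposition \ref{prop:Mukfoldingmaps} yields $\mu(\mathcal{D})=4+3(k-1)(k-2)-k+2=(3k-4)(k-2)+4$, the tabulated value, whereas taking contact $2$ among the regular branches would give $2k^2-5k+6$, which disagrees with the table for every $k$. So your proof in fact establishes the corrected statement; when writing it up you should say explicitly that the intersection number for regular pairs is $3$, not $2$, rather than appearing to contradict the claim you set out to prove.
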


\begin{proof}
The result follows from the fact that $\lambda_{p}=a_{41}x^3+a_{43}xy^2+O(4).$
\end{proof}


\subsubsection{Branch 3: $a_{11}=a_{22}=0$, $a_{21}\ne 0$}\label{secCase2}

\begin{theo}\label{theo:Branch2}
The strata of codimension $\le 4$ of finitely $\mathcal A$-determined $k$-folding map-germs in the branch $a_{11}=a_{22}=0$, $a_{21}\ne 0$ are those given in {\rm Table \ref{tab:ClassifAs}}. 
The invariants of the germs in each stratum are given in {\rm Table \ref{tab:invAs_strata}}.
\end{theo}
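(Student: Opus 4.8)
The plan is to follow the template of Theorems~\ref{theo:a11=0} and~\ref{theo:Branch1}, splitting the statement into one proposition per stratum (${\bf P}^k_l$ for $l=2,3,4$; ${\bf Q}^k_3$; ${\bf Q}^k_4$; $\widetilde{\bf Q}^k_4$; ${\bf R}^k_4$) and reading every invariant off the double point curve. For a general germ in this branch, $a_{11}=a_{22}=0$ and $a_{21}\ne0$, so formula~\eqref{eq:Taylorlambdaj} gives $\lambda_j=a_{21}x+a_{31}x^2+\vartheta_{2j}a_{32}xy+\vartheta_{3j}a_{33}y^2+O(3)$, with leading term $a_{21}x$. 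Thus every branch $\mathcal D_j$ is a regular curve tangent to the $y$-axis, whence $\mu(\mathcal D_j)=0$ and $r(\mathcal D)=k-1$. The corresponding topological representatives have the prenormal form $(x,y^k,xy+g(y))$ with $g\in\mathfrak m_1^3$; reordering the target coordinates identifies these with a generalisation of Mond's $H$-singularities in which the fold order $3$ is replaced by $k$, in agreement with the fact that the only simple $3$-folding germs of this branch are the $H_s$ of Theorem~\ref{theo:k=3SingReflmaps}.

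Writing $\mathcal D_j$ as a graph $x=\phi_j(y)=O(y^2)$, the contact $\mathcal D_j\cdot\mathcal D_{j'}=\mathrm{ord}(\phi_j-\phi_{j'})$ is governed to leading order by the pure-power coefficients $a_{ss}$ and by the arithmetic of the $\vartheta_{sj}$ in Lemma~\ref{lem:vartheta_js}: for the first degeneracy it equals $s_0-1$, where $s_0$ is the least $s\ge3$ with $a_{ss}(\vartheta_{sj}-\vartheta_{sj'})\ne0$, while the finer strata also bring in the mixed coefficients $a_{qs}$ recorded in Table~\ref{tab:ClassifAs}. Since $\mu(\mathcal D_j)=0$, Proposition~\ref{prop:Mukfoldingmaps} reduces the Milnor number to $\mu(\mathcal D)=2\sum_{j<j'}\mathcal D_j\cdot\mathcal D_{j'}-k+2$. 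I would organise the classification by the order of the first non-vanishing $a_{ss}$: order $3$ produces the ${\bf P}^k_l$ strata, order $4$ the ${\bf Q}^k_3$, ${\bf Q}^k_4$ and $\widetilde{\bf Q}^k_4$ strata, and order $5$ the ${\bf R}^k_4$ stratum. The conditions of Table~\ref{tab:ClassifAs} are exactly those keeping every $\mathcal D_j\cdot\mathcal D_{j'}$, and hence $\mu(\mathcal D)$, finite; by Theorem~\ref{theo:mu(D)FinitDet} the germ is then finitely $\mathcal A$-determined, which holds precisely when no two $\phi_j$ coincide.

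To reach the topological normal forms I would verify that each stratum is path-connected and that the numbers $\mu(\mathcal D_j)$ and $\mathcal D_j\cdot\mathcal D_{j'}$ are constant along it --- the $\mathcal A$-moduli present for $k\ge4$ being harmless, since they fix these contact data --- and then invoke Corollary~\ref{propTopTrivAndInvs} (equivalently Proposition~\ref{theo:BobPe}) to conclude topological triviality, so that every germ of the stratum is topologically equivalent to the stated representative. Finally I would evaluate the entries of Table~\ref{tab:invAs_strata}: since $f_y=x+g'(y)$ on a representative, $C=\dim_\C \mathcal{O}_2/\langle y^{k-1},x+g'(y)\rangle=k-1$, while $T=\tfrac13\sum_{j<j'}T_{j,j'}$ with $T_{j,j'}=\dim_\C \mathcal{O}_2/\langle\lambda_j,\lambda_{j,j'}\rangle$ and $\lambda_{j,j'}=(\lambda_j-\lambda_{j'})/y$.

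The main obstacle is the case $3\mid k$, together with its interaction with the parity of $k$. By Lemma~\ref{lem:vartheta_js} one has $\vartheta_{3j}=0$ exactly for $j=k/3$ and $j=2k/3$, so when the leading pure power is $y^3$ these two branches lose their order-$2$ tangency with the others and acquire higher contact; this is the origin of the new $H_3$-behaviour announced in the introduction and forces a dedicated analysis of $\mathcal D_{k/3}$, $\mathcal D_{2k/3}$ and of their contact with the generic branches, while $\vartheta_{3,k/2}=1$ (for $k$ even) distinguishes one further branch. The second delicate point is the computation of $T$. In Branches~1 and~2 one had $T=0$, because the branches met transversally or the map was generically two-to-one; here all branches are mutually tangent, so each $T_{j,j'}=\dim_\C \mathcal{O}_2/\langle\lambda_j,\lambda_{j,j'}\rangle$ is a genuine two-variable intersection number that must be computed stratum by stratum through the constants $\vartheta_{sj}$.
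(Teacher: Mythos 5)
Your overall framework is the same as the paper's: one proposition per family, organised by the first non-vanishing pure coefficient $a_{ss}$ ($s=3,4,5$ giving the ${\bf P}$, ${\bf Q}$, ${\bf R}$ strata); every branch ${\mathcal D}_j$ is a regular graph $x=\phi_j(y)$, so $\mu({\mathcal D}_j)=0$, $r({\mathcal D})=k-1$ and, by Proposition \ref{prop:Mukfoldingmaps}, $\mu({\mathcal D})=2\sum_{j<j'}{\mathcal D}_j\cdot{\mathcal D}_{j'}-k+2$; finite determinacy comes from reducedness of ${\mathcal D}$, topological triviality from constancy of the contacts via Corollary \ref{propTopTrivAndInvs}, $C=k-1$, and $T$ from the numbers $T_{j,j'}$. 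The gap is in your concrete rule for the contacts, which fails exactly where the content of Table \ref{tab:ClassifAs} lies. You claim the first-degeneracy contact equals $s_0-1$, with $s_0$ the least $s\ge 3$ such that $a_{ss}(\vartheta_{sj}-\vartheta_{sj'})\ne 0$, the mixed coefficients $a_{qs}$ entering only in ``finer strata''. That is correct for non-exceptional pairs, but for the exceptional pairs the mixed coefficients enter already at the first degeneracy. Concretely, for $k=3p$ and the pair $(p,2p)$ in the ${\bf P}$ family, $\vartheta_{3p}=\vartheta_{3(2p)}=0$ and $\vartheta_{4p}=\vartheta_{4(2p)}=1$, so the cubic coefficients of $\phi_p$ and $\phi_{2p}$ coincide (both equal to a multiple of $a_{44}$), and their quartic coefficients differ by $(\vartheta_{2p}-\vartheta_{2(2p)})\,CndH_2/a_{21}^2$ with $CndH_2=a_{32}a_{44}-a_{21}a_{55}$; hence ${\mathcal D}_p\cdot{\mathcal D}_{2p}=4$ if and only if $CndH_2\ne 0$, whereas your rule predicts the condition $a_{55}\ne 0$ (your $s_0$ is $5$, since $\vartheta_{5p}=\vartheta_{2p}\ne\vartheta_{2(2p)}=\vartheta_{5(2p)}$), which is neither necessary nor sufficient. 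Executed as written, your plan gives wrong defining conditions already for ${\bf P}^k_2$ when $3\mid k$, and likewise misses $CndH_3$, $CndH_4$, $CndQm_5$, $CndQm_6$, $CndRm_5$ below it; the paper gets these by expanding the parametrisations $\gamma_j$ with all mixed terms included.

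The second problem is the combinatorics of the exceptional pairs, which you misidentify. When $3\mid k$, the branches ${\mathcal D}_{k/3}$ and ${\mathcal D}_{2k/3}$ do not ``lose their order-$2$ tangency with the others'': their contact with every generic branch remains $2$, and what requires dedicated analysis is their contact with each other. Moreover, the further exceptional behaviour is not governed by the parity of $k$ or by $\vartheta_{3,k/2}=1$: in the ${\bf P}$ family the branch $j=k/2$ is not exceptional at all, because by Lemma \ref{lem:vartheta_js} no other index $j'$ satisfies $\vartheta_{3j'}=1$, so no exceptional pair is created and indeed the ${\bf P}$ strata in Table \ref{tab:ClassifAs} carry no parity conditions. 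What actually happens is that divisibility of $k$ by $4$ creates the mutually exceptional triple $\{k/4,k/2,3k/4\}$ (where $\vartheta_{4j}=0$), relevant in the ${\bf Q}$ and ${\bf R}$ families, and divisibility by $5$ the quadruple $\{k/5,2k/5,3k/5,4k/5\}$, relevant in the ${\bf R}$ family, with the cases $12\mid k$ and $20\mid k$ superposing two kinds of exceptional sets; these, together with the conditions $CndQm_5$, $CndQm_6$, $CndRm_5$, are what produce the splitting into ${\bf Q}^k_3$, ${\bf Q}^k_4$, $\widetilde{\bf Q}^k_4$ and the four sub-cases of ${\bf R}^k_4$, as well as the corresponding values of $\mu({\mathcal D})$ and $T$ in Table \ref{tab:invAs_strata}. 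Without correcting these two points your outline cannot reproduce either table.
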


\begin{proof} The result follows from 
Propositions \ref{prop:a_{11}=a_{22}=0a_{21}ne0},
\ref{prop:a_{11}=a_{22}=a_{33}=0a_{21}ne0,4ndivk} and \ref{prop:a_{11}=a_{22}=a_{33}=a{44}=0a_{21}ne0,4ndivk}. 
\end{proof}

\begin{table}[tp]
\begin{center}
\caption{Strata of codimension $\le 4$ in Branch 3.}
\footnotesize{
\begin{tabular}{llc}
\hline 
Name & Defining equations and open conditions& Codim\\
&together with $a_{11}=a_{22}=0$, $a_{21}\ne0$&\\
\hline 
${\bf P}^k_2$, $3\nmid k$&  $a_{33}\ne 0$&2\\
${\bf P}^{k}_{2}$, $3\mid k$& $a_{33}\ne 0$, $CndH_2\ne 0$&2\\
${\bf P}^{k}_{3}$, $3\mid k$& $a_{33}\ne 0$, $CndH_2=0$, $CndH_3\ne 0$&3\\
${\bf P}^{k}_{4}$, $3\mid k$& $a_{33}\ne 0$, $CndH_2=CndH_3=0$,  $CndH_4\ne 0$&4
\\
${\bf Q}^{k}_{3}$, $3\nmid k$, $4\nmid k$& $a_{33}=0$, $a_{44}\ne 0$  &3\\         
${\bf Q}^{k}_{3}$, $3\mid k$,  $4\nmid k$& $a_{33}=0$, $a_{44}\ne 0, CndH_2\ne 0$  &3\\
${\bf Q}^{k}_{3}$, $3\nmid k$, $4\mid k$& $a_{33}=0$, $a_{44}\ne 0$, 
$CndQm_5\ne 0$  &3\\    
${\bf Q}^{k}_{3}$, $12\mid k$& $a_{33}=0$, $a_{44}\ne 0$, 
$CndH_2\ne 0$, $CndQm_5\ne 0$  &3\\
${\bf Q}^{k}_{4}$, $3\mid k$,  $4\nmid k$& $a_{33}=CndH_2=0$, $a_{44}\ne 0$, $CndH_3\ne 0$  &4\\
${\bf Q}^{k}_{4}$, $12\mid k$& $a_{33}=CndH_2=0$, $CndH_3\ne 0$, $CndQm_5\ne 0$ &4\\
$\widetilde{\bf Q}^{k}_{4}$, $3\nmid k$, $4\mid k$& $a_{33}=CndQm_5=0$, $a_{44}\ne 0$, $CndQm_6\ne 0$ &4\\         
$\widetilde{\bf Q}^{k}_{4}$, $12\mid k$& $a_{33}=CndQm_5=0$, $CndH_2\ne 0$, $CndQm_6\ne 0$ &4
\\
${\bf R}^{k}_{4}$, $4\nmid k$, $5\nmid k$& $a_{33}=a_{44}=0$, $a_{55}\ne 0$& 4
\\                                           
${\bf R}^{k}_{4}$, $4\mid k$, $5\nmid k$& $a_{33}=a_{44}=0$, $a_{55}\ne 0$, $CndQm_5\ne 0$& 4
\\
${\bf R}^{k}_{4}$, $4\nmid k$, $5\mid k$ & $a_{33}=a_{44}=0$, $a_{55}\ne 0$, $CndRm_5\ne 0$& 4
\\
${\bf R}^{k}_{4}$, $20\mid k$& $a_{33}=a_{44}=0$, $a_{55}\ne 0$, $CndQm_5\ne 0$, $CndRm_5\ne 0$ & 4
\\       
\hline                  
\end{tabular}
$
\begin{array}{rcl}
CndQm_5&=&a_{32}a_{55} - a_{21}a_{66}\\
CndQm_6&=&a_{43} a_{55}-a_{21} a_{77}\\
CndRm_5&=&a_{32}a_{66}-a_{21}a_{77}
\end{array}
$
}
\label{tab:ClassifAs}
\end{center}
\end{table}

\begin{table}[tp]
\begin{center}
\caption{Topological invariants of germs in the strata in Table \ref{tab:ClassifAs}.}
\footnotesize{
\begin{tabular}{lccccc}
\hline
	Name 
&	$C$
&	$T$
&	$\mu(\mathcal D)$
&	$r(\mathcal D)$
\\
\hline
${\bf P}^k_2$, $3\nmid k$ & $k-1$ & $\frac{(k-1)(k-2)}{6}$      & $(2k-3)(k-2)$&	 $k-1$\\   
${\bf P}^k_2$, $3\mid k$   & $k-1$ & $\frac{(k-1)(k-2)+4}{6}$	 & $(2k-3)(k-2)+4$& $k-1$\\
${\bf P}^k_3$, $3\mid k$   & $k-1$ & $\frac{(k-1)(k-2)+10}{6}$	 & $(2k-3)(k-2)+10$& $k-1$\\
${\bf P}^k_4$, $3\mid k$   & $k-1$ & $\frac{(k-1)(k-2)+16}{6}$	 & $(2k-3)(k-2)+16$& $k-1$\\
${\bf Q}^{k}_{3}$, $3 \nmid k,$ $4\nmid k$ & $k-1$ &	$\frac{(k-1)(k-2)}{3}$	& $(3k-4)(k-2)$ &$k-1$\\  
${\bf Q}^{k}_{3}$, $3\mid k,$ $4\nmid k$ & $k-1$ &	$\frac{(k-1)(k-2)+1}{3}$& $(3k-4)(k-2)+2$&$k-1$\\		    
${\bf Q}^{k}_{3}$, $3\nmid k,$ $4\mid k$ & $k-1$ &$\frac{(k-1)(k-2)+6}{3}$& $(3k-4)(k-2)+12$&$k-1$\\		    
${\bf Q}^{k}_{3}$, $12\mid k$ & $k-1$ &		$\frac{(k-1)(k-2)+7}{3}$ &	$(3k-4)(k-2)+14$&$k-1$\\	 	
${\bf Q}^{k}_{4}$, $3\mid k$, $4\nmid k$& $k-1$& $\frac{(k-1)(k-2)+4}{3}$&	$(3k-4)(k-2)+8$ &$k-1$\\	    
${\bf Q}^{k}_{4}$, $12\mid k$& $k-1$& $\frac{(k-1)(k-2)+10}{3}$ & $(3k-4)(k-2)+20$&$k-1$\\
$\widetilde{\bf Q}^{k}_{4}$, $3\nmid k,4\mid k$ &	$k-1$ &	$\frac{(k-1)(k-2)+9}{3}$&$(3k-4)(k-2)+18$&$k-1$\\	   
$\widetilde{{\bf Q}}^{k}_{4}$, $12\mid k$ & $k-1$& $\frac{(k-1)(k-2)+10}{3}$& $(3k-4)(k-2)+20$&$k-1$\\
${\bf R}^{k}_{4}$, $4 \nmid k,$ $5\nmid k$& $k-1$&$\frac{(k-1)(k-2)}{2}$&	$(4k-5)(k-2)$&	$k-1$\\	    
${\bf R}^{k}_{4}$, $4\mid k,$ $5\nmid k$ & $k-1$&	$\frac{(k-1)(k-2)+2}{2}$& $(4k-5)(k-2)+6$&$k-1$\\		    
${\bf R}^{k}_{4}$, $4\nmid k,$ $5\mid k$ & $k-1$&	$\frac{(k-1)(k-2)+8}{2}$& $(4k-5)(k-2)+24$&$k-1$\\		    
${\bf R}^{k}_{4}$, $20\mid k$ & $k-1$& $\frac{(k-1)(k-2)+10}{2}$& $(4k-5)(k-2)+30$&$k-1$
\\ 
\hline
\end{tabular}
}
\label{tab:invAs_strata}
\end{center}
\end{table}

For any finitely $\mathcal A$-determined $k$-folding map-germ in this branch, we have $C=\dim_\C {\cO_2}/{\langle y^{k-1},a_{21}x+O(2)\rangle}=k-1.$

The double point branches are regular curves that can be parametrised by $t\mapsto(\gamma_j(t),t)$, with
\[
\begin{array}{rcl}
\gamma_j(t)&=&-\frac{1}{a_{21}}\vartheta_{3j}a_{33}t^2
-\frac{1}{a_{21}^2}(\vartheta_{2j}\vartheta_{3j}a_{32}a_{33}-\vartheta_{4j}a_{21}a_{44})t^3\\
&&
-\frac{1}{a_{21}^3}(\vartheta_{2j}^2 \vartheta_{3j} a_{32}^2 a_{33}-\vartheta_{2j} \vartheta_{4j} a_{32} a_{44} a_{21}
+\vartheta_{3j}^2 a_{33}(a_{31}a_{33}-a_{43} a_{21})+\vartheta_{5j} a_{55} a_{21}^2)
t^4\\
&&+O(5).
\end{array}
\]

The strata are determined by the contact between the branches of the double point curve which depend on  
$\vartheta_{sj}$ as well as on the coefficients $a_{pq}$. We start with the case $a_{33}\ne 0$, where the  strata depend on the divisibility of $k$ by $3$.

\begin{prop}\label{prop:a_{11}=a_{22}=0a_{21}ne0}
Suppose that $a_{11}=a_{22}=0$ and $a_{21}a_{33}\neq 0$. Any $k$-folding map-germ $F_k$ in case {\rm (a)} or satisfying the additional conditions 
in {\rm (b)}
is finitely $\mathcal A$-determined and is topologically equivalent to one of the map-germs
$$
{\bf P}^k_l\colon(x,y)\mapsto(x,y^k,xy+y^3+y^{3l-1}),\text{ for }l=2,3,4.
$$

 The invariants $\mu({\mathcal D}), C,T, r(\mathcal D)$ are as in \mbox{\rm Table \ref{tab:invAs_strata}.} 
We have contact ${\mathcal D}_j\cdot {\mathcal D}_{j'}=2$, except for ${\mathcal D}_p\cdot {\mathcal D}_{2p}$ when $k=3p$ which is given in {\rm (b)}. 

\begin{itemize}
\item[{\rm (a)}] If $3\nmid k$, then $F_k$ is topologically equivalent to ${\bf P}^k_2$. 
\item[{\rm (b)}] If $k=3p$, then the strata are as follows:
	\begin{itemize}
	\item[{\rm (b1)}]  If $CndH_2\ne 0$, then ${\mathcal D}_p\cdot {\mathcal D}_{2p}=4$ and $F_k$ is topologically equivalent to ${\bf P}^k_2$. 
	
	\item[{\rm (b2)}] If $CndH_2=0$ and $CndH_3\ne 0$, then ${\mathcal D}_p\cdot {\mathcal D}_{2p}=7$ and $F_k$ is topologically equivalent to ${\bf P}^k_3$.
	
	\item[{\rm (b2)}] If $CndH_2=CndH_3=0$ and $CndH_4\ne 0$,  then ${\mathcal D}_p\cdot {\mathcal D}_{2p}=10$ and $F_k$ is topologically equivalent to ${\bf P}^k_4$.
	\end{itemize}
\end{itemize}

\end{prop}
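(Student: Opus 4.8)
The plan is to follow the template of the proof of Theorem~\ref{theo:a11=0}: read the branches $\mathcal{D}_j$ off the displayed expansion of $\gamma_j$, compute the pairwise contacts $\mathcal{D}_j\cdot\mathcal{D}_{j'}$ and hence $\mu(\mathcal{D})$ via Proposition~\ref{prop:Mukfoldingmaps}, deduce finite $\mathcal{A}$-determinacy from Theorem~\ref{theo:mu(D)FinitDet}, and finally invoke Corollary~\ref{propTopTrivAndInvs} over each (path-connected) stratum to obtain topological triviality together with the stated normal form. Since $a_{21}a_{33}\neq 0$, each branch $\mathcal{D}_j$ is a regular curve $t\mapsto(\gamma_j(t),t)$ with leading term $-\tfrac{1}{a_{21}}\vartheta_{3j}a_{33}t^2$, so $\mu(\mathcal{D}_j)=0$ and, by the Remarks following Proposition~\ref{prop:Mukfoldingmaps}, $\mathcal{D}_j\cdot\mathcal{D}_{j'}=\mathrm{ord}(\gamma_j-\gamma_{j'})$.

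First I would settle all the generic contacts. The leading coefficients of $\gamma_j$ and $\gamma_{j'}$ coincide exactly when $\vartheta_{3j}=\vartheta_{3j'}$, and by Lemma~\ref{lemVarthetaZeroOrOne}(3) this forces the common value to be $0$ or $1$. The value $1$ is attained by at most one index, and $\vartheta_{3j}=0$ occurs if and only if $3\mid k$, in which case precisely for $j=p$ and $j=2p$ (with $k=3p$). Hence $\mathcal{D}_j\cdot\mathcal{D}_{j'}=2$ for every pair except possibly $(p,2p)$ when $3\mid k$. In case (a), $3\nmid k$, all $\binom{k-1}{2}$ contacts equal $2$, so Proposition~\ref{prop:Mukfoldingmaps} gives $\mu(\mathcal{D})=2(k-1)(k-2)-k+2=(2k-3)(k-2)$; finite determinacy follows, and since the whole stratum $\{a_{11}=a_{22}=0,\ a_{21}a_{33}\neq 0\}$ is path-connected and the model ${\bf P}^k_2$ lies in it, Corollary~\ref{propTopTrivAndInvs} yields topological equivalence to ${\bf P}^k_2$.

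The crux is case (b), $k=3p$, where everything hinges on $\mathrm{ord}(\gamma_p-\gamma_{2p})$. Here I would exploit that $\xi^p=\omega$ is a primitive cube root of unity: because $\lambda_j=\bigl(f(x,y)-f(x,\xi^jy)\bigr)/\bigl((1-\xi^j)y\bigr)$ depends on $j$ only through $\xi^j$, the functions $\lambda_p,\lambda_{2p}$ of $F_{3p}$ coincide verbatim with the functions $\lambda_1,\lambda_2$ of the $3$-folding germ with the same $f$. Consequently $\mathcal{D}_p\cdot\mathcal{D}_{2p}$ equals the contact $\mathcal{D}_1\cdot\mathcal{D}_2$ already analysed in \S\ref{subs:casek=3}, whose successive degenerations are governed exactly by $CndH_2,CndH_3,CndH_4$ and produce contacts $4,7,10$ (the $H_2,H_3,H_4$ cases). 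Feeding these into Proposition~\ref{prop:Mukfoldingmaps} gives $\mu(\mathcal{D})=(2k-3)(k-2)+4,\ +10,\ +16$ for $l=2,3,4$, matching Table~\ref{tab:invAs_strata}; the degeneracy of $(p,2p)$ leaves every other contact equal to $2$, as claimed.

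With the contacts in hand, finite $\mathcal{A}$-determinacy is immediate from Theorem~\ref{theo:mu(D)FinitDet}, and topological equivalence to the stated ${\bf P}^k_l$ follows from Corollary~\ref{propTopTrivAndInvs} once each stratum is seen to be path-connected and each model is checked against its defining conditions (for instance ${\bf P}^k_2$ has $a_{32}=a_{44}=0$, $a_{55}=1$, whence $CndH_2=-1\neq 0$, and $CndH_3,CndH_4\neq 0$ similarly single out ${\bf P}^k_3,{\bf P}^k_4$). The remaining invariants are computed on the models: $C=k-1$ and $r(\mathcal{D})=\sum_j r(\mathcal{D}_j)=k-1$ are clear, while for $T$ I would use that on ${\bf P}^k_l$ one has $\lambda_j=x+\vartheta_{3j}y^2+\vartheta_{(3l-1)j}y^{3l-2}$, so dividing $\lambda_j-\lambda_{j'}$ by $y$ drops the order by one and gives $T_{j,j'}=\mathcal{D}_j\cdot\mathcal{D}_{j'}-1$; thus $T_{j,j'}=1$ generically and $T_{p,2p}=3l-3$, and $T=\tfrac13\sum_{j<j'}T_{j,j'}$ reproduces the table. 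I expect the main obstacle to be the single contact $\mathcal{D}_p\cdot\mathcal{D}_{2p}$: carried out directly it requires expanding $\gamma_p-\gamma_{2p}$ to order $10$ and recognising its three vanishing loci as $CndH_2,CndH_3,CndH_4$, which is precisely the heavy algebra that the reduction to the $k=3$ case is designed to bypass.
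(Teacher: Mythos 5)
Your proof is correct, but the key step is handled by a genuinely different argument than the paper's. The shared scaffolding --- regular branches, generic contacts equal to $2$ via Lemma \ref{lemVarthetaZeroOrOne}, $\mu(\mathcal D)$ from Proposition \ref{prop:Mukfoldingmaps}, finite determinacy from Theorem \ref{theo:mu(D)FinitDet}, topological triviality over connected strata from Corollary \ref{propTopTrivAndInvs}, and $C$, $r(\mathcal D)$, $T$ evaluated on the models --- is the same in both. The difference is the exceptional contact $\mathcal D_p\cdot\mathcal D_{2p}$ when $k=3p$: the paper computes it head-on, expanding the parametrisations $\gamma_p,\gamma_{2p}$ up to order $10$ and exhibiting $CndH_2$, $CndH_3$, $CndH_4$ as the successive obstructions (that computation is, in effect, where those formulas get derived), whereas you observe that $\xi^p$ is a primitive cube root of unity, so that $\lambda_p,\lambda_{2p}$ of $F_{3p}$ are verbatim the $\lambda_1,\lambda_2$ of the $3$-folding germ with the same $f$, and you import the degenerations from Theorem \ref{theo:k=3SingReflmaps} and Table \ref{tab:k=3}. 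Your reduction is sound and conceptually illuminating --- it explains why the $H$-series conditions govern this branch for every $k$ divisible by $3$ --- but it silently uses two things. First, \S\ref{subs:casek=3} records only the $\mathcal A$-classes ($F_3\sim_{\mathcal A}H_l$ under the $CndH$-conditions), not branch contacts; to convert this into $\mathcal D_1\cdot\mathcal D_2=3l-2$ you must compute the double point curve of Mond's normal form $(x,y^3,xy+y^{3l-1})$ --- two smooth branches $x+(1+\omega)y^{3l-2}=0$ and $x+(1+\omega^2)y^{3l-2}=0$, with $\omega=e^{2\pi i/3}$, hence contact $3l-2$, i.e.\ $4,7,10$ --- and use that the double point space is preserved, up to bi-holomorphism of the source, under $\mathcal A$-equivalence. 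Second, your route leans on Theorem \ref{theo:k=3SingReflmaps}, whose $H$-branch proof the paper only sketches; there is no circularity, since that theorem is proved by direct $\mathcal A$-reduction rather than by the contact computation you are establishing, but it does make your argument less self-contained than the paper's, which trades elegance for an explicit derivation of $CndH_3$ and $CndH_4$. With these two points made explicit your proof is complete, and your bookkeeping ($T_{j,j'}=\mathcal D_j\cdot\mathcal D_{j'}-1$ on the models, $T_{p,2p}=3l-3$, $\mu(\mathcal D)=(2k-3)(k-2)+6l-8$) agrees with Table \ref{tab:invAs_strata}.
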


\begin{proof}
If $3\nmid k$, then by Lemma \ref{lem:vartheta_js} we have $\vartheta_{3j}\ne 0$ and $\vartheta_{3j}\ne \vartheta_{3j'}$
for all $j\neq j'$. This implies ${\mathcal D}_j\cdot{\mathcal D}_{j'}=2$ for all $j\neq j'$.

If $k=3p$, then by Lemma \ref{lemVarthetaZeroOrOne} 
the equality $\vartheta_{3j}=\vartheta_{3j'}$ holds only when $\{j,j'\}=\{p,2p\}$. 
Again, we obtain ${\mathcal D}_j\cdot {\mathcal D}_{j'}=2$ for all $j\neq j'$ with $\{j,j'\}\neq \{p,2p\}$. 

We have $\vartheta_{3p}=\vartheta_{3(2p)}=0$, $\vartheta_{4p}=\vartheta_{4(2p)}=1$ and $\vartheta_{5p}=\vartheta_{2(2p)}$  (Lemma \ref{lemVarthetaZeroOrOne}).
Using the parametrisations of ${\mathcal D}_p$ and ${\mathcal D}_{2p}$, we get  ${\mathcal D}_p\cdot {\mathcal D}_{2p}=4$ if, and only if,  $a_{32}a_{44}-a_{21}a_{55}\ne 0$, equivalently,  
$CndH_2\ne 0$. 

When $CndH_2=0$, the exceptional branches ${\mathcal D}_p$ and ${\mathcal D}_{2p}$ are parametrised by 
$x=-\frac{a_{44}}{a_{21}}y^3+\beta_sy^7+O(9) $, $s=1,2$, with 
$\beta_1-\beta_2\ne 0$ if, and only if, $CndH_3\ne 0$. Then, ${\mathcal D}_p\cdot {\mathcal D}_{2p}=7$. 

When $CndH_2=CndH_3=0$, the exceptional branches are parametrised by 
$x=-\frac{a_{44}}{a_{21}}y^3+\beta_sy^{10}+O(11)$, $s=1,2$, with 
$\beta_1-\beta_2\ne 0$ if, and only if, $CndH_4\ne 0$. 
Then, ${\mathcal D}_p\cdot {\mathcal D}_{2p}=10$. 

The values of $T$ can be computed using the models ${\bf P}^k_l$. 
We have 
$\lambda_j=x+\vartheta_{3j}y^2+\vartheta_{(3l-1)j}y^{3l-2}$
and
$\lambda_{j,j'}=(\vartheta_{3j}-\vartheta_{3j'})y+(\vartheta_{3l-1,j}-\vartheta_{3l-1,j'})y^{3l-3}.$
By Lemma \ref{lemVarthetaZeroOrOne},  
$T_{j,j'}=\dim_\C\frac{\cO_2}{\langle x,y\rangle}=1$ when $3\nmid k$ or when $k=3p$ but $\{j,j'\}\neq \{p,2p\}$. 
For $k=3p$, 
we show that $\vartheta_{3l-1,p}\neq \vartheta_{3l-1,2p}$ so $T_{p,2p}=3l-3$.
\end{proof}

\begin{rem}
{\rm 
The singularity ${\bf P}^4_2\colon (x,y)\mapsto (x,y^4,xy+y^3+y^5)$ is topologically equivalent to the singularity $T_4\colon (x,y)\mapsto (x,y^4,xy+y^3)$ in \cite{mond}. Observe that according to Proposition \ref{prop:a_{11}=a_{22}=0a_{21}ne0}, if $k$ is not divisible by $3$, then the $y^{3l-1}$ term can be removed from the expression of ${\bf P}^k_l$, without changing the topological class of the germ.
}
\end{rem}

\begin{prop}\label{prop:a_{11}=a_{22}=a_{33}=0a_{21}ne0,4ndivk}
	Suppose that $a_{11}=a_{22}=a_{33}=0$ and $a_{21}a_{44}\neq 0$. 
	Any $k$-folding map-germ $F_k$  in case {\rm (a)} or satisfying the additional conditions 
in  
	 {\rm (b)}, {\rm (c)} or {\rm (d)}    
	is finitely $\mathcal A$-determined and is topologically equivalent to one of the following map-germs:
	\[
	{\bf Q}^k_3\colon(x,y)\mapsto(x,y^k,xy+y^4+y^{5}+y^{6})
	\]
	\[ 
	{\bf Q}^k_4\colon(x,y)\mapsto(x,y^k,xy+y^4+y^{6}+y^{8})
	\]
	\[ 
	\widetilde{{\bf Q}}^k_4\colon(x,y)\mapsto(x,y^k,xy+y^4+y^{5}+y^{7})
	\]

	The invariants  are as in {\rm Table \ref{tab:invAs_strata}.} 
We have	${\mathcal D}_j\cdot {\mathcal D}_{j'}=3$ except when $j$ and $j'$ are in the sets $J$ or $J'$ below.
	
	\begin{itemize}
		\item[{\rm (a)}] If $3\nmid k$ and $4\nmid k$, then  there are no exceptional branches and the germs in this stratum are topologically equivalent to ${\bf Q}_3^k$. 
		
		\item[{\rm (b)}] If $k=3p$ and $4\nmid k$, then $J=\{p,2p\}$.  
		\begin{itemize}
			\item[{\rm (b1)}] If $CndH_2\ne 0$, then ${\mathcal D}_p\cdot {\mathcal D}_{2p}=4$ and $F_k$ 
			is topologically equivalent to ${\bf Q}_3^k$.
				
				\item[{\rm (b2)}] If $CndH_2=0$ and $CndH_3\ne 0$,  then ${\mathcal D}_p\cdot {\mathcal D}_{2p}=7$ and $F_k$ 
				is topologically equivalent to ${\bf Q}_4^k$.
			\end{itemize}

			\item[{\rm (c)}] If $k=4p$, $3\nmid k$, then $J=\{p,2p,3p\}$. 
			\begin{itemize}
				
				\item[{\rm (c1)}] If $CndQm_5\ne 0$, then ${\mathcal D}_{j}\cdot {\mathcal D}_{j'}=5$, for all $j,j'\in J, j\ne j'$,
				and $F_k$ is topologically equivalent to  ${\bf Q}_3^k$.
				
				\item[{\rm (c2)}]If $CndQm_5=0$ and $CndQm_6\ne 0$, then ${\mathcal D}_{j}\cdot {\mathcal D}_{j'}=6$, for all $j,j'\in J, j\ne j'$,
				and $F_k$ is topologically equivalent to $\widetilde{\bf Q}_4^k$.
			\end{itemize}
			
			\item[{\rm (d)}] If $k=12p$, the exceptional contact between double point branches occurs when the indices are in $J=\{4p,8p\}$ 
			or $J'=\{3p,6p,9p\}$. There are three strata:
			
			\begin{itemize}
				\item[{\rm (d1)}] If $CndH_2\ne 0$ and $CndQm_5\ne 0$, then ${\mathcal D}_{j}\cdot {\mathcal D}_{j'}=4$ (resp. ${\mathcal D}_{j}\cdot {\mathcal D}_{j'}=5$) for all distinct pairs with $j,j'$ in $J$ (resp. $J'$), and $F_k$ is topologically equivalent to ${\bf Q}_3^k$. 
				
				\item[{\rm (d2)}] If $CndH_2=0, CndH_3\ne 0 $ and $ CndQm_5\ne 0$, then ${\mathcal D}_{j}\cdot {\mathcal D}_{j'}=7$ (resp. ${\mathcal D}_{j}\cdot {\mathcal D}_{j'}=5$) for all distinct pairs with $j,j'$ in $J$ (resp. $J'$), and $F_k$ is 
			 topologically equivalent to ${\bf Q}_4^k$. 
				
				\item[{\rm (d3)}] If $CndQm_5=0, CndQm_6\ne 0,$ and $ CndH_2\ne 0$, then 
				 ${\mathcal D}_{j}\cdot {\mathcal D}_{j'}=4$ (resp. ${\mathcal D}_{j}\cdot {\mathcal D}_{j'}=6$)  for all distinct pairs with $j,j'$ in $J$ (resp. $J'$), and $F_k$ is 
				 topologically equivalent to $\widetilde{\bf Q}_4^k$.
			\end{itemize}
			
		\end{itemize}
	\end{prop}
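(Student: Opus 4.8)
The plan is to follow the same scheme as in the proof of Proposition \ref{prop:a_{11}=a_{22}=0a_{21}ne0}: determine the mutual intersection multiplicities ${\mathcal D}_j\cdot{\mathcal D}_{j'}$ of the double point branches, assemble $\mu({\mathcal D})$ from them via Proposition \ref{prop:Mukfoldingmaps}, deduce finite $\mathcal A$-determinacy from Theorem \ref{theo:mu(D)FinitDet}, and finally invoke Corollary \ref{propTopTrivAndInvs} together with Theorem \ref{theo:BobPe} to conclude that each (path-connected) stratum consists of topologically equivalent germs, with the stated normal forms ${\bf Q}^k_3$, ${\bf Q}^k_4$, $\widetilde{\bf Q}^k_4$ as representatives.

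First I would feed $a_{33}=0$ into the parametrisation $t\mapsto(\gamma_j(t),t)$ of ${\mathcal D}_j$ recorded before the proposition. The quadratic term then vanishes, leaving $\gamma_j(t)=\tfrac{\vartheta_{4j}a_{44}}{a_{21}}t^3+O(4)$, so every ${\mathcal D}_j$ is a regular curve tangent to $\{x=0\}$ and ${\mathcal D}_j\cdot{\mathcal D}_{j'}=\mathrm{ord}(\gamma_j-\gamma_{j'})=3$ whenever $\vartheta_{4j}\neq\vartheta_{4j'}$. By Lemma \ref{lemVarthetaZeroOrOne}(3) an equality $\vartheta_{4j}=\vartheta_{4j'}$ forces the common value to be $0$ or $1$, and parts (1),(2) of the same lemma pin down where this happens: the value $1$ occurs exactly when $3\mid k$ and the indices lie in $\{k/3,2k/3\}$, and the value $0$ exactly when $4\mid k$ and the indices lie in $\{k/4,k/2,3k/4\}$. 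This isolates the exceptional sets $J$ and $J'$ appearing in cases (a)--(d) and shows ${\mathcal D}_j\cdot{\mathcal D}_{j'}=3$ off these sets.

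Next I would treat the exceptional branches. When $\vartheta_{4j}=1$ (the $3\mid k$ regime) the cubic terms coincide and the branches behave exactly as in Proposition \ref{prop:a_{11}=a_{22}=0a_{21}ne0}, so the same expansions give contact $4$ if $CndH_2\neq0$ and contact $7$ if $CndH_2=0,\,CndH_3\neq0$. When $\vartheta_{4j}=0$ (the $4\mid k$ regime) the cubic term also vanishes; here I would note that $k\mid 4j$ for $j\in J$ forces $\vartheta_{5j}=1$ by Lemma \ref{lemVarthetaZeroOrOne}(2), so the quartic coefficient reduces to $-\tfrac{a_{55}}{a_{21}}$ for every branch in $J$, giving ${\mathcal D}_j\cdot{\mathcal D}_{j'}\ge5$. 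Extending the parametrisation to order $t^6$ and comparing quintic and sextic coefficients then yields contact exactly $5$ when $CndQm_5\neq0$, and exactly $6$ when $CndQm_5=0,\,CndQm_6\neq0$. Case (d), $k=12p$, is the superposition of the two phenomena on the disjoint index sets $J=\{4p,8p\}$ and $J'=\{3p,6p,9p\}$, so the contact numbers combine additively and are governed by the independent conditions stated.

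With all intersection multiplicities determined, $\mu({\mathcal D})$ follows from Proposition \ref{prop:Mukfoldingmaps}, finite determinacy from Theorem \ref{theo:mu(D)FinitDet}, and the remaining invariants $C$, $r({\mathcal D})$ and $T=\tfrac13\sum_{j<j'}T_{j,j'}$ are read off the normal forms using the $\vartheta_{sj}$ via Lemma \ref{lemVarthetaZeroOrOne}, matching Table \ref{tab:invAs_strata}. The main obstacle is the higher-order expansion of $\gamma_j$ in the $4\mid k$ case: one must carry the parametrisation past the degree recorded in the excerpt, verify that the quartic coefficients genuinely coincide on $J$, and check that $CndQm_5$ and $CndQm_6$ are precisely the obstructions to contact $5$ and $6$. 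The $12\mid k$ case adds no new expansion but demands careful bookkeeping to confirm that $J$ and $J'$ are disjoint and that the $CndH_\bullet$ and $CndQm_\bullet$ conditions act independently, so that the three strata (d1)--(d3) are correctly separated.
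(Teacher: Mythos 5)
Your proposal is correct and follows essentially the same route as the paper's proof: the same branch parametrisations $t\mapsto(\gamma_j(t),t)$, the same use of Lemma \ref{lemVarthetaZeroOrOne} to isolate the exceptional index sets $J$ and $J'$, the same contact computations yielding the $CndH_\bullet$ and $CndQm_\bullet$ conditions, and the same final assembly via Proposition \ref{prop:Mukfoldingmaps}, Theorem \ref{theo:mu(D)FinitDet} and Corollary \ref{propTopTrivAndInvs}. Your shortcut of importing the $3\mid k$ exceptional-pair analysis from Proposition \ref{prop:a_{11}=a_{22}=0a_{21}ne0} is legitimate (every occurrence of $a_{33}$ in $\lambda_p,\lambda_{2p}$ carries the factor $\vartheta_{3p}=\vartheta_{3(2p)}=0$, so those branches are insensitive to $a_{33}$), and the remaining discrepancies (signs of coefficients, the phrase about contacts combining ``additively'') are immaterial to the contact orders and strata.
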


\begin{proof}
The branches $\mathcal D_j$ can be parametrised by $t\mapsto (\gamma_j(t),t)$ 
with 
\[
\begin{array}{rcl}
\gamma_j(t)&=&-\frac{1}{a_{21}}\vartheta_{4j}a_{44}t^3
-\frac{1}{a_{21}^2}(\vartheta_{2j}\vartheta_{4j}a_{32}a_{44}-\vartheta_{5j}a_{21}a_{55})t^4\\
&&
-\frac{1}{a_{21}^3}
(\vartheta_{2j}^2 \vartheta_{4j} a_{32}^2 a_{44}-\vartheta_{3j} \vartheta_{4j} a_{43} a_{44} a_{21}
-\vartheta_{2j} \vartheta_{5j} a_{21}a_{32}a_{55}+\vartheta_{6j} a_{66} a_{21}^2)
t^5+O(6).
\end{array}
\]

(a) If $3\nmid k$, $4\nmid k$, then $\vartheta_{4j}\ne \vartheta_{4j'}$ 
for all $j,j'$ with $j\ne j'$ (Lemma \ref{lem:vartheta_js}). Therefore,  
${\mathcal D}_j\cdot {\mathcal D}_{j'}=3$ for all distinct pairs. 

(b) If $k=3p$ and $4\nmid p$, then $\vartheta_{4j}\ne \vartheta_{4j'}$ 
for all distinct pairs with $j$ or $j'$ not in $J=\{p,2p\}$. For such pairs, 
${\mathcal D}_j\cdot {\mathcal D}_{j'}=3$.

Using the fact that $\vartheta_{6p}=\vartheta_{6(2p)}=\vartheta_{3p}=\vartheta_{3(2p)}=0$,
$\vartheta_{4p}=\vartheta_{4(2p)}=1$
 and $\vartheta_{5p}=\vartheta_{2p}\neq\vartheta_{5(2p)}=\vartheta_{2(2p)}$, the parametrisation 
 of the exceptional branch $\mathcal D_p$
becomes
\[
\gamma_p(t)=-\frac{a_{44}}{a_{21}}t^3
-\frac{\vartheta_{2p}(a_{32}a_{44}-a_{21}a_{55})}{a_{21}^2}t^4-\frac{\vartheta_{2p}^2a_{32} (a_{32}a_{44}-a_{21}a_{55})}{a_{21}^3}
t^5+O(6).
\]

A parametrisation of $\mathcal D_{2p}$ is obtained by replacing
$\vartheta_{2p}$ by $\vartheta_{2(2p)}$ in the expression of $\gamma_p$. 
Therefore, ${\mathcal D}_p\cdot {\mathcal D}_{2p}=4$ if, and only if, 
$a_{32}a_{44}-a_{21}a_{55}\ne 0$, i.e., $CndH_2\ne 0$.

When $CndH_2= 0$, we have 
\[
\gamma^1_{p}=-\frac{a_{44}}{a_{21}}t^3-\frac{a_{31} a_{44}^2 - a_{21} a_{44}a_{54} + a_{21}^2 a_{77} }{a_{21}^3}t^6+\\
\frac{\vartheta^2_{p}CndH_3}{a^4_{21}}t^7+O(8).
\]

For $\gamma_{2p}$, we replace $\vartheta^2_{p}$ by $\vartheta^2_{2p}$ in $\gamma_{p}$. 
It follows that ${\mathcal D}_p\cdot {\mathcal D}_{2p}=7$ if, and only if, 
$CndH_3\ne 0$.

\smallskip

(c)  If $k=4p$ and $3\nmid p$, we have
${\mathcal D}_j\cdot {\mathcal D}_{j'}=3$ for 
$j$ or $j'$ not in $\{p,2p,3p\}$. 

The parametrisation of $\mathcal D_p$ becomes 
$
\gamma_{p}(t)={a_{55}}/{a_{21}}t^4+
{\vartheta^2_{p}(a_{32}a_{55} - a_{21}a_{66})}/{a_{21}^2} t^5+O(6),
$
and similarly for $\gamma_{2p}$ and $\gamma_{3p}$ replacing $p$ by $2p$ and $3p$ respectively. 
Consequently, the branches $\mathcal D_p$, $\mathcal D_{2p}$, $\mathcal D_{3p}$ have pairwise order of contact $5$ if, and only if, 
$a_{32}a_{55} - a_{21}a_{66}\ne 0$, i.e., $CndQm_5\ne 0.$

When $CndQm_5=0$, 
$\gamma_{p}=-{a_{55}}/{a_{21}}t^4-
\vartheta_{2p}\vartheta_{3p}({a_{21} a_{77} - a_{43} a_{55}})/
{a_{21}^2}t^6+O(7)$, with similar ajustements as above for $\gamma_{2p}$ 
and $\gamma_{3p}$.
Therefore, the three exceptional branches 
have pairwise order of contact $6$ if, and only if, $a_{21} a_{77}-a_{43} a_{55}\ne 0$, i.e., $CndQm_6\ne 0$.

(d)  This follows by Lemma \ref{lem:vartheta_js} and (b) and (c) above.

The contact between the branches determines $\mu(\mathcal D)$, the topological types and their associated strata. 
It remains to compute $T$ for each normal form. 

For ${\bf Q}^k_3$ we have 
 $\lambda_j=x+\vartheta_{4j}y^3+\vartheta_{5j}y^{5}+\vartheta_{6j}y^5,$ and 
$
\lambda_{j,j'}=(\vartheta_{4j}-\vartheta_{4j'})y^2+(\vartheta_{5j}-\vartheta_{5j'})y^3+(\vartheta_{6j}-\vartheta_{6j'})y^4.
$
Using the properties of $\vartheta_{sj}$ in Lemma \ref{lemVarthetaZeroOrOne}, 
we have $T_{j,j'}=2$ unless $k=3p$ and $j,j'\in \{p,2p\}$, or 
$k=4p$ and $j,j'\in \{p,2p,3p\}$. 
In the first case we get $T_{j,j'}=3$ and in the second 
 $T_{j,j'}=4$. 
 
 The invariant for the germ ${\bf Q}^k_4$  differs from ${\bf Q}^k_3$ only when $k$ is divisible by $3$. 
 For $k=3p$,  we have $T_{p,2p}=6$ and $T_{j,j'}=4$ if $k=4q$ and $j,j'\in\{q,2q,3q\}$. 
 All other indices $j$ and $j'$ give $T_{j,j'}=2$. 
 
 Similarly, for the germ $\widetilde {\bf Q}^k_4$ and for $k=4p$, we get $T_{j,j'}=5$ if $j,j'\in\{p,2p,3p\}$.  
If $k=3q$, then $T_{q,2q}=3$. All other indices $j$ and $j'$ give $T_{j,j'}=2$. 
\end{proof}

\begin{rem}\label{rem:Unequivalent}
{\rm 
When $k$ is divisible by $12$, the germs ${\bf Q}^k_4$ and $\widetilde{\bf Q}^k_4$ have the same invariants $C,T,\mu(D)$ and $r(D)$ but they are not topologically equivalent as their associated sets of contacts between double points branches are distinct (see \cite{Zariski}).
}
\end{rem}

\begin{prop}\label{prop:a_{11}=a_{22}=a_{33}=a{44}=0a_{21}ne0,4ndivk}
Suppose that $a_{11}=a_{22}=a_{33}=a_{44}=0$ and $a_{21}a_{55}\neq 0$. 
Any $k$-folding map-germ $F_k$ in case {\rm (a)} or satisfying the additional conditions 
in {\rm (b)} or {\rm (c)}   
is finitely $\mathcal A$-determined and is topologically equivalent to  
\[{\bf R}^k_4\colon(x,y)\mapsto(x,y^k,xy+y^5+y^6+y^7).\]

The invariants associated to the germs in the stratum are as in {\rm Table \ref{tab:invAs_strata}}.
We have ${\mathcal D}_j\cdot {\mathcal D}_{j'}=4$ except for the exceptional pairs of branches below.

\begin{itemize}
\item[{\rm (a)}] If $4\nmid k$ and $5\nmid k$, there are no additional conditions and no exceptional branches. 

\item[{\rm (b)}] If $k=4p$, $5\nmid k$ and $CndQm_5\ne 0$, then  ${\mathcal D}_j\cdot {\mathcal D}_{j'}=5$ 
for all distinct pairs with $ j,j'$ in $J=\{p,2p,3p\}$.
 
 \item[{\rm (c)}] If $k=5p$, $4\nmid k$ and $CndRm_5\ne 0$, ${\mathcal D}_j\cdot {\mathcal D}_{j'}=5$  for all distinct pairs with $ j,j'$ in $J=\{p,2p,3p,4p\}$.

\item[{\rm (d)}] If $k=20p$, $CndQm_5\ne 0$ and $CndRm_5\ne 0$, then ${\mathcal D}_j\cdot {\mathcal D}_{j'}=5$ for all distinct pairs with $ j,j'$ in $J=\{5p,10p,15p\}$ or in $J'=\{4p,8p,12p,16p\}$.
 \end{itemize}

\end{prop}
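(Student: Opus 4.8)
The plan is to follow verbatim the strategy of the preceding Proposition \ref{prop:a_{11}=a_{22}=a_{33}=0a_{21}ne0,4ndivk}: parametrise the regular branches $\mathcal{D}_j$ of the double point curve, read off all the intersection multiplicities $\mathcal{D}_j\cdot\mathcal{D}_{j'}$ from these parametrisations, and then feed the contact data into the invariant machinery of \S\ref{sec:TopInvariants}. Because $a_{11}=a_{22}=a_{33}=a_{44}=0$ and $a_{21}a_{55}\neq0$, formula \eqref{eq:Taylorlambdaj} gives $\lambda_j=a_{21}x+\vartheta_{5j}a_{55}y^4+\vartheta_{6j}a_{66}y^5+O(6)$, so every $\mathcal{D}_j$ is smooth and, solving $\lambda_j=0$, admits a parametrisation $t\mapsto(\gamma_j(t),t)$ with $\gamma_j(t)=-\tfrac{\vartheta_{5j}a_{55}}{a_{21}}t^4+O(5)$. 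By the remark on orders of contact following Proposition \ref{prop:Mukfoldingmaps}, $\mathcal{D}_j\cdot\mathcal{D}_{j'}=\mathrm{ord}(\gamma_j-\gamma_{j'})$; hence whenever $\vartheta_{5j}\neq\vartheta_{5j'}$ the leading terms already differ and the contact equals $4$. This settles all generic pairs and yields the value $4$ quoted in the statement.

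Next I would isolate the exceptional pairs. By Lemma \ref{lem:vartheta_js}(3), $\vartheta_{5j}=\vartheta_{5j'}$ forces the common value to be $0$ or $1$; parts (1) and (2) identify these with $k\mid 5j$ (value $0$, possible only when $5\mid k$) and $k\mid 4j$ (value $1$, possible only when $4\mid k$). This produces exactly the arithmetic cases (a)--(d) and the index sets $J,J'$. For such a pair the $t^4$-terms coincide, so I would expand $\gamma_j$ one or two orders further. The key algebraic simplification I expect is that on each exceptional set one has $\vartheta_{6j}=\vartheta_{2j}$ (when $\vartheta_{5j}=1$, since then $\xi^{4j}=1$) and $\vartheta_{7j}=\vartheta_{2j}$ (when $\vartheta_{5j}=0$, since then $\xi^{5j}=1$), with the $\vartheta_{2j}$ pairwise distinct; substituting these into the expansion collapses the difference of the relevant coefficients to a nonzero multiple of $a_{32}a_{55}-a_{21}a_{66}=CndQm_5$ on the $\vartheta_{5j}=1$ family and of $a_{32}a_{66}-a_{21}a_{77}=CndRm_5$ on the $\vartheta_{5j}=0$ family. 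The stated open conditions then pin the exceptional contacts, which come out as $5$ and $6$ respectively; I would confirm both values against the entries for $\mu(\mathcal{D})$ in Table \ref{tab:invAs_strata} through the formula of Proposition \ref{prop:Mukfoldingmaps}.

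With all contacts in hand the conclusion is routine. Every $\mathcal{D}_j$ is regular, so $\mu(\mathcal{D}_j)=0$ and Proposition \ref{prop:Mukfoldingmaps} converts the contact data into a finite $\mu(\mathcal{D})$; finiteness yields finite $\mathcal{A}$-determinacy by Theorem \ref{theo:mu(D)FinitDet}, and Corollary \ref{propTopTrivAndInvs}, applied along the path-connected stratum, gives topological triviality and hence topological equivalence to the model ${\bf R}^k_4$, which one verifies satisfies the defining and open conditions of whichever of (a)--(d) applies to the given $k$ (indeed $a_{32}=0$, $a_{55}=a_{66}=a_{77}=1$ make $CndQm_5=CndRm_5=-1\neq0$). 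The remaining invariants are computed on ${\bf R}^k_4$: here $C=\dim_\C\cO_2/\langle y^{k-1},a_{21}x+O(2)\rangle=k-1$, while from $\lambda_j=x+\vartheta_{5j}y^4+\vartheta_{6j}y^5+\vartheta_{7j}y^6$ and $\lambda_{j,j'}=(\vartheta_{5j}-\vartheta_{5j'})y^3+(\vartheta_{6j}-\vartheta_{6j'})y^4+(\vartheta_{7j}-\vartheta_{7j'})y^5$ one gets $T_{j,j'}=\mathrm{ord}_y\lambda_{j,j'}$, namely $3$ generically and the larger values on $J$ and $J'$, and finally $T=\tfrac13\sum_{j<j'}T_{j,j'}$.

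The main obstacle I anticipate is the simultaneous case (d), $k=20p$, where the two coincidences $\vartheta_{5j}=1$ and $\vartheta_{5j}=0$ occur on the disjoint sets $J$ and $J'$ at once. There one must carry the parametrisations far enough to separate the contributions governed by $CndQm_5$ and by $CndRm_5$, confirm that the two open conditions act independently on the two families, and check that no further coincidence among the $\vartheta_{6j}$ or $\vartheta_{7j}$ alters the counts of $\mathcal{D}_j\cdot\mathcal{D}_{j'}$ and $T_{j,j'}$ on either set; this bookkeeping, rather than any new idea, is the delicate part. Everything else is a direct transcription of the arguments in Propositions \ref{prop:a_{11}=a_{22}=0a_{21}ne0} and \ref{prop:a_{11}=a_{22}=a_{33}=0a_{21}ne0,4ndivk}.
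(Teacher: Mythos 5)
Your proposal is correct and is essentially the paper's own proof: the same branch parametrisations $\gamma_j$, the same use of Lemma \ref{lem:vartheta_js} together with the identities $\vartheta_{6j}=\vartheta_{2j}$ (when $\vartheta_{5j}=1$) and $\vartheta_{7j}=\vartheta_{2j}$ (when $\vartheta_{5j}=0$) to collapse the exceptional contacts onto $CndQm_5$ and $CndRm_5$, and the same appeal to Proposition \ref{prop:Mukfoldingmaps}, Theorem \ref{theo:mu(D)FinitDet} and Corollary \ref{propTopTrivAndInvs} to get finite determinacy, topological triviality along the connected stratum, and the invariants of the model ${\bf R}^k_4$. One remark: your contact value $6$ (not $5$) for pairs in the $5\mid k$ families is exactly what the parametrisation $\gamma_{sp}(t)=-\frac{a_{66}}{a_{21}}t^5+\frac{\vartheta_{2(sp)}CndRm_5}{a_{21}^2}t^6+O(t^7)$ forces and what the entries $(4k-5)(k-2)+24$ and $(4k-5)(k-2)+30$ of Table \ref{tab:invAs_strata} require via Proposition \ref{prop:Mukfoldingmaps}, so the value $5$ asserted in parts (c) and (d) of the statement is a misprint that your computation correctly overrides.
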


\begin{proof} 
The branches $\mathcal D_j$ can be parametrised by $t\mapsto (\gamma_j(t),t)$ 
with 
\[
\begin{array}{rcl}
\gamma_j(t)&=&-\frac{1}{a_{21}}\vartheta_{5j}a_{55}t^4+
\frac{1}{a_{21}^2}(\vartheta_{2j}\vartheta_{5j}a_{32}a_{55} - \vartheta_{6j}a_{21}a_{66})t^5+\\
&&\frac{1}{a_{21}^3} (a_{21} (\vartheta_{2j}\vartheta_{6j}a_{32} a_{66} - \vartheta_{7j}a_{21} a_{77}) - ((\vartheta_{2j})^2a_{32}^2 - \vartheta_{3j}a_{21} a_{43})\vartheta_{5j} a_{55})t^6+O(7).
\end{array}
\]

(a) If $4\nmid k$ and $5\nmid k$, then $\vartheta_{5j}\ne \vartheta_{5j'}$ 
for all $j,j'$ with $j\ne j'$ (by Lemma \ref{lem:vartheta_js}). Therefore,  
${\mathcal D}_j\cdot {\mathcal D}_{j'}=4$ for all distinct pairs. 

(b) If $k=4p$ and $5\nmid k$, then  $\vartheta_{5p}=1$, 
$\vartheta_{6p}=\vartheta_{2p}$, and we can write 
$\gamma_p(t)=-a_{55}/a_{21}t^4+\vartheta_{2p}CndQm_5/a_{21}^2t^5+O(6)$. 
We get similarly $\gamma_{2p}(t) $ and $\gamma_{3p}(t)$ 
by substituting  
$\vartheta_{2p}$ by, respectively, $\vartheta_{2(2p)}$ and $\vartheta_{2(3p)}$ in $\gamma_p$.
The result follows as $\vartheta_{2s}\ne \vartheta_{2s'}$ for 
$s,s'\in \{p,2p,3p\}$, $s\ne s'$.

(c) If $k=5p$ and $4\nmid k$,  then  $\vartheta_{5p}=0$, 
$\vartheta_{6p}=1$, $\vartheta_{7p}=\vartheta_{2p}$, so
$\gamma_p(t)=-a_{66}/a_{21}t^5+\vartheta_{2p}CndRm_5/a_{21}^2t^6+O(7)$. The expressions 
$\gamma_{sp}$, $s=2,3,4,$ are obtaining by substituting  
$\vartheta_{2p}$ by $\vartheta_{2(sp)}$ in $\gamma_p$.

(d) The case $k=20p$ follows in a similarly way to that of Proposition \ref{prop:a_{11}=a_{22}=a_{33}=0a_{21}ne0,4ndivk}(d). 

The calculations for $T$ are similar to those in the proof of Proposition \ref{prop:a_{11}=a_{22}=a_{33}=0a_{21}ne0,4ndivk}. We obtain $T_{j,j'}=4$ if $k=4p$ and $j,j'\in\{p,2p,3p\}$, and $T_{j,j'}=5$ if $k=5q$ and $j,j'\in\{q,2q,3q,4q\}$. All remaining indices $j$ and $j'$ give $T_{j,j'}=3$.
\end{proof}

\begin{rem}\label{remCompareQ43AndR44}
{\rm
The singularities ${\bf Q}^4_3$ and ${\bf R}^4_4$ 
have the same invariants, but the contact of their double point branches shows that they are not topologically equivalent.
}
\end{rem}

\subsubsection{Branch 4: $a_{11}=a_{21}=a_{22}=0$}\label{secCase3}

\begin{theo}\label{theo:Branch3}
The strata of codimension $\le 4$ of finitely $\mathcal A$-determined $k$-folding map-germs in the branch $a_{11}=a_{22}=a_{21}=0$ are those given in {\rm Table \ref{tab:ClassifPar}}. 
The invariants of the germs in each stratum are given in {\rm Table \ref{tab:InvariantsParabolicBranch}}.
\end{theo}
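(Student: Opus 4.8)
The plan is to prove the theorem exactly as its counterparts for Branches 2 and 3 (Theorems \ref{theo:Branch1} and \ref{theo:Branch2}), namely by reducing it to a short sequence of propositions, one for each configuration of the cubic part of $f$, whose union exhausts the codimension-$\le 4$ strata of Table \ref{tab:ClassifPar}. The decisive new feature of Branch 4 is that, since $a_{11}=a_{21}=a_{22}=0$, the expansion \eqref{eq:Taylorlambdaj} of every defining function of the double point curve begins in degree two:
\[
\lambda_j = a_{31}x^2 + \vartheta_{2j}\,a_{32}\,xy + \vartheta_{3j}\,a_{33}\,y^2 + O(3).
\]
Hence each branch $\mathcal D_j=V(\lambda_j)$ is in general singular at the origin, and the map $(x,y)\mapsto(x,f(x,y))$ is generically three-to-one, so that now $T\neq 0$ and its computation via the numbers $T_{j,j'}$ is genuinely needed.

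First I would classify the branches $\mathcal D_j$ by the leading quadratic form $Q_j=a_{31}x^2+\vartheta_{2j}a_{32}xy+\vartheta_{3j}a_{33}y^2$. When $Q_j$ has distinct linear factors, $\mathcal D_j$ is an ordinary node (two smooth transverse branches); the generic Branch 4 germ falls here and is topologically equivalent to ${\bf U}^k_3$, for which $\operatorname{disc}Q_j=4\xi^j\neq 0$ for all $j$ by the identity $\vartheta_{2j}^2-\vartheta_{3j}=\xi^j$ (note that the cubic part of ${\bf U}^k_3$ has a repeated factor, yet the $\vartheta$-weights keep $Q_j$ nondegenerate). When $\operatorname{disc}Q_j=0$ the branch is more degenerate and its type is read off from higher jets of $\lambda_j$, producing the cusps and $A$-/$D$-type singularities behind ${\bf U}^k_4$, ${\bf X}^k_4$ and ${\bf Y}^k_4$. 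For each resulting stratum I would then run the standard pipeline: use Lemma \ref{lemVarthetaZeroOrOne} to track which $\vartheta_{sj}$ coincide or vanish, read off $\mu(\mathcal D_j)$ and the pairwise contacts $\mathcal D_j\cdot\mathcal D_{j'}$ from the parametrisations, feed these into Proposition \ref{prop:Mukfoldingmaps} to obtain $\mu(\mathcal D)$, deduce finite $\mathcal A$-determinacy from Theorem \ref{theo:mu(D)FinitDet}, and conclude topological triviality along the (path-connected) stratum from Theorem \ref{theo:BobPe} (equivalently Corollary \ref{propTopTrivAndInvs}). The invariants $C,T,\mu(\mathcal D),r(\mathcal D)$ recorded in Table \ref{tab:InvariantsParabolicBranch} are then computed once, on the normal form, from the formulae of Section \ref{sec:TopInvariants}.

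I expect the genuine obstacle to be the moduli strata ${\bf V}^{k,j,j'}_4$ and ${\bf W}^{k,j}_4$ (with the exceptional ${\bf W}^{3p,p}_4$). There the nondegenerate quadratics $Q_j$ for several indices share a common linear factor, forced by the divisibility coincidences of Lemma \ref{lemVarthetaZeroOrOne}, so the smooth branches of $\mathcal D$ meet in a configuration whose $\mathcal A$-class varies continuously; unlike Branches 1--3 these germs are not $\mathcal A$-simple and carry moduli in the constants $a_{j,j'},b_{j,j'},c_j$. The key point is to show that, although the $\mathcal A$-type moves, the invariants $C,T,\mu(\mathcal D),r(\mathcal D)$ remain constant along the stratum, so that Theorem \ref{theo:BobPe} still forces topological triviality; this amounts to verifying that all intersection multiplicities and all $T_{j,j'}$ are independent of the moduli. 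As in Remark \ref{rem:Unequivalent} (cf.\ \cite{Zariski}) one must also keep in mind that equality of these four invariants does not by itself imply $\mathcal A$-equivalence. Finally, organising the index combinatorics so that precisely the strata of Table \ref{tab:ClassifPar} of codimension $\le 4$ appear, and no others, is the main bookkeeping task.
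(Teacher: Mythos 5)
Your overall pipeline coincides with the paper's: split Branch 4 into propositions according to degeneracies of the quadratic parts $Q_j$ of the $\lambda_j$, determine branch singularity types and pairwise contacts, obtain $\mu(\mathcal D)$ from Proposition \ref{prop:Mukfoldingmaps}, finite determinacy from Theorem \ref{theo:mu(D)FinitDet}, topological triviality along connected strata from Theorem \ref{theo:BobPe} and Corollary \ref{propTopTrivAndInvs}, and evaluate $C,T,\mu(\mathcal D),r(\mathcal D)$ on normal forms. Your identity $\vartheta_{2j}^2-\vartheta_{3j}=\xi^j$ is correct, $\operatorname{disc}Q_j$ is exactly the paper's $\Delta_j^k$, and the observation that the cubic of ${\bf U}^k_3$ has a repeated factor while every $Q_j$ stays nondegenerate is accurate.

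However, your case division misassigns most of the strata, so as written it would not reproduce Table \ref{tab:ClassifPar}. First, ${\bf U}^k_4$ does \emph{not} arise from $\operatorname{disc}Q_j=0$: its stratum requires $\Delta_j^k\neq 0$ for all $j$, occurs only for $k=3p$, and is cut out by $a_{44}=0$; there Lemma \ref{lemVarthetaZeroOrOne} gives $\vartheta_{3p}=\vartheta_{3(2p)}=0$, so $Q_p$ and $Q_{2p}$ automatically share the factor $x$, and $a_{44}=0$ raises the contact $\mathcal D_p\cdot\mathcal D_{2p}$ from $5$ to $8$ (Proposition \ref{prop:a11=a21=a22=0(0)}) --- this is precisely the ``divisibility-forced common factor'' phenomenon you attribute instead to ${\bf V}$ and ${\bf W}$. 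Second, ${\bf X}^k_4$ and ${\bf Y}^k_4$ are the cases $a_{31}=0$, respectively $a_{33}=0$, where \emph{all} the $Q_j$ share the factor $y$, respectively $x$; for odd $k$ every branch there is still a node, so these strata are invisible to a classification by $\operatorname{disc}Q_j$ alone. Third, the stratum actually defined by $\Delta_{j_0}^k=0$ is ${\bf W}^{k,j_0}_4$ (cuspidal branches $\mathcal D_{j_0},\mathcal D_{k-j_0}$), while ${\bf V}^{k,j,j'}_4$ is defined by $\Omega^k_{j,j'}=0$; both are codimension-one conditions on the coefficients (equivalently on $\alpha=a_{32}^2/(a_{31}a_{33})$), not consequences of divisibility. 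Beyond this bookkeeping, the missing idea is the arithmetic of $\Delta_j^k$ and $\Omega^k_{j,j'}$ developed in Proposition \ref{prop:PropertiesDeltaOmega}: that $\Delta_j^k=0$ singles out exactly the pair $\{j_0,k-j_0\}$, that $\Omega^k_{j,j'}=0$ propagates to exactly three pairs of indices, that (in the relevant real-coefficient situation) the two conditions exclude one another, and that generic distinct branches have contact $4$. These facts require genuine work with resultants, subresultants and roots of unity, well beyond Lemma \ref{lemVarthetaZeroOrOne}, and without them you cannot certify that your strata are pairwise disjoint, have codimension exactly $4$, exhaust all cases of codimension $\le 4$, and carry constant invariants as needed for Theorem \ref{theo:BobPe}.
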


\begin{proof} The result follows from 
Propositions \ref{prop:a11=a21=a22=0(0)}, \ref{prop:a11=a21=a22=0(1)}, \ref{prop:a11=a21=a22=0(2)},
\ref{prop:a11=a21=a22=0(4)}, \ref{prop:a11=a21=a22=0(5)}. 
\end{proof}

\begin{table}[htp]
\begin{center}
\caption{strata of codimension $\leq 4$ in Branch 4.}
\footnotesize{
\begin{tabular}{llc}
\hline 
Name & Defining equations and open conditions & Codim\\
& together with $a_{11}=a_{21}=a_{22}=0$&\\
\hline 
${\bf U}^{k}_{3}$, $3\nmid k$ &$a_{31}a_{33}\ne 0$, $\Delta_j^k\ne 0$, $\Omega_{j,k-j}\ne 0$&3
\\    
${\bf U}^{k}_{3}$, $3\mid k$ &$a_{31}a_{33}\ne 0$, $a_{44}\ne 0$, $\Delta_j^k\ne 0$, $\Omega_{j,j'}^k\ne 0$&3
\\    
${\bf U}^{k}_{4}$, $3\mid k$ &$a_{31}a_{33}\ne 0$, $a_{44}=0$, $\Delta_j^k\ne 0$, $\Omega_{j,j'}^k\ne 0$, $CndUm_8\ne 0$&4
\\    
${\bf V}_4^{k,j,j'}$, (*)&$a_{31}a_{33}\ne 0$, $\Omega_{j,j'}^k=0$, $a_{32}\ne 0$, $a_{44}\ne 0$, ${CndVm_5}_{j,j'}\ne 0$ &4
\\     
${\bf W}_4^{k,j}$, $3\nmid k$,  (**)&$a_{31}a_{33}\ne 0$,  $\Delta_{j}^k=0$, $CndWA_2\ne 0$&4
\\    
${\bf W}_4^{k,j}$, $3\mid k$, (**), (***)&$a_{31}a_{33}\ne 0$,  $\Delta_{j}^k=0$, $CndWA_2\ne 0$, $a_{44}\ne 0$&4\\
${\bf W}_4^{3p,p}$, $k=3p$&$a_{31}a_{33}\ne 0$,  $a_{32}=0$, $a_{44}\ne 0$, ${Cnd\widetilde{W}m}_8\ne 0$&4
\\    
${\bf X}_4^k$, $2\nmid k$, $3\nmid k$&$a_{32}a_{33}\ne 0$,  $a_{31}=0$, $a_{41}\ne 0$&4
\\
${\bf X}_4^k$, $2\mid k$, $3\nmid k$&$a_{32}a_{33}\ne 0$,  $a_{31}=0$, $a_{41}\ne 0$&4
\\
${\bf X}_4^k$, $2\nmid k$, $3\mid k$&$a_{32}a_{33}\ne 0$,  $a_{31}=0$, $a_{41}\ne 0$, $a_{44}\ne 0$&4
\\
${\bf X}_4^k$, $6\mid k$&$a_{32}a_{33}\ne 0$,  $a_{31}=0$, $a_{41}\ne 0$, $a_{44}\ne 0$&4
\\
${\bf Y}_4^k$, $2\nmid k$&$a_{32}a_{31}\ne 0$,  $a_{33}=0$, $a_{44}\ne 0$&4
\\
${\bf Y}_4^{k}$, $2\mid k$&$a_{32}a_{31}\ne 0$,  $a_{33}=0$, ${CndYA}_3\ne 0$, ${CndYm_6}_j\ne 0$, $a_{44}\ne 0$&4
\\
\hline
\end{tabular}

$
\begin{array}{rcl}
\Delta_j^k&=&(a_{32}^2-4a_{31}a_{33})\xi^{2j}+2(a_{32}^2-2a_{31}a_{33})\xi^j+a_{32}^2-4a_{31}a_{33}\\
\Omega_{j,j'}^k&=&a_{31}a_{33}(1+\xi^j+\xi^{j'})^2-a_{32}^2(\xi^j+\xi^{j'}+\xi^{j+j'})\\
CndUm_8&=&a_{55}(a_{31}a_{55}-a_{54}a_{32})+a_{77}a_{32}^2\\
{CndVm_5}_{j,j'}&=&
a_{32}^2a_{44}\beta_{j,j'} +a_{32}a_{33}(2a_{33}a_{42}-a_{32}a_{43})\alpha_{j,j'}+a_{41}a_{33}^3,\, \mbox{\rm with}\\
&& \alpha_{j,j'} =\frac{\xi^j+\xi^{j'}+\xi^{j+j'}}{(1+\xi^j+\xi^{j'})^2},\ \beta_{j,j'}=\frac{\xi^{j+3j'}+\xi^{3j+j'}+\xi^{3j'}+2\xi^{2j+j'}+2\xi^{j+2j'}+\xi^{3j}+2\xi^{j+j'}+\xi^{j}+\xi^{j'} }{(1+\xi^{j}+\xi^{j'})^4}. 
\\
CndWA_2&=&(a_{41}a_{33}+a_{31}a_{43})a_{32}^3-2a_{31}(a_{42}a_{33}+2a_{31}a_{44})a_{32}^2+8a_{31}^3a_{33}a_{44}\\
{Cnd\widetilde{W}m}_8&=&a_{31}a_{55}-a_{42}a_{44}\\
{CndYA}_3&=&a_{43}^2-4a_{31}a_{55}\\
{CndYm_6}_j&=& (a_{31}a_{44}^2+a_{55}a_{32}^2-a_{43}a_{44}a_{32})\xi^{4j}+
a_{44}(2a_{31}a_{44}-a_{32}a_{43})\xi^{2j}+a_{31}a_{44}^2\\
(*) &&\mbox{$j<j'$; $(j,j')\ne (p,2p)$ when $k=3p$}\\
(**) &&\mbox{If $2\mid k$ then $j\ne k/2$}\\
(***) &&\mbox{\rm If $k=3p$, then $j\ne p,2p$}
\end{array}
$
}
\label{tab:ClassifPar}
\end{center}
\end{table}

\begin{table}[htp]
\begin{center}
\caption{Topological invariants of germs in strata in Table \ref{tab:ClassifPar}.}
\label{tab:InvariantsParabolicBranch}
\footnotesize{
\begin{tabular}{lcccc}
\hline
	Name &	$C$ &	$T $&	$\mu(\mathcal D)$ &	$r(\mathcal D)$ 
	\\
\hline
$ {\bf U}^{k}_{3}$, $3\nmid k$&$2k-2$& $\frac{(k-1)(k-2)}{3}$&$4(k-1)(k-2)+1$	&$2k-2$\\		    
${\bf U}^{k}_{3}$, $3\mid k$ & $2k-2$& $\frac{(k-1)(k-2)+1}{3}$&$4(k-1)(k-2)+3$&$2k-2$\\
${\bf U}^{k}_{4}$, $3\mid k$	
&	$2k-2$		
&	$\frac{(k-1)(k-2)+4}{3}$
&	$4(k-1)(k-2)+9$
&	$2k-2$
\\

${\bf V}^{k,j,j'}_{4},3\nmid k$	
&	$2k-2$
&	$\frac{(k-1)(k-2)+3}{3}$
&	$4(k-1)(k-2)+7$
&	$2k-2$
\\
${\bf V}^{k,j,j'}_{4},3\mid k$	
&	$2k-2$
&	$\frac{(k-1)(k-2)+4}{3}$
&	$4(k-1)(k-2)+9$
&	$2k-2$

\\
${\bf W}^{k,j}_{4},3\nmid k$	
&	$2k-2$
&	$\frac{(k-1)(k-2)}{3}$
&	$4(k-1)(k-2)+3$
&	$2k-4$
\\
${\bf W}^{k,j}_{4},3\mid k$
&	$2k-2$
&	$\frac{(k-1)(k-2)+1}{3}$
&	$4(k-1)(k-2)+5$
&	$2k-4$

\\
${\bf W}_4^{3p,p}$, $k=3p$
&	$2k-2$		
&	$\frac{(k-1)(k-2)+4}{3}$
&	$4(k-1)(k-2)+11$
&	$2k-4$
\\
${\bf X}^{k}_{4}$, $2\nmid k,3\nmid k $&	$3k-3$&	$\frac{(k-1)(k-2)}{3}$	&$5(k-1)(k-2)+1$	& $2k-2$\\   
${\bf X}^{k}_{4}$, $2\mid k,3\nmid k$ & $3k-3$&	$\frac{(k-1)(k-2)}{3}$	&$5(k-1)(k-2)+2$	&	$2k-3$\\	    
${\bf X}^{k}_{4}$, $2\nmid k,3\mid k$ & $3k-3$&$\frac{(k-1)(k-2)+1}{3}$&$5(k-1)(k-2)+3$	&$2k-2$\\			    
${\bf X}^{k}_{4}$, $6\mid k $&   $3k-3$&$\frac{(k-1)(k-2)+1}{3}$&$5(k-1)(k-2)+4$&$2k-3$\\

${\bf Y}^{k}_{4}$, $2\nmid k$&$2k-2$&$\frac{(k-1)(k-2)}{2}$&$5(k-1)(k-2)+1$&$2k-2$\\
${\bf Y}^{k}_{4}$, $2\mid k$&$2k-2$&$\frac{k(k-2)}{2}$&$5(k-1)(k-2)+3(k-1)$&$2k-2$\\
\hline
\end{tabular}
}
\end{center} 
\end{table}

 The functions defining the double point branches $\mathcal D_j$ have the following initial terms:
$$
\lambda_j(x,y)=a_{31}x^2+a_{32}\vartheta_{2j} xy+a_{33}\vartheta_{3j} y^2+O(3).
$$
 Consequently, the branches $\mathcal D_j$ are singular. A branch $\mathcal D_j$ has an $A_1$-singularity unless the discriminant $\Delta_j^k$ of the quadratic part of $\lambda_j$ vanishes. 
 We have 
\begin{equation}\label{eq:Delta_j}
\Delta_j^k=(a_{32}^2-4a_{31}a_{33})\xi^{2j}+2(a_{32}^2-2a_{31}a_{33})\xi^j+a_{32}^2-4a_{31}a_{33}.
\end{equation}

An $A_1$-singularity is a transverse intersection of two regular curves.  
Two branches $\mathcal D_j$ and $\mathcal D_{j'}$ with an $A_1$-singularity 
may have one or both of their components being tangential 
 (that is, the tangent cones of the two branches have a non-trivial intersection). 
 Taking the resultant of $j^2\lambda_j$ and $j^2\lambda_{j'}$ with respect to one of the variables,  we find that this 
 happens if, and only if, $a_{31}a_{33}=0$ or 
\begin{equation}\label{eq:Omega}
\Omega^k_{j,j'}=
a_{31}a_{33}(1+\xi^j+\xi^{j'})^2
-a_{32}^2(\xi^j+\xi^{j'}+\xi^{j+j'})=0.
\end{equation}

We have $1+\xi^j+\xi^{j'}=0$ or $\xi^j+\xi^{j'}+\xi^{j+j'}=0$ 
if, and only if, $k=3p$ and $j,j'\in \{p,2p\}$.
Therefore, if $3\nmid k$ or if $k=3p$ and $j,j'\notin \{p,2p\}$, 
$V(\Omega^k_{j,j'})$ is a codimension 1 algebraic variety in $J^l(2,1)$, for $l\ge 3$. 
For such pairs, 
 we set 
\begin{equation}\label{coeff_alpha_{j,j'}&alpha}
 \alpha_{j,j'} =\frac{\xi^j+\xi^{j'}+\xi^{j+j'}}{(1+\xi^j+\xi^{j'})^2}\quad \mbox{\rm and}\quad
\alpha=\frac{a_{32}^2}{a_{31}a_{33}}.
\end{equation}

We have the following properties of $\Delta_j^k$ and $\Omega_{j,j'}^k$ when 
\mbox{$a_{31}a_{33}\ne 0$}; the case \mbox{$a_{31}a_{33}=0$} 
is dealt with in Propositions \ref{prop:a11=a21=a22=0(4)} and \ref{prop:a11=a21=a22=0(5)}.

\begin{prop}\label{prop:PropertiesDeltaOmega}
Suppose that  $a_{31}a_{33}\ne 0$ and that $3\nmid k$ or $k=3p$ and $j,j'\notin\{p,2p\}$. Then:

{\rm (1)} $\Delta_{j}^k=\xi^{2j}\Delta_{k-j}^k$, and if $\Delta_{j}^k=0$ then 
$\Delta_{s}^k\ne 0 $ for $s\notin \{j,k-j\}$, so the solutions of $\Delta_{j}^k=0$
in the $k^{th}$-roots of unity come in pairs. 

{\rm (2)} For $\Delta_{j}^k$ to vanish requires $\alpha $ in {\rm (\ref{coeff_alpha_{j,j'}&alpha})} to belong to the real semi-line $(-\infty, 3)$.

{\rm (3)} $\alpha_{j,j'}=\alpha_{j',j}$ for all pairs $(j,j')$.
We have $\alpha_{j,j'}=\alpha_{k-j,j'-j}=\alpha_{k-j',j-j'}$ and the pairs $(j,j'), (k-j,j'-j), (k-j',j-j')$ are pairwise distinct.
Furthermore, $\alpha^k_{l,q}=\alpha^k_{j,j'}$ if, and only if, 
$(l,q)$ or $(q,l)$ is one of those $3$ pairs.

{\rm (4)} $\alpha_{j,j'}$  is real if, and only if, $j'=k-j$ or $j'=2j$. 
In that case, by {\rm (3)}, $\alpha_{k-j,2j}$ is also real.
Then,  $\alpha_{j,k-j}=\alpha_{j,2j}=\alpha_{k-j,2j}=(1+\xi^j+\xi^{k-j})^{-1}$.

{\rm (5)} 
If $\alpha$ is real then $\Omega^k_{j,j'}=0$ if, and only if,  $j'=k-j$ or $j'=2j$. Then by {\rm (3)}, we also have 
$\Omega^k_{k-j,2j}=0$.

{\rm (6)}
If $\alpha$ is real, 
then $\Omega^k_{j_0,k-j_0}=0$ implies 
$\Delta_j^k\ne 0$ for all $j$. Conversely, if $\Delta_{j_0}^k=0$, 
then $\Omega^k_{j,k-j}\ne 0$ for all $j$.

{\rm (7)} If $\Delta_j^k\Delta_{j'}^k\ne 0$ and $\Omega^k_{j,j'}\ne 0$ for all $j,j'$ with $j'\ne j$, 
then $\mathcal D_j\cdot \mathcal D_{j'}=4$. 

\end{prop}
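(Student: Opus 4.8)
The plan is to fix $w=\xi^j$ (so $|w|=1$, $w\neq 1$, and $\bar w=w^{-1}=\xi^{k-j}$) and to rewrite each condition in terms of $\alpha=a_{32}^2/(a_{31}a_{33})$. Setting $A=a_{32}^2-4a_{31}a_{33}$ and $B=a_{32}^2-2a_{31}a_{33}$ one has $\Delta_j^k=Aw^2+2Bw+A$, a reciprocal (palindromic) quadratic in $w$. For (1), substituting $w\mapsto w^{-1}=\xi^{k-j}$ and clearing $w^{-2}$ gives $\Delta_j^k=w^2\Delta_{k-j}^k$ at once; and since a reciprocal quadratic has roots $w,w^{-1}$, once $\xi^j$ is a zero the only other zero is $\xi^{-j}=\xi^{k-j}$, so no third $k$-th root of unity can satisfy $\Delta^k=0$ and the solutions pair up as $\{j,k-j\}$. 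The degenerate case $A=0$ is handled separately: then $B\neq0$ (otherwise $a_{31}a_{33}=0$), so $\Delta_j^k=2Bw$ has no root of unity as a zero; and $w=-1$ is never a zero since $\Delta^k_{k/2}=-4a_{31}a_{33}\neq0$.

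For (2) I would solve $\Delta_j^k=0$ for $\alpha$: dividing by $a_{31}a_{33}\neq0$ and collecting terms yields $\alpha(w+1)^2=4(w^2+w+1)$, hence $\alpha=4(w^2+w+1)/(w+1)^2$ (legitimate as $w=-1$ is excluded). Writing $w=e^{i\theta}$ with $\theta=2\pi j/k$, the factorisations $w^2+w+1=w(1+2\cos\theta)$ and $(w+1)^2=4w\cos^2(\theta/2)$ collapse this to the real expression $\alpha=(1+2\cos\theta)/\cos^2(\theta/2)=4-2/(1+\cos\theta)$; as $\cos\theta$ runs over $(-1,1)$ this is real and sweeps exactly $(-\infty,3)$, proving (2).

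The heart of the argument is (3). I would set $a=\xi^j,b=\xi^{j'}$ and use the elementary symmetric functions $e_1=1+a+b$, $e_2=a+b+ab$, $e_3=ab$ of the triple $\{1,\xi^j,\xi^{j'}\}$, so that $\alpha_{j,j'}=e_2/e_1^2$. Scaling all three points by a common unit multiplies $e_i$ by $u^i$ and fixes $e_2/e_1^2$, so $\alpha_{j,j'}$ depends only on the exponent triple $\{0,j,j'\}$ up to translation mod $k$; the three normalisations placing each exponent at $0$ give the pairs $(j,j')$, $(k-j,j'-j)$, $(k-j',j-j')$, for which the claimed equalities follow by direct substitution, while distinctness of these pairs fails only when $3j\equiv0\pmod{k}$ forces $k=3p$, $\{j,j'\}=\{p,2p\}$, excluded by hypothesis. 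The delicate point is the converse (injectivity): that $\alpha_{l,q}=\alpha_{j,j'}$ puts $(l,q)$ among these six ordered pairs. Here I would exploit that the points lie on the unit circle, giving $\bar e_1=e_2/e_3$, $\bar e_2=e_1/e_3$, $|e_3|=1$; these yield $\overline{\alpha_{j,j'}}=e_1e_3/e_2^2$ and, on eliminating, $|e_1|=1/|\alpha_{j,j'}|$, after which a choice of phase for $e_1$ (the translation freedom) reconstructs $e_1,e_2,e_3$, hence the cubic $t^3-e_1t^2+e_2t-e_3$ whose roots are a common rotation of $\{1,\xi^j,\xi^{j'}\}$. Thus $\alpha_{j,j'}$ determines the triple up to translation, and the six orderings are precisely the preimages (one checks $e_1\neq0$ and $\alpha_{j,j'}\neq0$, both failing only in the excluded case).

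Parts (4)--(7) then follow quickly. For (4), conjugation gives $\overline{\alpha_{j,j'}}=\alpha_{k-j,k-j'}$, so $\alpha_{j,j'}$ is real iff $\alpha_{j,j'}=\alpha_{k-j,k-j'}$; by the injectivity of (3) this forces $(k-j,k-j')$ to be one of the six ordered pairs of $(j,j')$, and solving the resulting congruences leaves exactly $j'=k-j$ or $j'=2j$ (up to the $j\leftrightarrow j'$ symmetry), the common value being $(1+\xi^j+\xi^{k-j})^{-1}$ by a direct computation. For (5), writing $\Omega_{j,j'}^k=a_{31}a_{33}\big((1+\xi^j+\xi^{j'})^2-\alpha(\xi^j+\xi^{j'}+\xi^{j+j'})\big)$ shows $\Omega_{j,j'}^k=0\iff\alpha=1/\alpha_{j,j'}$; with $\alpha$ real this forces $\alpha_{j,j'}$ real, and (4) gives $j'=k-j$ or $j'=2j$. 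For (6), both vanishing conditions pin $\alpha$ to explicit real values: $\Omega_{j_0,k-j_0}^k=0$ gives $\alpha=1+2\cos(2\pi j_0/k)$, whereas $\Delta_j^k=0$ gives $\alpha=4-2/(1+\cos(2\pi j/k))$, and I must show these two families are disjoint. This non-coincidence among cosines of rational angles is the principal obstacle: a simultaneous solution would force $\cos(2\pi j/k)=\tfrac{2\cos(2\pi j_0/k)-1}{3-2\cos(2\pi j_0/k)}$, which I expect to rule out by combining the range estimates of (2) with a Galois/algebraic-integer argument, these quantities being conjugate real cyclotomic numbers confined to $(-1,1)$. Finally (7) is geometric: when $\Delta_j^k,\Delta_{j'}^k\neq0$ each of $\mathcal D_j,\mathcal D_{j'}$ has multiplicity $2$ with reduced tangent cone, and the resultant computation recorded in \eqref{eq:Omega} shows the two tangent cones share a line iff $a_{31}a_{33}=0$ or $\Omega_{j,j'}^k=0$; since neither holds, the germs have no common tangent and the intersection multiplicity equals the product $2\cdot 2=4$ of the multiplicities, so $\mathcal D_j\cdot\mathcal D_{j'}=4$.
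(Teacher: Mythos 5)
Your treatment of (1), (2), (5) and (7) is correct and follows the same lines as the paper. Your (3)--(4) are also correct, but by a genuinely different and more conceptual route: the paper proves the injectivity statement in (3) through a heavy real-algebraic computation (rational parametrisation of the circle, the polynomials $P_{(c,d)}$, $Q_{(c,d)}$, resultants, subresultants and the discriminant curve $\delta_R=0$, with the case $|\alpha_{j,j'}|=1$ treated separately), whereas you observe that for triples on the unit circle $\bar e_1=e_2/e_3$, $\bar e_2=e_1/e_3$, $|e_3|=1$, deduce $|e_1|=1/|\alpha_{j,j'}|$, and reconstruct $(e_1,e_2,e_3)$ up to the substitution $e_i\mapsto u^ie_i$ with $u$ a unit; hence two pairs with the same $\alpha$ give triples satisfying $\{1,\xi^l,\xi^q\}=u\{1,\xi^j,\xi^{j'}\}$, and since $1$ lies in both triples, $u^{-1}\in\{1,\xi^j,\xi^{j'}\}$, yielding exactly the three claimed pairs. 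This is shorter, handles the case $|\alpha_{j,j'}|=1$ uniformly rather than as a separate branch, and makes (4) an immediate corollary via $\overline{\alpha_{j,j'}}=\alpha_{k-j,k-j'}$ (the paper instead proves (4) by a direct trigonometric computation).

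The genuine gap is (6). You correctly reduce it to showing that the values $\alpha=1+2\cos(2\pi j_0/k)$ (from $\Omega^k_{j_0,k-j_0}=0$) and $\alpha=4-2/(1+\cos(2\pi j/k))$ (from $\Delta^k_j=0$) never coincide, but then you only announce that you ``expect'' to rule this out by a Galois/algebraic-integer argument; that is not a proof, and this coincidence question is the entire content of (6). Moreover, no such argument can work for all $k$: when $12\mid k$, take $j_0=k/6$ and $j=k/4$, so $\cos(2\pi j_0/k)=1/2$ and $\cos(2\pi j/k)=0$, and both formulas give $\alpha=2$ (this is the solution $(1/2,0)$ of your own equation $\cos(2\pi j/k)=(2\cos(2\pi j_0/k)-1)/(3-2\cos(2\pi j_0/k))$). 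Concretely, for $k=12$, $a_{31}=a_{33}=1$, $a_{32}=\sqrt2$, one has $\Delta^{12}_{3}=-2(\xi^{6}+1)=0$ and $\Omega^{12}_{2,10}=(1+\xi^{2}+\xi^{10})^{2}-2(\xi^{2}+\xi^{10}+1)=4-4=0$, and all indices avoid $\{p,2p\}=\{4,8\}$, so the hypotheses of the proposition hold. For what it is worth, the paper's own proof of (6) stumbles at exactly the same spot: after excluding $\eta_1=\eta_2$ it asserts that the polynomials $P_1$ and $P_2$ have no common roots, which the example above contradicts. So your gap is real, and for $k$ divisible by $12$ it cannot be filled as stated; any repair must either restrict $k$ (e.g.\ assume $12\nmid k$) or modify the statement, and consequently the stratification that relies on it.
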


\begin{proof} 
(1) As $\xi^{-j}=\xi^{k-j}$, factoring out $\xi^{2j}$ in \eqref{eq:Delta_j} gives $\Delta_{j}^k=\xi^{2j}\Delta_{k-j}^k$.

If $a_{32}^2-4a_{31}a_{33}=0$, then for $\Delta_j^k$ to vanish requires $a_{32}^2-2a_{31}a_{33}=0$. 
This would imply $a_{31}a_{33}=0$. Therefore, under the hypothesis of the proposition, 
we can assume that $a_{32}^2-4a_{31}a_{33}\ne 0$. Then 
$$
\Delta_j^k=0\iff \xi^{2j}+\frac{2(\alpha-2)}{\alpha-4}\xi^j+1=0
$$
with $\alpha$ as in \eqref{coeff_alpha_{j,j'}&alpha}. If $\xi^j$ is a solution of the above quadratic equation, 
then so is $\xi^{k-j}=\xi^{-j}$. Therefore, for $\alpha$ fixed, if $\Delta_j^k=0$, then 
$\Delta_s^k\ne 0$ for $s\notin \{ j,k-j \}$.

(2) When $a_{31}a_{33}\ne 0$, we can write
$
\Delta_j^k=a_{31}a_{33}((1+\xi^j)^2\alpha-4(1+\xi^j+\xi^{2j})).
$
Clearly, $\Delta_j^k\ne 0$ when $\xi^j=-1$. Thus,  
$\Delta_j^k=0$ if, and only if, $\alpha=4(1+\xi^j+\xi^{2j})/(1+\xi^j)^2$, 
which shows that if $\Delta_j^k=0$ then $\alpha$ must be real. The discriminant 
of the quadratic equation $(\xi^j+1)^2\alpha-4(\xi^{2j}+\xi^j+1)=0$ in $\xi^j$ is $4(\alpha-3)$, 
so $\alpha <3$ as the solutions $\xi^j$ are not real.

(3) Clearly as $\alpha_{j,j'}=\frac{\xi^j+\xi^{j'}+\xi^{j+j'}}{(1+\xi^j+\xi^{j'})^2}$, 
we have $\alpha_{j,j'}=\alpha_{j',j}$. Factoring our 
$\xi^{2j}$ (resp. $\xi^{2j'}$) from the numerator and denominator gives
$\alpha_{k-j,j'-j}=\alpha_{k-j,j'-j}=\alpha_{k-j',j-j'}$.

We now seek pairs $(l,q)$ for which $\alpha_{l,q}=\alpha_{j,j'}$. 
We know from the above that we have at least 3 such pairs. To show that these are the only ones, 
we write $\alpha_{j,j'}=c+id$ (so $(c,d)\ne (0,0)$) and represent points on the unit circle, with $-1$ removed,  in the form $z=\frac{1-t^2}{1+t^2}+i\frac{2t}{1+t^2}$ and 
$w=\frac{1-s^2}{1+s^2}+i\frac{2s}{1+s^2}$, with $t,s\in \mathbb R$. We set
$$
\alpha_{z,w}=\frac{z+w+zw}{(1+z+w)^2}.
$$ 
The real and imaginary parts of 
$\alpha_{z,w}-(c+id)$ vanish if, and only if, $P_{(c,d)}(s,t)=Q_{(c,d)}(s,t)=0$, where
$$
{
\small
\begin{array}{rcl}
P_{(c,d)}(s,t)&=&(cs^4+4ds^3+s^4-6cs^2-4ds+2s^2+c+1)t^4+\\
&&4(s^2+1)(ds^2-2cs-d+s)t^3-2(3cs^4-s^4+10cs^2+8ds-c+1)t^2-\\
&&4(s^2+1)(ds^2+2cs+3d-s)t+\\
&&cs^4-4ds^3+s^4+2cs^2-12ds-2s^2+9c-3,\\
Q_{(c,d)}(s,t)&=&-(ds^4-4cs^3-6ds^2+4cs+d)t^4+4(s^2+1)(cs^2+2ds-c+1)t^3+\\
&&2(3ds^4+10ds^2+2s^3-8cs-d+2s)t^2-\\
&&4(s^2+1)(cs^2-2ds+3c-1)t-ds^4-4cs^3-2ds^2+4s^3-12cs-9d+4s.
\end{array}
}
$$

Observe that $P_{(c,d)}$ and $Q_{(c,d)}$ are symmetric polynomials. 
Their resultant with respect to $t$ vanishes if, and only if,
$s^2=3$ or $R_{(c,d)}(s)=0$, with
$$
{\small 
\begin{array}{rcl}
R_{(c,d)}(s)&=&(c^2+d^2-1)^2s^6-(9c^4+18c^2d^2+9d^4+2c^2+2d^2+8c-3)s^4+\\
&&(27c^4+54c^2d^2+27d^4+18c^2+18d^2-16c+3)s^2-\\
&&27c^4-54c^2d^2-27d^4+18c^2+18d^2-8c+1.
\end{array}
}
$$

We have $s^2=3$ if, and only if, $k=3p$ and  $w=\xi^p$ or $w=\xi^{2p}$, and this is excluded from the hypotheses. 
The component $R_{(c,d)}$ of the resultant is a cubic polynomial in $s^2$ provided $c^2+d^2-1\ne 0$, i.e., $|\alpha_{j,j'}|\ne 1$. Suppose that $|\alpha_{j,j'}|\ne 1$. 
Then the discriminant of  $R_{(c,d)}$  vanishes if, and only if, $d=0$ (i.e., $\alpha_{j,j'}$ is real and this is treated in (4) below) or 
$$
\delta_R=27c^4+54c^2d^2+27d^4-18c^2-18d^2+8c-1=0.
$$
For $(c,d)$ in the interior region bounded by the curve $\delta_R=0$ (see Figure \ref{fig:Part}), $R_{(c,d)}$ has a unique solution 
in $s^2$. As we know that there are at least three distinct solution pairs to the problem, it follows that $(c,d)$ must be in the exterior region $(R^+)$ bounded by the curve $\delta_R=0$.

Observe that $s=0$ is a root of $R_{(c,d)}$ if, and only if,  $\delta_R=0$. Therefore, the roots of $R_{(c,d)}$ 
in $s^2$ do not change sign in $(R^+)$. Choosing any point in that region, we find that they are all positive. 
It follows that $R_{(c,d)}$ has six roots $\pm s_1,\pm s_2,\pm s_3$. These correspond to six points on the unit circle 
$w_1,w_2,w_3$ and $\overline{w}_1,\overline{w}_2,\overline{w}_3.$

For each root of $R_{(c,d)}$ we show, by considering the subresultant (see for example 
\cite{Kahoui}) of $P_{(c,d)}$ and $Q_{(c,d)}$ that 
$P_{(c,d)}(\pm s_i,t)$ and $Q_{(c,d)}(\pm s_i,t)$ have only one common root. 
As $P_{(c,d)}$ and $Q_{(c,d)}$  are symmetric polynomials, that common  root is a root of $R_{(c,d)}$. 
Interchanging $w_i$ with $\overline{w}_i$ if necessary, we can set $w_1=\xi^j$, $w_1=\xi^{k-j}$, $w_1=\xi^{k-j'}$,
Then the solutions of $\alpha_{z,w}-c-id=0$ are exactly, up to permutation of $z$ and $w$, $(\xi^j,\xi^{j'})$, $(\xi^{k-j},\xi^{j'-j})$, $(\xi^{k-j'},\xi^{j-j'})$.

We turn now to the case when $|\alpha_{j,j'}|=1$, i.e., $c^2+d^2-1=0$. This occurs if, and only if, $2\mid k$ and 
$j'=j+\frac{k}{2}$ or $j'=\frac{k}{2}$. Suppose that $2\mid k$ and 
$j'=j+\frac{k}{2}$. Then $\alpha_{j,j+\frac{k}{2}}=-\xi^{2j}$, and  $\alpha_{l,l+\frac{k}{2}}= \alpha_{j,j+\frac{k}{2}}$ if,
and only if, $l=j$ or $l=j+\frac{k}{2}$. In both cases, we get only the pair $(j,j+\frac{k}{2})$. 
Now $\alpha_{\frac{k}{2},l}=-\xi^{-2l}$, so $\alpha_{\frac{k}{2},l}=\alpha_{j,j+\frac{k}{2}}$
if, and only if, $l=k-j$ or $l= \frac{k}{2}-j$. 
This shows that 
  $\alpha_{j,j+\frac{k}{2}}=\alpha_{k-j,\frac{k}{2}}=\alpha_{\frac{k}{2}-j,\frac{k}{2}}$ and 
  the equality $\alpha_{j,j+\frac{k}{2}}= \alpha_{l,q}$ holds only for these three pairs.
\begin{figure}
\begin{center}
\includegraphics[scale=0.4]{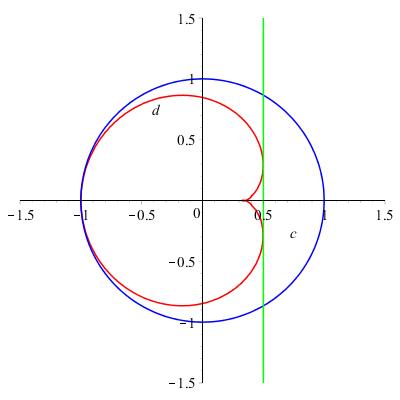}
\end{center}
\caption{The red curve is the discriminant of $R_{(c,d)}$, the blue curve is the unit circle and the green line are the values of $\alpha_{p,w}$ and $\alpha_{2p,w}$ when $k=3p$.}
\label{fig:Part}
\end{figure}

(4) We can write  $\alpha_{j,j'}$ in the form
$$
\alpha_{j,j'}=\frac{6(1+\Re(\xi^j)+\Re(\xi^{j'})+\Re(\xi^{j'-j}))+\xi^{j-2j'}+\xi^{j+j'} + \xi^{j'-2j}}{|1+\xi^j+\xi^{j'}|^2}.
$$

Therefore,  $\alpha_{j,j'}$ is real if, and only if, $\zeta=\xi^{j-2j'}+\xi^{j+j'} + \xi^{j'-2j} $ is real. 
Setting $\theta=j-2j'$ and $\phi=j'-2j$, we get $\Im(\zeta)=\sin(\frac{2\pi \theta}{k})+\sin(\frac{2\pi \phi}{k})-\sin(\frac{2\pi(\theta+\phi)}{k})$. 
Then
$$
\begin{array}{rcl}
\Im(\zeta)=0&\iff& \sin(\frac{2\pi \theta}{k})+\sin(\frac{2\pi \phi}{k})=\sin(\frac{2\pi(\theta+\phi)}{k})\\
&\iff&2 \sin(\frac{\pi (\theta+\phi)}{k})\cos(\frac{\pi (\theta-\phi)}{k})=2\sin(\frac{\pi(\theta+\phi)}{k})\cos(\frac{\pi(\theta+\phi)}{k}).
\end{array}
$$
Now, $\sin(\frac{\pi (\theta+\phi)}{k})=0$ when $j'=k-j$ and 
$\cos(\frac{\pi (\theta-\phi)}{k})=\cos(\frac{\pi(\theta+\phi)}{k})$
when  $j'=2j$ or $j=2j'$. Clearly, $\alpha_{j,k-j}=1/(1+\xi^j+\xi^{k-j})=\alpha_{j,2j}.$

(5) As $a_{31}a_{33}\ne 0$, we can write 
$\Omega_{j,j'}^k=a_{31}a_{33}(1-\alpha_{j,j'}\alpha)$, so $\Omega_{j,j'}^k=0$ if, and only if, $\alpha_{j,j'}=1/\alpha$ 
and the statement follows by (4).
 
(6) 
When $\alpha$ is real, by (4), we have $\Omega_{j_0,k-j_0}=0$ when $\xi^j_0+\xi^{k-j_0}+1=\alpha$.
It follows that $\xi^j_0=\eta_1$ with 
$
\eta_1=\frac{\alpha-1}{2}\pm i\sqrt{1-\left(\frac{\alpha-1}{2}\right)^2}\, .
$
Suppose that there exist a  $j$ for which $\Delta_{j}^k=0$, 
equivalently,
$\xi^{2j}+\frac{2(\alpha-2)}{(\alpha-4)}\xi^j+1=0$.
Then $\xi^j+\xi^{k-j}=-\frac{2(\alpha-2)}{\alpha-4}$ which gives $\xi_j=\eta_2$ with 
$
\eta_2=-\frac{\alpha-2}{\alpha-4}\pm i\sqrt{1-\left(\frac{\alpha-2}{\alpha-4}\right)^2}\, .
$
We have $\eta_1=\eta_2$ if, and only if, $\alpha=0$ or $3$. Then $k=3p$ and $j_0=p$ or $2p$, which is excluded from our hypotheses, so $\eta_1\ne \eta_2$.

The complex number $\eta_1$ (resp. $\eta_2$) is a $k^{th}$-root of unity if, and only if, 
$\alpha$ is a root of the polynomial $P_1(\alpha)$ (resp. $P_2(\alpha)$) of degree $k$ obtained by taking the numerator of $\Re({\eta_1}^k)-1$ (resp. $\Re({\eta_2}^k)-1$).
But if $P_1$ and $P_2$ have one common root, all the other roots must also be common. 
(This follows from the fact that the map $\cos\theta=-(\alpha-2)/(\alpha-4)$ is a bijection for $\alpha\in (-\infty,3]$ and 
$\cos\theta=(\alpha-1)/2$ is also a bijection for $\alpha\in (-1,3]$.)
As $P_1$ and $P_2$ are distinct polynomials, 
it follows that they have no common roots. Consequently, $\Delta_{j}^k\ne 0$ for all $j$. 

The argument for showing that $\Omega^k_{j,k-j}\ne 0$ 
when $\Delta_{j_0}^k=0$ is the same as above.

(7) We have, with the hypothesis, 
$\mathcal D_j\cdot \mathcal D_{j'}=\dim_{\C}\mathcal O_2/\left<xy,x^2+y^2\right>=4$. 
\end{proof}

In view of Proposition \ref{prop:PropertiesDeltaOmega} (7), we give in the rest of this section 
$\mathcal D_j\cdot \mathcal D_{j'}$ for the exceptional branches only, i.e, when one or both branches 
have a singularity more degenerate than $A_1$ or one or both of their components are tangential (which is 
equivalent to $\mathcal D_j\cdot \mathcal D_{j'}>4$).
We start with the case when all  the branches have an $A_1$-singularity.

\begin{prop}\label{prop:a11=a21=a22=0(0)}
Suppose that $a_{11}=a_{21}=a_{22}=0$, $a_{31}a_{33}\ne 0$  and $\Delta_j^k\ne 0$ for all $j$. Any $k$-folding map-germ $F_k$  satisfying 
the additional conditions in {\rm (a)} or {\rm (b)} is finitely $\mathcal A$-determined and is topologically equivalent to one 
of the following
germs:
\[
{\bf U}^k_{l}\colon(x,y)\mapsto(x,y^k,x^2y+2xy^2+y^3+y^{3l-5}),\, l=3,4.\\
\]
Every branch of the double point curve has an $A_1$-singularity and the invariants associated to the germs in these strata are as in {\rm Table \ref{tab:InvariantsParabolicBranch}.}

\begin{itemize}
\item [{\rm (a)} ]If $3\nmid k$ and $\Omega^k_{j,j'}\ne 0$ for all distinct pairs, then 
	$F_k$ is topologically equivalent to 
	${\bf U}^k_{3} $.

\item[{\rm (b)}] Suppose that $k=3p$ and $\Omega^k_{j,j'}\ne 0$ for all distinct pairs with $j,j'\ne \{p,2p\}$. 
\begin{itemize}
	\item[{\rm (b1)}] If $a_{44}\ne 0$, then ${\mathcal D}_p\cdot\mathcal D_{2p}=5$ and $F_k$ is topologically equivalent to 
	${\bf U}^k_{3} $.
	\item[{\rm (b2)}]  If $a_{44}=0$ and $CndUm_8\ne 0$, then ${\mathcal D}_p\cdot\mathcal D_{2p}=8$ and $F_k$ is topologically equivalent to 
	${\bf U}^k_{4} $.
\end{itemize}
\end{itemize}
\end{prop}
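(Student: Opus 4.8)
The plan is to run the same machine as in the proof of Theorem \ref{theo:a11=0} and the preceding propositions of this section: read off the singularity type of each branch $\mathcal D_j$ and the intersection numbers $\mathcal D_j\cdot\mathcal D_{j'}$ from the functions $\lambda_j$, assemble $\mu(\mathcal D)$ via Proposition \ref{prop:Mukfoldingmaps}, deduce finite $\mathcal A$-determinacy from Theorem \ref{theo:mu(D)FinitDet}, and finally obtain topological triviality along the stratum through Corollary \ref{propTopTrivAndInvs}. Since $\Delta_j^k$ in \eqref{eq:Delta_j} is exactly the discriminant of the quadratic part $a_{31}x^2+\vartheta_{2j}a_{32}xy+\vartheta_{3j}a_{33}y^2$ of $\lambda_j$, the hypothesis $\Delta_j^k\ne 0$ for all $j$ says that every $\mathcal D_j$ is a node, so $\mu(\mathcal D_j)=1$ and $r(\mathcal D_j)=2$, giving $r(\mathcal D)=2(k-1)$ as in Table \ref{tab:InvariantsParabolicBranch}. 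I would also record at the outset that when $k=3p$ one has $\Delta_p^k=\xi^p a_{32}^2$, so the standing hypothesis forces $a_{32}\ne 0$; this will be needed below.

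For every pair with $\Omega^k_{j,j'}\ne 0$, Proposition \ref{prop:PropertiesDeltaOmega}(7) gives $\mathcal D_j\cdot\mathcal D_{j'}=4$, which settles case (a) completely, since for $3\nmid k$ no $\Omega^k_{j,j'}$ vanishes. When $k=3p$ the single exceptional pair is $\{p,2p\}$, for which $1+\xi^p+\xi^{2p}=0$ forces $\Omega^k_{p,2p}=0$ by \eqref{eq:Omega}; because $\vartheta_{3p}=\vartheta_{3,2p}=0$, the two nodes $\mathcal D_p$ and $\mathcal D_{2p}$ share the common tangent line $x=0$, and I must compute $\mathcal D_p\cdot\mathcal D_{2p}$ directly. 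The idea is to split each node into its two smooth components and sum the four pairwise contacts. Three of these do not involve the two $x=0$-tangent components; they are transverse (value $1$ each) because the remaining tangent lines $a_{31}x+\vartheta_{2j}a_{32}y=0$ are distinct for $j=p,2p$ (as $\vartheta_{2p}\ne\vartheta_{2,2p}$). The remaining contact is $\mathrm{ord}(\gamma_p-\gamma_{2p})$, where $x=\gamma_j(y)$ parametrises the $x=0$-tangent component of $\mathcal D_j$. If $a_{44}\ne 0$ the leading term of $\gamma_j$ is $-a_{44}/(\vartheta_{2j}a_{32})\,y^2$, and these differ, giving contact $2$ and hence $\mathcal D_p\cdot\mathcal D_{2p}=2+3=5$ (case (b1)). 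If $a_{44}=0$ the orders rise and the first coefficient separating $\gamma_p$ from $\gamma_{2p}$ is governed by $CndUm_8$, yielding contact $5$ and $\mathcal D_p\cdot\mathcal D_{2p}=8$ (case (b2)).

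Feeding these into Proposition \ref{prop:Mukfoldingmaps} and using $\mu(\mathcal D_j)=1$ for all $k-1$ branches gives $\mu(\mathcal D)=4(k-1)(k-2)+1$ in case (a), and, via $\mu(\mathcal D)=4(k-1)(k-2)+1+2\,(\mathcal D_p\cdot\mathcal D_{2p}-4)$, the values $4(k-1)(k-2)+3$ and $4(k-1)(k-2)+9$ in (b1) and (b2), matching Table \ref{tab:InvariantsParabolicBranch}. Finiteness of $\mu(\mathcal D)$ gives finite $\mathcal A$-determinacy by Theorem \ref{theo:mu(D)FinitDet}. Each stratum is the complement of an algebraic hypersurface inside an affine jet space, hence connected, and the numbers $\mu(\mathcal D_j)$ and $\mathcal D_j\cdot\mathcal D_{j'}$ are constant on it, so Corollary \ref{propTopTrivAndInvs} shows that all germs in the stratum are topologically equivalent to the chosen model ${\bf U}^k_l$. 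It then remains to check that ${\bf U}^k_l$ lies in the correct stratum and to compute its invariants: $C=\dim_\C \cO_2/\langle y^{k-1},f_y\rangle=2(k-1)$ because $f_y$ has nonzero $x^2$-coefficient, while $T=\tfrac13\sum_{j<j'}T_{j,j'}$ follows from $T_{j,j'}=\dim_\C \cO_2/\langle\lambda_j,\lambda_{j,j'}\rangle$, which equals $2$ for the generic pairs and $3$ (resp. $6$) for the exceptional pair $\{p,2p\}$ in the ${\bf U}^k_3$ (resp. ${\bf U}^k_4$) model, reproducing the table.

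The main obstacle is the hand computation of $\mathcal D_p\cdot\mathcal D_{2p}$ in case (b2): one must carry the parametrisations $\gamma_p,\gamma_{2p}$ of the two $x=0$-tangent components through order $y^5$ to see that they separate precisely when $CndUm_8\ne 0$, keeping careful track of the cyclotomic identities $\vartheta_{3p}=0$, $\vartheta_{4p}=1$, $\vartheta_{5p}=\vartheta_{2p}$, $\vartheta_{7p}=1$ from Lemma \ref{lemVarthetaZeroOrOne}. The cleanest route is to verify the contact directly on the model ${\bf U}^k_4$, where for $j\in\{p,2p\}$ one has $\lambda_j=x^2+2\vartheta_{2j}xy+y^6$, which solves explicitly to give $\gamma_j=-y^5/(2\vartheta_{2j})+\cdots$ with $\vartheta_{2p}\ne\vartheta_{2,2p}$, hence $\mathrm{ord}(\gamma_p-\gamma_{2p})=5$; one then translates the general condition $CndUm_8\ne 0$ into the non-degeneracy guaranteeing the same contact for an arbitrary germ of the stratum.
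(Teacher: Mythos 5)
Your proposal is correct and follows essentially the same route as the paper's proof: nodal branches detected by $\Delta_j^k\ne 0$, the contact computation for the exceptional pair $\{p,2p\}$ when $k=3p$ (contact $2$ of the $x=0$-tangent components giving ${\mathcal D}_p\cdot{\mathcal D}_{2p}=5$ when $a_{44}\ne 0$, contact $5$ governed by $CndUm_8$ giving $8$ when $a_{44}=0$), then Proposition \ref{prop:Mukfoldingmaps}, Theorem \ref{theo:mu(D)FinitDet} and Corollary \ref{propTopTrivAndInvs} together with connectedness of the stratum, and finally $C$ and $T_{j,j'}$ computed on the model. Two minor caveats: your aside that $3\nmid k$ by itself rules out vanishing of $\Omega^k_{j,j'}$ is false (its vanishing is precisely the ${\bf V}^{k,j,j'}_4$ stratum, which occurs for $3\nmid k$ too, so in case (a) you must simply invoke the hypothesis), and the membership of ${\bf U}^k_l$ in the stratum, which you flag but do not verify, is settled in the paper via Proposition \ref{prop:PropertiesDeltaOmega}(2),(4),(5) using that the model has $\alpha=a_{32}^2/(a_{31}a_{33})=4>3$.
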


\begin{proof} Each branch of the double point curve consists of a transverse 
intersection of two regular curves. In (a) all of these curves are pairwise transverse. 

In (b), $j^2\lambda_s(x,y)=x(a_{31}x+a_{32}\vartheta_{2s}y)$ for $s=p,2p$. 
Observe that $a_{32}\ne 0$ as we supposed $\Delta^k_s\ne 0$, so 
the branches $\mathcal D_p$ and $\mathcal D_{2p}$ have one common line $x=0$ in their 
tangent cone and their associated curves tangent to this line are parametrised by $t\mapsto (\gamma_s(t),t)$ with  
$\gamma_s(t)=-(a_{44}/({\vartheta_{2s} a_{32}}))t^2+O(3)$, 
so they have order of contact 2 when $a_{44}\ne 0$. 
Then $\mathcal D_p\cdot \mathcal D_{2p}=\dim_{\C}\mathcal O_2/\left<xy,x^2+y^3\right>=5$. 

When $a_{44}=0$, the two curves are parametrised by $t\mapsto(\gamma_s(t),t)$ with
$$
\gamma_s(t) = -\frac{a_{55}}{a_{32}}t^3+
\frac{2}{\vartheta_{2s} a_{32}^3}(a_{55}(a_{31}a_{55}-a_{54}a_{32})+a_{77}a_{32}^2)t^5
+O(6),$$
and have contact order 5 when $(a_{55}(a_{31}a_{55}-a_{54}a_{32})+a_{77}a_{32}^2\ne 0$, i.e., $CndUm_8\ne 0$.
Then 
$\mathcal D_p\cdot \mathcal D_{2p}=\dim_{\C}\mathcal O_2/\left<xy,x^2+y^6\right>=8.$

For the topological normal forms, 
we choose $a_{31},a_{32},a_{33}$ real with $\alpha> 3$.
By Proposition \ref{prop:PropertiesDeltaOmega} (2), all $\Delta_j^k$ are non-zero. 
Also, by  Proposition \ref{prop:PropertiesDeltaOmega} (4) and (5) 
$\Omega_{j,k-j}=0$ when $\xi^j+\xi^{k-j}+1=\alpha$, which cannot happen when $\alpha >3$, 
so the $\Omega_{j,j'}^k$ in (a) and (b) are non-zero. 
As the strata are connected sets, we can choose $a_{31}=a_{32}=1,a_{32}=2$
so that the conditions on $\Delta_j^k$ and $\Omega_{j,j'}^k$ are satisfied.

The number of triple points is calculated as in  Proposition \ref{prop:a_{11}=a_{22}=a_{33}=0a_{21}ne0,4ndivk}. 
If $k=3p$, the germ ${\bf U}^k_{3}$ (resp. ${\bf U}^k_{4}$) has $T_{p,2p}=3$ (resp. $T_{p,2p}=6$), 
while all other indices $j$ and $j'$ give $T_{j,j'}=2$.
\end{proof}

\begin{prop}\label{prop:a11=a21=a22=0(1)}
Suppose that $a_{11}=a_{21}=a_{22}=0$, $a_{31}a_{33}\ne 0$, $a_{32}\ne 0$ and 
$\Omega^k_{j_0,j_0'}=0$
for some pair $(j_0,j'_0)$, with $j_0,j'_0\notin\{p,2p\}$ when $k=3p$. 
Then
${\mathcal D}_{s}\cdot\mathcal D_{q}=5$ for $(s,q)\in \{(j_0,j'_0), (k-j_0,j'_0-j_0), (k-j'_0,j_0-j'_0)\}$
if, and only if, 
${CndVm_5}_{j_0,j_0'}\ne 0$. 
For $k=3p$,  we have 
${\mathcal D}_p\cdot \mathcal D_{2p}=5$ if, and only if, $a_{44}\ne 0$. Any $k$-folding map-germ $F_k$ satisfying the above conditions is finitely $\mathcal A$-determined and is topologically equivalent to 
$$
{\bf V}^{k,j_0,j_0'}_{4}\colon(x,y)\mapsto(x,y^k,x^2y+xy^2+\alpha_{j_0,j'_0}y^3+\big(1-\frac{\beta_{j_0,j'_0}}{\alpha^3_{j_0,j'_0}}\big)x^3y+y^4).
$$

The invariants associated to germs in these strata are as in {\rm Table \ref{tab:InvariantsParabolicBranch}.}
\end{prop}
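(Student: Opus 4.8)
The plan is to follow the template of the preceding propositions in this branch: first determine the tangency pattern of the double point branches, compute the relevant intersection multiplicities, assemble $\mu(\mathcal D)$ via Proposition \ref{prop:Mukfoldingmaps}, and then pass to the topological statement through Theorem \ref{theo:mu(D)FinitDet} and Proposition \ref{theo:BobPe}. To begin, since $a_{31}a_{33}\ne 0$ and $a_{32}\ne0$, and since the germs of this stratum have $\Delta^k_{j_0},\Delta^k_{j_0'}\ne 0$ (so that the quadratic parts of $\lambda_{j_0}$ and $\lambda_{j_0'}$ factor into distinct linear forms), each of $\mathcal D_{j_0}$ and $\mathcal D_{j_0'}$ has an $A_1$-singularity and splits into two smooth local components. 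The condition $\Omega^k_{j_0,j_0'}=0$ means, by its derivation as the resultant of the two $2$-jets, that exactly one component of $\mathcal D_{j_0}$ shares a tangent line with one component of $\mathcal D_{j_0'}$, while the remaining three pairs of components meet transversally and each contribute $1$ to $\mathcal D_{j_0}\cdot\mathcal D_{j_0'}$.

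The heart of the proof, and the step I expect to be the main obstacle, is the computation of the order of contact of the two tangent components. I would parametrise each of them as a graph over the common tangent direction and expand $\lambda_{j_0}$ and $\lambda_{j_0'}$ far enough to detect the first discrepancy between the two parametrisations. The assertion is that this order of contact equals $2$ precisely when ${CndVm_5}_{j_0,j_0'}\ne0$; verifying this requires carrying the expansion to the order at which $a_{41},a_{42},a_{43},a_{44}$ first enter, and recognising the resulting obstruction as the stated polynomial in the $a_{pq}$, $\alpha_{j_0,j_0'}$ and $\beta_{j_0,j_0'}$. When it is non-zero the tangent pair contributes $2$, giving $\mathcal D_{j_0}\cdot\mathcal D_{j_0'}=3+2=5$; when it vanishes the contact is higher and the germ lies in a stratum of larger codimension. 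By Proposition \ref{prop:PropertiesDeltaOmega}(3) the equality $\alpha_{l,q}=\alpha_{j_0,j_0'}$, hence $\Omega^k_{l,q}=0$, holds exactly for the three pairs $(j_0,j_0')$, $(k-j_0,j_0'-j_0)$, $(k-j_0',j_0-j_0')$, so the same computation gives $\mathcal D_s\cdot\mathcal D_q=5$ for each. For $k=3p$ the pair $(p,2p)$ always satisfies $\Omega^k_{p,2p}=0$ because $1+\xi^p+\xi^{2p}=0$, and here the common tangent is the line $x=0$; this is exactly the configuration of Proposition \ref{prop:a11=a21=a22=0(0)}(b1), giving $\mathcal D_p\cdot\mathcal D_{2p}=5$ if and only if $a_{44}\ne 0$.

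With the exceptional intersection multiplicities in hand, I would assemble $\mu(\mathcal D)$ from Proposition \ref{prop:Mukfoldingmaps}: every branch $\mathcal D_j$ contributes $\mu(\mathcal D_j)=1$, every non-exceptional pair contributes $\mathcal D_j\cdot\mathcal D_{j'}=4$ by Proposition \ref{prop:PropertiesDeltaOmega}(7), and the three exceptional pairs (respectively the four, when $3\mid k$, counting also $(p,2p)$) contribute $5$ each. Since each unit of excess in a pairwise contact adds $2$ to $\mu$ through the factor in Proposition \ref{prop:Mukfoldingmaps}, this yields $4(k-1)(k-2)+7$ for $3\nmid k$ and $4(k-1)(k-2)+9$ for $3\mid k$, matching Table \ref{tab:InvariantsParabolicBranch}. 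Finiteness of $\mu(\mathcal D)$ then gives finite $\mathcal A$-determinacy by Theorem \ref{theo:mu(D)FinitDet}.

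For the topological classification I would check that the stratum cut out by the listed conditions is path-connected and that $\mu(\mathcal D)$ is constant along it, so that Corollary \ref{propTopTrivAndInvs} (via Proposition \ref{theo:BobPe}) makes all its germs topologically equivalent. That the model ${\bf V}^{k,j_0,j_0'}_4$ lies in the stratum is verified directly: its coefficients give $\alpha=a_{32}^2/(a_{31}a_{33})=1/\alpha_{j_0,j_0'}$, so $\Omega^k_{j_0,j_0'}=a_{31}a_{33}(1-\alpha_{j_0,j_0'}\alpha)=0$, while the prescribed $x^3y$-coefficient $1-\beta_{j_0,j_0'}/\alpha^3_{j_0,j_0'}$ makes ${CndVm_5}_{j_0,j_0'}=\alpha^3_{j_0,j_0'}\ne0$. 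Finally I would read off $C=\dim_\C\mathcal O_2/\langle y^{k-1},f_y\rangle=2k-2$, since the conic $f_y=0$ meets each of the $k-1$ lines $y=0$ with multiplicity $2$, and compute $T=\frac13\sum_{j<j'}T_{j,j'}$ on the normal form exactly as in Proposition \ref{prop:a_{11}=a_{22}=a_{33}=0a_{21}ne0,4ndivk}, where each exceptional pair raises $T_{j,j'}$ by one, reproducing the tabulated values $\tfrac{(k-1)(k-2)+3}{3}$ and $\tfrac{(k-1)(k-2)+4}{3}$.
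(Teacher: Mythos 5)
Your proposal follows essentially the same route as the paper: identify the single shared tangent line forced by $\Omega^k_{j_0,j_0'}=0$, compute the order of contact of the two tangential components, recognise ${CndVm_5}$ as the obstruction, handle the pair $(p,2p)$ for $k=3p$ by reduction to Proposition \ref{prop:a11=a21=a22=0(0)}(b1), and then get finite determinacy from $\mu(\mathcal D)<\infty$ and topological triviality from connectedness of the stratum via Proposition \ref{theo:BobPe}; your verification that the model ${\bf V}^{k,j_0,j_0'}_4$ lies in the stratum (giving ${CndVm_5}_{j_0,j_0'}=\alpha^3_{j_0,j_0'}\neq 0$) is exactly the paper's. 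Two caveats. First, the crux --- that the contact of the tangential components drops to the stated polynomial ${CndVm_5}_{j_0,j_0'}$ --- is announced but not executed in your plan; the paper carries it out by parametrising the tangential components via the elementary fact that two quadratics $x^2+a_ix+b_i=0$ sharing a root have that root equal to $-(b_1-b_2)/(a_1-a_2)$, which gives the initial terms $\gamma_l(t)=-\frac{a_{33}}{a_{32}}(1+\xi^{s}+\xi^{q})t+\lambda_l t^2+O(3)$ and reduces the claim to $\lambda_s-\lambda_q\neq 0$. Second, there is a small logical gap in your transfer to the other two pairs: Proposition \ref{prop:PropertiesDeltaOmega}(3) gives only the invariance of $\alpha_{j,j'}$ under $(j,j')\mapsto(k-j,j'-j)\mapsto(k-j',j-j')$, but ${CndVm_5}_{s,q}$ also involves $\beta_{s,q}$, so you need the additional identity $\beta_{j,j'}=\beta_{k-j,j'-j}=\beta_{k-j',j-j'}$ (stated explicitly in the paper, and proved the same way as for $\alpha$, by factoring powers of $\xi$ out of numerator and denominator of \eqref{eq:betajj'}) to conclude that the three exceptional intersection numbers are governed by the single condition ${CndVm_5}_{j_0,j_0'}\neq 0$. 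As a minor point in your favour, your triple-point count ($T_{j,j'}=2$ for generic pairs and $3$ for the exceptional ones) is the one consistent with Table \ref{tab:InvariantsParabolicBranch}, whereas the paper's closing sentence, read literally, asserts $T_{j,j'}=3$ for all pairs, which would contradict its own table.
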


\begin{proof} When $a_{32}=0$ and $k=3p$,  we have $\Delta_p^k=\Delta_{2p}^k=0$. This case is dealt with 
in Proposition \ref{prop:a11=a21=a22=0(2)}. With the hypothesis and Proposition \ref{prop:PropertiesDeltaOmega}(5),
we have $\Omega_{j_0,j'_0}=\Omega_{k-j_0,j'_0-j_0}=\Omega_{k-j'_0,j_0-j'_0}=0.$

We need to consider the order of contact between the two tangential components 
of the pairs  $(\mathcal D_{s},\mathcal D_{q})$ with $(s,q)\in \{(j_0,j'_0), (k-j_0,j'_0-j_0), (k-j'_0,j_0-j'_0)\}$.
Using the fact that if two quadratic equations $x^2+a_ix+b_i=0$, $i=1,2$, have one root in common, then the 
root is given by $x=-(b_1-b_2)/(a_1-a_2)$, we can get the initial terms of parametrisations of the tangential components of the above pairs. 
These are given by 
$t\mapsto (t,\gamma_l(t))$, $l=s,q$, with 
$
\gamma_l(t)=
-\frac {a_{33}}{a_{32}}\big(1+\xi^{s}+\xi^{q}\big)t+\lambda_lt^2+O(3), l=s,q.
$
A calculation shows that $\lambda_s-\lambda_q=0$ if, and only if, 
$$
{CndVm_5}_{s,q}=a_{32}^2a_{44}\beta_{s,q} +a_{32}a_{33}(2a_{33}a_{42}-a_{32}a_{43})\alpha_{s,q}+a_{41}a_{33}^3=0,
$$ with
\begin{equation}\label{eq:betajj'}
\beta_{j,j'}=\frac{\xi^{j+3j'}+\xi^{3j+j'}+\xi^{3j'}+2\xi^{2j+j'}+2\xi^{j+2j'}+\xi^{3j}+2\xi^{j+j'}+\xi^{j}+\xi^{j'} }{(1+\xi^{j}+\xi^{j'})^4}. 
\end{equation}

Observe that $\beta_{j,j'}=\beta_{k-j,j'-j}=\beta_{k-j',j-j'}$, 
so ${CndVm_5}_{s,q}$ has the same value for 
$(s,q)\in \{ (j_0,j'_0), (k-j_0,j'_0-j_0), (k-j'_0,j_0-j'_0)\}.$
It follows that  ${\mathcal D}_{s}\cdot\mathcal D_{q}=\dim_{\C}\mathcal O_2/\left<xy,x^2+y^3\right>=5$ 
if, and only if, ${CndVm_5}_{j_0,j_0'}\ne 0$.

For the topological model, we take $a_{31}=a_{32}=1$ and $a_{33}=\alpha_{j_0,j'_0}$
so $\Omega_{j_0,j'_0}=0$. We also set $a_{44}=1$, $a_{42}=a_{43}=0$, 
then ${CndVm_5}_{j_0,j'_0}=\beta_{j_0,j'_0} +a_{41}\alpha_{j_0,j'_0}^3$. We set $a_{41}=1-\beta_{j_0,j'_0}/\alpha_{j_0,j'_0}^3$, so that 
${CndVm_5}_{j_0,j'_0}\ne 0$. For calculating the triple points, we find that $T_{j,j'}=3$ for all $j$ and $j'$ with $j\ne j'$.
\end{proof}

We deal now with the case when one pair of branches of the double point curve has a singularity more degenerate than $A_1$ (see Proposition \ref{prop:PropertiesDeltaOmega}(1)).

\begin{rem}\label{rem:Unequivalent2}
{\rm 
For $k=3p$,  the singularities ${\bf U}^k_4$ and ${\bf V}^{k,j,j'}_4$ have the same invariants, but  the intersection numbers between their double point branches shows that they are not topologically equivalent.
}
\end{rem}

\begin{prop}\label{prop:a11=a21=a22=0(2)}
Suppose that $a_{11}=a_{21}=a_{22}=0$, $a_{31}a_{33}\ne 0$. The strata below are of codimension $4$, and the invariants associated to germs in the strata are as in {\rm Table \ref{tab:InvariantsParabolicBranch}.}

{\rm (1)} Suppose that $\Delta_{j_0}^k=\Delta_{k-j_0}^k=0$, with $j_0\notin \{ p,2p\}$ when $k=3p$ {\rm (}so $a_{32}\ne 0{\rm )}$. 
The branches 
$\mathcal D_{j_0}$ and $\mathcal D_{k-j_0}$ have an $A_2$-singularity if, and only if,
$CndWA_2\ne 0$. We have
$
{\mathcal D}_{j_0}\cdot\mathcal D_{j}={\mathcal D}_{k-j_0}\cdot\mathcal D_{j}=4$ for all distinct pairs.
When $k=3p$ and  $a_{44}\ne 0$, we have  
${\mathcal D}_p\cdot\mathcal D_{2p}=5$. 
Any $k$-folding map-germ $F_k$ satisfying the above conditions is $\mathcal A$-finitely determined and is topologically equivalent to  
$$
{\bf W}^{k,j_0}_{4}\colon(x,y)\mapsto(x,y^k,x^2y+xy^2+\frac{(\xi^{j_0}+1)^2}{4(\xi^{2j_0}+\xi^{j_0}+1)}y^3+4x^2y^2+y^4).
$$

{\rm (2)} If $k=3p$ and $j_0=p$, then 
${\mathcal D}_p$ and $\mathcal D_{2p}$ have an $A_2$-singularity if, and only if, $a_{44}\ne 0$.
We have ${\mathcal D}_p\cdot\mathcal D_{2p}=8$ 
if, and only if, ${Cnd\widetilde{W}m}_8\ne 0$, and  
${\mathcal D}_s\cdot\mathcal D_j=5$ for $s=p,2p$ and $j\ne s$.
Any $k$-folding map-germ $F_k$ satisfying the above conditions is $\mathcal A$-finitely determined and is topologically equivalent to  
$$
{\bf W}^{3p,p}_{4}\colon(x,y)\mapsto(x,y^k,x^2y+y^3+y^4+y^5).
$$
\end{prop}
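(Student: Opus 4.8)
The plan is to follow the template of the preceding propositions of this branch: identify the singularity types of the two exceptional double point branches, compute every intersection multiplicity $\mathcal D_j\cdot\mathcal D_{j'}$, substitute these into the formula of Proposition \ref{prop:Mukfoldingmaps} to obtain $\mu(\mathcal D)$, and then apply the finite-determinacy criterion of Theorem \ref{theo:mu(D)FinitDet} together with the topological-triviality machinery of Corollary \ref{propTopTrivAndInvs}. In both parts the hypothesis $\Delta_{j_0}^k=0$, with $a_{31}a_{33}\ne 0$, is symmetric in $j_0\leftrightarrow k-j_0$ by Proposition \ref{prop:PropertiesDeltaOmega}(1), so the degeneracy occurs on exactly the pair $\mathcal D_{j_0},\mathcal D_{k-j_0}$, while every other branch retains an $A_1$ singularity.

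For part (1), note first that $\Delta_{j_0}^k=0$ forces $\alpha=a_{32}^2/(a_{31}a_{33})$ to be real and nonzero by Proposition \ref{prop:PropertiesDeltaOmega}(2); here $a_{32}\ne 0$ because $a_{32}=0$ would give $\alpha=0$, which is possible only for $k=3p$, $j_0\in\{p,2p\}$, a case excluded in the hypothesis. For real nonzero $\alpha$ every $\Omega_{j,j'}^k$ is nonzero: when $\alpha_{j,j'}$ is non-real this is immediate since $1-\alpha_{j,j'}\alpha$ cannot then vanish, and when $\alpha_{j,j'}$ is real it follows from Proposition \ref{prop:PropertiesDeltaOmega}(4) and (6). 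Consequently no two distinct branches are tangential, except the pair $\mathcal D_p,\mathcal D_{2p}$ forced to share a tangent line when $k=3p$; since every branch has multiplicity two at the origin, each remaining cross-contact equals $2\cdot 2=4$ (Proposition \ref{prop:PropertiesDeltaOmega}(7) when both branches are nodes, and the same multiplicity count when one is the cusp $\mathcal D_{j_0}$ or $\mathcal D_{k-j_0}$). It then remains to analyse $\lambda_{j_0}$: as $\Delta_{j_0}^k=0$ its quadratic part is a perfect square $a_{31}(x-cy)^2$, so I would complete the square and apply a Tschirnhaus substitution removing the $x^3$ and $x^2y$ terms, reducing $\lambda_{j_0}$ to $a_{31}\tilde x^2+(\mathrm{const})\,y^3+O(4)$; a direct computation identifies this cubic coefficient, up to a nonzero factor, with $CndWA_2$, whence $\mathcal D_{j_0}$ and $\mathcal D_{k-j_0}$ are $A_2$ iff $CndWA_2\ne 0$. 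When $k=3p$ the contact $\mathcal D_p\cdot\mathcal D_{2p}=5$ for $a_{44}\ne 0$ is exactly that of Proposition \ref{prop:a11=a21=a22=0(0)}(b1). Summing in Proposition \ref{prop:Mukfoldingmaps} with $\mu(A_2)=2$ and $\mu(A_1)=1$ yields $\mu(\mathcal D)=4(k-1)(k-2)+3$ for $3\nmid k$ and $4(k-1)(k-2)+5$ for $3\mid k$, as in Table \ref{tab:InvariantsParabolicBranch}.

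For part (2), with $k=3p$ and $j_0=p$, I would evaluate $\Delta_p^k$ at $\xi^p=\omega$, a primitive cube root of unity; using $1+\omega+\omega^2=0$ one obtains $\Delta_p^k=\omega\,a_{32}^2$, so here $\Delta_p^k=0$ is equivalent to $a_{32}=0$. Then $\vartheta_{3p}=0$ together with $a_{32}=0$ makes the quadratic parts of both $\lambda_p$ and $\lambda_{2p}$ equal to $a_{31}x^2$, so both branches are tangent to $x=0$; reading the $y^3$ coefficient $\vartheta_{4p}a_{44}=a_{44}$ off \eqref{eq:Taylorlambdaj} and completing the square in $x$ shows each branch is $A_2$ iff $a_{44}\ne 0$. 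The two cusps share both their tangent line and their leading cuspidal coefficient, so to evaluate $\mathcal D_p\cdot\mathcal D_{2p}$ I would develop Puiseux parametrizations of the two branches to the order at which they separate; this separation is governed by $a_{31}a_{55}-a_{42}a_{44}=Cnd\widetilde{W}m_8$, giving $\mathcal D_p\cdot\mathcal D_{2p}=8$ iff $Cnd\widetilde{W}m_8\ne 0$. Each remaining branch $\mathcal D_j$, $j\ne p,2p$, has $\vartheta_{3j}\ne 0$, hence is an $A_1$ node with tangents $x=\pm cy$, $c\ne 0$, sharing no tangent with the cusps, so its contact with $\mathcal D_p$ and $\mathcal D_{2p}$ is $4$; the node–node contacts are likewise $4$, since distinct generic branches have distinct tangent cones by Lemma \ref{lemVarthetaZeroOrOne}. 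Feeding these values into Proposition \ref{prop:Mukfoldingmaps} gives $\mu(\mathcal D)=4(k-1)(k-2)+11$, as tabulated.

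In all cases $\mu(\mathcal D)$ is finite, so $F_k$ is finitely $\mathcal A$-determined by Theorem \ref{theo:mu(D)FinitDet}. Each stratum is cut out by $a_{11}=a_{21}=a_{22}=0$ and the single equation $\Delta_{j_0}^k=0$, equivalently $a_{32}=0$ in part (2), together with open conditions, and hence has codimension $4$; it is connected, and along it the numbers $C$, $\mu(\mathcal D_j)$, $\mathcal D_j\cdot\mathcal D_{j'}$ and $T_{j,j'}$ are constant, so by Corollary \ref{propTopTrivAndInvs} it forms a single topological class, represented by $\mathbf W^{k,j_0}_4$ (resp. $\mathbf W^{3p,p}_4$). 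Verifying that these explicit germs realise the tabulated invariants, by computing $C$ and the $T_{j,j'}$ as in the proof of Proposition \ref{prop:a_{11}=a_{22}=a_{33}=0a_{21}ne0,4ndivk}, completes the argument. I expect the intersection-multiplicity computation in part (2) to be the main obstacle: because $\mathcal D_p$ and $\mathcal D_{2p}$ coincide as cusps to leading order, one must carry both parametrizations far enough to detect the order-$8$ separation and to isolate precisely the combination $Cnd\widetilde{W}m_8$; the completion-of-square bookkeeping producing $CndWA_2$ in part (1) is a milder instance of the same computation.
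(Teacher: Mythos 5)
Your proposal is correct and follows essentially the same route as the paper's proof: the $A_2$ conditions are read off the $3$-jets of the exceptional $\lambda_j$'s, all contacts $\mathcal D_j\cdot\mathcal D_{j'}$ are computed branch by branch, and finite determinacy plus a single topological class per connected stratum follow from Theorem \ref{theo:mu(D)FinitDet}, Proposition \ref{prop:Mukfoldingmaps} and Corollary \ref{propTopTrivAndInvs}. Where the paper is terse---it records the contacts as colengths, $\dim_{\mathbb C}\mathcal O_2/\langle x^2+y^3,x^2-y^2\rangle=4$ and $\dim_{\mathbb C}\mathcal O_2/\langle x^2+y^3,y^4\rangle=8$---you argue via tangent cones, multiplicities and Puiseux expansions; the two are equivalent, and your identification of ${Cnd\widetilde{W}m}_8=a_{31}a_{55}-a_{42}a_{44}$ as the coefficient governing the order-$8$ separation is exactly what legitimises the paper's second colength formula.

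One substantive point, and it is in your favour. In part (2) you derive $\mathcal D_s\cdot\mathcal D_j=4$ for $s\in\{p,2p\}$ and $j\notin\{p,2p\}$, whereas the statement asserts this contact equals $5$. Your value is the correct one: with $a_{32}=0$ the cusps $\mathcal D_p,\mathcal D_{2p}$ have tangent cone $a_{31}x^2=0$, while each node $\mathcal D_j$ has tangent cone $a_{31}x^2+\vartheta_{3j}a_{33}y^2=0$, which does not contain the line $x=0$; hence the contact is the product of the multiplicities, $2\cdot 2=4$ (for the normal form ${\bf W}^{3p,p}_4$ one checks directly that $\langle\lambda_p,\lambda_j\rangle=\langle x^2,y^2\rangle$). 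Moreover, only the value $4$ reproduces, via Proposition \ref{prop:Mukfoldingmaps}, the tabulated $\mu(\mathcal D)=4(k-1)(k-2)+11$ of Table \ref{tab:InvariantsParabolicBranch}; the value $5$ would add $4(k-3)$ to it. The paper's own proof never establishes the ``$5$'': for part (2) it only computes the cusp--cusp contact $8$. So your argument proves the proposition with this contact corrected to $4$; the discrepancy is a slip in the statement, not a gap in your proof.
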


\begin{proof} The condition for an $A_2$-singularity of $\mathcal D_{j_0}$ and $\mathcal D_{k-j_0}$
follows by analysing the 3-jets of, respectively, $\lambda_{j_0}$ and $\lambda_{k-j_0}$. 
Observe that when $2\mid k$,  $\xi^{j_0}=-1$ is a solution of 
$\Delta_{j_0}^k=\Delta_{k-j_0}^k=0$ if, and only if, $a_{31}a_{33}=0$. As we are assuming 
$a_{31}a_{33}\ne 0$, we have $j_0\ne k/2$ in (1) when $2\mid k$.

With the hypothesis in (1), 
${\mathcal D}_{j_0}\cdot\mathcal D_{j}={\mathcal D}_{k-j_0}\cdot\mathcal D_{j}=\dim_{\mathbb C}\mathcal O_2/\left<x^2+y^3,x^2-y^2\right >=4$ for any distinct pairs.

For (2), we have 
${\mathcal D}_p\cdot\mathcal D_{2p}=\dim_{\mathbb C}\mathcal O_2/\left<x^2+y^3,y^4\right >=8$ 
when ${Cnd\widetilde{W}m}_8\ne 0$.

For the topological normal forms, we use the fact that $\Delta_{j_0}^k=0$ if and only if 
$\alpha={a_{32}^2}/({a_{31}a_{33}})=4(\xi^{2j_0}+\xi^{j_0}+1)/{(\xi^{j_0}+1)^2}$ (see the proof of Proposition \ref{prop:PropertiesDeltaOmega}) and 
set $a_{31}=a_{32}=1$, so $a_{33}={(\xi^{j_0}+1)^2}/{4(\xi^{2j_0}+\xi^{j_0}+1)}$.
We have $T_{j,j'}=2$ for all $j$ and $j'$ with $j\ne j'$.
\end{proof}

\begin{prop}\label{prop:a11=a21=a22=0(4)}
Suppose that $a_{11}=a_{21}=a_{22}=0$, $a_{31}=0$, $a_{33}a_{32}\ne 0$.
All the branches of the double point curve share only one line in their tangent cones 
except  when $k=3p$ where  the branches $\mathcal D_p$ and $\mathcal D_{2p}$ have the same tangent cone.
Any $k$-folding map-germ $F_k$ satisfying the additional conditions in {\rm (a)}, {\rm (b)}, {\rm (c)} or {\rm (d)} 
is finitely $\mathcal A$-determined and is topologically equivalent to
$$
{\bf X}_4^k\colon (x,y)\mapsto (x,y^k,xy^2+y^3+x^3y+y^4).
$$

The stratum is of codimension $4$, and
the invariants associated to the germs in the strata above are as in {\rm Table \ref{tab:InvariantsParabolicBranch}.}
\begin{itemize}
\item[{\rm (a)}] If $2\nmid k$, $3\nmid k$, all of the branches $\mathcal D_{j}$
of the double point curve have an $A_1$-singularity and
${\mathcal D}_j\cdot\mathcal D_{j'}=5$, $j\ne j'$,  when $a_{41}\ne 0$. 
\item[{\rm (b)}] If $k=2p$, $3\nmid k$, the branches $\mathcal D_{j}$, $j\ne p$, behave as in {\rm (a)}. The branch 
$\mathcal D_p$ has an $A_2$-singularity if, and only if, $a_{41}\ne 0$.
We also have ${\mathcal D}_j\cdot\mathcal D_{p}=5$, $j\ne p$.
\item[{\rm (c)}]  If $k=3p$, $2\nmid k$, the branches $\mathcal D_{j}$ behave as in {\rm (a)} except for 
$\mathcal D_{p}$ and $\mathcal D_{2p}$ where ${\mathcal D}_p\cdot\mathcal D_{2p}=6$ when $a_{41}a_{44}\ne 0$. 
\item[{\rm (d)}]  If $k=6p$, when $a_{41}a_{44}\ne 0$, the exceptional branches of the double point curve 
behave as in {\rm (b)} and {\rm (c)}, the remaining branches behave as in {\rm (a)}. 
\end{itemize}
\end{prop}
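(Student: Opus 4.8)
The plan is to follow the template of Theorem~\ref{theo:a11=0} and the preceding propositions of this branch: read off the singularity types of the branches $\mathcal D_j$ and their pairwise intersection numbers $\mathcal D_j\cdot\mathcal D_{j'}$, feed these into Proposition~\ref{prop:Mukfoldingmaps} to obtain $\mu(\mathcal D)$, invoke Theorem~\ref{theo:mu(D)FinitDet} for finite $\mathcal A$-determinacy, and finally use connectedness of the stratum together with Proposition~\ref{theo:BobPe} (through Corollary~\ref{propTopTrivAndInvs}) to collapse the whole stratum onto the single topological model $\mathbf X_4^k$.

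The tangent-cone statement comes first. Setting $a_{31}=0$ in the quadratic part of $\lambda_j$ gives $y\,(a_{32}\vartheta_{2j}x+a_{33}\vartheta_{3j}y)$, so $y=0$ is a tangent line of every $\mathcal D_j$; the slope of the second line is controlled by $\vartheta_{3j}/\vartheta_{2j}$, and the computation behind \eqref{eq:Omega} shows that two such lines agree exactly when $\xi^{j}+\xi^{j'}+\xi^{j+j'}=0$, i.e. $k=3p$ and $\{j,j'\}=\{p,2p\}$. Substituting $a_{31}=0$ into \eqref{eq:Delta_j} yields $\Delta_j^k=a_{32}^2\vartheta_{2j}^2$, which vanishes precisely when $\vartheta_{2j}=0$, i.e. $k=2p$ and $j=p$; there the quadratic part collapses to $a_{33}y^2$ and $\mathcal D_p$ acquires an $A_2$ (cusp) exactly when $a_{41}\neq0$. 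This yields the first assertion and isolates the four divisibility cases.

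Next I would compute the exceptional contacts, most cleanly on the model, where $\lambda_j=x^3+\vartheta_{2j}xy+\vartheta_{3j}y^2+\vartheta_{4j}y^3$. For a generic pair of $A_1$-nodes with distinct second lines I split each $\mathcal D_j$ into its branch tangent to $y=0$ (along which $y\sim-\tfrac{a_{41}}{a_{32}\vartheta_{2j}}x^2$) and its transverse branch; the two $y=0$-tangent branches meet to order $2$ (this is where $a_{41}\neq0$ enters) and the three remaining pairings are transverse, so $\mathcal D_j\cdot\mathcal D_{j'}=2+1+1+1=5$. When $k=2p$, a smooth branch not tangent to the cusp $\mathcal D_p$ meets it with multiplicity equal to the multiplicity $2$ of the cusp, giving $\mathcal D_j\cdot\mathcal D_p=3+2=5$. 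When $k=3p$ the branches $\mathcal D_p,\mathcal D_{2p}$ share both lines $x=0$ and $y=0$; pairing the four smooth components produces $\mathcal D_p\cdot\mathcal D_{2p}=2+2+1+1=6$, which requires $a_{41}a_{44}\neq0$. The case $k=6p$ merely superposes the cusp at $j=3p$ with the tangential pair at $\{2p,4p\}$.

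Feeding $\mu(\mathcal D_j)=1$ for nodes, $2$ for the cusp, and these contacts into Proposition~\ref{prop:Mukfoldingmaps} gives $\mu(\mathcal D)=5(k-1)(k-2)+\varepsilon$ with $\varepsilon=1,2,3,4$ in cases (a)--(d), matching Table~\ref{tab:InvariantsParabolicBranch} and forcing finite $\mathcal A$-determinacy. Each stratum is cut out by $a_{11}=a_{21}=a_{22}=a_{31}=0$ together with open conditions ($a_{32}a_{33}a_{41}\neq0$, and $a_{44}\neq0$ when $3\mid k$), hence is connected of codimension $4$; since $\mu(\mathcal D)$ is constant on it, Proposition~\ref{theo:BobPe} renders every germ topologically equivalent to $\mathbf X_4^k$, which indeed sits in the stratum ($a_{31}=0$, $a_{32}=a_{33}=a_{41}=a_{44}=1$). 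It remains to record $C$ and $T$: splitting $V(f_y)$ into its $y\sim x^2$ branch and its transverse branch gives $C=(2k-2)+(k-1)=3k-3$, while $T=\tfrac13\sum_{j<j'}T_{j,j'}$ follows from $T_{j,j'}=2$ in general and $T_{p,2p}=3$ at the tangential pair. The hard part is the case analysis by divisibility together with the correct intersection multiplicities at the cuspidal and tangential pairs — in particular recognising that a smooth curve transverse to a cusp meets it with multiplicity $2$, supplying the extra unit that raises the generic value from $4$ to $5$.
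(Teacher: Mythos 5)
Your proposal is correct and follows essentially the same route as the paper: the paper's proof computes $j^2\lambda_j=y(\vartheta_{2j}a_{32}x+\vartheta_{3j}a_{33}y)$, parametrises the $y=0$-tangent component as $\gamma_j(t)=-a_{41}/((1+\xi^j)a_{32})\,t^2+O(3)$ to get ${\mathcal D}_j\cdot{\mathcal D}_{j'}=5$ in case (a), and then omits the remaining cases, which are exactly the branch-splitting contact counts ($3+2=5$ at the cusp, $2+2+1+1=6$ at the shared tangent cone) and the standard $\mu({\mathcal D})$/connectedness/Bobadilla--Pe machinery that you spell out. Your checks of $\Delta_j^k=a_{32}^2\vartheta_{2j}^2$, the coincidence of second tangent lines exactly when $\xi^j+\xi^{j'}+\xi^{j+j'}=0$, and the values of $\mu({\mathcal D})$, $C$ and $T_{j,j'}$ all agree with Tables \ref{tab:ClassifPar} and \ref{tab:InvariantsParabolicBranch}.
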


\begin{proof}
We have $j^2\lambda_j(x,y)=
y(\vartheta_{2j}a_{32}x+\vartheta_{3j}a_{33}y)$. 
For (a), ${\vartheta_{2j}}\ne 0$ for all $j$ and 
$\frac{\vartheta_{2j}}{\vartheta_{2j}}\ne \frac{\vartheta_{3j'}}{\vartheta_{2j'}}$ for $j\ne j'$, 
so all of the branches of the double point curve have an $A_1$-singularity and one common line 
$y=0$ in their tangent cone. The component of 
${\mathcal D}_j$ with tangent $y=0$ can be parametrised by $t\mapsto (t,\gamma_j(t))$ with  
by $\gamma_j(t)=-{a_{41}}/({(1+\xi^j)a_{32}})t^2+O(3)$. Therefore, when $a_{41}\ne 0$, 
we have ${\mathcal D}_j\cdot\mathcal D_{j'}=\dim_{\mathbb C}
\mathcal O_2/\left<xy,y^2+x^3\right>=5$. 

The remaining parts of the proof follow similarly and are omitted.
\end{proof}

\begin{prop}\label{prop:a11=a21=a22=0(5)}
Suppose that $a_{11}=a_{21}=a_{22}=0$, $a_{33}=0$, $a_{31}a_{32}\ne 0$.
All the branches of the double point curve share only one line in their tangent cones. 
Any  $k$-folding map-germ $F_k$ satisfying the additional conditions in {\rm (a)} or {\rm (b)} 
is finitely $\mathcal A$-determined 
and is topologically equivalent to 
$$
{\bf Y}^{k}_{4}\colon(x,y)\mapsto(x,y^k,-xy^2+x^2y+y^4+y^5).
$$

The stratum is of codimension $4$, and the invariants associated to germs in the stratum 
are as in {\rm Table \ref{tab:InvariantsParabolicBranch}.}
\begin{itemize}
\item[{\rm (a)}] If $2\nmid k$, all of the branches of the double point curve have an $A_1$-singularity 
and ${\mathcal D}_j\cdot\mathcal D_{j'}=5$, $j\ne j'$,  when $a_{44}\ne 0$.

\item[{\rm (b)}]  If $k=2p$, the branch ${\mathcal D}_p$ has an $A_3$-singularity 
if, and only if, ${CndYA}_3\ne 0$.  
Then  ${\mathcal D}_p\cdot\mathcal D_{j}=6$, $j\ne p$, and 
${\mathcal D}_j\cdot\mathcal D_{p+j}=6$, $1\le j<p$, if and only if 
${CndYm_6}_j\ne 0$.
The other pairs of branches behave as in {\rm (a)}.
\end{itemize}
\end{prop}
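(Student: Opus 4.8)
The plan is to follow the scheme already used in this branch: read off the initial terms of the $\lambda_j$ from \eqref{eq:Taylorlambdaj}, determine the singularity type of each branch $\mathcal D_j$, compute the intersection multiplicities $\mathcal D_j\cdot\mathcal D_{j'}$ by parametrising the components sharing a common tangent, assemble $\mu(\mathcal D)$ via Proposition \ref{prop:Mukfoldingmaps}, and deduce finite $\mathcal A$-determinacy from Theorem \ref{theo:mu(D)FinitDet} and topological triviality from Theorem \ref{theo:BobPe}. Setting $a_{11}=a_{21}=a_{22}=a_{33}=0$ in \eqref{eq:Taylorlambdaj} gives
\[
j^2\lambda_j = a_{31}x^2+a_{32}\vartheta_{2j}xy = x\bigl(a_{31}x+a_{32}\vartheta_{2j}y\bigr).
\]
Since $a_{31}\ne0$, the line $x=0$ lies in every tangent cone, while the second lines $a_{31}x+a_{32}\vartheta_{2j}y=0$ are pairwise distinct because the $\vartheta_{2j}=1+\xi^j$ are distinct; this proves that $x=0$ is the unique common tangent line.

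For $2\nmid k$ (and for $j\ne p$ when $k=2p$) we have $\vartheta_{2j}\ne0$, so each such $\mathcal D_j$ has an $A_1$-singularity. I would parametrise the component tangent to $x=0$ as $x=\gamma_j(y)=\sigma_jy^2+\tau_jy^3+O(y^4)$ and solve $\lambda_j(\gamma_j(y),y)=0$ order by order; the $y^3$-term yields
\[
\sigma_j=-\frac{\vartheta_{4j}a_{44}}{a_{32}\vartheta_{2j}}=-\frac{(1+\xi^{2j})a_{44}}{a_{32}},
\]
using $\vartheta_{4j}/\vartheta_{2j}=1+\xi^{2j}$. When $2\nmid k$ the $\sigma_j$ are distinct and (for $a_{44}\ne0$) nonzero, so the two $x=0$-components meet with contact $2$ while the remaining three line pairs meet transversally, giving $\mathcal D_j\cdot\mathcal D_{j'}=2+1+1+1=5$. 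This is case (a).

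For $k=2p$ the branch $\mathcal D_p$ is exceptional, as $\vartheta_{2p}=0$ kills the $xy$-term. Here $\lambda_p=a_{31}x^2+a_{43}xy^2+a_{55}y^4+\cdots$, and completing the square $X=x+\tfrac{a_{43}}{2a_{31}}y^2$ sends the $y^4$-coefficient to $-{CndYA}_3/(4a_{31})$, so $\mathcal D_p$ has an $A_3$-singularity exactly when ${CndYA}_3\ne0$, its two smooth branches being $x=\rho_\pm y^2+\cdots$ with $\rho_\pm$ the roots of $a_{31}\rho^2+a_{43}\rho+a_{55}$. Since for $j\ne p$ the tangential component of $\mathcal D_j$ and both branches of $\mathcal D_p$ touch $x=0$ with distinct $y^2$-coefficients ($\sigma_j\ne\rho_\pm$), these contribute $2+2$, and the transverse line of $\mathcal D_j$ adds $1+1$, giving $\mathcal D_p\cdot\mathcal D_j=6$. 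The decisive pairs are $(j,p+j)$ with $1\le j<p$: because $\xi^{p+j}=-\xi^j$ one still has $\vartheta_{4(p+j)}/\vartheta_{2(p+j)}=1+\xi^{2j}$, hence $\sigma_j=\sigma_{p+j}$ and the two tangential components agree to order $2$. Their contact is then governed by $\tau_j-\tau_{p+j}$, read off from the $y^4$-term of $\lambda_j(\gamma_j(y),y)=0$, and I expect its simplification (using $\vartheta_{s(p+j)}$ versus $\vartheta_{sj}$ according to the parity of $s$) to reduce precisely to ${CndYm_6}_j$, so that $\mathcal D_j\cdot\mathcal D_{p+j}=3+1+1+1=6$ iff ${CndYm_6}_j\ne0$, all other pairs behaving as in (a).

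This last simplification is the main obstacle: the bookkeeping of the $\vartheta_{s(p+j)}$ inside $\tau_j-\tau_{p+j}$ is delicate and must land exactly on the quartic-in-$\xi^{j}$ expression ${CndYm_6}_j$. Once the intersection data is assembled, Proposition \ref{prop:Mukfoldingmaps} gives $\mu(\mathcal D)=5(k-1)(k-2)+1$ for $2\nmid k$ and $\mu(\mathcal D)=5(k-1)(k-2)+3(k-1)$ for $k=2p$, matching Table \ref{tab:InvariantsParabolicBranch}; reducedness of $\mathcal D$ then yields finite $\mathcal A$-determinacy by Theorem \ref{theo:mu(D)FinitDet}. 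Each stratum is the complement of finitely many hypersurfaces in the linear subspace $\{a_{11}=a_{21}=a_{22}=a_{33}=0\}$, hence connected, so constancy of $\mu(\mathcal D)$ together with Theorem \ref{theo:BobPe} gives topological triviality; evaluating at the explicit model ${\bf Y}^k_4$ (for which $a_{31}=1,a_{32}=-1,a_{44}=a_{55}=1$, whence $\rho_\pm=\pm i$ and one checks $\sigma_j=1+\xi^{2j}\ne\rho_\pm$ and ${CndYm_6}_j=2\xi^{4j}+2\xi^{2j}+1\ne0$) pins down the topological class. Finally $C=2k-2$ is immediate since $\partial f/\partial y$ has leading part $a_{31}x^2$, and $T$ is computed from the $T_{j,j'}$ exactly as in Proposition \ref{prop:a_{11}=a_{22}=a_{33}=0a_{21}ne0,4ndivk}, giving the tabulated values.
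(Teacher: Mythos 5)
Your proposal is correct and takes essentially the same route as the paper's proof: the same parametrisations of the tangential components, the same contact counts branch by branch, and the same appeals to Proposition \ref{prop:Mukfoldingmaps}, Theorem \ref{theo:mu(D)FinitDet} and Theorem \ref{theo:BobPe} for $\mu(\mathcal D)$, finite determinacy and topological triviality. The two steps you leave unverified are asserted without computation in the paper as well, and both check out: the ``expected'' simplification is correct, namely
$\tau_j-\tau_{p+j}=\frac{2\xi^{j}}{a_{32}^{3}(1-\xi^{2j})}\,{CndYm_6}_j$,
and your parenthetical $\sigma_j\ne\rho_\pm$ holds on the stratum because
$a_{31}\sigma_j^{2}+a_{43}\sigma_j+a_{55}=\frac{\xi^{4j}}{a_{32}^{2}}\,{CndYm_6}_{k-j}$,
which is nonzero by the table's open conditions (since ${CndYm_6}$ depends only on $\xi^{2j}$, the conditions imposed for $1\le j<p$ cover all indices).
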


\begin{proof}
We have $j^2\lambda_j(x,y)=x(a_{31}x+a_{32}\vartheta_{2j} y)$
 so when $2\nmid k$, all of the branches of the double point curve have an $A_1$-singularity as $\vartheta_{2j}\ne 0$ for all $j$. All the branches 
have only $x=0$ as a common line in their tangent cone. 
The components of the branches which are tangent 
to $x=0$ are parametrised by $t\mapsto (\gamma_j(t),t)$, with 
$\gamma_j(t)=-{a_{44}(1+\xi^{2j})}/{a_{32}}t^2+O(3)$.
When $a_{44}\ne 0$, ${\mathcal D}_j\cdot\mathcal D_{j'}=\dim_{\mathbb C}
\mathcal O_2/\left<xy,x^2+y^3\right>=5$. 

When $k=2p$, 
we have $\vartheta_{2p}=\vartheta_{4p}=0$, 
so considering the 4-jet of $\lambda_p$, we find that 
the branch $\mathcal D_p$ has an $A_3$-singularity if, and only if, ${CndYA}_3\ne 0$. When this is the case, 
 ${\mathcal D}_p\cdot\mathcal D_{j}=\dim_{\mathbb C}
\mathcal O_2/\left<xy,x^2+y^4\right>=6$, for $j\ne p$. 

All pairs of branches $\mathcal D_j$, $\mathcal D_{p+j}$,  $1\le j<p$
have tangential component parametrised by $t\mapsto(\gamma_s(t),t)$, 
with 
$\gamma_s(t)=-{a_{44}(1+\xi^2_{j})}/{a_{32}}t^2+\alpha_{s}t^3+O(4)$, $s=j,p+j$, with 
$\alpha_1-\alpha_2\ne 0$ if and only if 
${CndYm_6}_j\ne 0$. When this is the case, 
${\mathcal D}_j\cdot\mathcal D_{p+j}=\dim_{\mathbb C}
\mathcal O_2/\left<xy,x^2+y^4\right>=6$, for $1\le j< p$. 
The other pairs of branches behave as when $2\nmid k$.
\end{proof}

\begin{rem}
{\rm The stratification $\mathcal S_k$ is determined, for each $k\geq 4$, by the conditions in Tables \ref{tab:ClassifPr}, \ref{tab:ClassifAs} and  \ref{tab:ClassifPar} ($\mathcal S_2$ follows from the results in \cite{BruceWilk} and $\mathcal S_3$ is indicated in Table \ref{tab:k=3}). 
Each stratum corresponds to a single topological class and all the topological classes listed in Tables \ref{tab:ClassifPr}, \ref{tab:ClassifAs} and  \ref{tab:ClassifPar} 
are topologically distinct.  
Indeed, as pointed out in \mbox{Remark \ref{rem:Unequivalent}}  (resp. Remark \ref{remCompareQ43AndR44} and resp. Remark \ref{rem:Unequivalent2})  
 ${\bf Q}^k_4$ and $\widetilde {\bf Q}^k_4$ (resp. ${\bf Q}^4_3$ and ${\bf R}^4_4$ and resp. ${\bf U}^k_4$ and ${\bf V}^{k,j,j'}_4$) 
are not topologically equivalent. 
For the remaing classes, it follows by comparing the invariants in Tables \ref{TableInvariantsCase1}, \ref{tab:invAs_strata} and \ref{tab:InvariantsParabolicBranch}, 
that they are all topologically distinct. 
}
\end{rem}


\section{Generic singularities of $k$-folding map-germs}\label{sec:surfaces}

We define, by varying the plane $\pi \in {\rm Graff}(2,3)$,  the family of Whitney $k$-folds 
\mbox{$
\Omega^k:\mathbb C^3\times {\rm Graff}(2,3)\to \mathbb C^3,
$ }
given by $\Omega^k(p,\pi)=\omega^k_{\pi}(p)$, with $\omega^k_{\pi}$ as in Definition \ref{def:k-foldingComplex}. 

Given a complex surface $M$ in $\mathbb C^3$, 
we call the restriction of $\Omega^k$ to $M$ the family of $k$-folding maps on $M$ and denote it by ${\mathcal F}_k$.
We have ${\mathcal F}_k(p,\pi)=F^{\pi}_k(p)=\omega^k_{\pi}(p)$ for all $p\in M$ and $\pi \in {\rm Graff}(2,3)$. 

Recall that a property of surfaces is said to be {\it generic} if it is satisfied in a residual set of embeddings of the surfaces to $\mathbb C^3$. The image of a surface $M$ by an embedding in the residual set is then called generic, or simply that $M$ is generic. When $k$ is large, the $\mathcal A$-singularities of $F^{\pi}_k$ may have high $\mathcal A_e$-codimensions (for the cases in this paper, this means high modality). However, they do occur on generic surfaces. To make sense of this, we follow a similar approach to that in \cite{Bruce84} and proceed as follows. 

As we are interested here in the local singularities of $k$-folding maps, we consider the setting in Remarks \ref{rems:Fkpi}(4) at a point $p_0\in M$ and choose a suitable system of coordinates 
so that $\pi_0:y=0$ and $F_k=F^{\pi_0}_k(x,y)=(x,y^k, f(x,y))$ for $(x,y)$ in  a small enough neighbourhood $U$ of the origin. 
A plane $\pi=\overline{(d,v)}$ near $\pi_0$ is obtained by applying a translation $T_{\pi}$ followed by an orthogonal transformation $R_{\pi}\in U(3)$ to  $\pi_0$. We choose $T_{\pi}$ and $R_{\pi}$ as follows.
The translation $T_{\pi}$ takes the origin to the point of intersection of $\pi$ with the $y$-axis 
 (the point exists because $\pi_0$ is orthogonal to the $y$-axis and $\pi$ is close to $\pi_0$).
The transformation $R_{\pi}$ near the identity (and is taken to be the identity if $v$ is parallel to $v_0$) takes $v/||v||$ to $v_0=(0,1,0)$ and fixes the line through the origin orthogonal to $v_0$ and $v$. 
By varying the planes $\pi$ in a neighbourhood $V$ of $\pi_0$ in ${\rm Graff}(2,3)$, we get the family of $k$-folding maps given by ${\mathcal F}_k:U\times V\to \mathbb C^3$, with
$$
{\mathcal F}_k((x,y),\pi)=(R_{\pi}\circ T_{\pi})\circ \omega_k \circ (R_{\pi}\circ T_{\pi})^{-1}(\phi (x,y)),
$$ 
with $\phi(x,y)=(x,y,f(x,y))$. (We choose $R_{\pi}$ and $T_{\pi}$ to depend analytically on $\pi$.) 
The $\mathcal A$-type of the singularity of $F^{\pi}_k$ at a given point in $U$ is the same as that of the germ of  
$$
\tilde{\mathcal F}_k((x,y),\pi)=\tilde F_{\pi}^k(x,y)=(R_{\pi}\circ T_{\pi})^{-1}\circ {\mathcal F}_k((x,y),\pi)=\omega_k \circ (R_{\pi}\circ T_{\pi})^{-1}(\phi (x,y))
$$ 
at that point. At any point $p=(x,y)\in U$, there exist a bi-holomorphic map-germ  $K:(\mathbb C^2,0)\to (\mathbb  C^2,(x,y))$ such that 
$$
(R_{\pi}\circ T_{\pi})^{-1}(\phi (K(X,Y))=(\bar{x}+X,\bar{y}+Y,\bar{z}+g_{p}(X,Y)),
$$ 
for some germ of a holomorphic function $g_{p}$, where $(\bar{x},\bar{y},\bar{z})=(R_{\pi}\circ T_{\pi})^{-1}(\phi (x,y)).$  Composing $\tilde F_{\pi}^k$ with $K$ gives the germ $(\tilde F_{\pi}^k)_p$ of $\tilde F_{\pi}^k$ at $p$
$$
	\begin{array}{rll}
		(\tilde F_{\pi}^k)_p(X,Y)&=&(\bar{x}+X,(\bar{y}+Y)^k,\bar{z}+g_{p}(X,Y))\\
		&\sim_{\mathcal A}&(X,(\bar{y}+Y)^k,g_{p}(X,Y)).
	\end{array}
$$

Observe that $g_{p}$ depends on the choice of $R_{\pi}$, $T_{\pi}$ and of the coordinates system, 
but the $\mathcal A$-class of the resulting germs $(\tilde F_{\pi}^k)_p$ is independent of these choices.

Clearly, a necessary condition for $(\tilde F^{\pi}_k)_p$ to be singular at the origin is $\bar{y}=0$, equivalently, 
$p\in \pi$. We define the family of maps	$\Phi:U\times V\to \mathbb C\times  J^l(2,1)$ given by	
$$
\Phi(p,\pi)=(\langle v, p\rangle -d,j^lg_{p}(0,0)).
$$
The map $\Phi$ plays a similar role to the Monge-Taylor map in \cite{Bruce84}. Here we include 	the first component to capture the planes through a given point $p\in M$ which can give rise to singular $k$-folding map-germs at $p$.

We stratify $\mathbb C\times  J^l(2,1)$ by $0\times \mathcal S_k$, together with $\mathbb C\times  J^l(2,1)\setminus 0\times  J^l(2,1)$. Following standard transversality arguments (see for example \cite{Bruce84}), one can show that	for a residual set of local embeddings of $M$ in $\mathbb C^3$, the map $\Phi$ is transverse to the strata in $\mathbb C\times J^l(2,1)$. 
As the domain of the family $\Phi$ is of dimension 5,  for a generic local embedding of $M$ in $\mathbb C^3$, 
$\Phi$ intersects a stratum $0\times X$ only when $X$ has codimension $\le 4$ in $J^l(2,1)$. This means that the only singularities of $k$-folding maps that can occur on a generic surface 
are those belonging to the strata listed 
in Tables \ref{tab:k=3}, \ref{tab:ClassifPr}, \ref{tab:ClassifAs}, \ref{tab:ClassifPar}. Furthermore, $\Phi$ is transverse to these strata. Therefore, the locus of points where the $l$-jets of the
$k$-folding map-germs belong  to a stratum $X$ of codimension $3$ (resp. $4$) is a regular curve on $M$ (resp. isolated points on this curve). The above discussion also clarifies why the singularities of $F^{\pi}_k$ occur generically even though they may have high $\mathcal A_e$-codimensions: they belong to strata of low codimension.
 (The strata can be viewed as a gluing of non $\mathcal A$-equivalent orbits which depends on a finite set of moduli.)

It is worth observing that the above discussion is valid for smooth surfaces in $\mathbb R^3$.

\section{$k$-folding map-germs on surfaces in $\mathbb R^3$}\label{sec:hiddensymmetries}

We consider in this section  the geometry of $k$-folding maps-germs on smooth (i.e., regular and of class $C^{\infty}$) surfaces 
in $\mathbb R^3$.

\subsection{Robust features on surfaces}
Robust features on a surface in $\mathbb R^3$ is a terminology introduced by Ian Porteous to indicate special characteristic 
features that can be traced when the surface evolves. 
What is sought after in applications  
are robust features which are represented by curves or points on the surface as these form a ``skeletal structure'' 
of the surface (open regions bounded by robust curves are also robust features). They 
play an important role in computer vision and shape recognition (see for example \cite{Musuvathy}) as they can be  
used to distinguish two shapes  (surfaces) from each other and, in some cases, reconstruct the surface.

We consider here the {\it parabolic, ridge, sub-parabolic and flecnodal curves} on a smooth surface $M$ in $\mathbb R^3$ 
and special points on these curves (see \cite{IFRT} for references on work on these curves from a singularity theory point of view).  
These are robust features on $M$. We recall briefly what they are.

{\it The  parabolic curve} is the locus of points where the Gaussian curvature vanishes. It is captured by the contact of the surface $M$ with planes: it is the locus of points where 
the height function along a normal direction to $M$ has  an $A_{\ge 2}$-singularity. 
The parabolic curve is regular on a generic surface, and the height function has an $A_2$-singularity at its points except at isolated 
ones, called {\it Cusps of Gauss}, where it has an $A_3$-singularity.

{\it  The  ridge} is the locus of points on $M$ 
where a principal curvature is extremal along its associated lines of principal curvature. 
It is also the locus of points on $M$ which correspond to singular points on its focal set.
The ridge is captured by the contact of $M$ with spheres:  
it is the locus of points on $M$ where the distance squared function from a given point 
in $\mathbb R^3$ (the point belong to the focal set) has an $A_{\ge 3}$-singularity. 
Away from umbilic points (i.e., points where the principal curvatures coincide), the ridge is a regular curve on a generic surface and the distance squared function has an $A_3$-singularity 
at its points except at isolated points where it has an $A_4$-singularity. At umbilics, the ridge consist of one regular curve 
or a transverse intersection of three regular curves. (Umbilics and $A_4$-points are used as {\it seed points} for drawing ridges 
on a given shape, see \cite{Musuvathy}.)

The {\it sub-parabolic curve} is the locus of points on $M$ corresponding to parabolic points of its focal set. 
It is the locus of geodesic inflections of the lines of principal curvature; 
it is also the locus of points along which a principal curvature is extremal along the other lines of curvature.
It is captured by the singularities of the 2-folding map on $M$: it is the locus of points where 
some map $F_2^{\pi}$ has an $S_{\ge 2}$-singularity (or one which is adjacent to an $S_2$-singularity).
The singularities of $F_2$ and their geometric characterisation 
on a generic surface parametrised in Monge form $z=f(x,y)$ at a given non-umbilic point, with $f$ as in \eqref{Taylorf},
are as in Table \ref{tab:AlgCdFmp}.
 At umbilics, the sub-parabolic curve 
consist of one regular curve 
or a transverse intersection of three regular curves.
Observe that the ridge is also captured by the singularities of $F_2^{\pi}$ (see Table \ref{tab:AlgCdFmp}).

\begin{table}[tp]
	\begin{center}
		\caption{Geometric characterisation of the singularities of the folding-map $F_{2}$.}
		{\footnotesize
			\begin{tabular}{cp{0.7\textwidth}}
				\hline 
				Name & Algebraic conditions and geometric meaning\\
				\hline 
				{\rm Crosscap}&$a_{21}\ne 0$
				\\
				$B_1=S_1$& $a_{21}=0,\ a_{31}\ne 0, \ a_{33}\ne 0$ \\
				&General smooth point of focal set
				\\
				$B_2$& $a_{21}=0, a_{31}\ne 0, a_{33}=0, CndNA_3=4a_{31}a_{55}-a_{43}^2\ne 0$\\
				&General cusp point of focal set corresponding to a point on the ridge curve
				\\
				$B_3$& $a_{21}=0,  a_{31}\ne 0,  a_{33}=0, CndNA_3=0$,\\
				&${CndNA}_5=8a_{31}^3a_{77}-4a_{65}a_{43}a_{31}^2+ 2a_{53}a_{43}^2a_{31}-
				a_{41}^2a_{43}^3\ne0$\\	&(Cusp) point of focal set in closure of parabolic curve on symmetry set
				\\
				$S_2$& $a_{21}=0,\ a_{31}=0, \  a_{33}\ne 0,\ a_{41}\ne 0$\\
				&Parabolic smooth point of focal set corresponding to a point on the sub-parabolic curve
				\\
				$S_3$& $a_{21}=0,\ a_{31}=0, \  a_{33}\ne 0,\ a_{41}=0,\ a_{51}\ne 0$\\
				&Cusp of Gauss at smooth point of focal set
				\\
				$C_3$& $a_{21}=0,\ a_{31}=0, \  a_{33}=0,\ a_{41}\ne 0,\ a_{43}\ne 0$\\
				&Intersection point of cuspidal-edge and parabolic curve on focal set
				\\
				\hline 
			\end{tabular}
			\label{tab:AlgCdFmp}
		}
	\end{center}
\end{table}

The {\it  flecnodal curve} is the locus of geodesic inflections of the asymptotic curves.
It is captured by the contact of the surface $M$ with lines: it is the locus of points where the 
orthogonal projection of the surface has a singularity of type swallowtail or worse. 
We recall briefly some results on these projections as they are needed for interpreting the 
singularities of $k$-folding maps.
 
The orthogonal projection $P_v$ of $M$ along the direction $v\in S^2$ to the plane $T_vS^2$ 
is given by $P_v(p)=p-(p\cdot v)v$, with $p\in M$. This can be represented locally by a map-germ from the plane to the plane. 
Varying $v$ yields the family of 
orthogonal projections $P:M\times S^2\to TS^2$ given by 
$
P(p,v)=(v,P_v(p)).
$

A transversality theorem asserts that for an open and dense (i.e., generic) set of embeddings $\phi:U\to \mathbb R^3$, 
the surface $M=\phi(U)$ has the following property: for any $v\in S^2$ 
the map-germ $P_v$ has only local singularities $\mathcal A$-equivalent to one in Table \ref{tab:othgprojeSurfaR3Algb} 
at any point on $M$. By translating $p_0\in M$ to the origin and taking $M$ locally at $p_0$ in Monge form,  an open subset of  $M$ is parametrised by 
$\phi(x,y)=(x,y,f(x,y))$, with $f$ having no constant nor linear terms. 
Projecting along the direction $v=(0,1,0)$ gives 
$P_v(x,y)=(x,f(x,y))$, which is singular at the origin.
Table \ref{tab:othgprojeSurfaR3Algb}  shows the conditions on the coefficients of $f$ 
for the map-germ $P_v$ to have a singularity at the origin of 
$\mathcal A_e$-codimension $\le 2$, where $d_e(G,\mathcal A)$ denotes the $\mathcal A_e$-codimension of $G$ (see for example \cite{IFRT}), 

\begin{table}[tp]
\begin{center}
\caption{$\mathcal A_e$-codimension $\le 2$ singularities of map-germs $G:(\mathbb R^2,0)\to (\mathbb R^2,0)$.}
{\footnotesize
\begin{tabular}{llcl}
\hline 
Name & Normal form & $d_e(G,{\mathcal{A}})$ & Algebraic conditions on $f$ in (\ref{Taylorf}) \\
&&&for the singularities of $G(x,y)=(x,f(x,y))$\\
\hline
Fold          & $(x,y^2)$ &0 &$a_{22}\ne 0$\\
\hline
Cusp          & $(x,  xy+y^3)$&0 &$a_{22}=0$, $a_{21}\ne 0$, $a_{33}\ne 0$\\
\hline
Swallowtail   & $(x, xy+y^4)$&1 &$a_{22}=0$, $a_{33}=0$, $a_{21}\ne 0$, $a_{44}\ne 0$\\
\hline
Lips/beaks    & $(x,  y^3\pm x^2y)$&1 &$a_{22}=0$, $a_{21}=0$, $a_{33}\ne 0$, $a_{32}^2-3a_{31}a_{33}\ne 0$\\
\hline
Goose         & $(x,  y^3+ x^3y)$&2 &$a_{22}=0$, $a_{21}=0$, $a_{32}^2-3a_{31}a_{33}=0$, $a_{33}\ne 0$, \\
& &&$27a_{41}a_{33}^3-18a_{42}a_{32}a_{33}^2+9a_{43}a_{32}^2a_{33}-4a_{44}a_{32}^3\ne 0$\\
\hline
Butterfly    & $(x, xy+y^5\pm y^7)$& 2&$a_{22}=0$, $a_{33}=0$, $a_{44}=0$, $a_{21}\ne 0$, $a_{55}\ne 0$,\\
&&&$ (8a_{55}a_{77}-5a_{66}^2)a_{21}^2+2a_{55}(a_{32}a_{66}-20a_{43}a_{55})a_{21}+$\\
&&&$35a_{32}^2a_{55}^2\ne 0$\\
\hline
Gulls         & $(x,xy^2+y^4+y^{5})$ & 2&$a_{22}=0$, $a_{21}=0$, $a_{33}=0$, $a_{32}\ne 0$, $a_{44}\ne 0$,\\
&&& $a_{55}a_{32}^2-2a_{43}a_{44}a_{32}+4a_{31}a_{44}^2\ne 0$\\
\hline
\end{tabular}
}
\label{tab:othgprojeSurfaR3Algb}
\end{center}
\end{table}

The projection $P_v$ has a fold singularity at $p$ if, and and only if, $v$ is a non-asymptotic tangent direction to $M$ at $p$.
The singularity at $p$ is of type cusp or worse  if, and only 
if, $v$ is an asymptotic direction at $p$. For a generic surface $M$, the closure of the set of points where 
$P_v$ has a swallowtail (resp. lips/beaks) singularity is a precisely the flecnodal (resp. parabolic) curve. 
The flecnodal curve meets tangentially the parabolic curve at the cusps of the Gauss map, which 
are the gulls singularities of $P_v$ \cite{banchoffgaffneymccrory}. 
We call a point where this happens a \emph{gulls-point} of $M$

The goose (resp. butterfly) singularities of $P_v$ appear at special points on the parabolic (resp. flecnodal) curve. We call a point on $M$ where these singularities occur a \emph{goose-point}  (resp. \emph{butterfly-point}) on $M$.

The above robust features are defined in terms of the principal curvatures $\kappa_1$ and $\kappa_2$. 
We can suppose $\kappa_1<\kappa_2$ (away from umbilic curves) and give different colors to the robust features associated to 
each principal curvature. For instance, we can have a blue ridge (associated to $\kappa_1$) and a red ridge 
(associated to $\kappa_2$).

\subsection{The geometry of $k$-folding map-germs}\label{sec:surfaces}
We obtain here the robust features determined by the singularities of $k$-folding map-germs, for $k\geq 3$. 
We start with the case where $p$ is not an umbilic point and treat the $k=3$ case separately. 

\begin{theo} \label{theo:k=3}
Let $M$ be a generic surface in $\mathbb R^3$, $p$ a non umbilical point on $M$ 
and $\pi$ a plane through $p$ and orthogonal to $v$. The map-germ $F_3^{\pi}$ at $p$ is an immersion if, and only if, $v$ is not tangent to $M$ at $p$. 
It has an $S_1$-singularity
if, and only if, $v\in T_pM$  but is neither a principal nor an asymptotic direction. We have the following when $v$ is 
a principal or an asymptotic direction at $p$.
\begin{enumerate}
\item[{\rm (1)}] 
If $v$ is a principal direction at $p$, then the possible singularities of $F_3^{\pi}$ at $p$ and their geometric characterisations are as follows:

\begin{tabular}{cp{0.8\textwidth}}
$S_3$:& $p$ is not on	the sub-parabolic curve associated to $v$.\\
$S_5$:& $p$ is a generic point on the sub-parabolic curve associated to $v$.\\
$S_7$:&  $p$ is an $S_3$-point on the sub-parabolic curve associated to $v$.
\end{tabular}

\item[{\rm (2)}] Suppose that $v$  is an asymptotic  and that $p$ is a hyperbolic point. 
Then $F_3^{\pi}$ has a singularity at $p$ of type $H_{k},k=2,3,4$.
The $H_{3}$-singularities occur on a regular curve on $M$.
We call its closure  the {\rm $H_3$-curve}. 
The $H_4$-singularities occur at isolated points on this curve.

\item[{\rm (3)}]If $p$  is a parabolic point and $v$ is the unique asymptotic direction at $p$ 
which is also a principal direction, then the possible topological classes of  $F_3^{\pi}$ at $p$ and  their geometric characterisations are as follows:

\begin{tabular}{cp{0.8\textwidth}}
$X_4$:&$p$ is a generic point on the parabolic curve.\\
${\bf U}^3_4$:& $p$ is an $A_2^*$-point {\rm (}see {\rm \cite{GiblinJaneszko,GiblinWarderZak})}; it is on the closure of the $H_3$-curve.\\
${\bf X}^3_4$:& $p$ is on the sub-parabolic curve associated to $v$; the principal map with value $v$ at $p$ has a beaks singularity at $p$ {\rm (\cite{BTframes})}.\\
${\bf W}^{3,1}_4$:& $p$ is on the sub-parabolic curve associated to the other principal direction $v^{\perp}$; it is  on the closure of the $H_3$-curve; the frame map has 
a cross-cap singularity at $p$ {\rm (\cite{BTframes})}.
\end{tabular}
\end{enumerate}
\end{theo}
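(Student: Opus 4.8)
The plan is to work entirely in the standard form supplied by Remarks \ref{rems:Fkpi}(4). First I would fix the non-umbilic point $p$ and, for a tangent direction $v$, choose the Monge coordinate system in which the $z$-axis is the unit normal to $M$ at $p$, the $y$-axis is along $v$, and the $x$-axis the remaining orthogonal direction; then $F_3^{\pi}$ is represented by $(x,y^3,f(x,y))$ with $f$ as in \eqref{Taylorf} and no constant or linear term. The whole theorem then reduces to reading off, for each prescribed geometric position of $v$, which stratum of $\mathcal S_3$ (Table \ref{tab:k=3}) the jet of $f$ lands in, and invoking the classification of Theorem \ref{theo:k=3SingReflmaps}. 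The genericity hypothesis enters through the transversality of the map $\Phi$ established in \S\ref{sec:surfaces}: it guarantees that only strata of codimension $\le 4$ occur and that codimension-$3$ (resp. codimension-$4$) strata are met along regular curves (resp. at isolated points) of the relevant subfamily of directions.

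The key translation is between the $2$-jet coefficients of $f$ and the second fundamental form $\mathrm{II}$ of $M$ at $p$. In the chosen frame one has $a_{11}=0$ exactly when $v\in T_pM$, while $2a_{22}=\mathrm{II}(v,v)$ is the normal curvature along $v$ and $a_{21}=\mathrm{II}(e_1,v)$ the off-diagonal term, with $e_1$ the $x$-direction. Hence $a_{22}=0$ characterises asymptotic directions and $a_{21}=0$ principal directions. This identifies the four branches of Lemma \ref{lem:Class2-jets} geometrically: $v\notin T_pM$ gives an immersion; $a_{21}a_{22}\ne0$ (tangent, neither principal nor asymptotic) is Branch 1, yielding $S_1$; Branch 2 ($a_{21}=0$, $a_{22}\ne0$) is $v$ principal and non-asymptotic; Branch 3 ($a_{22}=0$, $a_{21}\ne0$) is $v$ asymptotic at a hyperbolic point; and Branch 4 ($a_{21}=a_{22}=0$) forces $v$ to be simultaneously principal and asymptotic, i.e. $p$ parabolic with $v$ the unique asymptotic direction. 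This settles the preliminary assertions and assigns items (1)--(3) to their branches.

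For the finer identifications I would proceed branch by branch, matching the defining equations of Table \ref{tab:k=3} with the known Monge-form characterisations of the geometric features. In Branch 2 the chain $S_3\to S_5\to S_7$ is governed by $a_{31}=0$ and then $a_{41}=0$; these are exactly the conditions cutting out the sub-parabolic curve associated to $v$ and its $S_3$-points, as recorded for the folding map $F_2$ by the $S_2$- and $S_3$-entries of Table \ref{tab:AlgCdFmp}, so (1) follows. In Branch 3 the classification yields $H_2,H_3,H_4$ according to the vanishing of $CndH_2$ and $CndH_3$; since $H_3$ is a codimension-$3$ stratum and $H_4$ codimension-$4$, transversality places the $H_3$-singularities along a regular curve, which we name the $H_3$-curve, and the $H_4$-singularities at isolated points on it, giving (2). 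In Branch 4, after removing the pure powers of $x$ (killed by a target coordinate change, so that $a_{20}=\kappa_1/2\ne0$ is irrelevant), the generic stratum is the Mond singularity $X_4$, realised along the parabolic curve, while the degenerations $a_{44}=0$, $a_{31}=0$ and $a_{32}=0$ single out ${\bf U}^3_4$, ${\bf X}^3_4$ and ${\bf W}^{3,1}_4$ respectively.

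The main obstacle, and the bulk of the remaining work, is the geometric reading of these three codimension-$4$ strata in Branch 4. Here I would unwind the conditions $a_{44}=0$, $CndUm_8\ne0$ (resp. $a_{31}=0$, resp. $a_{32}=0$) in principal Monge coordinates at a parabolic point and compare them, term by term, with the published characterisations: the $A_2^*$-points of \cite{GiblinJaneszko,GiblinWarderZak} for ${\bf U}^3_4$, and the beaks singularity of the principal map and the cross-cap of the frame map of \cite{BTframes} for ${\bf X}^3_4$ and ${\bf W}^{3,1}_4$. I would also verify that the ${\bf U}^3_4$ and ${\bf W}^{3,1}_4$ loci lie on the closure of the $H_3$-curve by showing the relevant $H_3$-condition is forced there. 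This matching is the delicate part because it requires the exact normalisations of the principal and frame maps from the cited works, together with the explicit Monge equations of the sub-parabolic and $H_3$ curves, whereas the singularity-theoretic content—which stratum each germ occupies—is already supplied by Theorem \ref{theo:k=3SingReflmaps}.
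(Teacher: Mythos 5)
Your proposal is correct and takes essentially the same route as the paper: the paper's own proof consists precisely of reading off the strata of Table \ref{tab:k=3} (Theorem \ref{theo:k=3SingReflmaps}) and translating their defining equations and open conditions via Tables \ref{tab:AlgCdFmp} and \ref{tab:othgprojeSurfaR3Algb}, with genericity supplied by the transversality argument of \S\ref{sec:surfaces}. Your second-fundamental-form dictionary for the $2$-jet coefficients ($a_{11}$, $a_{21}$, $a_{22}$), the branch-by-branch matching, and the flagged comparison with \cite{GiblinJaneszko,GiblinWarderZak,BTframes} for the Branch-4 strata simply make explicit what the paper leaves implicit in those tables and citations.
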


\begin{proof}
The proof follows by considering the defining equations and the open conditions of the strata of $3$-folding map-germs 
in Theorem \ref{theo:k=3SingReflmaps} 
and the geometric interpretation of the algebraic conditions 
in Table \ref{tab:k=3}  given in Tables \ref{tab:AlgCdFmp} and \ref{tab:othgprojeSurfaR3Algb}. 
\end{proof}

\begin{prop}\label{prop:surfacesk=3}
The $H_3$-curve of a generic surface 
is a regular curve and meets the parabolic curve tangentially at $A_2^*$
and  ${\bf W}^{3,1}_4$-points.
\end{prop}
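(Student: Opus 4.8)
The plan is to lift the analysis to the incidence variety of asymptotic directions and to read off both assertions from the fold structure of its projection to $M$ over the parabolic curve. Near a parabolic point $p_0$, I take $M$ in Monge form and consider the projection $\rho\colon\mathcal I\to M$ of the surface $\mathcal I=\{(x,y,\ell)\mid \mathrm{II}_{(x,y)}(\ell,1)=0\}$, where $\mathrm{II}_{(x,y)}$ is the second fundamental form at the point over $(x,y)$ and $(\ell:1)$ ranges over asymptotic directions. Since the unique asymptotic direction at $p_0$ is the double root $\ell=0$ of $\mathrm{II}_{p_0}(\ell,1)$ and the opposite principal curvature is nonzero, $\mathcal I$ is smooth, $\ell$ is a local coordinate on it, and, for a generic surface, $\rho$ is a fold whose critical (fold) locus $\mathcal I_{\mathrm{par}}=\{\partial_\ell\mathrm{II}_{(x,y)}(\ell,1)=0\}\subset\mathcal I$ is mapped by $\rho$ onto the parabolic curve, with $\ker d\rho|_{\mathcal I_{\mathrm{par}}}=\langle\partial_\ell\rangle$ the fibre direction.

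Next I would realise $CndH_2=a_{32}a_{44}-a_{55}a_{21}$ as a smooth function on $\mathcal I$ by rotating the Monge frame so that its $y$-axis points along $(\ell:1)$; the coefficients $a_{qs}$ then become smooth functions of $(x,y,\ell)\in\mathcal I$. By Theorem \ref{theo:k=3} the $H_3$-curve is $\rho(\tilde H)$, where $\tilde H=\{CndH_2=0,\ CndH_3\neq 0\}$. On $\mathcal I$ the Branch $3$ conditions $a_{11}=a_{22}=0$ are automatic, so the transversality of $\Phi$ to the codimension-$3$ $H_3$-stratum of $\mathcal S_3$ (the set-up of \S\ref{sec:surfaces}) amounts to $CndH_2$ cutting out a regular curve $\tilde H$ on $\mathcal I$; away from the parabolic curve $\rho$ is a local diffeomorphism, so the $H_3$-curve is regular there. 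To locate the meeting points I use that along $\mathcal I_{\mathrm{par}}$ the asymptotic frame tends to the Branch $4$ frame, so $a_{21}=0$ and $CndH_2|_{\mathcal I_{\mathrm{par}}}=a_{32}a_{44}$. Hence $\tilde H$ meets $\mathcal I_{\mathrm{par}}$ exactly where $a_{32}a_{44}=0$, that is at the $A_2^*$-points ($a_{44}=0$, $a_{32}\neq 0$) and the ${\bf W}^{3,1}_4$-points ($a_{32}=0$, $a_{44}\neq 0$) of Table \ref{tab:k=3}.

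At such a meeting point $\mathbf p$ both assertions reduce to the single nondegeneracy condition
\[
\partial_\ell CndH_2(\mathbf p)\neq 0,
\]
that is, to $\tilde H$ being transverse to the fibre direction $\partial_\ell=\ker d\rho|_{\mathbf p}$. Indeed, this forces $T_{\mathbf p}\tilde H\neq\langle\partial_\ell\rangle$, so $d\rho(T_{\mathbf p}\tilde H)$ is a nonzero line; since for a fold the image of $d\rho|_{\mathbf p}$ is the tangent line to the critical value set, i.e. to the parabolic curve, the $H_3$-curve $\rho(\tilde H)$ is regular at $\mathbf p$ and tangent there to the parabolic curve. Combined with the regularity away from the parabolic curve, this shows that the $H_3$-curve is a regular curve meeting the parabolic curve tangentially precisely at the $A_2^*$- and ${\bf W}^{3,1}_4$-points.

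The main obstacle is to establish $\partial_\ell CndH_2(\mathbf p)\neq 0$. Since $a_{21}=0$ at $\mathbf p$, one has $\partial_\ell CndH_2=(\partial_\ell a_{32})a_{44}+a_{32}(\partial_\ell a_{44})-a_{55}\,\partial_\ell a_{21}$, where the $\partial_\ell a_{qs}$ are the derivatives of the Monge coefficients under the infinitesimal frame rotation; a short computation gives $\partial_\ell a_{21}\neq 0$, it being a nonzero multiple of the nonzero principal curvature at $p_0$. The delicate point is to rule out a cancellation between the surviving terms at each meeting point, which I expect to follow from the open conditions defining these strata in Table \ref{tab:k=3} (namely $CndUm_8\neq 0$ at the $A_2^*$-points and $Cnd\widetilde{W}m_8=a_{31}a_{55}-a_{42}a_{44}\neq 0$ at the ${\bf W}^{3,1}_4$-points), together with $CndH_3\neq 0$ and the transversality of $\Phi$; one also uses that $\mathbf p$ is a genuine fold, and not a cusp, point of $\rho$, which holds generically. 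Carrying out this frame-rotation computation and matching the resulting nonvanishing with the tabulated conditions is the one genuinely technical step.
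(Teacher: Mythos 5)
Your fold-structure framework is sound and genuinely different from the paper's argument: the paper gets regularity from the transversality set-up of \S\ref{sec:surfaces} and then obtains the tangency by directly computing the $1$-jets of the parabolic and $H_3$-curves, supplemented by the conceptual remark that the $H_3$-curve lies in the closure of the hyperbolic region (so it cannot cross the parabolic curve transversally) --- your fold argument is in effect a rigorous version of that remark. Your identification of the meeting points is also correct: on the critical locus $a_{21}=0$, so $CndH_2=a_{32}a_{44}$ there, and by Table \ref{tab:k=3} the parabolic points with $a_{32}a_{44}=0$ are exactly the ${\bf U}^3_4$ ($=A_2^*$) and ${\bf W}^{3,1}_4$ points, consistent with Theorem \ref{theo:k=3}(3). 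The reduction of both regularity and tangency at such a point $\mathbf p$ to $\partial_\ell CndH_2(\mathbf p)\neq 0$ is likewise valid, since at a fold point the image of $d\rho$ is the tangent line of the critical value set.

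The genuine gap is the step you defer, and your proposed way of closing it would fail: $\partial_\ell CndH_2(\mathbf p)\neq 0$ is \emph{not} a consequence of the open conditions in Table \ref{tab:k=3}. Carrying out the infinitesimal frame rotation (as in the proof of Theorem \ref{theo:umb}) gives, at $\mathbf p$ in the Branch 4 frame, $\partial_\ell a_{21}=2a_{20}$, $\partial_\ell a_{32}=2a_{31}-3a_{33}$, $\partial_\ell a_{44}=a_{43}$, hence (up to sign conventions)
\[
\partial_\ell CndH_2=(2a_{31}-3a_{33})a_{44}+a_{32}a_{43}-2a_{20}a_{55},
\]
which at an $A_2^*$-point ($a_{44}=0$) equals $a_{32}a_{43}-2a_{20}a_{55}$ and at a ${\bf W}^{3,1}_4$-point ($a_{32}=0$) equals $(2a_{31}-3a_{33})a_{44}-2a_{20}a_{55}$. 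These involve $a_{20}$, $a_{33}$ and $a_{43}$, none of which appear in $CndUm_8$, $Cnd\widetilde{W}m_8$ or $CndH_3$; concretely, the jet with $a_{20}=a_{31}=a_{32}=a_{77}=1$ and $a_{43}=a_{44}=a_{55}=0$ satisfies all the defining equations and open conditions of the $A_2^*$-stratum yet has $\partial_\ell CndH_2=0$. (A smaller instance of the same issue: smoothness of $\mathcal I$ and the fold property at a ${\bf W}^{3,1}_4$-point need $a_{33}\neq 0$, which for $k=3$ is invisible to the stratification because $\vartheta_{3j}=0$.) So the nonvanishing you need is an \emph{independent} genericity condition; it does hold on generic surfaces, but only via an additional transversality argument (the loci $\partial_\ell CndH_2=0$ inside the two codimension-$4$ strata have codimension $5$, hence are avoided by $\Phi$ for generic embeddings) or via the direct $1$-jet computation the paper performs. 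Until one of these is supplied, your proof establishes the proposition only conditionally.
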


\begin{proof}
The regularity of the $H_3$-curve follows by a transversality argument. We compute the 1-jets 
of the parabolic and $H_3$-curves and find that they are tangential at their points of intersection.
This is expected as the $H_3$-singularities of $F_3^{\pi}$ occur when $v$ is an asymptotic direction, so the 
$H_3$-curve lies 
 in the closure of the hyperbolic region of the surface.
\end{proof}

We turn now to the case $k\ge 4$, still assuming $p$ not to be an umbilical point. 
We recall that some topological classes in Tables \ref{tab:ClassifPr}, \ref{tab:ClassifAs}, \ref{tab:ClassifPar}
come with divisibility conditions on $k$. For example, the ${\bf N}^k_3$ class requires $2\mid k$ (see Remarks \ref{rems:GeomkFoldNoneUmb}).

\begin{theo} \label{theo:general k}
Let $k\geq 4$ and let $M$ be a generic surface in $\mathbb R^3$, $p$ a non umbilical point on $M$ 
and $\pi$ a plane through $p$ and orthogonal to $v$. The map-germ $F_k^{\pi}$ at $p$ is an immersion if, and only if, $v$ is not tangent to $M$ at $p$.
It has a singularity which is topologically equivalent to  $M^k_1$
if, and only if, $v\in T_pM$ but is neither a principal nor an asymptotic direction at $p$. 
We have the following when $v$ is 
a principal or an asymptotic direction at $p$. 
\begin{enumerate}

\item[{\rm (1)}]  If $v$ is a principal direction at $p$, then $F_k^{\pi}$ belongs to a stratum in 
{\rm Table \ref{tab:ClassifPr}}. The possible topological classes of $F_k^{\pi}$ 
and their geometric interpretations are as follows: 

\begin{tabular}{cp{0.83\textwidth}}
${\bf M}^k_2$:& $p$ is not on the sub-parabolic curve associated to $v$.\\
${\bf M}^k_3$:&$p$ is a generic point on the sub-parabolic curve associated to $v$.\\
${\bf M}^k_4$:&$p$ is an $S_3$-point on the sub-parabolic curve associated to $v$.\\
${\bf N}^k_3$:&$p$ is a generic point on the ridge curve associated to $v$.\\
${\bf N}^k_4$:&$p$ is a $B_3$-point on the ridge curve associated to $v$.\\
${\bf O}^k_4$:&$p$ is a $C_3$-point {\rm (}intersection point of the ridge and sub-parobolic curves associated to $v${\rm)}.
\end{tabular}

\item[{\rm (2)}] If $p$ is a hyperbolic point and $v$ is an asymptotic direction at $p$, then $F_k^{\pi}$ belongs to a stratum in 
{\rm Table \ref{tab:ClassifAs}}. The possible topological classes of $F_k^{\pi}$ and their geometric interpretations are as follows: 

\begin{tabular}{cp{0.83\textwidth}}
${\bf P}^k_2$:& $p$ is not on the flecnodal curve associated to $v$, and is 
not on the $H_3$-curve associated to $v$ when $3\mid k$.\\
${\bf P}^k_3$:&$p$ is a generic point on the $H_3$-curve associated to $v$.\\
${\bf P}^k_4$:&$p$ is an $H_4$-point on the $H_3$-curve associated to $v$.\\
${\bf Q}^k_3$:&$p$ is a generic point on the flecnodal curve associated to $v$.\\
${\bf Q}^k_4$:&$p$ is a point of intersection of the flecnodal and $H_3$-curves associated to $v$.\\
$\widetilde{\bf Q}^k_4$:&$p$ is a special point on the flecnodal curve associated to $v$.\\
${\bf R}^k_4$:& $p$ is a butterfly-point on  the flecnodal curve associated to $v$.
\end{tabular}

\item[{\rm (3)}] If $p$ is a parabolic point and $v$ is an asymptotic {\rm(}and a principal{\rm)} direction at 
$p$, then $F_k^{\pi}$ belongs to a stratum in {\rm Table \ref{tab:ClassifPar}}. The possible topological classes of $F_k^{\pi}$ 
and their geometric interpretations are as follows: 

\begin{tabular}{cp{0.83\textwidth}}
${\bf U}^k_3$:& $p$ is a generic point on the parabolic curve.\\
${\bf U}^k_4$:&$p$ is an $A^*_2$-point.\\
${\bf V}^{k,j,j'}_4$:&$p$ is a special point on the parabolic curve.\\
${\bf W}^{k,j}_4$:& $p$ is a special point on the parabolic curve.\\
${\bf W}^{3q,q}_4$:&$p$ is on the sub-parabolic 
curve associated to $v^{\perp}$; it is on the closure of the $H_3$-curve 
associated to $v$; the frame map has 
a cross-cap singularity at $p$ {\rm (\cite{BTframes})}.\\
${\bf X}^k_4$:& $p$ is on the intersection of the parabolic and sub-parabolic curves associated to $v$; 
the principal map with value $v$ at $p$ has a beaks singularity at $p$ {\rm (\cite{BTframes})}.\\
${\bf Y}^k_4$:& $p$ is a cusp of Gauss point and a gulls-point.
\end{tabular}

\end{enumerate}
\end{theo}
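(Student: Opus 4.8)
The plan is to reduce, at each point $p$ and for each plane $\pi\perp v$, the germ $F_k^{\pi}$ to the standard form treated in \S\ref{sec:Classification}, and then read off its stratum from the Taylor coefficients of a single function $g_p$ whose coefficients carry extrinsic-geometric meaning. I would start from the coordinate set-up of \S\ref{sec:surfaces}: after the translation $T_\pi$ and rotation $R_\pi$, and the reparametrisation by $K$, the germ of $F_k^{\pi}$ at $p$ is $\mathcal A$-equivalent to $(X,Y^k,g_p(X,Y))$, where $Z=g_p(X,Y)$ is the surface written as a graph with the $Y$-axis along $v$ and the $X$-axis the tangent direction fixed by $R_\pi$ (generically $v^\perp$). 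Writing $g_p$ as in \eqref{Taylorf}, the first step is the dictionary for the $2$-jet: the linear coefficient $a_{11}$ is nonzero precisely when $v\notin T_pM$, which gives an immersion by Remarks \ref{rems:Fkpi}(4); and when $v\in T_pM$ the quadratic coefficients are, up to nonzero factors, the normal curvature $a_{22}=\tfrac12\mathrm{II}(v,v)$ and the off-diagonal term $a_{21}=\mathrm{II}(v,v^\perp)$. Hence $a_{22}=0$ characterises $v$ asymptotic and $a_{21}=0$ characterises $v$ principal, so the four $\mathcal A^{(2)}$-orbits of Lemma \ref{lem:Class2-jets} correspond exactly to: $v$ neither principal nor asymptotic (Branch 1, giving ${\bf M}^k_1$); $v$ principal with $p$ not asymptotic along $v$ (Branch 2); $p$ hyperbolic and $v$ asymptotic but not principal (Branch 3); and $p$ parabolic with $v$ the common asymptotic/principal direction (Branch 4). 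This settles the immersion and ${\bf M}^k_1$ statements and assigns each of (1)--(3) to its branch.

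For the interior of each branch I would invoke the classification already established: Theorem \ref{theo:Branch1} (Table \ref{tab:ClassifPr}) for (1), Theorem \ref{theo:Branch2} (Table \ref{tab:ClassifAs}) for (2), and Theorem \ref{theo:Branch3} (Table \ref{tab:ClassifPar}) for (3). These identify the topological class of $F_k^{\pi}$ from explicit polynomial conditions on the $a_{qs}$. The crux is that these are the very same coefficients, and the very same polynomials, that govern the contact of $M$ with lines, planes and spheres. Thus the geometric interpretation is obtained by matching, condition by condition, the defining equations of the strata with the meanings recorded in Table \ref{tab:AlgCdFmp} (singularities of the folding map $F_2$, detecting the focal set and hence the ridge via $a_{33}=0$, $CndNA_3$, $CndNA_5$, and the sub-parabolic curve via $a_{31}=0$, $a_{41}=0$, $a_{51}$) and Table \ref{tab:othgprojeSurfaR3Algb} (singularities of the projection $P_v$, detecting the flecnodal curve via $a_{33}=0$, $a_{44}$, the parabolic curve via $a_{32}^2-3a_{31}a_{33}$, and the gulls/goose/butterfly points). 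For example, in Branch 2 the chain $a_{31}\neq0\leadsto a_{31}=0,a_{41}\neq0\leadsto a_{41}=0,a_{51}\neq0$ reproduces the $S_1\to S_2\to S_3$ hierarchy of $F_2$ along $v$, giving ${\bf M}^k_2,{\bf M}^k_3,{\bf M}^k_4$ off, on, and at an $S_3$-point of the sub-parabolic curve; the conditions $a_{33}=0$ with $CndNA_3,CndNA_5$ reproduce the $B_2,B_3$ ridge conditions, giving ${\bf N}^k_3,{\bf N}^k_4$; and $a_{31}=a_{33}=0$ gives the $C_3$-point ${\bf O}^k_4$. Branches 3 and 4 are handled identically, reading the flecnodal and parabolic strata from Table \ref{tab:othgprojeSurfaR3Algb}, and recognising the cusp-of-Gauss/gulls point ${\bf Y}^k_4$ from the gulls conditions $a_{33}=0$, $a_{32}a_{44}\neq0$.

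The genuinely new content lies in the strata carrying divisibility constraints on $k$ and the conditions $CndH_2,CndH_3,CndH_4,\Omega^k_{j,j'},\Delta^k_j$, which have no counterpart in the $k=2$ theory. For these I would use the $k=3$ analysis (Theorem \ref{theo:k=3}) as the definition of the $H_3$-curve and of its $H_4$-, $A_2^*$- and ${\bf W}^{3,1}_4$-points, and then show that for $3\mid k$ the extra factor $CndH_2$ (respectively $CndH_3$) distinguishing ${\bf P}^k_2$ from ${\bf P}^k_3$ (and ${\bf Q}^k_3$ from ${\bf Q}^k_4$) is precisely the equation cutting out the $H_3$-curve inside the asymptotic locus; the points ${\bf V}^{k,j,j'}_4$ and ${\bf W}^{k,j}_4$ are the loci where $\Omega^k_{j,j'}=0$ or $\Delta^k_j=0$ meets the parabolic curve. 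The identifications of ${\bf X}^k_4$ and ${\bf W}^{3q,q}_4$ with beaks of the principal map and cross-caps of the frame map follow by comparing the stratum conditions with the characterisations in \cite{BTframes}. Exhaustiveness of the list is guaranteed by the transversality argument of \S\ref{sec:surfaces}, which forces a generic surface to meet only the strata of $\mathcal S_k$ of codimension $\le4$.

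I expect the main obstacle to be the precise dictionary between the coefficients of $g_p$ and the extrinsic invariants of $M$: one must verify that, after the rotation $R_\pi$ and the reparametrisation $K$ of \S\ref{sec:surfaces}, the $a_{qs}$ of $g_p$ agree---up to nonzero, coefficient-independent factors that do not affect the vanishing conditions---with the Monge coefficients used in Tables \ref{tab:AlgCdFmp} and \ref{tab:othgprojeSurfaR3Algb}, and in particular that the $X$-axis is generically the principal-orthogonal direction, so that $a_{21}=0$ really does detect the principal directions. Once this compatibility is in place, the theorem reduces to a systematic bookkeeping across Tables \ref{tab:ClassifPr}, \ref{tab:ClassifAs}, \ref{tab:ClassifPar}, \ref{tab:AlgCdFmp} and \ref{tab:othgprojeSurfaR3Algb}.
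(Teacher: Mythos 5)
Your proposal is correct and follows essentially the same route as the paper: reduce $F_k^{\pi}$ to the standard Monge-form germ $(x,y^k,f(x,y))$ of Remarks \ref{rems:Fkpi}(4), match the four branches of Lemma \ref{lem:Class2-jets} with the geometric conditions on $v$ (tangent, principal, asymptotic), invoke the classification Theorems \ref{theo:Branch1}, \ref{theo:Branch2}, \ref{theo:Branch3}, and translate their stratum conditions through the dictionaries of Tables \ref{tab:AlgCdFmp} and \ref{tab:othgprojeSurfaR3Algb}, with exhaustiveness supplied by the transversality argument of Section \ref{sec:surfaces}. The second-fundamental-form dictionary for the $2$-jet and the identification of $CndH_2$, $CndH_3$ with the equations cutting out the $H_3$-curve, which you spell out explicitly, are precisely what the paper's two-line proof leaves implicit.
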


\begin{proof} With the setting as in the proof of Theorem \ref{theo:k=3}, the results follow by interpreting 
the conditions in Tables \ref{tab:ClassifPr}, \ref{tab:ClassifAs} and \ref{tab:ClassifPar} using 
Tables \ref{tab:AlgCdFmp} and \ref{tab:othgprojeSurfaR3Algb}.
\end{proof}

\begin{rems}\label{rems:GeomkFoldNoneUmb}
	{\rm 
		1. The singularities in Branches 1 and 2 are associated to principal directions, those in Branch 3 to asymptotic directions 
		at hyperbolic points and those in Branch 4 to asymptotic directions at parabolic points.
		
		2. Theorems \ref{theo:k=3} and \ref{theo:general k}  show clearly that  $k$-folding maps 
		capture the robust features obtained by $2$-folding maps and by the contact of the surface with lines, planes and spheres, giving thus new geometric characterisations of these features and a unified approach to study them. We also obtain a new 1-dimensional robust feature (the $H_3$-curve) and several $0$-dimensional ones: the $H_4$, ${\bf Q}^k_4$, $\widetilde{\bf Q}^k_4$, ${\bf V}^{k,j,j'}_4$ and ${\bf W}^{k,j}_4$ points. 
		
	 3. The $H_3$-curve is captured by $k$-folding maps when $k$ is divisible by $3$. 
		The sub-parabolic and flecnodal curves are captured by $k$-folding maps for any $k$ while 
		the ridge curve is captured by $k$-folding maps when $k$ is even. 
		Regarding the ridge, it is the locus of points where the surface has more infinitesimal 
			symmetry with respect to planes (\cite{BruceWilk}). The map-germs $(x,y)\mapsto (x,y^{2p},f(x,y))$ identify the pair of points $(x,y)$ and $(x,-y),$ 
		which explains why all the $F_{2p}$ folding maps capture the ridge curve.

		4. The condition $\Delta_j^k=0$ can be satisfied for any $k,j$ when the coefficients $a_{sl}$ are real, so 
		${\bf W}^{k,j}_4$-points can occur on surfaces in $\mathbb R^3$ for all $k,j$. 
		For the ${\bf V}^{k,j,j'}_4$ topological class, it follows from Proposition \ref{prop:PropertiesDeltaOmega} that 
		only ${\bf V}^{k,j,k-j}_4$, ${\bf V}^{k,j,2j}_4$ and ${\bf V}^{k,k-j,2j}_4$-points can occur on surfaces in $\mathbb R^3$.
		
		5. Theorem \ref{theo:k=3} gives a new
		geometric interpretation for the $A^*_2$-points in \cite{GiblinJaneszko,GiblinWarderZak}.
		
		6. Observe that the open conditions (those involving expressions $\ne 0$) for the 
		singularities of the $k$-folding map in Tables \ref{tab:k=3}, \ref{tab:ClassifPr}, \ref{tab:ClassifAs}, \ref{tab:ClassifPar} and 
		their associated ones in Tables \ref{tab:AlgCdFmp} and \ref{tab:othgprojeSurfaR3Algb} are not always identical. 
		For the $0$-dimensional robust features, the open conditions in both tables are satisfied on a generic surface. 
		For the $1$-dimensional robust features, this means that some special points in one setting are not special in the other. For example, the $3$-folding map does not distinguish between a $C_3$-point and a generic point on the sub-parabolic curve.
	}
\end{rems}

We consider now the situation at umbilic points. For a generic surface $M$, these occur at isolated points in its elliptic region,
and every direction in the tangent plane of $M$ at such points can be considered a principal direction.
We take $M$ locally in Monge form $z=f(z,y)$, consider the origin to be an umbilic point and write 
$
f(x,y)=\frac{\kappa}{2}(x^2+y^2)+C(x,y)+ O_4(x,y)
$
where $C$ is a homogeneous cubic form in $x,y$. We can take 
$C(x,y)$ to be  the real part of $z^2+\beta z^2\overline{z}$, with $z=x+iy$ and $\beta=s+it$ (see for example \cite{BruceWilk}). 
Then, $
C=(1+s)x^3-tx^2y+(s-3)xy^2x-ty^3. $

\begin{theo}\label{theo:umb}
Let $k\ge 3$ and let $M$ be a generic smooth surface in $\mathbb R^3$, $p$ an umbilic point  on $M$ 
and $\pi$ a plane through $p$ and orthogonal to $v\in T_{p}M$.   
\begin{enumerate}
\item If $2\nmid k$, then for almost all directions $v$ in $T_{p}M$ the singularity of $F_k^{\pi}$ at $p$ is 
of type $S_3$ when $k=3$ and of type ${\bf M}^k_2$ when $k\ge 4$.
There are three  directions {\rm (}resp. one direction{\rm )}  where the singularity is of type $S_5$ when $k=3$ and of type ${\bf M}^k_3$ when $k\ge 4$ 
if $\beta$ is inside {\rm (}resp. outside{\rm )} the outer hypocycloid $\beta=-3(2e^{2i\theta}+e^{-4i\theta})$ 
in \mbox{\rm Figure \ref{fig:PartUmb}}.

\item If $2\mid k$, then for almost all directions $v$ in $T_{p}M$ the singularity of $F_k^{\pi}$ at $p$ is 
of type ${\bf M}^k_2$.
There are three  directions {\rm (}resp. one direction{\rm )}  where the singularity is of type ${\bf M}^k_3$ 
if $\beta$ is inside {\rm (}resp. outside{\rm )} the hypocycloid $\beta=-3(2e^{2i\theta}+e^{-4i\theta})$.
There are also three directions {\rm (}resp. one direction{\rm )}  where the singularity is of  type ${\bf N}^k_3$ 
when $\beta$ is inside {\rm (}resp. outside{\rm )}  the inner hypocycloid $\beta=2e^{2i\theta}+e^{-4i\theta}$ in \mbox{\rm Figure \ref{fig:PartUmb}}.
\end{enumerate}
\end{theo}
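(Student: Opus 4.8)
The plan is to reduce the behaviour at an umbilic to Branch~2 of the classification and then to count, as the direction $v$ (equivalently the plane $\pi$) varies over the tangent circle, the directions realising the degenerate strata. First I would exploit the rotational symmetry of the $2$-jet. Writing $M$ in Monge form with $f=\tfrac{\kappa}{2}(x^2+y^2)+C(x,y)+O_4$ and $C=\operatorname{Re}(z^3+\beta z^2\bar z)$, $z=x+iy$, $\beta=s+it$, I put $F_k^{\pi}$ into the standard form of Remarks~\ref{rems:Fkpi}(4) by rotating the tangent plane so that $v$ becomes the $y$-axis. Since $p$ is umbilic, the quadratic part $\tfrac{\kappa}{2}(x^2+y^2)$ is rotation-invariant, so after the rotation one still has $a_{11}=a_{21}=0$ and $a_{22}=\kappa/2\neq 0$ (the umbilic lies in the elliptic region). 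Hence, for \emph{every} direction $v$, the germ $F_k^{\pi}$ lies in Branch~2, and its topological type is read off from Table~\ref{tab:ClassifPr} (and from Table~\ref{tab:k=3} when $k=3$) in terms of the cubic coefficients $a_{31},a_{33}$.

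Next I would compute how $a_{31},a_{33}$ depend on the direction. Setting $z=e^{i\theta}w$ with $w=X+iY$ gives $C=\operatorname{Re}\!\big(e^{3i\theta}w^3+\beta e^{i\theta}w^2\bar w\big)$, and extracting the coefficients of $X^2Y$ and $Y^3$ yields
$$
a_{31}(\theta)=-3\sin 3\theta-\operatorname{Im}(\beta e^{i\theta}),\qquad a_{33}(\theta)=\sin 3\theta-\operatorname{Im}(\beta e^{i\theta}).
$$
A direction with $a_{31}a_{33}\neq 0$ lands in the generic Branch~2 stratum, giving $S_3$ (for $k=3$) or ${\bf M}^k_2$ (for $k\ge 4$). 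The degenerate directions are precisely the zeros of $a_{31}$ (producing $S_5$, resp.\ ${\bf M}^k_3$) and, when $k$ is even, the zeros of $a_{33}$ (producing ${\bf N}^k_3$); for odd $k$ the $y^3$-term is irrelevant to the topological class (cf.\ Proposition~\ref{prop:a11=a21=0}), so only $a_{31}=0$ is special there.

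The heart of the argument is the count of degenerate directions. I would regard $\{a_{33}(\theta)=0\}$ and $\{a_{31}(\theta)=0\}$ as one-parameter families of lines in the $\beta$-plane and compute their envelopes by eliminating $\theta$ between the defining equation and its $\theta$-derivative. This gives the three-cusped hypocycloid $\beta=2e^{2i\theta}+e^{-4i\theta}$ from $a_{33}=0$ and $\beta=-3(2e^{2i\theta}+e^{-4i\theta})$ from $a_{31}=0$, which are exactly the inner and outer hypocycloids in the statement. As these envelopes are the discriminant loci, the number of real solutions $\theta$ (taken modulo $\theta\mapsto\theta+\pi$, which is the identification $v\mapsto -v$) is locally constant off them and changes by two across them. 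Evaluating at $\beta=0$, where $a_{33}=\sin 3\theta$ has the three direction-solutions $\theta=0,\pi/3,2\pi/3$, and for $|\beta|\to\infty$, where $\operatorname{Im}(\beta e^{i\theta})$ dominates and leaves a single solution, pins the count down to three directions inside each hypocycloid and one outside, which is precisely the assertion of the theorem.

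The main obstacle is this counting step: identifying each envelope as the genuine deltoid discriminant and justifying the three-versus-one tangent-line count cleanly, rather than merely producing the envelope curve. I would finish by invoking genericity of $M$: at the finitely many special directions the open conditions $a_{41}\neq 0$ and $\mathrm{CndNA}_3\neq 0$ hold, and the loci $\{a_{31}=0\}$ and $\{a_{33}=0\}$ are disjoint (they could meet only when $\sin 3\theta=0=\operatorname{Im}(\beta e^{i\theta})$, a non-generic coincidence on $\beta$). Consequently no stratum of codimension $4$ is reached, and exactly the strata ${\bf M}^k_3$ and ${\bf N}^k_3$ (and their $k=3$ analogues $S_5$) occur, with the stated multiplicities governed by the two hypocycloids.
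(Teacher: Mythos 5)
Your proposal is correct and follows essentially the same route as the paper's proof: rotate the Monge form at the umbilic so that the germ lies in Branch 2 for every tangent direction, compute the direction-dependent cubic coefficients $a_{31}(\theta)$ and $a_{33}(\theta)$, identify their discriminant loci in the $\beta$-plane as the outer and inner hypocycloids, count three directions inside versus one outside, and invoke genericity for the open conditions ($a_{41}\neq 0$, $CndNA_3\neq 0$) and to exclude the simultaneous vanishing $a_{31}=a_{33}=0$. Your complex-variable bookkeeping and envelope/wall-crossing count are just a compact reformulation (up to a $\pi/2$ shift in the parametrisation of directions) of the paper's computation of the discriminants of the cubic forms $\bar a_{31},\bar a_{33}$ in $\cos\theta,\sin\theta$ and its assertion that the cubic has three roots inside the hypocycloid and one outside.
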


\begin{figure}
\begin{center}
	\includegraphics[scale=0.35]{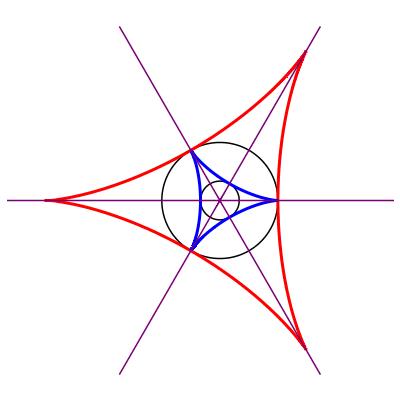}
\end{center}
\caption{Partition of the space of cubic forms.}
\label{fig:PartUmb}
\end{figure}

\begin{proof}
We take $v=(\cos(\theta),\sin(\theta),0)$, $\theta\in[0,2\pi]$ and consider the rotation 
$$
R=\left(
\begin{array}{ccc}
\sin(\theta)&\cos(\theta)&0\\
-\cos(\theta)&\sin(\theta)&0\\
0&0&1
\end{array}
\right),
$$
which takes the direction $(0,1,0)$ to $v$. Then, 
$$
F_k^{\pi}\circ R^{-1}(x,y)=(x\sin(\theta)-y\cos(\theta),(x\cos(\theta)+y\sin(\theta))^k,f(x,y)).
$$

Changes of coordinates in the source give
$$
F_k^{\pi}\circ R^{-1}(X,Y)=(X,Y^k,f(X\sin(\theta)+Y\cos(\theta),-X\cos(\theta)+Y\sin(\theta))).
$$
We denote by $\bar{a}_{lj}$ the coefficient of $X^{l-j}Y^j$ in 
the Taylor expansion of $f(X\sin(\theta)+Y\cos(\theta),-X\cos(\theta)+Y\sin(\theta))$.
The proof follows then considering the conditions for the singularities of $F_k^{\pi}$ in Tables 
\ref{tab:k=3} and \ref{tab:ClassifPr}. 

(1) We have 
$$
\bar{a}_{31}=(s-3)\cos(\theta)^3-t\cos(\theta)^2\sin(\theta)+(9+s)\sin(\theta)^2\cos(\theta)-t\sin(\theta)^3.
$$

When $2\nmid k$ and $\bar{a}_{31}\ne 0$, the singularity of $F_k^{\pi}$ at the origin is 
of type $S_3$ when $k=3$ or of type $M^k_2$ when $k\ge 4$.

The coefficient $\bar{a}_{31}$ is a cubic form in $\cos(\theta)$ and $\sin(\theta)$. Its discriminant is 
the hypocycloid $\beta=-3(2e^{2i\theta}+e^{-4i\theta})$. The cubic has 
three roots for $\beta$ inside the hypocycloid and one root when it is outside.
For $v$ corresponding to one of these roots,  the singularity of $F_k^{\pi}$ is of type 
$S_5$ when $k=3$ and of type ${\bf M}^k_3$ when $k\ge 4$, provided $\bar{a}_{41}\ne 0$. 
The condition $\bar{a}_{41}\ne 0$ is satisfied at umbilic points on generic surfaces.
 
(2) We have 
$$
\bar{a}_{33}=(1+s)\cos(\theta)^3-t\cos(\theta)^2\sin(\theta)+(s-3)\cos(\theta)\sin(\theta)^2-t\sin(\theta)^3.
$$

When $2\mid k$  (so $k\ge 4$)  and $\bar{a}_{33}\ne 0$, the singularity of $F_k^{\pi}$ at the origin is 
of type ${\bf M}^k_2$. We also get the ${\bf M}^k_3$ singularities as in (1) when $\bar{a}_{31}=0$.  

The coefficient $\bar{a}_{33}$ is also a cubic form in $\cos(\theta)$ and $\sin(\theta)$. Its discriminant is 
the hypocycloid $\beta=2e^{2i\theta}+e^{-4i\theta}$. The cubic has 
three roots for $\beta$ inside the hypocycloid and one root when it is outside.
For $v$ corresponding to one of these roots,  the singularity of $F_k^{\pi}$ is 
of type ${\bf N}^k_3$ if $\bar{a}_{31}\ne 0$. 

We have $\bar{a}_{33}=\bar{a}_{31}=0$ if, and only if, $\beta$ 
is on one of the tangent lines $t(3s^2-t^2)=0$ 
to the hypocycloids at their cusp points, see Figure \ref{fig:PartUmb}. (On these lines, 
the singularity is of type ${\bf O}^k_4$ or more degenerate. This singularity does not occur at umbilic points on a generic surface.)
\end{proof}

\begin{rem}\label{rems:umb}
{\rm 
1. Theorem \ref{theo:umb} is merely another interpretation of the results 
in \cite{BruceWilk,wilkinson} when using the geometric characterisations of the singularities of $k$-folding 
maps in  Theorems \ref{theo:k=3} and \ref{theo:general k}. We know from \cite{Bruce84, BruceWilk,wilkinson} that 
there are one or three ridge curves and   one or three sub-parabolic curves at umbilic points on a generic surface.
These curves meet transversally and change colour at the umbilic point. 
	
2. Figure \ref{fig:PartUmb} is first obtain in \cite{BruceWilk} when considering 2-folding maps. 
In that case both hypocycloids are present, whereas when $k\ge 3$ only one of them 
is present when $k$ is odd (both are present when $k$ is even). 
Also in \cite{BruceWilk} is considered the circle $|\beta|=3$  
which corresponds to the Monge-Taylor map failing to be transverse to the umbilics stratum. 
The circle $|\beta|=1$ is also exceptional and corresponds to two ridges through the umbilic being tangential, see \cite{BGT}.
As these conditions are geometric, the circles $|\beta|=1$ and $|\beta|=3$ can also be considered exceptional for 
$k$-folding maps and are added to  Figure \ref{fig:PartUmb}.

3. Umbilic points on a generic surface occur at elliptic points, that is why we do not get flecnodal curves or $H_3$-curves 
through such points. 
}
\end{rem}

\begin{acknow}
GPS was partially supported by the Basque Government through the BERC 2018-2021 program and Gobierno Vasco Grant IT1094-16, by the Spanish Ministry of Science, Innovation and Universities: BCAM Severo Ochoa accreditation SEV-2017-0718, by the ERCEA Consolidator Grant 615655 NMST, and by Programa de Becas Posdoctorales en la UNAM, DGAPA, Instituto de Matem\'aticas, UNAM.

FT was partially supported by the 
FAPESP Thematic project grant 2019/07316-0  and the CNPq research grant 303772/2018-2.
\end{acknow}


\noindent
GPS: Departament de Matem\`atiques - Universitat de València, Calle Dr. Moliner 50,
46100, Burjassot (València), Spain. \\
E-mail: guillermo.penafort@uv.es
\\

\noindent
FT: Instituto de Ci\^encias Matem\'aticas e de Computa\c{c}\~ao - USP, Avenida Trabalhador s\~ao-carlense, 400 - Centro,
CEP: 13566-590 - S\~ao Carlos - SP, Brazil.\\
E-mail: faridtari@icmc.usp.br

\end{document}